\definecolor{darkspringgreen}{rgb}{0.09, 0.45, 0.27}
\definecolor{Red}{rgb}{0.7,0,0}
\patchcmd{\footnotemark}{\stepcounter{footnote}}{\refstepcounter{footnote}}{}{}
\newcommand{\stkout}[1]{\ifmmode\text{\sout{\ensuremath{#1}}}\else\sout{#1}\fi}
\newtheorem{thm}{Theorem}[subsection]
\newtheorem{cor}[thm]{Corollary}
\newtheorem{lem}[thm]{Lemma}
\newtheorem{prop}[thm]{Proposition}
\newtheorem{conj}[thm]{Conjecture}
\theoremstyle{defn}
\newtheorem{defn}[thm]{Definition}
\theoremstyle{definition} 
\newtheorem{rem}[thm]{Remark}
\newcommand{\nc}{\newcommand}
\nc{\renc}{\renewcommand} \nc{\ssec}{\subsection}
\nc{\sssec}{\subsubsection}
 \nc{\wh}{\widehat}
\nc\ol{\overline} \nc\ul{\underline} \nc\wt{\widetilde}
\nc{\BA}{{\mathbb{A}}} \nc{\BC}{{\mathbb{C}}} \nc{\BF}{{\mathbb{F}}}
\nc{\BD}{{\mathbb{D}}} \nc{\BG}{{\mathbb{G}}} \nc{\BQ}{{\mathbb{Q}}}
\nc{\BM}{{\mathbb{M}}} \nc{\BN}{{\mathbb{N}}} \nc{\BO}{{\mathbb{\bfO}}}
\nc{\BP}{{\mathbb{P}}} \nc{\BR}{{\mathbb{R}}}
\nc{\BZ}{{\mathbb{Z}}} \nc{\BS}{{\mathbb{S}}} \nc{\BW}{{\mathbb{W}}}
\nc{\CA}{{\mathcal{A}}} \nc{\CB}{{\mathcal{B}}} \nc{\CalD}{{\mathcal{D}}}
\nc{\CE}{{\mathcal{E}}} \nc{\CF}{{\mathcal{F}}}
\nc{\CG}{{\mathcal{G}}} \nc{\CH}{{\mathcal{H}}}
\nc{\CI}{{\mathcal{I}}} \nc{\CK}{{\mathcal{K}}} \nc{\CL}{{\mathcal{L}}}
\nc{\CM}{{\mathcal{M}}} \nc{\CN}{{\mathcal{N}}}
\nc{\CO}{{\mathcal{\bfO}}} \nc{\CP}{{\mathcal{P}}}
\nc{\CQ}{{\mathcal{Q}}} \nc{\CR}{{\mathcal{R}}}
\nc{\CS}{{\mathcal{S}}} \nc{\CT}{{\mathcal{T}}}
\nc{\CU}{{\mathcal{U}}} \nc{\CV}{{\mathcal{V}}}  \nc{\CY}{{\mathcal Y}}
\nc{\CW}{{\mathcal{W}}} \nc{\CZ}{{\mathcal{Z}}}
\nc{\cM}{{\check{\mathcal M}}{}} \nc{\csM}{{\check{\mathcal A}}{}}
\nc{\oM}{{\overset{\circ}{\mathcal M}}{}}
\nc{\obM}{{\overset{\circ}{\mathbf M}}{}}
\nc{\oCA}{{\overset{\circ}{\mathcal A}}{}}
\nc{\obA}{{\overset{\circ}{\mathbf A}}{}}
\nc{\ooM}{{\overset{\circ}{M}}{}}
\nc{\osM}{{\overset{\circ}{\mathsf M}}{}}
\nc{\vM}{{\overset{\bullet}{\mathcal M}}{}}
\nc{\nM}{{\underset{\bullet}{\mathcal M}}{}}
\nc{\oD}{{\overset{\circ}{\mathcal D}}{}}
\nc{\obD}{{\overset{\circ}{\mathbf D}}{}}
\nc{\oA}{{\overset{\circ}{\mathbb A}}{}}
\nc{\op}{{\overset{\bullet}{\mathbf p}}{}}
\nc{\cp}{{\overset{\circ}{\mathbf p}}{}}
\nc{\oU}{{\overset{\bullet}{\mathcal U}}{}}
\nc{\ofZ}{{\overset{\circ}{\mathfrak Z}}{}}
\nc{\ff}{{\mathfrak{f}}} \nc{\fv}{{\mathfrak{v}}}
\nc{\fa}{{\mathfrak{a}}} \nc{\fb}{{\mathfrak{b}}}
\nc{\fd}{{\mathfrak{d}}} \nc{\fe}{{\mathfrak{e}}}
\nc{\fg}{{\mathfrak{g}}} \nc{\fgl}{{\mathfrak{gl}}}
\nc{\fh}{{\mathfrak{h}}} \nc{\fri}{{\mathfrak{i}}}
\nc{\fj}{{\mathfrak{j}}} \nc{\fk}{{\mathfrak{k}}} \nc{\fl}{{\mathfrak{l}}}
\nc{\fm}{{\mathfrak{m}}} \nc{\fn}{{\mathfrak{n}}}
\nc{\ft}{{\mathfrak{t}}} \nc{\fu}{{\mathfrak{u}}}
\nc{\fw}{{\mathfrak{w}}} \nc{\fz}{{\mathfrak{z}}}
\nc{\fp}{{\mathfrak{p}}} \nc{\fq}{{\mathfrak{q}}} \nc{\frr}{{\mathfrak{r}}}
\nc{\fs}{{\mathfrak{s}}} \nc{\fsl}{{\mathfrak{sl}}}
\nc{\hsl}{{\widehat{\mathfrak{sl}}}}
\nc{\hgl}{{\widehat{\mathfrak{gl}}}}
\nc{\hg}{{\widehat{\mathfrak{g}}}}
\nc{\chg}{{\widehat{\mathfrak{g}}}{}^\vee}
\nc{\hn}{{\widehat{\mathfrak{n}}}}
\nc{\chn}{{\widehat{\mathfrak{n}}}{}^\vee}
\nc{\fA}{{\mathfrak{A}}} \nc{\fB}{{\mathfrak{B}}} \nc{\fC}{{\mathfrak{C}}}
\nc{\fD}{{\mathfrak{D}}} \nc{\fE}{{\mathfrak{E}}}
\nc{\fF}{{\mathfrak{F}}} \nc{\fG}{{\mathfrak{G}}} \nc{\fH}{{\mathfrak{H}}}
\nc{\fI}{{\mathfrak{I}}} \nc{\fJ}{{\mathfrak{J}}}
\nc{\fK}{{\mathfrak{K}}} \nc{\fL}{{\mathfrak{L}}}
\nc{\fM}{{\mathfrak{M}}} \nc{\fN}{{\mathfrak{N}}}
\nc{\frP}{{\mathfrak{P}}} \nc{\fQ}{{\mathfrak{Q}}}
\nc{\fS}{{\mathfrak{S}}} \nc{\fT}{{\mathfrak{T}}} \nc{\fU}{{\mathfrak{U}}}
\nc{\fV}{{\mathfrak{V}}} \nc{\fW}{{\mathfrak{W}}}
\nc{\fX}{{\mathfrak{X}}} \nc{\fY}{{\mathfrak{Y}}}
\nc{\fZ}{{\mathfrak{Z}}}
\nc{\ba}{{\mathbf{a}}}
\nc{\bb}{{\mathbf{b}}} \nc{\bc}{{\mathbf{c}}}
\nc{\be}{{\mathbf{e}}} \nc{\bj}{{\mathbf{j}}} \nc{\bm}{{\mathbf{m}}}
\nc{\bn}{{\mathbf{n}}} \nc{\bp}{{\mathbf{p}}}
\nc{\bq}{{\mathbf{q}}} \nc{\br}{{\mathbf{r}}} \nc{\bt}{{\mathbf{t}}}
\nc{\bfu}{{\mathbf{u}}} \nc{\bv}{{\mathbf{v}}}
\nc{\bx}{{\mathbf{x}}} \nc{\by}{{\mathbf{y}}} \nc{\bz}{{\mathbf{z}}}
\nc{\bw}{{\mathbf{w}}} \nc{\bA}{{\mathbf{A}}}
\nc{\bB}{{\mathbf{B}}} \nc{\bC}{{\mathbf{C}}}
\nc{\bD}{{\mathbf{D}}} \nc{\bF}{{\mathbf{F}}} \nc{\bG}{{\mathbf{G}}}
\nc{\bH}{{\mathbf{H}}} \nc{\bI}{{\mathbf{I}}} \nc{\bJ}{{\mathbf{J}}}
\nc{\bK}{{\mathbf{K}}} \nc{\bM}{{\mathbf{M}}} \nc{\bN}{{\mathbf{N}}}
\nc{\bO}{{\mathbf{\bfO}}} \nc{\bS}{{\mathbf{S}}} \nc{\bT}{{\mathbf{T}}}
\nc{\bU}{{\mathbf{U}}} \nc{\bV}{{\mathbf{V}}} \nc{\bW}{{\mathbf{W}}}
\nc{\bX}{{\mathbf{X}}}
\nc{\bY}{{\mathbf{Y}}} \nc{\bP}{{\mathbf{P}}}
\nc{\bZ}{{\mathbf{Z}}} \nc{\bh}{{\mathbf{h}}}
\nc{\sA}{{\mathsf{A}}} \nc{\sB}{{\mathsf{B}}}
\nc{\sC}{{\mathsf{C}}} \nc{\sD}{{\mathsf{D}}}
\nc{\sE}{{\mathsf{E}}} \nc{\sF}{{\mathsf{F}}} \nc{\sG}{{\mathsf{G}}}
\nc{\sI}{{\mathsf{I}}} \nc{\sK}{{\mathsf{K}}} \nc{\sL}{{\mathsf{L}}}
\nc{\sfm}{{\mathsf{m}}} \nc{\sM}{{\mathsf{M}}} \nc{\sO}{{\mathsf{\bfO}}}
\nc{\sQ}{{\mathsf{Q}}} \nc{\sP}{{\mathsf{P}}}
\nc{\sT}{{\mathsf{T}}} \nc{\sZ}{{\mathsf{Z}}}
\nc{\sV}{{\mathsf{V}}} \nc{\sW}{{\mathsf{W}}}
\nc{\sfp}{{\mathsf{p}}} \nc{\sq}{{\mathsf{q}}} \nc{\sr}{{\mathsf{r}}}
\nc{\st}{{\mathsf{t}}} \nc{\sfb}{{\mathsf{b}}}
\nc{\sfc}{{\mathsf{c}}} \nc{\sd}{{\mathsf{d}}}
\nc{\sz}{{\mathsf{z}}}
\nc{\tA}{{\widetilde{\mathbf{A}}}}
\nc{\tB}{{\widetilde{\mathcal{B}}}}
\nc{\tg}{{\widetilde{\mathfrak{g}}}} \nc{\tG}{{\widetilde{G}}}
\nc{\TM}{{\widetilde{\mathbb{M}}}{}}
\nc{\tO}{{\widetilde{\mathsf{\bfO}}}{}}
\nc{\tU}{{\widetilde{\mathfrak{U}}}{}} \nc{\TZ}{{\tilde{Z}}}
\nc{\tx}{{\tilde{x}}} \nc{\tbv}{{\tilde{\bv}}}
\nc{\tfP}{{\widetilde{\mathfrak{P}}}{}} \nc{\tz}{{\tilde{\zeta}}}
\nc{\tmu}{{\tilde{\mu}}}
\nc{\urho}{\underline{\rho}} \nc{\uB}{\underline{B}}
\nc{\uC}{{\underline{\mathbb{C}}}} \nc{\ui}{\underline{i}}
\nc{\uj}{\underline{j}} \nc{\ofP}{{\overline{\mathfrak{P}}}}
\nc{\oB}{{\overline{\mathcal{B}}}}
\nc{\og}{{\overline{\mathfrak{g}}}} \nc{\oI}{{\overline{I}}}
\nc{\eps}{\varepsilon} \nc{\hrho}{{\hat{\rho}}}
\nc{\blambda}{{\boldsymbol{\lambda}}} \nc{\bmu}{{\boldsymbol{\mu}}} \nc{\bnu}{{\boldsymbol{\nu}}}
\nc{\one}{{\mathbf{1}}} \nc{\two}{{\mathbf{t}}}
\nc{\Cat}{\mathop{\operatorname{\rm 1-Cat}}}
\nc{\Sym}{\mathop{\operatorname{\rm Sym}}}
\nc{\Tot}{{\mathop{\operatorname{\rm Tot}}}}
\nc{\Spec}{\mathop{\operatorname{\rm Spec}}}
\nc{\Ker}{{\mathop{\operatorname{\rm Ker}}}}
\nc{\Isom}{{\mathop{\operatorname{\rm Isom}}}}
\nc{\Hilb}{{\mathop{\operatorname{\rm Hilb}}}}
\nc{\deeq}{{\mathop{\operatorname{\rm deeq}}}}
\nc{\End}{{\mathop{\operatorname{\rm End}}}}
\nc{\Ext}{{\mathop{\operatorname{\rm Ext}}}}
\nc{\Hom}{{\mathop{\operatorname{\rm Hom}}}}
\nc{\CHom}{{\mathop{\operatorname{{\mathcal{H}}\it om}}}}
\nc{\GL}{{\mathop{\operatorname{\rm GL}}}}
\nc{\Mir}{{\mathop{\operatorname{\rm Mir}}}}
\nc{\St}{{\mathop{\operatorname{\rm St}}}}
\nc{\oblv}{{\mathop{\operatorname{\rm oblv}}}}
\nc{\gr}{{\mathop{\operatorname{\rm gr}}}}
\nc{\Id}{{\mathop{\operatorname{\rm Id}}}}
\nc{\perf}{{\mathop{\operatorname{\rm perf}}}}
\nc{\defi}{{\mathop{\operatorname{\rm def}}}}
\nc{\length}{{\mathop{\operatorname{\rm length}}}}
\nc{\supp}{{\mathop{\operatorname{\rm supp}}}}
\nc{\colim}{{\mathop{\operatorname{\rm colim}}}}
\nc{\Fun}{{\mathop{\operatorname{\rm Funct}}}}
\nc{\Hei}{{\mathop{\operatorname{\rm Heis}}}}
\nc{\HC}{{\mathcal H}{\mathcal C}}
\nc{\ren}{{\mathsf{ren}}}
\nc{\locc}{{\mathsf{loc.c}}}
\nc{\pr}{{\operatorname{pr}}}
\nc{\Cliff}{{\mathsf{Cliff}}}
\nc{\loc}{{\operatorname{loc}}}
\nc{\Fl}{{\mathbf{Fl}}} \nc{\Ffl}{{\mathcal{F}\ell}}
\nc{\Fib}{{\mathsf{Fib}}}
\nc{\Coh}{{\mathsf{Coh}}} \nc{\FCoh}{{\mathsf{FCoh}}}
\nc{\Perf}{{\mathsf{Perf}}}
\nc{\reg}{{\text{\rm reg}}}
\nc{\gvee}{{\mathfrak g}^{\!\scriptscriptstyle\vee}}
\nc{\tvee}{{\mathfrak t}^{\!\scriptscriptstyle\vee}}
\nc{\nvee}{{\mathfrak n}^{\!\scriptscriptstyle\vee}}
\nc{\bvee}{{\mathfrak b}^{\!\scriptscriptstyle\vee}}
       \nc{\rhovee}{\rh\bfO^{\!\scriptscriptstyle\vee}}
\nc{\cplus}{{\mathbf{C}_+}} \nc{\cminus}{{\mathbf{C}_-}}
\nc{\cthree}{{\mathbf{C}_*}} \nc{\Qbar}{{\bar{Q}}}
\nc{\Gtimes}{\vphantom{j^{X^2}}\smash{\overset{G}{\vphantom{\rule{0pt}{0.30em}}\smash{\times}}}}
\nc{\sGtimes}{\vphantom{j^{X^2}}\smash{\overset{\mathsf G}{\vphantom{\rule{0pt}{0.30em}}\smash{\times}}}}
\nc{\bOmega}{{\overline{\Omega}}}
\nc{\seq}[1]{\stackrel{#1}{\sim}}
\nc{\nilp}{{\operatorname{Nilp}}}
\nc{\aff}{{\operatorname{aff}}}
\nc{\fin}{{\operatorname{fin}}}
\nc{\mir}{{\operatorname{mir}}}
\nc{\triv}{{\operatorname{triv}}}
\nc{\ext}{{\operatorname{ext}}}
\nc{\righ}{{\operatorname{right}}}
\nc{\lef}{{\operatorname{left}}}
\nc{\forg}{{\operatorname{forg}}}
\nc{\fid}{{\operatorname{fd}}}
\nc{\modu}{{\operatorname{-mod}}}
\nc{\Mor}{{\operatorname{Mor}}}
\nc{\Gr}{{\mathbf{Gr}}}
\nc{\FT}{{\operatorname{FT}}}
\nc{\Mat}{{\operatorname{Mat}}}
\nc{\MSt}{{\operatorname{MSt}}}
\nc{\sph}{{\operatorname{sph}}}
\nc{\GR}{{\mathbf{Gr}}}
\nc{\Perv}{{\operatorname{Perv}}}
\nc{\Rep}{{\operatorname{Rep}}}
\nc{\Ind}{{\operatorname{Ind}}}
\nc{\IC}{{\operatorname{IC}}}
\nc{\Bun}{{\operatorname{Bun}}}
\nc{\Proj}{{\operatorname{Proj}}}
\nc{\colimc}{{\operatorname{colim}^{co}}}
\nc{\Stab}{{\operatorname{Stab}}}
\nc{\pt}{{\operatorname{pt}}}
\nc{\bfmu}{{\boldsymbol{\mu}}}
\nc{\bfomega}{{\boldsymbol{\omega}}}
\nc{\calM}{\mathcal M}
\nc{\calA}{\mathcal A}
\nc{\calO}{\mathcal O}
\nc{\cC}{\mathcal C}
\nc{\CC}{\mathbb C}
\nc{\calN}{\mathcal N}
\nc{\grg}{\mathfrak g}
\nc{\tslash}{/\!\!/\!\!/}
\nc\grt{\mathfrak t}
\nc\bfM{\mathbf M}
\nc\bfN{\mathbf N}
\let\xra\xrightarrow
\nc\ZZ{\mathbb{Z}}
\nc\calC{\mathcal C}
\nc\calF{\mathcal F}
\nc\calX{\mathcal X}
\nc\calY{\mathcal Y}
\nc\QCoh{\operatorname{QCoh}}
\nc\IndCoh{\operatorname{IndCoh}}
\nc\Maps{\operatorname{Maps}}
\nc\Dmod{D-\operatorname{mod}}
\newcommand\Hecke{\operatorname{Hecke}}
\nc{\calD}{\mathcal D}
\nc\bfO{\mathbf O}
\nc\bfF{\mathbf F}
\nc\GG{\mathbb G}
\nc\calK{\mathcal K}
\nc{\calG}{\mathcal G}
\nc\RHom{\operatorname{RHom}}
\nc\Res{\operatorname{Res}}
\nc\Av{\operatorname{Av}}
\nc\grs{\mathfrak s}
\nc{\tilX}{\widetilde X}
\nc\calB{\mathcal B}
\nc\calS{\mathcal S}
\nc\calT{\mathcal T}
\nc\calZ{\mathcal Z}
\nc\LS{\operatorname{LocSys}}
\nc\bfL{\on{\mathbf L}}
\newcommand*\circled[1]
\newcommand{\raisemath}[1]{\mathpalette{\raisem@th{#1}}}
\newcommand{\raisem@th}[3]{\raisebox{#1}{$#2#3$}}
\nc{\binlim}[2][]{\def\@tempa{#1}\@ifnextchar^{\@binlim{#2}}{\@binlim{#2}^{}}}
\def\@binlim#1^#2{\mathbin{\@ifempty{#2}{\mathop{#1}}{\mathop{#1}\@xp\displaylimits\@tempa^{#2}}}}
\nc\cX{{\mathcal X}}
\nc\Gm{{\mathbb G_m}}
\renc\Hecke{\mathit{\CH\kern-.2ex ecke}}
\nc\Fq{\mathbb F_q}
\nc\bGO{{\bG_\bO}}
\nc\opp{{\on{op}}}
\nc\tbx{\binlim{\widetilde\boxtimes{}}}
\nc\chkbx{\binlim{\check\boxtimes}}
\nc\phitau{\varphi\tau}
\newenvironment{i-ii-iii}{%
\begin{enumerate}%
}%
{\end{enumerate}}
\nc\ceil[1]{\lceil#1\rceil}  \nc\floor[1]{\lfloor#1\rfloor}
\nc\Lie{\on{Lie}}
 \let\arXiv\arxiv
\nc\kap{\kappa}
\nc\gra{\mathfrak a}
\nc\gl{\mathfrak{gl}}
\nc\sTr{\operatorname{sTr}}
\nc\hatG{\widehat{G}}
\nc\calL{\mathcal L}
\nc\Whit{\operatorname{Whit}}
\nc\KL{\operatorname{KL}}
\nc\mto{\mapsto }
\nc\en{\enspace }
\numberwithin{equation}{section}
\newtheorem*{rep@theorem}{\rep@title}
\newcommand{\newreptheorem}[2]{%
\newenvironment{rep#1}[1]{%
 \def\rep@title{#2 \ref{##1}}%
 \begin{rep@theorem}}%
 {\end{rep@theorem}}}
 \newcommand{\ncmd}{\newcommand*}
\ncmd{\DMO}{\DeclareMathOperator}
\ncmd{\ncmdd}[2]{\ncmd{#1}{{#2}}}
\ncmd{\DefOps}[1]{\def\OPERATOR@NAME##1{\DeclareMathOperator{##1}{\expandafter\@gobble\string##1}}
    \def\OPERATOR@LIST##1{\ifcat\noexpand\relax\noexpand##1\OPERATOR@NAME##1\expandafter\OPERATOR@LIST\fi}
    \OPERATOR@LIST#1.}
\ncmd{\DefRm}[1]{\def\OPERATOR@NAME##1{\ncmd{##1}{\mathrm{\expandafter\@gobble\string##1}}}
    \def\OPERATOR@LIST##1{\ifcat\noexpand\relax\noexpand##1\OPERATOR@NAME##1\expandafter\OPERATOR@LIST\fi}
    \OPERATOR@LIST#1.}
\nc\negquad{\mkern-18mu}
\nc\lhs{\quad&\negquad}
\ncmd{\phtr}[2]{\lefteqn{#1{\phantom{#2}}}#2}
\def\Alphabet{ABCDEFGHIJKLMNOPQRSTUVWXYZ}
\def\newalph#1#2{\begingroup
    \def\procL@tt@r##1{%
        \@xp\gdef\csname#1\endcsname{#2}}%
    \proc@lph@bet\endgroup}
\def\proc@lph@bet{\@xp\prlist@\Alphabet\relax}
\def\prlist@#1{\ifx#1\relax\else\procL@tt@r{#1}\@xp\prlist@\fi}
\ncmd{\SmSub}[2][]{_\bgroup #2\smsub@{#1}}
\def\smsub@#1{\@ifnextchar_{#1\@smsb}{\egroup}}
\def\@smsb_#1{#1\smsub@{}}
\ncmd{\SmSup}[2][]{^\bgroup #2\smsup@{#1}}
\def\smsup@#1{\@ifnextchar^{#1\@smsp}{\@ifnextchar'{\prime\@xp\smsup@\@xp{\@xp}\@gobble}{\egroup}}}
\def\@smsp^#1{#1\smsup@{}}
\def\@binlim#1_#2{\mathbin{\@ifempty{#2}{\mathop{#1}}{\mathop{#1}\@xp\displaylimits\@tempa_{#2}}}}
\theoremstyle{defn}
\newtheorem{ex}[thm]{Example}
\theoremstyle{remark}
   \let\D\cD
\ncmd{\Fp}{{\FF_p}}
\DMO{\cHom}{\text{\textrm{\itshape{\cH}\kern-.2ex{}om}}}
\DMO{\cEnd}{\text{\textrm{\itshape{\cE}\kern-.2ex{}nd}}}    \DMO{\cExt}{\text{\textrm{\itshape{\cE}\kern-.2ex{}xt}}}
\ncmd{\sff}{\mathsf f}
\let\Im\undefined   \let\det\undefined
\ncmd{\young}[1]{\vcenter{\begin{Young}#1\crcr\end{Young}}}
\ncmd{\RGam}{\text{\upshape R}\Gamma}
\DMO{\chr}{char}
\ncmd{\angs}[1]{\langle#1\rangle}
\ncmd{\hmod}{\text{\upshape-mod}}
\ncmd{\cxym}[1]{\ensuremath{\vcenter{\xymatrix{#1}}}}
 \let\arXiv\arxiv
\title{Untwisted Gaiotto equivalence}
\date{}
\author[R.Travkin]{Roman Travkin}
\address{Skolkovo Institute of Science and Technology, Moscow, Russia}
\email{roman.travkin2012@gmail.com}
\author[R.Yang]{Ruotao Yang}
\address{Skolkovo Institute of Science and Technology, Moscow, Russia}
\email{yruotao@gmail.com}
\thanks{\textit{2010 Mathematics Subject Classification}: 14F10, 14D24, 14M15, 17B20.}
\thanks{\textit{Keywords}: D-modules, Affine Grassmannian, Supergroups}
\begin{document}

\begin{abstract}
This is a successive paper of \cite{[BFGT]}. We prove an equivalence between the category of finite-dimensional representations of degenerate supergroup $\underline{\GL}(M|N)$ and the category of $(\GL_M(\bfO) \ltimes U_{M, N}(\bfF), \chi_{M, N})$-equivariant D-modules on $\Gr_{N}$. We also prove that we can realize the category of finite-dimensional representations of degenerate supergroup $\underline{\GL}(M|N)$ as a category of D-modules on the mirabolic subgroup $\Mir_L(\bfF)$ with certain equivariant conditions for any $L$ bigger than $N$ and $M$.
\end{abstract}
\maketitle


\setcounter{tocdepth}{2}

\section{Introduction}
\subsection{Notations}
We denote by $\bfF=\mathbb{C}(\!(t)\!)$ the field of Laurent series and by $\bfO=\mathbb{C}[\![t]\!]$ the ring of formal power series. Given a group scheme $G$, we denote by $G(\bfF)$ the loop group of $G$ such that $\mathbb{C}$-points of $G(\bfF)$ are given by $\mathbb{C}(\!(t)\!)$-points of $G$, and denote by $G(\bfO)$ the arc group of $G$ such that $\mathbb{C}$-points of $G(\bfO)$ are given by $\mathbb{C}[\![t]\!]$-points of $G$. Set $\Gr_G:= G(\bfF)/G(\bfO)$ the affine Grassmannian of $G$.
It is known that $\Gr_G$ is a formally smooth ind-scheme.

Assume $G= \GL_N$. In this case, $\Gr_N:= \Gr_{\GL_N}$ admits a left action of $\GL_N(\bfF)$ by multiplication. In particular, any subgroup $H$ of $\GL_N(\bfF)$ acts on $\Gr_N$, and we can consider the corresponding derived equivariant category. Denote by $D^{H}(\Gr_N)$ the DG-category of left $H$-equivariant D-modules on $\Gr_N$.

\subsection{Reminder on mirabolic Satake equivalence}
\label{reminder}
Let $\text{Sym}^\bullet (\mathfrak{gl}_N[-2])$ {denote} the symmetric DG-algebra generated by $\mathfrak{gl}_N:= \text{Lie}(\GL_N)$, such that the generator is placed in degree 2 and the differential is 0. A famous result (\textit{derived Satake equivalence}) of \cite{[BF]} says that there is an equivalence between the bounded derived category of locally compact $\GL_N(\bfO)$-equivariant D-modules on $\Gr_N$ and the derived category of perfect $\GL_N$-equivariant modules over $\text{Sym}^\bullet (\mathfrak{gl}_N[-2])$. Namely,
\begin{equation*}\label{der Sat}
    D^{\GL_N(\bfO),\locc}(\Gr_N)\simeq D_{\perf}^{\GL_N}(\text{Sym}^\bullet (\mathfrak{gl}_N[-2])).
\end{equation*}
Here, \textit{locally compact} means compact when regarded as a plain D-module on $\Gr_N$.

In \cite{[BFGT]} and \cite{[FGT]}, the authors considered the mirabolic version of the above equivalence. On the D-module side in the mirabolic case, instead of considering $\GL_N(\bfO)$-equivariant D-modules on the affine Grassmannian $\Gr_N$, we need to consider $\GL_N(\bfO)$-equivariant D-modules on the mirabolic affine Grassmannian $\Gr_N\times \bfF^N$. Here, the action of $\GL_{N}(\bfO)$ is given by the diagonal action on $\Gr_N\times \bfF^N$. 

The definition of the mirabolic Satake category $D^{\GL_N(\bfO)}(\Gr_N\times \bfF^N)$ needs to be taken care of since any compact object (also, locally compact object) of the mirabolic Satake category is supported on an infinite-dimensional space. Indeed, any $\GL_N(\bfO)$-orbit in $\Gr_N\times (\bfF^N-0)$ is infinite-dimensional. 

The authors of \cite{[BFGT]} defined three different versions of the mirabolic Satake category: $D^{\GL_{N}(\bfO),\locc}_{!}(\Gr_N\times \bfF^N)$, $D^{\GL_{N}(\bfO),\locc}_{*}(\Gr_N\times \bfF^N)$, and $D^{\GL_{N}(\bfO),\locc}_{!*}(\Gr_N\times \bfF^N)$, according to different renormalizations. They are equipped with monoidal structures $\textasteriskcentered$, $\circledast$ and fusion product, respectively. Roughly speaking, $\textasteriskcentered$ is given by the convolution product and $\circledast$ is a combination of the convolution product on $\Gr_N$ and the tensor product on $\bfF^N$.  For more details of the definitions, see Section \ref{sec 3.1}.

In order to introduce the coherent side of the mirabolic Satake equivalence, we should introduce a certain Lie supergroup. 

Set $\mathbb{C}^{N|N}$ {to be} the super vector space with even vector space $\mathbb{C}^N$ and odd vector space $\mathbb{C}^N$. We denote by $\gl(N|N)$ the Lie superalgebra of endormorphisms of $\mathbb{C}^{N|N}$. One can define a degenerate version of the (super) Lie bracket on $\gl(N|N)$ where the supercommutator of an even element with any element is the same as in $\gl(N|N)$, but the supercommutator of any two odd elements is zero. The resulting Lie superalgebra is denoted by $\underline{\gl}(N|N)$, and the corresponding Lie supergroup  ({with the even part isomorphic to $\GL_N\x\GL_N$}) is denoted  by $\underline{\GL}(N|N)$.

It is proved  in \cite{[BFGT]} that $D_{!*}^{\GL_N(\bfO),\locc}(\Gr_N\times \bfF^N)$ is equivalent to $\Rep^\fin(\underline{\GL}(N|N))$, the bounded derived category of finite-dimensional representations of $\underline{\GL}(N|N)$.

This equivalence is $t$-exact with respect to the natural $t$-structure of $\Rep^\fin(\underline{\GL}(N|N))$ with the heart $\Rep^\fin(\underline{\GL}(N|N))^\heartsuit$ and the perverse $t$-structure of $D_{!*}^{\GL_N(\bfO),\locc}(\Gr_N\times \bfF^N)$. Furthermore, it is an equivalence of monoidal categories with respect to the fusion product on $D_{!*}^{\GL_N(\bfO),\locc}(\Gr_N\times \bfF^N)$ and the tensor product on $\Rep^\fin(\underline{\GL}(N|N))$.

Similar to the case of the classical Satake category, the category of finite-dimensional modules over the degenerate Lie supergroup admits a Koszul dual realization $D_\perf^{\GL_N\times \GL_N}(\mathfrak{B}_{N|N})$, where $\mathfrak{B}_{N|N}$ is a DG-algebra on an affine space with an action of $\GL_N\times \GL_N$. If we forget the degrees, $\mathfrak{B}_{N|N}$ is isomorphic to functions on $\mathfrak{gl}_N\times \mathfrak{gl}_N$. By imposing different gradings, there are three different versions of the symmetric algebras $\mathfrak{B}_{N|N}^{2,0}, \mathfrak{B}_{N|N}^{1,1}, \mathfrak{B}_{N|N}^{0,2}$. 

In Section \ref{convolution}, we will recall two monoidal structures $\overset{A}{\star}$ and $\overset{B}{\star}$ of $D_\perf^{\GL_N\times \GL_N}(\mathfrak{B}_{N|N})$ which are defined in \cite[Section 3.2]{[BFGT]}.

The following Theorem is proved in \cite{[BFGT]}.
\begin{thm}\label{bgft1}
There are monoidal equivalences of DG-categories,
\begin{equation}\label{NN}
\begin{split}
        (D^{\GL_N(\bfO),\locc}_{!}(\Gr_N\times \bfF^N), \circledast)\simeq (D_\perf^{\GL_N\times \GL_N}(\mathfrak{B}^{2,0}_{N|N}), \overset{A}{\star}),\\
        (D^{\GL_N(\bfO),\locc}_{*}(\Gr_N\times \bfF^N), \textasteriskcentered)\simeq (D_\perf^{\GL_N\times \GL_N}(\mathfrak{B}^{0,2}_{N|N}), \overset{B}{\star}).
\end{split}
\end{equation}
which commute with left and right convolutions with $\Perv_{\GL_N(\bO)}(\Gr_N)\simeq \Rep^\fin(\GL_N)^\heartsuit$.
\end{thm}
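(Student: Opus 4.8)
The plan is to bootstrap from the non-mirabolic derived Satake equivalence of \cite{[BF]} recalled above, together with the heart-level mirabolic Satake equivalence of \cite{[FGT]}, to pass from abelian categories to DG categories by a formality argument in the spirit of \cite{[BF]}, and then to read off the monoidal structures by a fusion computation on a small set of generators. As a first step I would reduce both equivalences in \eqref{NN} to $D_\perf^{\GL_N}(\Sym^\bullet(\mathfrak{gl}_N[-2]))$-linear equivalences. On the geometric side the derived Satake category $D^{\GL_N(\bfO),\locc}(\Gr_N)\simeq D_\perf^{\GL_N}(\Sym^\bullet(\mathfrak{gl}_N[-2]))$ acts on $D^{\GL_N(\bfO),\locc}_{!}(\Gr_N\times\bfF^N)$ and on $D^{\GL_N(\bfO),\locc}_{*}(\Gr_N\times\bfF^N)$ by convolution on the $\Gr_N$-factor (on the left, and on the right via the second presentation of $\Gr_N$); on the coherent side it acts on $D_\perf^{\GL_N\times\GL_N}(\mathfrak{B}_{N|N}^{?,?})$ through whichever copy of $\mathfrak{gl}_N$ sits in cohomological degree $2$, which plays the role of the Satake parameter. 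It then suffices to construct a derived-Satake-linear equivalence, for the asserted compatibility with left and right convolution by $\Perv_{\GL_N(\bfO)}(\Gr_N)\simeq\Rep^\fin(\GL_N)^\heartsuit$ follows by restricting the module structures to the hearts, where it is the factorwise content of \cite{[BF]}. Granted \cite{[BF]}, all the remaining content is localized in the extra $\bfF^N$-direction versus the extra copy of $\mathfrak{gl}_N$.

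Next I would exhibit a common finite set of generators. On the D-module side, $D^{\GL_N(\bfO),\locc}_{!}(\Gr_N\times\bfF^N)$ (resp.\ the $*$-version) is generated, as a module category over derived Satake, by its monoidal unit together with finitely many standard (resp.\ costandard) perverse objects not supported on $\Gr_N\times\{0\}$ --- under \cite{[FGT]} these correspond to the vector representation of $\underline{\GL}(N|N)$ and its dual, which tensor-generate $\Rep^\fin(\underline{\GL}(N|N))$ over the Satake subcategory. On the coherent side $D_\perf^{\GL_N\times\GL_N}(\mathfrak{B}_{N|N}^{2,0})$ is generated over derived Satake by the free module twisted by the standard and costandard representations of the other $\GL_N$-factor. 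Realizing both categories as modules over the $\GL_N\times\GL_N$-equivariant DG endomorphism algebra of the direct sum of a common such set of generators, the problem reduces to producing a $\GL_N\times\GL_N$-equivariant, derived-Satake-linear quasi-isomorphism between the two resulting DG algebras.

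The main obstacle is exactly this last quasi-isomorphism: one must identify the DG $\Ext$-algebra of the geometric generators and prove it is \emph{formal}, with cohomology $\mathfrak{B}_{N|N}$ carrying its $\GL_N\times\GL_N$-action. This needs the orbit stratification of $\Gr_N\times\bfF^N$ and the geometry of orbit closures, the equivariant (Borel--Moore, resp.\ compactly supported) cohomology of strata together with the stalks and costalks of the relevant $\IC$-sheaves, and a purity/weight argument forcing formality; and all of it has to be run in the renormalized, locally-compact setting, since every orbit not lying over $0\in\bfF^N$ is infinite-dimensional and only one of the two cohomology theories is finite on such an orbit. The genuinely delicate point is that the choice of renormalization --- the $!$-version paired with $\circledast$ versus the $*$-version paired with $\textasteriskcentered$ --- governs whether a $!$- or a $*$-extension is taken across the non-compact strata, and this is precisely what exchanges the grading $\mathfrak{B}_{N|N}^{2,0}$ with $\mathfrak{B}_{N|N}^{0,2}$ (the self-dual $!*$-normalization producing the remaining $\mathfrak{B}_{N|N}^{1,1}$). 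Tracking this degree shift through the infinite-dimensional cohomology, consistently with the derived-Satake linearity of the first step, is the subtle core of the argument.

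With a derived-Satake-linear equivalence of DG categories in hand, it remains to promote it to a monoidal equivalence. Since derived Satake is itself monoidal and already controlled by \cite{[BF]}, it suffices to match the monoidal units and to check compatibility on the non-Satake generators: one computes $\mathbb{C}^N\circledast\mathbb{C}^N$ (resp.\ $\mathbb{C}^N\textasteriskcentered\mathbb{C}^N$) on the D-module side and matches it with $\mathbb{C}^N\overset{A}{\star}\mathbb{C}^N$ (resp.\ $\mathbb{C}^N\overset{B}{\star}\mathbb{C}^N$) on the coherent side, a computation that lives inside the abelian categories and follows from \cite{[FGT]} together with the explicit descriptions of $\overset{A}{\star}$ and $\overset{B}{\star}$. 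Realizing the whole equivalence by a monadicity (Barr--Beck) argument over derived Satake would then transport the associativity and unitality constraints rather than requiring them to be checked by hand; the one hypothesis to verify there is conservativity and continuity of the functor forgetting the $\bfF^N$-direction (equivalently, setting the non-Satake copy of $\mathfrak{gl}_N$ to zero), which is, once more, where the infinite-dimensional geometry must be handled with care.
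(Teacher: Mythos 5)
Theorem \ref{bgft1} is not proved in this paper at all: it is imported verbatim from \cite{[BFGT]} (``The following Theorem is proved in \cite{[BFGT]}''), and the present text only recalls the definitions of $\circledast$, $\textasteriskcentered$, $\overset{A}{\star}$, $\overset{B}{\star}$ and then uses the statement as an input for the inductive proof of Theorem \ref{Gaiotto}. So there is no internal argument to compare your proposal against; the only fair comparison is with the strategy of \cite{[BFGT]} itself, which your sketch does resemble in outline (generation over the spherical Hecke/derived Satake subcategory, computation of an equivariant $\Ext$-algebra of the generating objects, formality via purity, and careful bookkeeping of the $!$- versus $*$-renormalizations on the infinite-dimensional orbits, which is exactly what distinguishes $\mathfrak{B}^{2,0}_{N|N}$ from $\mathfrak{B}^{0,2}_{N|N}$).

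As a proof, however, your text is a program rather than an argument: the three steps you yourself flag as the ``subtle core'' --- (i) identifying the renormalized equivariant $\Ext$-DG-algebra of the chosen generators with $\mathfrak{B}_{N|N}$ together with its $\GL_N\times\GL_N$-action, (ii) proving its formality in a setting where the strata over $\bfF^N\setminus 0$ are infinite-dimensional and only one of the two renormalizations behaves well, and (iii) upgrading the resulting equivalence to a monoidal one intertwining $\circledast$ with $\overset{A}{\star}$ and $\textasteriskcentered$ with $\overset{B}{\star}$ --- are stated as desiderata, not carried out. In particular the proposed Barr--Beck reduction ``over derived Satake'' is not obviously available: the forgetful functor you describe (setting the extra copy of $\mathfrak{gl}_N$, resp.\ the $\bfF^N$-direction, to zero) is not visibly monadic in the renormalized locally compact setting, and \cite{[BFGT]} does not argue this way; the monoidal compatibility there is built into the construction of the functor (via convolution/fusion on explicit generators) rather than transported by monadicity. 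If you intend this as a self-contained proof, these points would have to be supplied; if you intend it as a reduction to \cite{[BFGT]}, it would be cleaner to simply cite the theorem, as the paper does.
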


The above equivalences are also known as the untwisted Gaiotto conjecture in the case of $N=M$.

\subsection{Untwisted Gaiotto conjecture in the case $M=N-1$}
We denote by $\gl(N-1|N)$ the Lie superalgebra of endormorphisms of $\mathbb{C}^{N-1|N}$ and by $\underline{\gl}(N-1|N)$ the degenerate version of it. It is proved in \cite[Theorem 4.1.1]{[BFGT]} that there is an equivalence between the bounded derived category of finite-dimensional representations of  $\underline{\GL}(N-1|N)$ and the category of locally compact $\GL_{N-1}(\bfO)$-equivariant D-modules on $\Gr_N$. 

This equivalence is $t$-exact with respect to the naive $t$-structure of $\Rep^\fin(\underline{\GL}(N-1|N))$ with the heart $\Rep^\fin(\underline{\GL}(N-1|N))^\heartsuit$ and the perverse $t$-structure of $D^{\GL_{N-1}(\bfO),\locc}(\Gr_N)$. 

Note that $\Rep^\fin(\underline{\GL}(N-1|N))$ admits a left action of $\Rep^\fin(\GL_{N-1})$ and a right action of $\Rep^\fin(\GL_N)$, $D^{\GL_{N-1}(\bfO),\locc}(\Gr_N)$ admits a left action of $D^{\GL_{N-1}(\bfO), \locc}(\Gr_{N-1})$ and a right action of $D^{\GL_{N}(\bfO), \locc}(\Gr_{N})$. It is also proved in \cite[Theorem 4.1.1]{[BFGT]} that the equivalence between $\Rep^\fin(\underline{\GL}(N-1|N))$ and $D^{\GL_{N-1}(\bfO),\locc}(\Gr_N)$ is compatible with respect to the above actions via classical Satake equivalence.

Similar to the case $N=M$, the category of finite-dimensional modules over the degenerate Lie supergroup admits a Koszul realization $D_\perf^{\GL_{N-1}\times \GL_{N}}(\mathfrak{B}_{N-1|N})$. So \cite[Theorem 4.1.1]{[BFGT]} can be stated as follows.

\begin{thm}\label{bfgt2}
There is an equivalence of categories,
\begin{equation}\label{N-1}
    D^{\GL_{N-1}(\bfO),\locc}(\Gr_N)\simeq D_\perf^{\GL_{N-1}\times \GL_N}(\mathfrak{B}_{N-1|N}^{2,0}).
\end{equation}
\end{thm}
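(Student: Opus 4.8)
The plan is to obtain \eqref{N-1} by combining the equivalence
\[
D^{\GL_{N-1}(\bfO),\locc}(\Gr_N)\;\simeq\;\Rep^\fin(\underline{\GL}(N-1|N))
\]
of \cite[Theorem 4.1.1]{[BFGT]}, recalled above and which I would take as an input, with a Koszul-duality identification $\Rep^\fin(\underline{\GL}(N-1|N))\simeq D_\perf^{\GL_{N-1}\times\GL_N}(\mathfrak{B}_{N-1|N}^{2,0})$. Only the second step requires an argument here, and I would organise it as follows.

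First I would unwind the degenerate supergroup. By construction the supercommutator of any two odd elements of $\underline{\gl}(N-1|N)$ vanishes, so the odd part
\[
\fn\;:=\;\Hom(\CC^{N-1},\CC^N)\,\oplus\,\Hom(\CC^N,\CC^{N-1})
\]
is a purely odd abelian ideal and $\underline{\GL}(N-1|N)\simeq(\GL_{N-1}\times\GL_N)\ltimes\exp(\fn)$. Hence a finite-dimensional representation of $\underline{\GL}(N-1|N)$ is exactly a finite-dimensional module over the exterior algebra $\Lambda^\bullet\fn$ carrying a compatible algebraic action of $\GL_{N-1}\times\GL_N$, the two $\Hom$-summands of $\fn$ being assigned prescribed internal degrees.

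Next I would apply linear Koszul duality: for a finite-dimensional graded vector space $V$ with zero differential, the derived category of $\Lambda^\bullet V$-modules is equivalent to that of $\Sym^\bullet(V^\vee)$-modules, the equivalence being functorial in $V$ — hence equivariant for $\GL(V)$, and so for $\GL_{N-1}\times\GL_N$ acting on $\fn$ — and matching finite-dimensional $\Lambda^\bullet\fn$-modules with perfect $\Sym^\bullet(\fn^\vee)$-modules. Applied with $V=\fn$, and reading off the grading, $\Sym^\bullet(\fn^\vee)$ becomes the ring of functions on the affine space $\Hom(\CC^{N-1},\CC^N)\oplus\Hom(\CC^N,\CC^{N-1})$ equipped with its natural $\GL_{N-1}\times\GL_N$-action and with the two $\Hom$-blocks placed in cohomological degrees $2$ and $0$ respectively; this is precisely $\mathfrak{B}_{N-1|N}^{2,0}$ in the notation of \cite{[BFGT]}, the superscript recording this choice of grading. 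Composing with \cite[Theorem 4.1.1]{[BFGT]} then gives \eqref{N-1}.

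The step needing care is not the Koszul formalism itself but the bookkeeping that makes all three descriptions agree on the nose: one must verify that the cohomological shift and the assignment of the two odd blocks to degrees $2$ and $0$ (as opposed to $1,1$ or $0,2$) are the ones for which the resulting equivalence is compatible with the natural $t$-structures — the perverse one on $D^{\GL_{N-1}(\bfO),\locc}(\Gr_N)$ and the naive one on $\Rep^\fin(\underline{\GL}(N-1|N))$ — and with the left $D^{\GL_{N-1}(\bfO),\locc}(\Gr_{N-1})$- and right $D^{\GL_N(\bfO),\locc}(\Gr_N)$-module structures recalled above (matched via classical geometric Satake). If one instead wanted an argument not invoking \cite[Theorem 4.1.1]{[BFGT]}, the difficulty would migrate entirely to the geometric side: one would construct the comparison functor by convolution with a suitable kernel relating $\Gr_N$ to the mirabolic geometry behind \thmref{bgft1}, analyse the $\GL_{N-1}(\bfO)$-orbit stratification of $\Gr_N$ together with the associated IC-sheaves, and reconcile the renormalisation that the $\locc$-condition encodes on the two sides — which is the genuine technical core of \cite{[FGT],[BFGT]}.
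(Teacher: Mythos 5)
Your proposal matches the paper's treatment: there Theorem \ref{bfgt2} is simply a restatement of \cite[Theorem 4.1.1]{[BFGT]} (taken as an input, as you do) through the Koszul realization of $\Rep^\fin(\underline{\GL}(N-1|N))$ as $D_\perf^{\GL_{N-1}\times\GL_N}(\mathfrak{B}_{N-1|N})$, with the geometric content (colimit presentations via determinant twists and the mirabolic Satake equivalence) delegated to \cite{[BFGT]} exactly as you indicate. The only cosmetic difference is that the paper states the Koszul duality as landing in $\mathfrak{B}^{1,1}_{N-1|N}$ and then passes to $\mathfrak{B}^{2,0}_{N-1|N}$ by the non-monoidal, $\GL_{N-1}\times\GL_N$-equivariant regrading equivalence, which is precisely the degree bookkeeping you flag as the step needing care.
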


This equivalence is deduced in \cite{[BFGT]} from the mirabolic Satake equivalence recalled
in~\S\ref{reminder}. We can present $\Rep^\fin(\underline{\GL}(N-1|N))$ as a colimit of a full subcategory of $\Rep^\fin(\underline{\GL}(N|N))$ with the transition functor being tensoring with the determinant module. On the D-module side, $D^{\GL_{N-1}(\bfO),\locc}(\Gr_N)$ admits a colimit presentation given by $D^{{\GL}_{N}(\bfO),\locc}(\Gr_N\times (\bfO^N\setminus t\bfO^N))$ that is a full subcategory of $D^{\GL_{N}(\bfO),\locc}(\Gr_N\times \bfF^N)$. It is proved in {\em loc.cit.}\ that there exists an equivalence between these two full subcategories and the equivalence is compatible with taking colimits.

\subsection{Untwisted Gaiotto conjecture in general case}
In order to state the general untwisted Gaiotto conjecture, we need to introduce some notations.

Given a character 
$\chi: H'\longrightarrow \mathbb{A}^1 $
, we denote by $D^{H',\chi}(\Gr_N)$ the category of $(H', \chi^!(\exp))$-equivariant D-modules on $\Gr_N$. Here, $\exp$ is the exponential D-module on $\mathbb{A}^1$.

Assuming $M<N$, we denote by $P_{M,N}(\bfF)$ the parabolic subgroup of $\GL_N(\bfF)$ corresponding to the partition $(M+1, 1,1,...,1)$, and denote by $U_{M,N}(\bfF)$ the unipotent radical of $P_{M,N}(\bfF)$. 

There is a group morphism from $U_{M,N}(\bfF)$ to $\bfF$ which sends $(u_{i,j})\in U_{M,N}(\bfF)$ to $\sum_{i} u_{i,i+1}$. We compose this group morphism with taking residue
\begin{equation}
    \begin{split}
        \bfF&\longrightarrow \mathbb{C}\\
        \sum a_i t^i&\mapsto a_{-1}.
    \end{split}
\end{equation}
The resulting morphism is a character of $U_{M,N}(\bfF)$ denoted by $\chi_{M,N}$.

The conjugation action of $\GL_M(\bfO)$ on $U_{M,N}(\bfF)$ preserves $U_{M,N}(\bfF)$. Furthermore, the character $\chi_{M,N}$ is stable under the conjugation action. Hence, $\chi_{M,N}$ gives a character on 
\begin{equation}\label{1.3 GL}
{\GL}_{M}({\bfO}) \ltimes U_{M, N}(\bfF)=
\begin{pNiceArray}{wc{2.9em}ccwc{4em}|cccc}[last-col,code-for-last-col = \quad\,]
\Block{4-4}<\huge>{\GL_M(\bO)} & &&& 0 &*& \Cdots  &*  & \Block[l]{4-1}{\quad\, M} \\
\\
 & &   && \Vdots & \Vdots & &\Vdots &\\
 &&& 
 &0&*&\Cdots&* \\ 
\Block{4-4}<\huge>{0} &          &   && 1&*&\Cdots &* &\Block[l]{4-1}{\quad\, N-M} \\
 &          &   && 0 &\Ddots& \Ddots &\Vdots &  \\
&          &   && \Vdots &\Ddots&& * \\
&		&& & 0 & \Cdots  & 0 &1
\CodeAfter
%
%
\UnderBrace[shorten,yshift=0.5ex]{1-1}{8-4}{M}
\UnderBrace[shorten,yshift=0.5ex]{1-5}{8-8}{N-M}
\SubMatrix .{1-8}{4-8}{\}}[right-xshift=1em]
\SubMatrix .{5-8}{8-8}{\}}[right-xshift=1em]
\end{pNiceArray}  
\end{equation}

\vspace{1.6em}
%
%
%

Note that there is a forgetful functor from $D^{\GL_M(\bfO) \ltimes U_{M, N}(\bfF), \chi_{M, N}}\left(\Gr_{N}\right)$ to $D^{U_{M, N}(\bfF), \chi_{M, N}}\left(\Gr_{N}\right)$. We denote by $(D^{U_{M, N}(\bfF), \chi_{M, N}}\left(\Gr_{N}\right))^{\GL_M(\bfO),\locc}$ the full subcategory of $D^{\GL_M(\bfO) \ltimes U_{M, N}(\bfF), \chi_{M, N}}\left(\Gr_{N}\right)$ generated by the preimage of compact objects of $D^{U_{M, N}(\bfF), \chi_{M, N}}\left(\Gr_{N}\right)$.

In this paper, we will prove the following Gaiotto conjecture:
\begin{thm}\label{Gaiotto}
Assume that $N>M$. Then, the untwisted Gaiotto category $(D^{U_{M, N}(\bfF), \chi_{M, N}}\left(\Gr_{N}\right))^{\GL_M(\bfO),\locc}$ is equivalent to  the bounded derived category of finite-dimensional modules
over the degenerate supergroup $\underline{{\GL}}(M|N))$. In other words, there is an equivalence 
\begin{equation}
    (D^{U_{M, N}(\bfF), \chi_{M, N}}\left(\Gr_{N}\right))^{\GL_M(\bfO),\locc}\simeq D_\perf^{\GL_{M}\times \GL_N}(\mathfrak{B}_{M|N}^{2,0}).
\end{equation}
\end{thm}

We will also establish an equivalence about the category of compact objects of $D^{\GL_M(\bfO) \ltimes U_{M, N}(\bfF), \chi_{M, N}}\left(\Gr_{N}\right)$. It corresponds to a full subcategory of $\Rep^\fin(\underline{\GL}(M|N))$ with a nilpotent support condition, see Theorem \ref{thm 4.4.1}.

We should mention that the untwisted Gaiotto conjecture is a particular case of the Periods—$L$-functions duality conjectures of D.~Ben-Zvi, Y.~Sakellaridis and A.~Venkatesh, cf., \cite[Conjecture 7.5.1]{[BZSV]}.

\subsection{Strategy of the proof}
Fix $N$, the proof of Theorem \ref{Gaiotto} proceeds by descending induction in $M$. In~\S\ref{convolution}
and~\S\ref{conv cons}, we will recall the monoidal structures of $D^{\GL_M(\bfO),\locc}(\Gr_M\times \bfF^M)$ and of the category of modules over $\underline{\GL}(M|M)$. Furthermore, we will see that these categories act on $(D^{ U_{M,N}(\bfF),\chi_{M,N}}(\Gr_N))^{\GL_M(\bfO),\locc}$ and the category of modules over $\underline{\GL}(M|N)$ respectively.

Hence, we can divide the proof of Theorem \ref{Gaiotto} into three steps. 

The first step: we present $\Rep^\fin(\underline{\GL}(M-1|N))$ as a colimit of subcategories. Namely, we show that $\Rep^\fin(\underline{\GL}(M-1|N))$ can be obtained from $\Rep^\fin(\underline{\GL}(M-1|M))$ and $\Rep^\fin(\underline{\GL}(M|N))$ by taking tensor product and colimits. It is proved in Proposition \ref{prop 2.1}.

The second step: in~\S\ref{cons sid}, we present $(D^{U_{M-1, N}(\bfF), \chi_{M-1, N}}\left(\Gr_{{\GL}_{N}}\right))^{\GL_{M-1}(\bfO) \locc}$ as a colimit of subcategories. We prove an analog of Proposition \ref{prop 2.1} {on the} D-module side. Namely, we prove that there is an equivalence between the category $(D^{U_{M-1, N}(\bfF), \chi_{M-1, N}}\left(\Gr_{{\GL}_{N}}\right))^{\GL_{M-1}(\bfO) \locc}$ and the tensor product of $D^{\GL_{M-1}(\bfO),\locc}\left(\Gr_{{\GL}_{M}}\right)$ and $(D^{U_{M, N}(\bfF), \chi_{M, N}}\left(\Gr_{{\GL}_{N}}\right))^{\GL_M(\bfO), \locc}$ over the mirabolic Satake category of rank $M$. This step is the key point of the proof, and we will work it out using the Fourier transform. It is proved in Theorem \ref{claim1}.

In ~\S\ref{act com}, to finish the proof, we only need to compare two tensor products of categories. We only need to show that the equivalence~\cite[Theorem 4.1.1]{[BFGT]} $D^{\GL_{M-1}(\bfO),\locc}\left(\Gr_{{\GL}_{M}}\right)\simeq \Rep^\fin(\underline{\GL}(M-1|M))$ is compatible with the actions of the mirabolic Satake categories (Proposition \ref{act}). To prove this statement, we will use colimit presentations of the mirabolic Satake category of rank $M-1$ and the category of finite-dimensional modules over $\underline{\GL}(M-1|M-1)$. Then the proof can be obtained from the monoidal structure of the equivalence of~Theorem~\ref{bgft1}.

Using a similar method, we can prove Theorem \ref{thm 4.4.1}. 

\subsection{Symmetric definition of $D^{\GL_M(\bfO) \ltimes U_{M, N}(\bfF), \chi_{M, N}}\left(\Gr_{N}\right)$}
In the statement of Theorem \ref{Gaiotto} and \ref{thm 4.4.1}, we need to require $M<N$: otherwise, the subgroup $U_{M,N}(\bfF)\subset \GL_N(\bfF)$ makes no sense.

However, the supergroup $\underline{\GL}(M|N)$ is well-defined for any $N $ and $M$. Hence, we hope to find a good definition of the corresponding category on the D-module side which does not need to require $M<N$.

We can fix this problem by choosing an integer $L$ which is strictly bigger than $N$ and $M$, and define a category of D-modules on the mirabolic subgroup $\Mir_L(\bfF)$ of $\GL_L(\bfF)$ with certain equivariant conditions. To be more precise, we can define $C_{M,N,L}$ to be the category of D-modules on $\Mir_L(K)$ that are  left $(\GL_M(\bfO)\ltimes U_{M,L}, \chi_{M,L})$-equivariant and right $ (\GL_N(\bfO)\ltimes U_{N,L}, \chi_{N,L})$-equivariant.

In~\S\ref{5}, we prove that the resulting category $C_{M,N,L}$ is independent of the choice of $L$ and is equivalent to the category of $D^{\GL_M(\bfO) \ltimes U_{M, N}(\bfF), \chi_{M, N}}\left(\Gr_{{N}}\right)$ if $M<N$ and $D^{\GL_N(\bfO)}(\Gr_N\times \bfF^N)$ if $M=N$. 

\subsection{Iwahori version of the conjecture}
Let $I_N$ be the Iwahori subgroup of $\GL_N(\bfO)$ {and $\Fl_N \simeq \GL_N(\bfF)/I_N$ the affine flag variety for $\GL_N$}.. In \cite{[B]}, the author proved that there is an equivalence between $D^{I_N}(\Fl_N)$ and the derived category of $\GL_N$-equivariant D-modules on the DG version Steinberg variety. Motivated by this result, \cite{[BFGT]} proposed the mirabolic version of Bezrukanikov's equivalence, and \cite{[BFT]} proposed the orthosymplectic version of it.

Mimicking \cite[Conjecture 1.4.1]{[BFGT]}, we propose the following conjecture.

For $M< N$, let $V_1$ be {an} $M$-dimensional vector space and $V_2$ be {an} $N$-dimensional vector space. Let $\Ffl_i, i=1,2$ denote the variety of complete flags in $V_i$. 

We let $\St_{\Mir, M,N}$ be the subvariety of $\Hom(V_1, V_2)[1]\times \Hom(V_2, V_1)[1]\times \Ffl_1\times \Ffl_2$, such that $(A,B, F_1=(F_1^{(1)}\subset F_1^{(2)}...\subset F_1^{(M)}= V_1), F_2=(F_2^{(1)}\subset F_2^{(2)}...\subset F_2^{(N)}= V_2))$ belongs to $\St_{\Mir, M,N}$ if and only if $AB(F_2^{(i)})\subset F_2^{(i)}$, $i=1,2,...,N$, and $BA(F_1^{(j)})\subset F_1^{(j)}$, $j=1,2,...,M$.

Let $D^{I_M\times U_{M,N}(\bfF),\chi_{M,N}}(\Fl_N)$ be the derived category of $(I_M\times U_{M,N}(\bfF),\chi_{M,N})$-equivariant  sheaves on $\Fl_N$. We propose the following conjecture.
\begin{conj}
There is an equivalence of categories,
\begin{equation}
    D^{I_M\times U_{M,N}(\bfF),\chi_{M,N}}(\Fl_N)\simeq D^{\GL_M\times \GL_N}(\St_{\Mir, M,N}).
\end{equation}
\end{conj}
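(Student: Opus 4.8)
\smallskip
\noindent\textbf{Towards a proof of the conjecture.} We expect it to follow, granting \cite[Conjecture 1.4.1]{[BFGT]}, from an affine-flag analogue of the proof of \thmref{Gaiotto}: a descending induction on $M$ with $N$ fixed. The role played in the spherical story by the mirabolic Satake equivalence $D_{!*}^{\GL_N(\bfO),\locc}(\Gr_N\times\bfF^N)\simeq\Rep^\fin(\underline{\GL}(N|N))$ of \cite{[BFGT]} is now played by the mirabolic analogue of Bezrukavnikov's equivalence, i.e.\ by \cite[Conjecture 1.4.1]{[BFGT]}, which supplies the base case $M=N$ --- and, by the colimit argument below, an Iwahori counterpart of \thmref{bfgt2} for $M=N-1$ --- after which one descends from $M$ to $M-1$. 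The dictionary with \S\ref{reminder}--\S\ref{act com} reads $\Gr_N\rightsquigarrow\Fl_N$, $\GL_M(\bfO)\rightsquigarrow I_M$, derived Satake $\rightsquigarrow$ Bezrukavnikov's equivalence \cite{[B]}, the coherent side $\Sym^\bullet(\mathfrak{gl}_M[-2])$ of derived Satake $\rightsquigarrow$ the Bezrukavnikov-side category $D^{\GL_M}(\tg\times_\grg\tg)$, and the Gaiotto coherent side $D_\perf^{\GL_M\times\GL_N}(\mathfrak{B}_{M|N})\rightsquigarrow D^{\GL_M\times\GL_N}(\St_{\Mir,M,N})$. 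This base case cannot be extracted from the already-proven \thmref{Gaiotto}, since the averaging functor $D^{I_N}(\Fl_N\times\bfF^N)\to D^{\GL_N(\bfO)}(\Gr_N\times\bfF^N)$ is far from an equivalence: genuinely new input at rank $N$ is needed.

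The inductive step $M\to M-1$ rests on three ingredients, parallel to \S\ref{convolution}, \S\ref{cons sid} and \S\ref{act com}. First, a rank-$M$ monoidal category acting on both sides: the renormalized rank-$M$ Iwahori-mirabolic category $D^{I_M}(\Fl_M\times\bfF^M)$ of \cite[Conjecture 1.4.1]{[BFGT]} should act on $D^{I_{M-1}}(\Fl_M)$ and on $D^{I_M\times U_{M,N}(\bfF),\chi_{M,N}}(\Fl_N)$ --- by convolution on the rank-$M$ affine flag variety combined with the action on the $\bfF^M$-data, refining the action of the ordinary affine Hecke category $D^{I_M}(\Fl_M)$ --- and correspondingly $D^{\GL_M\times\GL_M}(\St_{\Mir,M,M})$ should act on $D^{\GL_{M-1}\times\GL_M}(\St_{\Mir,M-1,M})$ and on $D^{\GL_M\times\GL_N}(\St_{\Mir,M,N})$, refining $D^{\GL_M}(\tg\times_\grg\tg)$. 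Second, the tensor-product presentations, i.e.\ an Iwahori analogue of \thmref{claim1},
\begin{multline*}
D^{I_{M-1}\times U_{M-1,N}(\bfF),\chi_{M-1,N}}(\Fl_N)\\
\simeq D^{I_{M-1}}(\Fl_M)\otimes_{D^{I_M}(\Fl_M\times\bfF^M)}D^{I_M\times U_{M,N}(\bfF),\chi_{M,N}}(\Fl_N),
\end{multline*}
to be proved by a Fourier transform along the abelian loop-vector quotient $U_{M-1,N}(\bfF)/U_{M,N}(\bfF)$ --- which matches the factor $\bfF^M$ in the rank-$M$ mirabolic category --- together with the matching coherent presentation
\begin{multline*}
D^{\GL_{M-1}\times\GL_N}(\St_{\Mir,M-1,N})\\
\simeq D^{\GL_{M-1}\times\GL_M}(\St_{\Mir,M-1,M})\otimes_{D^{\GL_M\times\GL_M}(\St_{\Mir,M,M})}D^{\GL_M\times\GL_N}(\St_{\Mir,M,N}),
\end{multline*}
arising from a correspondence that decreases $\dim V_1$ from $M$ to $M-1$ composed with a determinant twist --- the geometric shadow of the construction of $\Rep^\fin(\underline{\GL}(M-1|N))$ in \propref{prop 2.1}. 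Third, the compatibility: the $M=N-1$-level equivalence $D^{I_{M-1}}(\Fl_M)\simeq D^{\GL_{M-1}\times\GL_M}(\St_{\Mir,M-1,M})$ must intertwine the two module structures --- an analogue of \propref{act}, which should itself reduce to the monoidality of the mirabolic Bezrukavnikov equivalence just as \propref{act} reduces to \thmref{bgft1}. Feeding the three ingredients into the two presentations identifies the two sides of the conjecture and closes the induction.

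The principal obstacle is the base case \cite[Conjecture 1.4.1]{[BFGT]} itself, which is open and, as explained, unavoidable. Granting it, the main remaining difficulty is the coherent side of the inductive step: one must understand precisely the correspondence relating the mirabolic Steinberg varieties $\St_{\Mir,M-1,N}$ and $\St_{\Mir,M,N}$, and the determinant twist implementing the transition functor, which is geometrically more involved than the graded-affine-space picture $\mathfrak{B}^{2,0}_{M|N}\rightsquigarrow\mathfrak{B}^{2,0}_{M-1|N}$ underlying \propref{prop 2.1}. The accompanying Fourier-transform step on $\Fl_N$ and the compatibility statement should then go through by the methods of \thmref{claim1} and \propref{act}; the one substantial extra chore is to pin down the correct renormalizations of $D^{I_M}(\Fl_M\times\bfF^M)$ and of $D^{I_M\times U_{M,N}(\bfF),\chi_{M,N}}(\Fl_N)$ --- the $U_{M,N}(\bfF)$-orbits on $\Fl_N$ being infinite-dimensional --- so that all the categories in sight are genuinely equivalent and the relative tensor products are formed in the intended DG sense. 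The same scheme, applied to the compact rather than the locally-compact subcategories, should yield an Iwahori counterpart of \thmref{thm 4.4.1}, with a nilpotent-support condition on the Steinberg side.
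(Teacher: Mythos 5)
The statement you are addressing is stated in the paper as a \emph{conjecture}: the paper offers no proof of it at all (it is proposed by analogy with \cite[Conjecture 1.4.1]{[BFGT]}), so there is no argument of the authors to compare yours against. Judged on its own terms, what you have written is a strategy outline rather than a proof, and you say so yourself: the base case rests on \cite[Conjecture 1.4.1]{[BFGT]}, which is open, so even granting every other step the argument is conditional and does not establish the equivalence $D^{I_M\times U_{M,N}(\bfF),\chi_{M,N}}(\Fl_N)\simeq D^{\GL_M\times \GL_N}(\St_{\Mir, M,N})$.

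Beyond the conditional base case, each of your three inductive ingredients is asserted, not proved, and none of them follows formally from the spherical case treated in the paper. The coherent-side decomposition is the most serious gap: the proof of \propref{prop 2.1} depends on very concrete features of $\mathfrak{B}^{2,0}_{M|N}$ --- the idempotent $\tilde c$, the Kleisli category $D(T_{M,N})$ identified with $D^{\GL_{M-1}\times\GL_N,\geq 0}$ via explicit generators $V^{\lambda}\otimes\mathfrak{B}^{2,0}\otimes V^{\mu}$ and the Hom-computation of \lemref{lemma 2.1}, and the invertible twist $\eta$ --- and you have no analogue of any of this for $D^{\GL_M\times\GL_N}(\St_{\Mir,M,N})$, where the geometry of $\tg\times_{\grg}\tg$-type fibre products replaces a graded affine space; ``a correspondence decreasing $\dim V_1$ composed with a determinant twist'' is a guess, not a construction. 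Likewise the D-module-side decomposition in the paper (\thmref{claim1}) uses \propref{lemma 3.1}, the colimit over $\xi^{-1}_{M,N,*}$ of $\Mir^t_{M+1}(\bfO)$-invariants, and the renormalization analysis of \corref{prop 3.3.6}; replacing $\GL_M(\bfO)$ by $I_M$ changes the orbit structure and the locally-compact/renormalized bookkeeping, and you flag this only as a ``chore'' without indicating how it is done. So the proposal is a reasonable roadmap faithful to the architecture of \thmref{Gaiotto}'s proof, but it leaves the conjecture exactly as open as the paper does.
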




\section{Coherent side}
\subsection{Degenerate supergroup}
{Let us recall} the  degenerate Lie superalgebra $\underline{\mathfrak{gl}}({M|N})$ {from~ \cite{[BFGT]}}. By definition, {its underlying super vector space} is the same as the underlying super vector space of ${\mathfrak{gl}}({M|N})$, and the supercommutators of even elements with any element in $\underline{\mathfrak{gl}}({M|N})$ and ${\mathfrak{gl}}({M|N})$ are same. But the {supercommutator} of any two odd elements in $\underline{\mathfrak{gl}}({M|N})$ {is}  set to be zero. 

The bounded derived category of finite-dimensional representations of the corresponding supergroup $\underline{\GL}({M|N})$ {(with the even part isomorphic to $\GL_M \x \GL_N$)} is denoted by $\Rep^\fin(\underline{\GL}({M|N}))$.
\subsection{Symmetric algebra realization}\label{sect 2.2}
Assume that $V_1$ is {an} $M$-dimensional vector space with {a basis} $e_1, e_2,...,e_M$ and $V_2$ is {an} $N$-dimensional vector space. Set the symmetric algebra in the category of complexes $\mathfrak{B}_{M|N}^{1,1}:=  \operatorname{Sym}^{\bullet} (\Hom(V_1, V_2)[-1])\otimes \operatorname{Sym}^{\bullet}(\Hom(V_2. V_1)[-1])= \operatorname{Sym}^{\bullet} (\underline{\mathfrak{gl}}(M|N)_{\bar{1}}[-1])$ $=\Sym^\blt(\Pi(\Hom(V_1,V_2)\oplus\Hom(V_2,V_1))[-1])$ where $\Pi$ stands for change of parity.

Let us denote by $\Rep^{\mathsf{loc.fin}}(\underline{{\GL}}(M|N))$ the ind-completion of $\Rep^\fin(\underline{{\GL}}(M|N))$. More precisely, the category $\Rep^{\mathsf{loc.fin}}(\underline{{\GL}}(M|N))$ is the unbounded derived category of locally finite representations of $\underline{\mathfrak{gl}}(M|N)$, and any object of $\Rep^{\mathsf{loc.fin}}(\underline{{\GL}}(M|N))$ is a filtered colimit of objects in $\Rep^\fin(\underline{{\GL}}(M|N))$.

By the Koszul duality, we have a monoidal equivalence
\begin{equation}
    \Rep^\fin(\underline{{\GL}}(M|N))\simeq D_{\perf}^{\GL_{V_1}\times \GL_{V_2}}(\mathfrak{B}_{M|N}^{1,1}).
\end{equation}
Here $D_{\perf}^{\GL_{V_1}\times \GL_{V_2}}(\mathfrak{B}_{M|N}^{1,1})$ denotes the derived category of {$(\GL_{V_1}\times \GL_{V_2})$-equivariant} perfect modules over $\mathfrak{B}_{M|N}^{1,1}$. 

\begin{rem}
The above equivalence induces an equivalence of their ind-completions
\begin{equation}
    \Rep^{\mathsf{loc.fin}}(\underline{{\GL}}(M|N))\simeq {{D}}^{\GL_{V_1}\times \GL_{V_2}}(\mathfrak{B}_{M|N}^{1,1})\simeq \Ind(D_{\perf}^{\GL_{V_1}\times \GL_{V_2}}(\mathfrak{B}_{M|N}^{1,1})).
\end{equation}

Here $D^{\GL_{V_1}\times \GL_{V_2}}(\mathfrak{B}_{M|N}^{1,1})$ denotes the derived category of $\GL_{V_1}\times \GL_{V_2}$-invariants modules over $\mathfrak{B}_{M|N}^{1,1}$. 
\end{rem}

In addition to $\mathfrak{B}_{M|N}^{1,1}$, we can consider the symmetric algebras:
\[\mathfrak{B}_{M|N}^{2,0}:=\operatorname{Sym}^{\bullet} (\Hom(V_1, V_2)[-2])\otimes \operatorname{Sym}^{\bullet}(\Hom(V_2. V_1)),\]
\[\mathfrak{B}_{M|N}^{0,2}:=\operatorname{Sym}^{\bullet} (\Hom(V_1, V_2))\otimes \operatorname{Sym}^{\bullet}(\Hom(V_2. V_1)[-2]).\]

The categories of $(\GL_M\x\GL_N)$-equivariant modules over $\mathfrak{B}_{M|N}^{2,0}$, $\mathfrak{B}_{M|N}^{1,1}$, and $\mathfrak{B}_{M|N}^{0,2}$ are equivalent via the functors of changing cohomological degrees{, see \cite[Section 3.6]{[BFGT]}}.  


\begin{rem}\label{rem 2.2.2}  {Up to the degree shearing operation (cf. \cite[Appendix A]{[AG]})},  the categories ${{D}}^{\GL_{V_1}\times \GL_{V_2}}(\mathfrak{B}_{M|N}^{i,2-i})$ are equivalent to $\IndCoh^{\GL_{V_1}\times \GL_{V_2}}(\Hom(V_1, V_2)\times \Hom(V_2, V_1))$ via identifying $\Hom(V_1, V_2)$ with $\Hom(V_2,V_1)^*$ and $\Hom(V_2, V_1)$ with $\Hom(V_1,V_2)^*$.
\end{rem}

Sometimes, in order to indicate the vector spaces used in the definition, we will denote $\mathfrak{B}_{M|N}^{i,2-i}$ by $\mathfrak{B}_{V_1|V_2}^{i,2-i}$. 
\begin{rem} {One  can show that for any other choice $V_1',V_2'$  of vector spaces of the same dimensions $M,N$, there is a {\em canonical} equivalence \[ D_\perf^{\GL_{V_1}\times \GL_{V_2}}(\mathfrak{B}_{V_1|V_2}^{i,2-i})\simeq D_\perf^{\GL_{V_1'}\times \GL_{V'_2}}(\mathfrak{B}_{V_1'|V_2'}^{i,2-i})\quad  ((i,2-i)\in\{(0,2),(1,1),(2,0)\}).\]
(Due to the $(\GL(V_1)\x\GL(V_2))$-equivariance, it is independent of the choice of isomorphisms $V_1\simeq V_1'$, $V_2\simeq V_2'$.)} 
\end{rem}

\subsection{Nilpotent support} We introduce a nilpotent cone $\nilp_{V_1|V_2}$ of $\Hom(V_1, V_2)\times \Hom(V_2, V_1)$. We denote by 
\begin{equation}\label{q}
    q: \Hom(V_1, V_2)\times \Hom(V_2, V_1)\longrightarrow \End(V_2)
\end{equation}
 the morphism sending $A_{1,2}, A_{2,1}\in \Hom(V_1, V_2)\times \Hom(V_2, V_1)$ to $A_{1,2}A_{2,1}\in \End(V_2)$.

We denote by $\nilp_{V_1|V_2}$ the subscheme of $\Hom(V_1, V_2)\times \Hom(V_2, V_1)$ such that the image $A_{1,2}A_{2,1}$ is nilpotent (equivalently, $A_{2,1}A_{1,2}\in \End(V_1)$ is nilpotent).

We denote by $D_{\perf}^{\GL_{V_1}\times \GL_{V_2}}(\mathfrak{B}_{M|N}^{i,2-i})_{\nilp}$ (resp. ${{D}}^{\GL_{V_1}\times \GL_{V_2}}(\mathfrak{B}_{M|N}^{i,2-i})_{\nilp}$) the full subcategory of $D_{\perf}^{\GL_{V_1}\times \GL_{V_2}}(\mathfrak{B}_{M|N}^{i,2-i})$ (resp. ${{D}}^{\GL_{V_1}\times \GL_{V_2}}(\mathfrak{B}_{M|N}^{i,2-i})$) spanned by the objects whose {cohomologies are} set-theoretically supported on $\nilp_{V_1|V_2}$. 

{
\begin{rem}
   According to \cite[Corollary 9.1.7]{[AG]}, the set-theoretical nilpotent support condition is Koszul dual to  the nilpotent singular support condition of ind-coherent sheaves. In particular, the category ${{D}}^{\GL_{V_1}\times \GL_{V_2}}(\mathfrak{B}_{M|N}^{i,2-i})_{\nilp}$ corresponds to the full subcategory of $\IndCoh((\Hom(V_1, V_2)\oplus \Hom(V_1, V_2)[-1])/(\GL_{V_1}\times \GL_{V_2}))$ with the nilpotent singular support condition.
\end{rem}
}

\subsection{Convolution}\label{convolution}
In this section, let us recall the definitions of the convolution products given in \cite[Section 3.4]{[BFGT]}.

Let us denote by $\mathcal{Q}_A$ (resp. $\mathcal{Q}_B$) the closed subvariety  in
$\mathcal{H}:=\Hom(W_1, W_2)\times$ $\Hom(W_2, W_1)\times \Hom(W_2, W_3)\times \Hom(W_3, W_2)\times \Hom(W_3, W_1)\times \Hom(W_1, W_3)$, such that, for any point
\[(A_{1,2}, A_{2,1}, A_{2,3}, A_{3,2}, A_{3,1}, A_{1,3})\in \mathcal{Q}_A,\] we have
\begin{equation}
    A_{1,3}=A_{2,3}A_{1,2}, A_{2,1}=A_{3,1}A_{2,3}, \textnormal{ and } A_{3,2}=A_{1,2}A_{3,1}.
\end{equation}
 \begin{equation}
 \textnormal{(resp. }\   A_{3,1}=A_{2,1}A_{3,2}, A_{1,2}=A_{3,3}A_{1,3}, \textnormal{ and } A_{2,3}=A_{1,3}A_{2,1}. \textnormal{)}
\end{equation}

Consider the following diagram  
\begin{center}
    \xymatrix @C = -2em{
&\mathcal{Q}_A\ar[ld]_{p_{1,2}}\ar[rd]^{p_{2,3}}\ar[dd]^{p_{1,3}}&\\
\Hom(W_1, W_2)\times \Hom(W_2, W_1)&&\Hom(W_2, W_3)\times \Hom(W_3, W_2)\\
&\Hom(W_1, W_3)\times \Hom(W_3, W_1)&
}
\end{center}

In \cite{[BFGT]}, the authors {consider} convolution products
\begin{equation}
    \begin{split}
        \overset{A}{\star}: {{D}}^{\GL_{W_1}\times \GL_{W_2}}(\mathfrak{B}_{W_1|W_2}^{2,0})\times {{D}}^{\GL_{W_2}\times \GL_{W_3}}(\mathfrak{B}_{W_2|W_3}^{2,0})\longrightarrow {{D}}^{\GL_{W_1}\times \GL_{W_3}}(\mathfrak{B}_{W_1|W_3}^{2,0})\\
    M_{1,2}\overset{A}{\star}M_{2,3}:= p_{1,3,*}(p_{1,2}^*M_{1,2}\underset{\mathbb{C}[\mathcal{H}]^\bullet_{2,0}}{\otimes}\mathbb{C}[\mathcal{Q}_A]_{2,0}\underset{\mathbb{C}[\mathcal{H}]^\bullet_{2,0}}{\otimes} p_{2,3}^* M_{2,3})^{\GL_{W_2}},\\
        \overset{B}{\star}: {{D}}^{\GL_{W_1}\times \GL_{W_2}}(\mathfrak{B}_{W_1|W_2}^{0,2})\times {{D}}^{\GL_{W_2}\times \GL_{W_3}}(\mathfrak{B}_{W_2|W_3}^{0,2})\longrightarrow {{D}}^{\GL_{W_1}\times \GL_{W_3}}(\mathfrak{B}_{W_1|W_3}^{0,2})\\
    M_{1,2}\overset{B}{\star}M_{2,3}:= p_{1,3,*}(p_{1,2}^*M_{1,2}\underset{\mathbb{C}[\mathcal{H}]^\bullet_{0,2}}{\otimes}\mathbb{C}[\mathcal{Q}_B]_{0,2}\underset{\mathbb{C}[\mathcal{H}]^\bullet_{0,2}}{\otimes} p_{2,3}^* M_{2,3})^{\GL_{W_2}}.
    \end{split}
\end{equation}

Here, \\
$\begin{aligned}
\mathbb{C}[\mathcal{H}]_{2,0}^{\bullet}=\operatorname{Sym}\left(\operatorname{Hom}\left(W_1, W_2\right)\right) \otimes \operatorname{Sym}\left(\operatorname{Hom}\left(W_2, W_1\right){[-2]}\right) \otimes \operatorname{Sym}\left(\operatorname{Hom}\left(W_3, W_2\right)\right) \\
\otimes \operatorname{Sym}\left(\operatorname{Hom}\left(W_2, W_3\right)[-2]\right) \otimes \operatorname{Sym}\left(\operatorname{Hom}\left(W_1, W_3\right)\right) \otimes \operatorname{Sym}\left(\operatorname{Hom}\left(W_3, W_1\right)[-2]\right),
\end{aligned}
$
\\
$\begin{aligned}
\mathbb{C}[\mathcal{H}]_{0,2}^{\bullet}=\operatorname{Sym}\left(\operatorname{Hom}\left(W_1, W_2\right){[-2]} \right) \otimes \operatorname{Sym}\left(\operatorname{Hom}\left(W_2, W_1\right)\right) \otimes \operatorname{Sym}\left(\operatorname{Hom}\left(W_3, W_2\right){[-2]} \right) \\
\otimes \operatorname{Sym}\left(\operatorname{Hom}\left(W_2, W_3\right)\right) \otimes \operatorname{Sym}\left(\operatorname{Hom}\left(W_1, W_3\right){[-2]}\right) \otimes \operatorname{Sym}\left(\operatorname{Hom}\left(W_3, W_1\right)\right),
\end{aligned}
$
\[\mathbb{C}[\mathcal{Q}_A]_{2,0}:=\operatorname{Sym}\left(\operatorname{Hom}\left(W_1, W_2\right)\right) \otimes\operatorname{Sym}\left(\operatorname{Hom}\left(W_2, W_3\right)[-2]\right) \otimes\operatorname{Sym}\left(\operatorname{Hom}\left(W_3, W_1\right)[-2]\right),\]
\[\mathbb{C}[\mathcal{Q}_B]_{0,2}:= \operatorname{Sym}\left(\operatorname{Hom}\left(W_2, W_1\right)\right) \otimes\operatorname{Sym}\left(\operatorname{Hom}\left(W_3, W_2\right){[-2]}\right)\otimes \operatorname{Sym}\left(\operatorname{Hom}\left(W_1, W_3\right){[-2]}\right).\]

The operation $\overset{A}{\star}$ (resp. $\overset{B}{\star}$) gives the category $ {{D}}^{\GL_{W_1}\times \GL_{W_2}}(\mathfrak{B}_{W_1|W_2}^{2,0})$ (resp. $ {{D}}^{\GL_{W_1}\times \GL_{W_2}}(\mathfrak{B}_{W_1|W_2}^{0,2})$) a monoidal category structure if $W_1$ and $W_2$ are of the same dimensions, and the resulting monoidal category can act on ${{D}}^{\GL_{W_3}\times \GL_{W_1}}(\mathfrak{B}_{W_3|W_1}^{2,0})$ (resp. ${{D}}^{\GL_{W_3}\times \GL_{W_1}}(\mathfrak{B}_{W_3|W_1}^{0,2})$) from right, and act on ${{D}}^{\GL_{W_2}\times \GL_{W_3}}(\mathfrak{B}_{W_2|W_3}^{2,0})$ (resp. ${{D}}^{\GL_{W_2}\times \GL_{W_3}}(\mathfrak{B}_{W_2|W_3}^{0,2})$) from left for any finite-dimensional vector space $W_3$.

\begin{rem}\label{2.4.1}
By construction, the convolution products induce the {following} functors
\begin{equation}\label{eq 2.7}
    \begin{split}
        \overset{A}{\star}: {{D}}^{\GL_{W_1}\times \GL_{W_2}}(\mathfrak{B}_{W_1|W_2}^{2,0})\times {{D}}^{\GL_{W_2}\times \GL_{W_3}}(\mathfrak{B}_{W_2|W_3}^{2,0})_{\nilp}\longrightarrow {{D}}^{\GL_{W_1}\times \GL_{W_3}}(\mathfrak{B}_{W_1|W_3}^{2,0})_{\nilp},\\
        \overset{A}{\star}:{{D}}^{\GL_{W_1}\times \GL_{W_2}}(\mathfrak{B}_{W_1|W_2}^{2,0})_{\nilp}\times {{D}}^{\GL_{W_2}\times \GL_{W_3}}(\mathfrak{B}_{W_2|W_3}^{2,0})\longrightarrow {{D}}^{\GL_{W_1}\times \GL_{W_3}}(\mathfrak{B}_{W_1|W_3}^{2,0})_{\nilp},\\
         \overset{B}{\star}:{{D}}^{\GL_{W_1}\times \GL_{W_2}}(\mathfrak{B}_{W_1|W_2}^{0,2})\times {{D}}^{\GL_{W_2}\times \GL_{W_3}}(\mathfrak{B}_{W_2|W_3}^{0,2})_{\nilp}\longrightarrow {{D}}^{\GL_{W_1}\times \GL_{W_3}}(\mathfrak{B}_{W_1|W_3}^{0,2})_{\nilp},\\
        \overset{B}{\star}: {{D}}^{\GL_{W_1}\times \GL_{W_2}}(\mathfrak{B}_{W_1|W_2}^{0,2})_{\nilp}\times {{D}}^{\GL_{W_2}\times \GL_{W_3}}(\mathfrak{B}_{W_2|W_3}^{0,2})\longrightarrow {{D}}^{\GL_{W_1}\times \GL_{W_3}}(\mathfrak{B}_{W_1|W_3}^{0,2})_{\nilp}.
    \end{split}
\end{equation}
\end{rem}
\begin{rem}
The convolution products $\overset{A}{\star}$ and $\overset{B}{\star}$ preserve compact objects. 
\end{rem}

\subsection{Endofunctor of ${{D}}^{\GL_M\times \GL_N}(\mathfrak{B}_{M|N})$}

From now to the end of this section, we assume that $\dim W_1= \dim W_2=M$ and $\dim W_3=N$. Let us take a $\GL_{W_1}\times \GL_{W_2}\times \GL_{W_3}$-invariant {subvariety} $\mathcal{Q}_A^0\subset \mathcal{Q}_A$ by requiring $A_{1,2}$ to be non-invertible. 

Consider the following diagram
\begin{center}
    \xymatrix @C = -2em{
&\mathcal{Q}^0_A\ar[ld]_{p^0_{1,2}}\ar[rd]^{p^0_{2,3}}\ar[dd]^{p^0_{1,3}}&\\
\Hom(W_1, W_2)\times \Hom(W_2, W_1)&&\Hom(W_2, W_3)\times \Hom(W_3, W_2)\\
&\Hom(W_1, W_3)\times \Hom(W_3, W_1).&
}
\end{center}

We denote by $T_{M,N}$ the endofunctor
\begin{equation}
    \begin{split}
        T_{M,N}: {{D}}^{\GL_{W_2}\times \GL_{W_3}}(\mathfrak{B}_{W_2|W_3}^{2,0})\longrightarrow& {{D}}^{\GL_{W_1}\times \GL_{W_3}}(\mathfrak{B}_{W_1|W_3}^{2,0})\\
        M_{2,3}\mapsto& p_{1,3,*}^0(p_{2,3}^{0,*} (M_{2,3}))^{\GL_{W_2}}.
    \end{split}
\end{equation}

By definition, the endofunctor $T_{M,N}$ is isomorphic to the endofunctor which is given by left {convolution} with $\tilde{c}\in {{D}}^{\GL_{W_1}\times \GL_{W_2}}(\mathfrak{B}_{W_1|W_2}^{2,0})$, i.e.,
\begin{equation}
\begin{split}
    T_{M,N}: {{D}}^{\GL_{W_2}\times \GL_{W_3}}(\mathfrak{B}_{W_2|W_3}^{2,0})&\longrightarrow {{D}}^{\GL_{W_1}\times \GL_{W_3}}(\mathfrak{B}_{W_1|W_3}^{2,0})\\ 
    M&\mapsto \tilde{c}\overset{A}{\star} M.
\end{split}
\end{equation}
Here, $\tilde{c}$ is the structure sheaf on the locus such that $A_{1,2}$ is non-invertible.

According to Remark \ref{2.4.1}, the endofunctor $T_{M,N}$ induces an endofunctor of the subcategory of perfect complexes with nilpotent support condition,
\begin{equation}
     T_{M,N}: D_\perf^{\GL_{W_2}\times \GL_{W_3}}(\mathfrak{B}_{W_2|W_3}^{2,0})_{\nilp}\longrightarrow D_\perf^{\GL_{W_1}\times \GL_{W_3}}(\mathfrak{B}_{W_1|W_3}^{2,0})_{\nilp},
\end{equation}
and an endofunctor of its ind-completion
\begin{equation}
     T_{M,N}: {{D}}^{\GL_{W_2}\times \GL_{W_3}}(\mathfrak{B}_{W_2|W_3}^{2,0})_{\nilp}\longrightarrow {{D}}^{\GL_{W_1}\times \GL_{W_3}}(\mathfrak{B}_{W_1|W_3}^{2,0})_{\nilp}.
\end{equation}

By construction, $\tilde{c}\overset{A}{\star} \tilde{c}\simeq \tilde{c}$. That is to say, $\tilde{c}$ is an idempotent algebra object with respect to $\overset{A}{\star}$. In particular, the Kleisli category ${{D}}(T_{M,N})$ of ${{D}}^{\GL_M\times \GL_N}(\mathfrak{B}_{M|N}^{2,0})$ is the full subcategory of ${{D}}^{\GL_M\times \GL_N}(\mathfrak{B}_{M|N}^{2,0})$ spanned by the essential image of $T_{M,N}$. Since convolution commutes with colimits, $T_{M,N}$ preserves colimits. In particular, ${{D}}(T_{M,N})$ is cocomplete.

Using the method applied in \cite[Section 4.3]{[BFGT]}, we can prove that ${{D}}(T_{M,N})$ can also be regarded as a full cocomplete subcategory ${{D}}^{\GL_{M-1}\times \GL_N, \geq 0}(\mathfrak{B}_{M-1|N}^{2,0})$ of ${{D}}^{\GL_{M-1}\times \GL_N}(\mathfrak{B}_{M-1|N}^{2,0})$ which is compactly generated by the objects $\{V^\lambda_{\GL_{M-1}}\otimes {\mathfrak{B}_{M-1|N}^{2,0}}\otimes V^\mu_{\GL_N}|\ \lambda \textnormal{ is a partition of {length}}\ M-1, \mu \textnormal{ is a signature of {length}}\ N\}$. Here, $V^\lambda_{\GL_{M-1}}$ and $V^\mu_{\GL_N}$ are the irreducible representations of $\GL_{M-1}$ and $\GL_N$.

To be self-contained, let us briefly recall the proof. Note that ${{D}}(T_{M,N})$ and ${{D}}^{\GL_{M-1}\times \GL_N, \geq 0}(\mathfrak{B}_{M-1|N}^{2,0})$ are compactly generated. It is sufficient to prove that the full subcategories of their compact objects are equivalent. 

\begin{lem}\label{lemma 2.1}
For $M\leq N$, {there exists an equivalence of categories}
\begin{equation}\label{eq 2.12}
   {\Psi:} D_{\perf}(T_{M,N}){\overset{\sim}{\longrightarrow}}\  D_{\perf}^{\GL_{M-1}\times \GL_N, \geq 0}(\mathfrak{B}_{M-1|N}^{2,0}){,}
\end{equation}
{which is compatible with the natural forgetful functors to $D_{\perf}^{\GL_{M-1}\times \GL_N}(\mathfrak{B}_{M-1|N}^{2,0})$.}
Here $D_\perf(T_{M,N})$ denotes the Kleisli subcategory of $D_\perf^{\GL_M\times \GL_N}(\mathfrak{B}_{M|N}^{2,0})$ associated with $T_{M,N}$.
\end{lem}
\begin{proof}
Denote by $\bar{W}_1$ the subspace of $W_1$ spanned by $e_1, e_2,..., e_{M-1}$. Let us consider the functor given by taking the convolution with $c_{\bar{1},1}$ which is the structure sheaf on $\Hom(W_{1}, \bar{W}_1)\times \Hom(\bar{W}_1, W_1)$.
\begin{equation}\label{2.6}
    \begin{split}
        D_{\perf}^{\GL_{W_1}\times \GL_{W_3}}(\mathfrak{B}_{W_1|W_3}^{2,0})&\longrightarrow D_{\perf}^{\GL_{\Bar{W}_1}\times \GL_{W_3}}(\mathfrak{B}_{\bar{W}_1|W_3}^{2,0})
   \\ M_{1,3}&\mapsto {c}_{\bar{1},1}\overset{A}{\star} M_{1,3}.
    \end{split}
\end{equation}

We denote by 
\begin{equation}\label{2.7}
    \Psi:D_{\perf}(T_{M,N})\longrightarrow D_{\perf}^{\GL_{\Bar{W}_1}\times \GL_{W_3}}(\mathfrak{B}_{\bar{W}_1|W_3}^{2,0})
\end{equation}
 the restriction of \eqref{2.6} to $D_{\perf}(T_{M,N})$. We claim that $\Psi$ is an equivalence between $D_\perf(T_{M,N})$ and $D_\perf^{\GL_{M-1}\times \GL_N, \geq 0}(\mathfrak{B}_{M-1|N}^{2,0})$.

For essential surjectivity, we only need to notice that the image of $V_{\GL_M}^\lambda\otimes \mathfrak{B}_{W_1|W_3}^{2,0}\otimes V_{\GL_N}^\mu$ is isomorphic to $V_{\GL_{M-1}}^{\bar{\lambda}}\otimes \mathfrak{B}_{\bar{W}_1|W_3}^{2,0}\otimes V_{\GL_M}^\mu$. Here, ${\bar{\lambda}}=(\lambda_1\geq \lambda_2\geq...\geq \lambda_{M-1})$ if $\lambda= (\lambda_1\geq \lambda_2\geq...\geq \lambda_{M-1}\geq 0)$, and $V_{\GL_{M-1}}^{\bar{\lambda}}=0$ otherwise. 

The full faithfullness follows from the isomorphism
\begin{equation}
    \begin{split}
        &\Hom_{D_\perf(T_{M,N})}(V_{\GL_M}^\lambda\otimes \mathfrak{B}_{W_2|W_3}^{2,0}\otimes V_{\GL_N}^\mu, V_{\GL_M}^{\lambda'}\otimes \mathfrak{B}_{W_2|W_3}^{2,0}\otimes V_{\GL_N}^{\mu'})\\
        \simeq &(V_{\GL_{W_2}}^{*,\lambda}\otimes V_{\GL_{W_1}}^{\lambda'}\otimes (\mathbb{C}[\mathcal{Q}_A^0])\otimes V_{\GL_{W_3}}^{\mu'}\otimes V_{\GL_{W_3}}^{*,\mu})^{\GL_{W_1}\times \GL_{{W}_{{2}}}\times \GL_{W_3}}\\
          \simeq &(V_{\GL_{M-1}}^{*,\bar{\lambda}}\otimes V_{\GL_{M-1}}^{\bar{\lambda}'}\otimes \mathfrak{B}_{M-1|N}^{2,0}\otimes V_{\GL_M}^{\mu'}\otimes V_{\GL_M}^{*,\mu})^{\GL_{\bar{W}_1}\times \GL_{W_2}}.
    \end{split}
\end{equation}

The last isomorphism follows from \cite[Lemma 3.13]{[BFGT]}
\end{proof}

As a corollary, we have
\begin{cor}\label{cor 2.5.2}
For $M\leq N$, {we have}
\begin{equation}\label{2.8}
 D_{\perf}(T_{M,N})_{\nilp}\simeq D_{\perf}^{\GL_{M-1}\times \GL_N, \geq 0}(\mathfrak{B}_{M-1|N}^{2,0})_{\nilp}{.}
\end{equation}
Here $D_{\perf}(T_{M,N})_{\nilp}$ denotes the Kleisli category of $D_\perf^{\GL_M\times \GL_N}(\mathfrak{B}_{M|N}^{2,0})_\nilp$ associated with $T_{M,N}$, and $D_{\perf}^{\GL_{M-1}\times \GL_N, \geq 0}(\mathfrak{B}_{M-1|N}^{2,0})_{\nilp}$ denotes the full subcategory of $D_{\perf}^{\GL_{M-1}\times \GL_N, \geq 0}(\mathfrak{B}_{M-1|N}^{2,0})$ with nilpotent support condition.
\end{cor}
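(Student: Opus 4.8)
The plan is to deduce \eqref{2.8} from \lemref{lemma 2.1} by checking that the equivalence $\Psi$ of its proof, together with its inverse, preserves the nilpotent support condition; then the restriction of $\Psi$ to the corresponding full subcategories is the asserted equivalence. First I would clarify that the Kleisli category $D_\perf(T_{M,N})_\nilp$ of $D_\perf^{\GL_M\times\GL_N}(\mathfrak{B}_{M|N}^{2,0})_\nilp$ agrees with the full subcategory of $D_\perf(T_{M,N})$ on objects with cohomology supported on $\nilp_{W_1|W_3}$: by the first line of \remref{2.4.1} (convolving an arbitrary object on the left with a nilpotently supported object) one has $T_{M,N}(M)=\tilde{c}\overset{A}{\star}M\in D_\perf^{\GL_{W_1}\times\GL_{W_3}}(\mathfrak{B}_{W_1|W_3}^{2,0})_\nilp$ whenever $M$ is nilpotently supported, while the idempotency $\tilde{c}\overset{A}{\star}\tilde{c}\simeq\tilde{c}$ is insensitive to support conditions; hence $T_{M,N}$ restricts to an idempotent endofunctor of $D_\perf^{\GL_M\times\GL_N}(\mathfrak{B}_{M|N}^{2,0})_\nilp$ whose essential image is exactly the class of $Y\in D_\perf(T_{M,N})$ with nilpotent support.

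The heart of the matter is that $\Psi$ — the restriction of $M_{1,3}\mapsto c_{\bar{1},1}\overset{A}{\star}M_{1,3}$ — is linear over $\mathbb{C}[\End(W_3)]$, which acts on the source and target through the map $q$ of \eqref{q}, i.e.\ through $A_{1,3}A_{3,1}$ and $A_{\bar{1},3}A_{3,\bar{1}}$ respectively. To see this I would run the convolution on the correspondence $\mathcal{Q}_A$ with $(\bar{W}_1,W_1,W_3)$ in the roles of $(W_1,W_2,W_3)$: its defining equations give $A_{\bar{1},3}=A_{1,3}A_{\bar{1},1}$ and $A_{3,1}=A_{\bar{1},1}A_{3,\bar{1}}$, hence
\[
A_{\bar{1},3}A_{3,\bar{1}}=A_{1,3}A_{\bar{1},1}A_{3,\bar{1}}=A_{1,3}A_{3,1}\in\End(W_3).
\]
Consequently all the ingredients of the convolution — the pullback $p_{1,2}^{*}$, tensoring with $c_{\bar{1},1}$ and with $\mathbb{C}[\mathcal{Q}_A]_{2,0}$ over $\mathbb{C}[\mathcal{H}]_{2,0}^\bullet$, and the pushforward-with-invariants $p_{1,3,*}(-)^{\GL_{W_1}}$ — are $\mathbb{C}[\End(W_3)]$-linear, so $\Psi$ is too. (The linearity is in fact $\GL_{W_3}$-equivariant, so $\Psi$ is $\QCoh^{\GL_{W_3}}(\End W_3)$-linear, but that refinement will not be needed.)

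Finally, by the definition of $\nilp_{V_1|V_2}$ an object of $D_\perf^{\GL_{V_1}\times\GL_{V_2}}(\mathfrak{B}_{V_1|V_2}^{2,0})$ has nilpotent support precisely when its cohomology is set-theoretically supported over the nilpotent cone of $\End(W_3)$ through $q$ — here one uses that $A_{1,2}A_{2,1}$ is nilpotent if and only if $A_{2,1}A_{1,2}$ is, so the $\End(W_1)$-side imposes nothing extra. Being $\mathbb{C}[\End(W_3)]$-linear, the equivalence $\Psi$ and its quasi-inverse both preserve the class of objects supported over this closed subset of $\Spec\mathbb{C}[\End(W_3)]$; therefore $\Psi$ restricts to an equivalence $D_\perf(T_{M,N})_\nilp\simeq D_\perf^{\GL_{M-1}\times\GL_N,\geq0}(\mathfrak{B}_{M-1|N}^{2,0})_\nilp$, which is \eqref{2.8}. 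The one step deserving care is this last bookkeeping — confirming that the nilpotency condition is genuinely detected on the single factor $\End(W_3)$ and that the $\End(W_3)$-linearity of $\Psi$ holds on the nose rather than up to some correction — everything else being formal once \lemref{lemma 2.1} is available.
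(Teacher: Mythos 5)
Your proposal is correct, and it reaches the corollary by a route that differs from the paper's in its concluding mechanism, although both rest on the same structural input, namely that on the correspondence $\mathcal{Q}_A$ (with $(\bar{W}_1,W_1,W_3)$ in the roles of $(W_1,W_2,W_3)$) one has $A_{\bar{1},3}A_{3,\bar{1}}=A_{1,3}A_{3,1}$, i.e.\ that $\Psi$ is compatible with the map $q$ of \eqref{q}. The paper uses this compatibility only through the induced homomorphism $\Sym(\gl_N[-2])\to\mathfrak{B}^{2,0}_{M|N}$ and its $\GL_N$-invariants: it checks that $\Psi$ carries the generators $V^\lambda_{\GL_M}\otimes\mathcal{O}_{\nilp_{M|N}}\otimes V^\mu_{\GL_N}$, realized as $V^\lambda_{\GL_M}\otimes(\mathfrak{B}^{2,0}_{M|N}\otimes_{\Sym(\gl_N[-2])^{\GL_N}}\mathbb{C})\otimes V^\mu_{\GL_N}$, to the corresponding generators for $M-1$, and then invokes the fact that these objects generate $D_\perf^{\GL_{M-1}\times\GL_N,\geq 0}(\mathfrak{B}^{2,0}_{M-1|N})_\nilp$, full faithfulness being inherited from \lemref{lemma 2.1}. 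You instead upgrade the compatibility with $q$ to linearity of $\Psi$ in the $\End(W_3)$-direction and transfer the support condition wholesale, so you never need the generation statement for the nilpotent subcategory of the target; in exchange you verify (via \remref{2.4.1}) the identification of the Kleisli category $D_\perf(T_{M,N})_\nilp$ with the nilpotently supported objects of $D_\perf(T_{M,N})$, a point the paper leaves implicit. One place to tighten: in the equivariant category a non-invariant function on $\End(W_3)$ does not act by endomorphisms, so ``linear over $\mathbb{C}[\End(W_3)]$'' should be read either as the $\QCoh^{\GL_{W_3}}(\End(W_3))$-linearity you mention only in passing, or simply as linearity over the invariants $\Sym(\gl_N)^{\GL_N}$; since the nilpotent cone is the zero locus of the positive-degree invariants, the invariants already detect nilpotent support, and an equivalence linear over them preserves that condition in both directions (a self-map of a bounded complex inducing zero on all cohomology is nilpotent). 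This is precisely the linearity the paper exploits implicitly when it applies $-\otimes_{\Sym(\gl_N[-2])^{\GL_N}}\mathbb{C}$, so your argument is a uniform, generator-free variant of the paper's proof.
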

\begin{proof}
We only need to show the restriction of $\Psi$ of \eqref{2.7} to $D_\perf(T_{M,N})_\nilp$ is essential surjective. 


Note that pullback along $q$ in \eqref{q} gives rise to a homomorphism, 
\[\Sym(\gl_N[-2])\longrightarrow \mathfrak{B}_{M|N}^{2,0}\].

{U}nder the functor $\Psi$ {in \eqref{eq 2.12}}, $V_{\GL_M}^\lambda\otimes (\mathfrak{B}_{M|N}^{2,0}\otimes_{\Sym(\gl_N[-2])^{\GL_N}}\mathbb{C})\otimes V_{\GL_N}^\mu$ goes to $V_{\GL_{M-1}}^{\bar{\lambda}}\otimes (\mathfrak{B}_{M-1|N}^{2,0}\otimes_{\Sym(\gl_N[-2])^{\GL_N}}\mathbb{C})\otimes V_{\GL_N}^\mu$.

 We have \[\mathfrak{B}_{M|N}^{2,0}\otimes_{\Sym(\gl_N[-2])^{\GL_N}}\mathbb{C}\simeq \mathcal{O}_{\nilp_{M|N}},\]and
\[\mathfrak{B}_{M-1|N}^{2,0}\otimes_{\Sym(\gl_N[-2])^{\GL_N}}\mathbb{C}\simeq \mathcal{O}_{\nilp_{M-1|N}}.\]


In particular, the image of \eqref{2.8} contains the collection of objects $\{V^\lambda_{\GL_{M-1}}\otimes \mathcal{O}_{\nilp_{M-1|N}}\otimes V^\mu_{\GL_N}| \lambda \textnormal{ is a partition of {length}}\ M-1, \mu \textnormal{ is a signature of {length}}\ N\}$. Now the claim follows from the fact that the category $D_{\perf}^{\GL_{M-1}\times \GL_N, \geq 0}(\mathfrak{B}_{M-1|N}^{2,0})_{\nilp}$ is generated by $\{V^\lambda_{\GL_{M-1}}\otimes \mathcal{O}_{\nilp_{M-1|N}}\otimes V^\mu_{\GL_N}|\ \lambda \textnormal{ is a partition of}\ M-1, \mu \textnormal{ is a signature of}\ N\}$.
\end{proof}
Let ${{D}}(T_{M,N})_\nilp$ be the Kleisli subcategory of ${{D}}^{\GL_M\times \GL_N}(\mathfrak{B}_{M|N}^{2,0})$ associated with the idempotent monad $T_{M,N}$, and let ${{D}}^{\GL_{M-1}\times \GL_N, \geq 0}(\mathfrak{B}_{M-1|N}^{2,0})_{\nilp}$ be the ind-completion of $D_{\perf}^{\GL_{M-1}\times \GL_N, \geq 0}(\mathfrak{B}_{M-1|N}^{2,0})_{\nilp}$.
\begin{cor}\label{cor 2.5.3}
For $M\leq N$, we have
\begin{equation}
    {{D}}(T_{M,N})_\nilp\simeq {{D}}^{\GL_{M-1}\times \GL_N, \geq 0}(\mathfrak{B}_{M-1|N}^{2,0})_{\nilp}.
\end{equation}
\end{cor}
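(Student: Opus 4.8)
The plan is to derive Corollary~\ref{cor 2.5.3} from Corollary~\ref{cor 2.5.2} by applying the ind-completion functor to both sides; the only genuine work is to identify ${{D}}(T_{M,N})_\nilp$ with $\Ind\bigl(D_\perf(T_{M,N})_\nilp\bigr)$, i.e.\ to show that ${{D}}(T_{M,N})_\nilp$ is compactly generated with subcategory of compact objects $D_\perf(T_{M,N})_\nilp$.

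First I would record the formal properties of $T_{M,N}$ on the nilpotent-support category. Since $T_{M,N}\simeq\tilde{c}\overset{A}{\star}(-)$ and the convolution $\overset{A}{\star}$ commutes with colimits and preserves compact objects, $T_{M,N}$ is a colimit-preserving endofunctor of ${{D}}^{\GL_M\times\GL_N}(\mathfrak{B}_{M|N}^{2,0})_\nilp$ (it preserves the nilpotent-support condition by Remark~\ref{2.4.1}) whose restriction to $D_\perf^{\GL_M\times\GL_N}(\mathfrak{B}_{M|N}^{2,0})_\nilp$ again lands in $D_\perf^{\GL_M\times\GL_N}(\mathfrak{B}_{M|N}^{2,0})_\nilp$; moreover $\tilde{c}\overset{A}{\star}\tilde{c}\simeq\tilde{c}$ makes it an idempotent monad. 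Consequently ${{D}}(T_{M,N})_\nilp$, the essential image of $T_{M,N}$, is a full subcategory of ${{D}}^{\GL_M\times\GL_N}(\mathfrak{B}_{M|N}^{2,0})_\nilp$ closed under colimits, and the inclusion $\iota\colon{{D}}(T_{M,N})_\nilp\hookrightarrow{{D}}^{\GL_M\times\GL_N}(\mathfrak{B}_{M|N}^{2,0})_\nilp$ admits a colimit-preserving right adjoint, namely $T_{M,N}$ corestricted to its image. Hence $\iota$ preserves compact objects, and, since colimits in ${{D}}(T_{M,N})_\nilp$ agree with those computed in the ambient category, an object of ${{D}}(T_{M,N})_\nilp$ is compact there precisely when it is compact in the ambient category. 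Therefore the compact objects of ${{D}}(T_{M,N})_\nilp$ are exactly the objects of $D_\perf^{\GL_M\times\GL_N}(\mathfrak{B}_{M|N}^{2,0})_\nilp$ lying in the image of $T_{M,N}$, i.e.\ $D_\perf(T_{M,N})_\nilp$.

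Next I would check compact generation. Any $X\in{{D}}(T_{M,N})_\nilp$ is $T_{M,N}(Y)$ for some $Y\in{{D}}^{\GL_M\times\GL_N}(\mathfrak{B}_{M|N}^{2,0})_\nilp$; writing $Y$ as a filtered colimit $Y\simeq\colim_i Y_i$ with $Y_i\in D_\perf^{\GL_M\times\GL_N}(\mathfrak{B}_{M|N}^{2,0})_\nilp$ and using that $T_{M,N}$ commutes with colimits gives $X\simeq\colim_i T_{M,N}(Y_i)$ with $T_{M,N}(Y_i)\in D_\perf(T_{M,N})_\nilp$. So $D_\perf(T_{M,N})_\nilp$ generates ${{D}}(T_{M,N})_\nilp$ under colimits and consists of compact objects, whence ${{D}}(T_{M,N})_\nilp$ is compactly generated and ${{D}}(T_{M,N})_\nilp\simeq\Ind\bigl(D_\perf(T_{M,N})_\nilp\bigr)$. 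Applying $\Ind(-)$ to the equivalence $D_\perf(T_{M,N})_\nilp\simeq D_\perf^{\GL_{M-1}\times\GL_N,\geq 0}(\mathfrak{B}_{M-1|N}^{2,0})_\nilp$ of Corollary~\ref{cor 2.5.2} then yields ${{D}}(T_{M,N})_\nilp\simeq\Ind\bigl(D_\perf^{\GL_{M-1}\times\GL_N,\geq 0}(\mathfrak{B}_{M-1|N}^{2,0})_\nilp\bigr)$, and the right-hand side is, by the definition recalled just before the statement, exactly ${{D}}^{\GL_{M-1}\times\GL_N,\geq 0}(\mathfrak{B}_{M-1|N}^{2,0})_\nilp$. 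This is the asserted equivalence.

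The step I expect to be the main obstacle is the \emph{exact} identification of the compact objects of the Kleisli category with $D_\perf(T_{M,N})_\nilp$ — that is, ruling out a mismatch caused by idempotent completion. This is settled by observing that $D_\perf^{\GL_{M-1}\times\GL_N,\geq 0}(\mathfrak{B}_{M-1|N}^{2,0})_\nilp$, being the full subcategory of a category of perfect modules singled out by a set-theoretic support condition, is idempotent complete; hence so is $D_\perf(T_{M,N})_\nilp$ by Corollary~\ref{cor 2.5.2}, so that $\Ind$ of it recovers all of ${{D}}(T_{M,N})_\nilp$ rather than a proper subcategory. (Equivalently, one may invoke the general fact that the essential image of a colimit-preserving idempotent monad on a compactly generated category is compactly generated by the monad applied to a set of compact generators, which is what the middle two paragraphs spell out.)
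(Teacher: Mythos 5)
Your proposal is correct and is essentially the paper's own argument: the paper's proof of this corollary consists precisely of the assertion that ${{D}}(T_{M,N})_\nilp$ is the ind-completion of $D_{\perf}(T_{M,N})_{\nilp}$, after which one applies $\Ind$ to Corollary~\ref{cor 2.5.2} and uses that the right-hand side is by definition the ind-completion of $D_{\perf}^{\GL_{M-1}\times \GL_N, \geq 0}(\mathfrak{B}_{M-1|N}^{2,0})_{\nilp}$ --- your middle paragraphs just supply the standard compact-generation details behind that assertion. (One inessential slip: the corestriction of the idempotent $T_{M,N}$ is its \emph{left} adjoint to the inclusion if one takes the monad direction, as the paper does, rather than a right adjoint; but the direction of compactness your argument actually needs --- that ambient-compact objects in the essential image are compact there --- is justified independently by fullness and the agreement of colimits, so the proof stands.)
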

\begin{proof}
It follows from the fact that ${{D}}(T_{M,N})_\nilp$ is equivalent to the ind-completion of $c_{\bar{1},1}\star D_{\perf}^{\GL_M\times \GL_N}(\mathfrak{B}_{M|N}^{{2,0}})_\nilp\simeq D_{\perf}(T_{M,N})_{\nilp}$.
\end{proof}

\subsection{Tensor product of categories of representations}
All the categories that we consider are DG-categories\footnote{By DG-category, we mean a stable $(\infty,1)$-category enhanced over the bounded complexes of finite dimensional vector spaces; and by cocomplete stable DG-category, we mean a cocomplete stable $(\infty,1)$-category enhanced over the complexes of vector spaces.}.  Given a {(resp. cocomplete)} monoidal 
 category $\cA$ and a right {(resp. cocomplete)} $\cA$-module category $\cM_1$ and a left {(resp. cocomplete)} $\cA$-module category $\cM_2$. We let the relative tensor product $\cM_1\otimes_{\cA} \cM_2$ be the geometric realization of $\cM_1 \otimes \cA^n \otimes \cM_2$ {inside the category of DG-categories} {(resp. cocomplete DG-categories with continuous functors)}, 
\begin{equation*}
\xymatrix{
  \cdots\cM_1 \otimes \cA\otimes \cA \otimes \cM_2\ar@<-1ex>[r] \ar@<1ex>[r]\ar[r]  &\cM_1 \otimes \cA \otimes \cM_2 \ar@<-.5ex>[r] \ar@<.5ex>[r]& \cM_1 \otimes \cM_2.
}
\end{equation*}
See \cite[Section 4.4]{[L]}.

\begin{prop}\label{prop 2.1}
For any $M\leq N$, there is an equivalence induced by \eqref{eq 2.7}
\begin{equation}\label{2.10}
    D^{\GL_{M-1}\times \GL_M}(\mathfrak{B}_{M-1|M}^{2,0})\otimes_{D^{\GL_{M}\times \GL_M}(\mathfrak{B}_{M|M}^{2,0})} D^{\GL_{M}\times \GL_N}(\mathfrak{B}_{M|N}^{2,0})\simeq D^{\GL_{M-1}\times \GL_N}(\mathfrak{B}_{M-1|N}^{2,0}).
\end{equation}
Similar claims hold for ${{D}}_\perf$, $D_{\perf}$ with nilpotent support, and ${{D}}$ with nilpotent support.
\end{prop}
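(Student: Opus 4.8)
The plan is to construct the comparison functor out of the convolution product and to prove it is an equivalence by presenting the left tensor factor $D^{\GL_{M-1}\times\GL_M}(\mathfrak B_{M-1|M}^{2,0})$ as a colimit over the ``corner'' of $D^{\GL_M\times\GL_M}(\mathfrak B_{M|M}^{2,0})$ cut out by the idempotent $\tilde c$. Set $\cA:=D^{\GL_M\times\GL_M}(\mathfrak B_{M|M}^{2,0})$ with its monoidal structure $\overset{A}{\star}$, and $\cM_1:=D^{\GL_{M-1}\times\GL_M}(\mathfrak B_{M-1|M}^{2,0})$, $\cM_2:=D^{\GL_M\times\GL_N}(\mathfrak B_{M|N}^{2,0})$, so that $\cM_1$ is a right and $\cM_2$ a left $\cA$-module via $\overset{A}{\star}$, and the monad $T_{M,N}=\tilde c\overset{A}{\star}-$ on $\cM_2$ is the action of the idempotent object $\tilde c\in\cA$. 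Since $\overset{A}{\star}$ is associative, the assignment $(m_1,m_2)\mapsto m_1\overset{A}{\star}m_2$ is $\cA$-balanced and hence factors through a functor
\[
F\colon\cM_1\otimes_{\cA}\cM_2\longrightarrow D^{\GL_{M-1}\times\GL_N}(\mathfrak B_{M-1|N}^{2,0}),
\]
which commutes with colimits (convolution does) and intertwines the twist by $\det_{\GL_{M-1}}$ on $\cM_1$ with the twist by $\det_{\GL_{M-1}}$ on its target, because the $\GL_{M-1}$-variable is not touched by $\overset{A}{\star}$. I would prove \propref{prop 2.1} by showing that $F$ is an equivalence, and the three variants in the same way.

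First I would upgrade \lemref{lemma 2.1} to an equivalence of module categories. Passing to ind-completions it identifies $\cM_1^{\geq0}:=D^{\GL_{M-1}\times\GL_M,\geq0}(\mathfrak B_{M-1|M}^{2,0})$ with the Kleisli category $D(T_{M,M})$, i.e.\ with the corner $\cA_{\tilde c}:=\tilde c\overset{A}{\star}\cA=\mathrm{Mod}_{\tilde c}(\cA)$; and since the functor $\Psi$ used in the proof of \lemref{lemma 2.1} is itself a convolution $c_{\bar 1,1}\overset{A}{\star}-$, associativity of $\overset{A}{\star}$ makes it an equivalence of \emph{right} $\cA$-modules. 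Then, $\tilde c$ being idempotent, for the left $\cA$-module $\cM_2$ one has $\cA_{\tilde c}\otimes_{\cA}\cM_2\simeq\tilde c\overset{A}{\star}\cM_2=D(T_{M,N})$, and the right-hand side is identified, again by the ind-completion of \lemref{lemma 2.1}, with $D^{\GL_{M-1}\times\GL_N,\geq0}(\mathfrak B_{M-1|N}^{2,0})$, compatibly with $F$. Thus $F$ restricts to an equivalence
\[
\cM_1^{\geq0}\otimes_{\cA}\cM_2\ \xrightarrow{\ \sim\ }\ D^{\GL_{M-1}\times\GL_N,\geq0}(\mathfrak B_{M-1|N}^{2,0}).
\]

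Next I would remove the $\geq0$ restriction by a colimit. The free modules $V^\lambda_{\GL_{M-1}}\otimes\mathfrak B_{M-1|M}^{2,0}\otimes V^\mu_{\GL_M}$ over all signatures $\lambda$ of $\GL_{M-1}$ (and $\mu$ of $\GL_M$) compactly generate $\cM_1$, and each is a twist by $\det_{\GL_{M-1}}^{-k}$ (for $k\gg0$) of one with $\lambda$ a partition; hence $\cM_1$ is the filtered colimit of the full subcategories $\cM_1^{\geq0}\otimes\det_{\GL_{M-1}}^{-k}$, $k\geq0$, each equivalent to $\cM_1^{\geq0}$ via a $\det_{\GL_{M-1}}$-twist, and this is a colimit of right $\cA$-modules since $\det_{\GL_{M-1}}$-twisting commutes with the $\cA$-action. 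The same description holds for $D^{\GL_{M-1}\times\GL_N}(\mathfrak B_{M-1|N}^{2,0})$ in terms of $D^{\GL_{M-1}\times\GL_N,\geq0}(\mathfrak B_{M-1|N}^{2,0})$. Because $-\otimes_{\cA}\cM_2$ preserves colimits and $F$ commutes with the $\det_{\GL_{M-1}}$-twists and (by the previous paragraph) restricts to an equivalence on the $k=0$ term, $F$ is the colimit of these equivalences, hence an equivalence. For the perfect version $D_\perf$, the perfect version with nilpotent support, and the ind-completed version with nilpotent support, the argument is identical, using \corref{cor 2.5.2} and \corref{cor 2.5.3} in place of the ind-completion of \lemref{lemma 2.1}, using \remref{2.4.1} to see that $\overset{A}{\star}$, hence $F$, preserves the nilpotent-support condition, and using that twisting by $\det_{\GL_{M-1}}$ preserves compactness and nilpotent support.

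The step I expect to be the main obstacle is the module-theoretic bookkeeping rather than any single computation: checking that the equivalences of \lemref{lemma 2.1}, \corref{cor 2.5.2} and \corref{cor 2.5.3} respect the right $\cA$-actions — so that $\cA_{\tilde c}\otimes_{\cA}\cM_2$ may be replaced by $D(T_{M,N})$ — and that the two colimit presentations, of $\cM_1$ and of the answer, are carried to one another by $-\otimes_{\cA}\cM_2$, i.e.\ that the $\det_{\GL_{M-1}}$-filtration on $\cM_1$ is transported by $F$ to the $\det_{\GL_{M-1}}$-filtration on $D^{\GL_{M-1}\times\GL_N}(\mathfrak B_{M-1|N}^{2,0})$. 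Identifying the abstract equivalence obtained this way with the convolution functor $F$ — which is what makes the equivalence natural with respect to the mirabolic Satake actions exploited later in the paper — also requires unwinding the bar resolution computing $\otimes_{\cA}$.
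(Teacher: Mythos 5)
Your proposal is correct and follows essentially the same route as the paper: reduce to the subcategories $D^{\ldots,\geq 0}$ via the idempotent $\tilde{c}$ and the Kleisli identification of \lemref{lemma 2.1} (resp.\ \corref{cor 2.5.2}, \corref{cor 2.5.3}), then recover the full categories by a colimit along determinant twists (the paper's endofunctor $\eta$, your filtered union of $\det^{-k}$-twisted copies of the $\geq 0$ part). Your explicit verification that the comparison functor is convolution and that \lemref{lemma 2.1} is an equivalence of right module categories only spells out compatibilities the paper leaves implicit.
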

\begin{proof}
In \cite[Section 4]{[BFGT]}, the authors constructed an invertible endofunctor $\eta$ of $D_\perf^{\GL_{M-1}\times \GL_N}(\mathfrak{B}_{M-1|N}^{2,0})$ which corresponds to tensoring with the determinant module up to a cohomological shift. This functor restricts to an endofunctor of $D_\perf^{\GL_{M-1}\times \GL_N, \geq 0}(\mathfrak{B}_{M-1|N}^{2,0})$. 

{Given a stable category $\cC$ and an endofunctor $F\colon\cC\to\cC$, we let $\colim'_F\cC$ denote the colimit  of the sequence $\cC\xra F\cC\xra F\cC\xra F\ldots$ in the $(\infty,1)$-category of stable $(\infty,1)$-categories.}  Similarly, {given a stable cocomplete category $\cC$ and a continuous endofunctor $F\colon\cC\to\cC$, we let $\colim_F\cC$ denote the colimit  of the sequence $\cC\xra F\cC\xra F\cC\xra F\ldots$ in the $(\infty,2)$-category of stable cocomplete $(\infty,1)$-categories with continous functors.} It is well-known that taking ind-completion preserves colimits.

For any object $M\in D_\perf^{\GL_{M-1}\times \GL_N}(\mathfrak{B}_{M-1|N}^{2,0})$, $\eta^n( M)\in D_\perf^{\GL_{M-1}\times \GL_N,\geq 0}(\mathfrak{B}_{M-1|N}^{2,0})$ for sufficient large $n$. Hence, the fully faithful embedding: \[D_\perf^{\GL_{M-1}\times \GL_N,\geq 0}(\mathfrak{B}_{M-1|N}^{2,0})\longrightarrow D_\perf^{\GL_{M-1}\times \GL_N}(\mathfrak{B}_{M-1|N}^{2,0})\] induces an equivalence
\begin{equation}
\begin{split}
        \colim'_{\eta}\ D_\perf^{\GL_{M-1}\times \GL_N, \geq 0}(\mathfrak{B}_{M-1|N}^{2,0})&\simeq \colim'_{\eta}\ D_\perf^{\GL_{M-1}\times \GL_N}(\mathfrak{B}_{M-1|N}^{2,0})\\
        &\simeq  D_\perf^{\GL_{M-1}\times \GL_N}(\mathfrak{B}_{M-1|N}^{2,0}).
\end{split}
\end{equation}

In particular, $\colim'_{\eta}\ D_\perf^{\GL_{M-1}\times \GL_M, \geq 0}(\mathfrak{B}_{M-1|M}^{2,0})\simeq D_\perf^{\GL_{M-1}\times \GL_N}(\mathfrak{B}_{M-1|M}^{2,0})$.

By taking ind-completion, we obtain
\[\colim_{\eta}\ D^{\GL_{M-1}\times \GL_N, \geq 0}(\mathfrak{B}_{M-1|N}^{2,0})\simeq D^{\GL_{M-1}\times \GL_N}(\mathfrak{B}_{M-1|N}^{2,0}),\]
\[\colim_{\eta}\ D^{\GL_{M-1}\times \GL_M, \geq 0}(\mathfrak{B}_{M-1|M}^{2,0})\simeq D^{\GL_{M-1}\times \GL_N}(\mathfrak{B}_{M-1|M}^{2,0}).\]

To prove \eqref{2.10}, it suffices to prove the following equivalence:
\begin{equation}\label{2.12}
\begin{split}
    D^{\GL_{M-1}\times \GL_M, \geq 0}(\mathfrak{B}_{M-1|M}^{2,0})&\otimes_{D^{\GL_{M}\times \GL_M}(\mathfrak{B}_{M|M}^{2,0})} D^{\GL_M\times \GL_N}(\mathfrak{B}_{M|N}^{2,0})\\ \simeq& \\D^{\GL_{M-1}\times \GL_N, \geq 0}&(\mathfrak{B}_{M-1|N}^{2,0}).
\end{split}
\end{equation}
Then, the equivalence \eqref{2.10} follows by taking colimits in the category of $(\infty,1)$-stable {cocomplete} categories with continuous functors on both sides.

To prove the equivalence \eqref{2.12}, it suffices to use the convolution product defined in~\S\ref{convolution} and Lemma \ref{lemma 2.1}.

Indeed, by Corollary \ref{cor 2.5.2}, 
\begin{equation*}
\begin{split}
      \textnormal{LHS of } \eqref{2.12}&\simeq D(T_{M,M})\otimes_{D^{\GL_{M}\times \GL_M}(\mathfrak{B}_{M|M}^{2,0})} D^{\GL_M\times \GL_N}(\mathfrak{B}_{M|N}^{2,0})\\
      &\simeq \tilde{c}\overset{A}{\star} D^{\GL_{M}\times \GL_M}(\mathfrak{B}_{M|M}^{2,0}) \otimes_{D^{\GL_{M}\times \GL_M}(\mathfrak{B}_{M|M}^{2,0})} D^{\GL_M\times \GL_N}(\mathfrak{B}_{M|N}^{2,0})\\
      &\simeq \tilde{c}\overset{A}{\star} D^{\GL_M\times \GL_N}(\mathfrak{B}_{M|N}^{2,0})\\
      &\simeq D(T_{M,N})\\
      &\simeq D^{\GL_{M-1}\times \GL_N, \geq 0}(\mathfrak{B}_{M-1|N}^{2,0})\\
      &\simeq \textnormal{RHS of }\eqref{2.12}.
\end{split}
\end{equation*}

The proof for other sheaf categories follows from the same proof and Lemma \ref{lemma 2.1} and Corollary \ref{cor 2.5.3}.
\end{proof}

\section{D-module side}\label{cons sid}
In this section, we will prove Theorem \ref{claim1}, which is the D-module side analog of Proposition \ref{prop 2.1}. 

\subsection{Definition of D-modules on ind-pro-schemes}\label{sec 3.1}
Since we need to handle with schemes of ind-pro-finite type, we need to take care when we define $D^{\GL_M(\bfO)}(\Gr_M\times \bfF^M)$, $D^{\GL_M(\bfO) \ltimes U_{M, N}(\bfF), \chi_{M, N}}\left(\Gr_{N}\right)$, etc. In this section, let us recall the definition of the category of D-modules on ind-pro-schemes. 

Assume $Y=\colim\ Y_i$ is an ind-pro-scheme, and schemes $Y_i$ are of pro-finite type. Let us denote the transition map by
\[\iota_{i,i'}: Y_i\hookrightarrow Y_{i'}.\]

Since each scheme $Y_i$ is of pro-finite type, we can write $Y_i$ as a limit 
\[Y_i= {\lim}\ Y_{i,j}\]
such that each $Y_{i,j}$ is of finite type and the transition functors
\[\pi_{i, j,j'}: Y_{i,j}\twoheadrightarrow Y_{i,j'}\]
are smooth projections.

Let us define \[D_!(Y):= \text{lim}_{\iota^!}\colim_{\pi^!} D(Y_{i,j}),\] and
\[D_*(Y):= \colim_{\iota_*}\text{lim}_{\pi_*} D(Y_{i,j}).\]

There is a canonical equivalence \[(D_!(Y))^\vee= D_*(Y).\]

Furthermore, choosing a dimension theory of $Y$ (see \cite[Section 3.3.7]{[Ber]}) gives rise to an equivalence between $D_!(Y)$ and $D_*(Y)$. In particular, if $Y$ is an ind-scheme, there is a canonical identification of $D_!(Y)$ and $D_*(Y)$, we denote it by $D(Y)$ for short. 
\begin{lem}
Assume $G$ is a {group}  ind-pro-scheme, we denote by \[m: G\times G\longrightarrow G\] the multiplication map, and by \[\Delta: G\longrightarrow G\times G\] the diagonal embedding. 

Then $D_*(G)$ is a Hopf algebra with the product $m_*$ and the coproduct $\Delta_*$. Similarly, $D_!(G)$ is a Hopf algebra with the product $\Delta^!$ and the coproduct $m^!$.
\end{lem}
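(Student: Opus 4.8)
The plan is to derive the Hopf-algebra structure purely formally, the two inputs being the Künneth formula and the functoriality of the assignments $Y\mapsto D_*(Y)$, $Y\mapsto D_!(Y)$; by ``Hopf algebra'' I mean a Hopf algebra object in the symmetric monoidal $\infty$-category of DG-categories with the Lurie tensor product. First I would record the Künneth equivalences
\[
D_*(Y\times Y')\simeq D_*(Y)\otimes D_*(Y'),\qquad D_!(Y\times Y')\simeq D_!(Y)\otimes D_!(Y'),
\]
together with the functoriality statements that $D_*$ is covariant for $*$-pushforward and $D_!$ is contravariant for $!$-pullback, and that these operations are compatible with Künneth (external Künneth for pushforward, resp.\ pullback). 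Granting this, the group $G$ carries its multiplication $m\colon G\times G\to G$, unit $e\colon\pt\to G$, and inversion $\iota\colon G\to G$, as well as — like every ind-pro-scheme — its diagonal $\Delta\colon G\to G\times G$ and structure map $p\colon G\to\pt$. Applying $D_*$ and the Künneth identification $D_*(G\times G)\simeq D_*(G)\otimes D_*(G)$ then turns $(m_*,e_*)$ into an associative unital product on $D_*(G)$, turns $(\Delta_*,p_*)$ into a coassociative counital coproduct, and turns $\iota_*$ into the antipode; all the Hopf axioms hold because the corresponding diagrams commute already for $G$. For instance, that the coproduct is an algebra map is the image under $D_*$ of the identity
\[
\Delta\circ m=(m\times m)\circ\sigma\circ(\Delta\times\Delta)\colon\ G\times G\longrightarrow G\times G,
\]
where $\sigma$ is the automorphism of $G\times G\times G\times G$ interchanging its second and third factors, and the remaining axioms descend in exactly the same way. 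The case of $D_!$ is identical, with $*$-pushforward replaced throughout by $!$-pullback; correspondingly $m^!$ becomes the coproduct, $\Delta^!$ the product, $e^!$ the counit, and $\iota^!$ the antipode.

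The substantive point is therefore the Künneth formula for ind-pro-schemes. For schemes of finite type the equivalence $D(X\times X')\simeq D(X)\otimes D(X')$ is standard. To propagate it along the definition $D_*(Y)=\colim_{\iota_*}\lim_{\pi_*}D(Y_{i,j})$, I would use that the Lurie tensor product of presentable DG-categories preserves colimits separately in each variable, so the colimits over the ind-direction present no obstacle; for the pro-direction the limits $\lim_{\pi_*}$ are rewritten as colimits $\colim_{\pi^!}$ in the $\infty$-category $\mathrm{Pr}^L$ of presentable categories and continuous functors, by passing to left adjoints — legitimate since each $\pi$ is a smooth projection, so $\pi^!$ agrees with $\pi^*$ up to a shift and is in particular a left adjoint. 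After this rewriting the ``$\otimes$ commutes with colimits'' principle applies levelwise, and, assembled functorially, yields the Künneth equivalence for $D_*$. Reading the same computation in the opposite variance gives the Künneth equivalence for $D_!$; alternatively it follows from the $D_*$-case via the duality $(D_!(Y))^\vee\simeq D_*(Y)$ recalled above.

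The final ingredient — the compatibility of $*$-pushforward (resp.\ $!$-pullback) with the Künneth decomposition, and more broadly the promotion of $D_*$ and $D_!$ to symmetric monoidal functors on ind-pro-schemes sending $\times$ to $\otimes$ — is part of the foundational setup of \cite{[Ber]}, which I would cite. I expect the main obstacle to be precisely this bookkeeping: no individual Hopf axiom is hard once the inputs are granted, but arranging the Künneth equivalences, the unit/counit and antipode maps, and all of their higher coherences to hold simultaneously and functorially — i.e.\ making the monoidality of $D_*$ and $D_!$ in the ind-pro-setting precise — is where the real care is needed. Once that is in hand the lemma follows at once.
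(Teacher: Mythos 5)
The paper states this lemma without proof: it appears as part of the recollection of the D-module formalism on ind-pro-schemes, with the relevant functoriality taken for granted from \cite{[Ber]}, so there is no argument in the text to compare yours against. Your proposal is the natural formal proof and is correct in outline: once one knows that $Y\mapsto D_*(Y)$ (resp.\ $Y\mapsto D_!(Y)$) is a symmetric monoidal functor from ind-pro-schemes with $\times$ to DG-categories with the Lurie tensor product, covariant for $*$-pushforward (resp.\ contravariant for $!$-pullback), every Hopf axiom is the image of the corresponding identity among the structure maps $m,\Delta,e,p,\iota$ of the group $G$, exactly as you write. Two small remarks. First, in the Künneth step your phrase ``rewritten as colimits $\colim_{\pi^!}$'' should be $\colim_{\pi^*}$: the limit along the right adjoints $\pi_*$ is the colimit along their left adjoints $\pi^*$, and identifying that diagram with the $\pi^!$-diagram requires a coherent choice of shifts (a dimension theory); your argument does not actually need this identification, only that the limit becomes a colimit in $\mathrm{Pr}^L$ so that $\otimes$ commutes with it, so this is an imprecision rather than a gap. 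Second, you correctly locate the substantive content in the existence of $*$-pushforward (resp.\ $!$-pullback) for maps of ind-pro-schemes together with its compatibility with Künneth; this is precisely the foundational package of \cite{[Ber]} that the paper itself invokes, so deferring it there is consistent with how the lemma is used in the text.
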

\subsection{Strong $G$-invariants}
Assume that $\mathcal{C}$ is a category with an action of $D_*(G)$, we define the category of $G$-invariants of $\cC$ as 
\[\cC^G:= \Hom_{D_*(G)}(\Vect, \cC).\] In other words, it is the totalization of
\begin{equation*}
\xymatrix{
  \cC \ar@<-.5ex>[r] \ar@<.5ex>[r] &\Hom(D_*(G), \cC)\ar@<-1ex>[r] \ar@<1ex>[r]\ar[r] & \Hom(D_*(G)\otimes D_*(G), \cC)\cdots.
}
\end{equation*}

If $\mu: G\longrightarrow \mathbb{G}_a$ is an additive character, we denote by $\mu^!(\exp)$ the character D-module on $G$ corresponding to $\mu$.  Let us denote by $\Vect_\mu$ the $D_*(G)$-module category with the underlying category $\Vect$ and the coaction corresponding to $\mu^!(\exp)$.

We define the category of $G$-invariants against $\mu$ of $\cC$ as 
\[\cC^{G,\mu}:= \Hom_{D_*(G)}(\Vect_\mu, \cC).\]

\subsection{Locally compact objects and renormalization} Given a category $\mathcal{C}$ with an action of $G$, there is a natural forgetful functor 
\[\mathcal{C}^G\longrightarrow \mathcal{C}.\]
We denote by $\mathcal{C}^{G, \mathsf{loc.c}}$ the full subcategory of $\mathcal{C}^G$ generated by the preimage of $\mathcal{C}^c$. We denote its ind-completion by $\mathcal{C}^{G, \ren}$.

Now we introduce some basic properties of $\mathcal{C}^{G, \ren}$ and $\cC^{G, \locc}$. 
\begin{prop}\label{prop 3.3.1}
If $U$ is pro-unipotent, $U$ acts on $\cC$, and $\cC^U$ is compactly generated, then 
\[\cC^{U, \ren}\simeq \cC^U.\]
\end{prop}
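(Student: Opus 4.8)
The plan is to show that the forgetful-then-ind-complete construction recovers $\cC^U$ on the nose, by verifying that the inclusion $\cC^{U,\locc}\into\cC^U$ identifies $\cC^{U,\locc}$ with a set of compact generators of $\cC^U$. Since $\cC^{U,\ren}$ is by definition $\Ind(\cC^{U,\locc})$, and since $\cC^U$ is assumed compactly generated, it then suffices to prove two things: (i) every object of $\cC^{U,\locc}$ is compact in $\cC^U$; (ii) the objects of $\cC^{U,\locc}$ generate $\cC^U$ under colimits. Given (i) and (ii), the universal property of ind-completion produces a canonical continuous functor $\cC^{U,\ren}=\Ind(\cC^{U,\locc})\to \cC^U$ which is fully faithful (because $\cC^{U,\locc}$ consists of compact objects, so $\Hom$'s are computed the same way on both sides) and essentially surjective (by (ii)), hence an equivalence.

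The crucial input, and the place where pro-unipotence of $U$ is used, is that the forgetful functor $\oblv\colon\cC^U\to\cC$ admits a \emph{continuous} right adjoint $\Av_*^U$ and moreover $\oblv$ is \emph{conservative}; for $U$ pro-unipotent one in fact has that invariants agree with coinvariants, so $\oblv$ has both a left and a right adjoint and all of them are continuous (this is the standard fact that taking strong invariants for a pro-unipotent group is $t$-exact and "affine", cf.\ the discussion of pro-unipotent groups in \cite{[Ber]}). First I would record that $\oblv\colon \cC^U\to\cC$ preserves compact objects: this is equivalent to its right adjoint $\Av^U_*$ being continuous, which holds because $U$ is pro-unipotent (the averaging functor is, up to a shift, given by a $!$-pushforward along the pro-unipotent $U$, which is continuous). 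Then, for $x\in\cC^{U,\locc}$, by definition $\oblv(x)\in\cC^c$; to see $x$ is compact in $\cC^U$, note $\Hom_{\cC^U}(x,-)=\Hom_{\cC}(\oblv(x),\oblv(-))$ is false in general, so instead I would argue via the adjunction: $\Hom_{\cC^U}(x,-)=\Hom_{\cC^U}(x, \Av^U_*\oblv(-)) $ only after using the (co)monadic comparison; cleanly, use that $\cC^U$ is comonadic over $\cC$ via $\oblv$, so an object $x$ with $\oblv(x)$ compact is compact provided the comonad $\oblv\circ(\text{right adjoint})$ is continuous, which again is the pro-unipotence input. This gives (i).

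For (ii): since $\cC^U$ is compactly generated, it is generated under colimits by $\cC^{U,c}$; I would show $\cC^{U,c}\subseteq\cC^{U,\locc}$, i.e.\ that if $x\in\cC^U$ is compact then $\oblv(x)\in\cC^c$. This follows because $\oblv$ has a continuous right adjoint (so $\oblv$ preserves compacts) — which is the same pro-unipotence fact once more — hence $\oblv(\cC^{U,c})\subseteq\cC^c$, i.e.\ $\cC^{U,c}\subseteq\cC^{U,\locc}$; combined with the reverse containment from (i) we get $\cC^{U,\locc}=\cC^{U,c}$ and therefore $\cC^{U,\locc}$ generates. Assembling: $\cC^{U,\ren}=\Ind(\cC^{U,\locc})=\Ind(\cC^{U,c})\simeq\cC^U$, the last equivalence being exactly compact generation of $\cC^U$.

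The main obstacle is the proof that $\oblv\colon\cC^U\to\cC$ preserves compact objects (equivalently, continuity of $\Av^U_*$, equivalently continuity of the comonad). Everything else is formal manipulation of (co)monadicity and ind-completions. This obstacle is resolved precisely by pro-unipotence: writing $U=\lim_\alpha U_\alpha$ with $U_\alpha$ unipotent algebraic groups, $\cC^U=\lim_\alpha \cC^{U_\alpha}$, and for each unipotent $U_\alpha$ the functor $\oblv\colon\cC^{U_\alpha}\to\cC$ has a continuous right adjoint (given by $*$-averaging, which for a unipotent group is proper up to the relevant shift), and these continuities are compatible along the tower; passing to the limit keeps continuity of the right adjoint, so $\oblv$ on $\cC^U$ preserves compacts. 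I would cite \cite[Section 3.3.7]{[Ber]} or the standard references on strong group actions for this point rather than reprove it.
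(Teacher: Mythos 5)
Your overall skeleton matches the paper's proof: reduce to comparing compact objects, show that the locally compact objects are exactly the compact objects of $\cC^U$, and conclude from compact generation; and your easy direction (compact in $\cC^U$ implies compact in $\cC$, because $\oblv$ has a continuous right adjoint $\Av^U_*$) is exactly the paper's. However, your justification of the hard direction (i) rests on a principle that is false: it is \emph{not} true that for a comonadic functor $\oblv\colon\cD\to\cC$ with continuous right adjoint, an object with compact image is compact. The cobar argument you are implicitly invoking requires commuting a totalization (a limit over $\Delta$) past a filtered colimit, which fails in general. Concretely, the paper's own Remark following Proposition \ref{prop 3.3.1} is a counterexample to your principle: for $U=\Gm$ acting on $\cC=\Vect$, the forgetful functor is conservative and (since $\Gm$ is of finite type) $\Av^{\Gm}_*$ is continuous, so your hypotheses hold, yet there is a locally compact object that is not compact. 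Relatedly, your identification of "the pro-unipotence input" as continuity of the comonad is off: continuity of $\Av_*$ holds for any group (ind-)scheme of pro-finite type (the paper uses it for $\GL_M(\bO)$ in Proposition \ref{3.3.7}), so it cannot be what makes the statement true only for pro-unipotent $U$.

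The genuinely pro-unipotent input, which the paper uses and your argument never isolates, is that the unit $\Id_{\cC^U}\to\Av^U_*\circ\oblv$ is an equivalence, i.e.\ $\oblv$ is fully faithful and $\Av^U_*$ is an idempotent continuous (co)localization. With this, given $c$ with $\oblv(c)$ compact and a filtered system $d'_i$ in $\cC^U$, one rewrites $\colim d'_i\simeq \Av^U_*(\colim \oblv(d'_i))$ and computes $\Hom_{\cC^U}(c,\Av^U_*(\colim\oblv(d'_i)))\simeq \Hom_{\cC}(\oblv(c),\colim\oblv(d'_i))\simeq\colim\Hom_{\cC}(\oblv(c),\oblv(d'_i))\simeq\colim\Hom_{\cC^U}(c,d'_i)$, which is precisely the paper's argument; no comonadic descent or commutation of $\Tot$ with colimits is needed. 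You do mention the adjacent fact that invariants agree with coinvariants for pro-unipotent $U$, and one could extract the unit isomorphism from a precise form of that statement, but as written you only use it to assert continuity of adjoints and then fall back on the false general principle, so step (i) has a real gap.
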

\begin{proof}
Since both $\cC^{U, \ren}$ and $\cC^U$ are compactly generated, we only need to verify the equivalence at compact objects level. Namely, we need to show that $c\in \cC^U$ is compact in $\cC$ if and only if $c$ is compact in $\cC^U$.

Assume $c\in \cC^U$ is compact in $\cC$, i.e.,
\[\Hom_{\cC}(\oblv(c), \colim\ d_i)\simeq \colim\ \Hom_{\cC}(\oblv(c), d_i)\]
for any filtered colimit of $d_i\in \cC$. By adjointness, 
\[\Hom_{\cC^U}(c, \Av_*^U(\colim\ d_i))\simeq \colim\ \Hom_{\cC}(c, \Av_*^U(d_i)).\]

By the assumption on $U$, $\Av_*^U$ is a continuous idempotent monad, so there is \[\Hom_{\cC^U}(c, \colim\Av_*^U( d_i))\simeq \colim\ \Hom_{\cC}(c, \Av_*^U(d_i)).\] 

For any object $d'\in \cC^U$, we have $\Av_*^U\circ \oblv (d')\simeq d'$. Combined with the above formula, \[\Hom_{\cC^U}(c, \colim\ d_i')\simeq \colim\ \Hom_{\cC}(c, d_i')\] for any $d_i'\in \cC^U$.

On the other hand, {let $c$ be} compact in $\cC^U$. Then $c$ is compact in $\cC$, since the forgetful functor $\cC^U\longrightarrow \cC$ admits a continuous right adjoint functor $\Av_*^U$. 
\end{proof}

\begin{rem}
The above proposition is false if $U$ is not of pro-finite type. For example, if $U$ is the loop group of a unipotent group, then the forgetful functor {$\cC^U\longrightarrow\cC$} does not preserve compact objects. 

The above proposition is also false for non-unipotent group (even in finite type case). For example, it is false in the case $\cC=\Vect$ and $U=\mathbb{G}_m$. In this case, the forgetful functor $\cC^U\longrightarrow \cC$ is conservative, so $\Vect^{\mathbb{G}_m, \locc}$ is generated by the essential image of the left adjoint functor $\Av_!^{\mathbb{G}_m}$ of the forgetful functor. The constant sheaf $\mathbb{C}$ is not in $\Vect^{\mathbb{G}_m, c}$, but in $\Vect^{\mathbb{G}_m, \locc}$.
\end{rem}

Recall the following lemma about invariants.
\begin{lem}\label{3.3 lem}
Assume $G:= H\ltimes S$ is a {group}  ind-pro-scheme. Let us take a character $\mu_1$ of $S$ and a character $\mu_2$ of $H$. We assume $\mu_1$ is stable under the conjugation action of $G$ on $S$, then $\mu_1$ and $\mu_2$ give rise to a character $\mu$ of $G$.

Let $\cC$ be a category admitting an action of $G$, then the category $\cC^{S,\mu_1}$ admits an action of $H$ and 
\begin{equation}
    (\cC^{S,\mu_1})^{H, \mu_2}\simeq (\cC)^{G,\mu}.
\end{equation}
\end{lem}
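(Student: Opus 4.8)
The statement is a form of associativity/descent for strong invariants along a semidirect product, twisted by a character. The plan is to reduce it to the untwisted semidirect-product statement $(\cC^S)^H \simeq \cC^{H\ltimes S}$ by absorbing the characters into the module categories. Concretely, recall that $\cC^{S,\mu_1} = \Hom_{D_*(S)}(\Vect_{\mu_1},\cC)$ and $\cC^{G,\mu} = \Hom_{D_*(G)}(\Vect_{\mu},\cC)$, and similarly for the $H$-invariants. The key input is that $D_*(G) \simeq D_*(H)\otimes D_*(S)$ as a Hopf algebra built from the semidirect product structure: as a coalgebra it is the tensor product, while the algebra structure is the smash/crossed product determined by the conjugation action of $H$ on $S$. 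The first step I would carry out is to make this Hopf-algebroid description precise — in particular to record how the character D-module $\mu^!(\exp)$ on $G=H\ltimes S$ decomposes, using the definition $\mu = \mu_2 \cdot (\mu_1\circ\pr_S)$, as the ``external product'' $\mu_2^!(\exp)\boxtimes \mu_1^!(\exp)$ compatibly with the Hopf structure. This is where the hypothesis that $\mu_1$ is $G$-conjugation-invariant is essential: it guarantees that $\Vect_{\mu_1}$, as a $D_*(S)$-module category, carries a compatible $H$-equivariant structure (equivalently, that the smash-product coaction is well defined), so that $\cC^{S,\mu_1}$ inherits an $H$-action and $\Vect_{\mu_1}$ promotes to a $D_*(G)$-module structure realizing $\Vect_\mu$.

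The second step is the bar-resolution bookkeeping. I would write $\cC^{G,\mu}$ as the totalization of the cosimplicial object $[n]\mapsto \Hom(D_*(G)^{\otimes n}, \cC)$ with the $\mu$-twisted coface maps, and then split each factor $D_*(G)^{\otimes n}\simeq D_*(H)^{\otimes n}\otimes D_*(S)^{\otimes n}$. Because the cosimplicial structure maps respect this splitting (the multiplication/comultiplication being the smash product/tensor coproduct), the totalization computing $\cC^{G,\mu}$ can be computed as an iterated totalization: first totalize in the $D_*(S)$-directions, which by definition yields $\cC^{S,\mu_1}$ together with its residual $D_*(H)$-comodule structure twisted by $\mu_2$; then totalize in the $D_*(H)$-directions, which yields $(\cC^{S,\mu_1})^{H,\mu_2}$. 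This is a Fubini/commuting-limits argument for totalizations of bi-cosimplicial objects, valid because all functors in sight are limits and everything is happening in a presentable $(\infty,1)$-category.

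The third step is to identify the twists correctly at the intermediate stage: one must check that the $D_*(H)$-comodule category obtained by $S$-invariance of the $\mu$-twisted $\cC$ is precisely $\cC^{S,\mu_1}$ with the $\mu_2$-twist, not some other twist. This follows from the decomposition $\mu^!(\exp)\simeq \mu_2^!(\exp)\boxtimes\mu_1^!(\exp)$ established in step one, together with the $H$-invariance of $\mu_1$. Alternatively, and perhaps more cleanly, one can phrase the whole argument in terms of module categories: $\Vect_\mu \simeq \Vect_{\mu_2}\otimes_{\Vect}\Vect_{\mu_1}$ as a $D_*(H)\otimes D_*(S)$-module, and then $\Hom_{D_*(G)}(\Vect_\mu,\cC) \simeq \Hom_{D_*(H)}\bigl(\Vect_{\mu_2}, \Hom_{D_*(S)}(\Vect_{\mu_1},\cC)\bigr)$ by a $\otimes$-$\Hom$ adjunction for the two-step action. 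I expect the main obstacle to be a careful treatment of the smash-product Hopf-algebroid structure on $D_*(H\ltimes S)$ and verifying that the twist $\mu$ really does decompose compatibly with it — i.e., that the conjugation-invariance hypothesis on $\mu_1$ is exactly what makes the bi-cosimplicial splitting strictly compatible with the twisted coface maps. Once that compatibility is in place, the commuting-totalizations argument is formal. (The untwisted case $\mu_1=\mu_2=0$ is the standard fact $(\cC^S)^H\simeq \cC^{H\ltimes S}$, so the content is entirely in the bookkeeping of the characters.)
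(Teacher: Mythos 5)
Your plan is correct in substance, but it takes a genuinely different (and heavier) route than the paper. The paper's entire proof is a two-line reduction: it quotes the \emph{untwisted} semidirect-product statement $((\cC')^{S})^{H}\simeq (\cC')^{G}$ from Beraldo \cite[Lemma 6.5.4, Remark 6.5.6]{[Ber]} and then absorbs the character by applying it to $\cC':=\cC\otimes \Vect_\mu$; since $\mu$ restricts to $\mu_1$ on $S$ and to $\mu_2$ on $H$, the twisted invariants of $\cC$ are literally the untwisted invariants of $\cC'$, stage by stage. You instead propose to re-prove the semidirect-product descent itself, via the crossed-product description of $D_*(H\ltimes S)$ and a cobar/Fubini argument, carrying the characters through the cosimplicial bookkeeping. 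That works, but note that your intermediate claim that the cosimplicial structure maps respect the naive splitting $D_*(G)^{\otimes n}\simeq D_*(H)^{\otimes n}\otimes D_*(S)^{\otimes n}$ is not literally true: the multiplication is the smash product and mixes the two factors, so the bi-cosimplicial object does not split as an external product. The correct repair is exactly your alternative formulation $\Hom_{D_*(G)}(\Vect_\mu,\cC)\simeq \Hom_{D_*(H)}\bigl(\Vect_{\mu_2},\Hom_{D_*(S)}(\Vect_{\mu_1},\cC)\bigr)$, which is essentially Beraldo's lemma in twisted form — and at that point the most economical write-up is the paper's: take the untwisted statement as the citable input (you yourself call it ``the standard fact'') and put the twist into the target category $\cC\otimes\Vect_\mu$ rather than into the coface maps. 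Your identification of where the conjugation-invariance of $\mu_1$ enters (both for $\mu$ to be a character of $G$ and for the residual $H$-action on $\cC^{S,\mu_1}$ to exist) matches the paper's hypotheses and is the right place to use it.
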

\begin{proof}
It follows from \cite[Lemma 6.5.4, Remark 6.5.6]{[Ber]} that for any category $\cC'$ admitting an action of $G$, we have 
\begin{equation}
    ((\cC')^{S})^{H}\simeq (\cC')^{G}.
\end{equation}
The lemma follows from the above equivalence immediately by letting $\cC':= \cC\otimes \Vect_\mu$.
\end{proof}

\begin{prop}\label{prop 3.3.4}
In the above lemma, if we assume $S$ is pro-unipotent, then \[\cC^{G,\mu, \ren}\simeq (\cC^{S,\mu_1})^{H,\mu_2,\ren}.\]
\end{prop}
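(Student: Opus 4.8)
The plan is to deduce the statement from \propref{prop 3.3.1} by peeling off the two layers of equivariance one at a time, carefully tracking how the renormalizations interact with the forgetful functors.

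First I would record the precise form of the equivalence of \lemref{3.3 lem}: it supplies a canonical equivalence $\cC^{G,\mu}\simeq(\cC^{S,\mu_1})^{H,\mu_2}$ which, by the way it is constructed in \cite{[Ber]} (from the identity $((\cC')^S)^H\simeq(\cC')^G$ applied to $\cC'=\cC\otimes\Vect_\mu$), is compatible with forgetting equivariance. Concretely, under this equivalence the forgetful functor $\oblv\colon\cC^{G,\mu}\to\cC$ is identified with the composite $(\cC^{S,\mu_1})^{H,\mu_2}\xra{\oblv_H}\cC^{S,\mu_1}\xra{\oblv_S}\cC$, where $\oblv_H$ forgets the $H$-equivariance and $\oblv_S$ forgets the $S$-equivariance. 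This compatibility is the only nontrivial input about \lemref{3.3 lem} that I will need, and I would spell it out from the construction in \emph{loc.\ cit.}

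Next I would unwind the two renormalized categories. By definition $\cC^{G,\mu,\locc}$ is the full subcategory of $\cC^{G,\mu}$ generated by the objects $x$ with $\oblv(x)\in\cC^c$, while $(\cC^{S,\mu_1})^{H,\mu_2,\locc}$ is the full subcategory of $(\cC^{S,\mu_1})^{H,\mu_2}$ generated by the objects $x$ with $\oblv_H(x)\in(\cC^{S,\mu_1})^c$; the two $\ren$-categories are the respective ind-completions. Since the subcategory generated by a class of objects depends only on that class, and since ind-completion carries equivalences to equivalences, it suffices to check that the two generating classes coincide under the equivalence of \lemref{3.3 lem}. Using the factorization $\oblv=\oblv_S\circ\oblv_H$, this reduces to the purely object-level assertion: for $z\in\cC^{S,\mu_1}$ one has $z\in(\cC^{S,\mu_1})^c$ if and only if $\oblv_S(z)\in\cC^c$.

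Finally, this object-level assertion is exactly what the proof of \propref{prop 3.3.1} establishes in the untwisted case; to obtain the twisted version I would pass, just as in the proof of \lemref{3.3 lem}, to $\cC':=\cC\otimes\Vect_\mu$, under which $\cC^{S,\mu_1}\simeq(\cC')^S$ compatibly with the forgetful functors to $\cC'\simeq\cC$. Since $S$ is pro-unipotent, the averaging functor $\Av_*^S$ on $\cC'$ is a continuous idempotent monad with $\Av_*^S\circ\oblv_S\simeq\id$ on $(\cC')^S$, and the argument in the proof of \propref{prop 3.3.1} uses only these two properties, so it applies verbatim to give ``$z$ compact in $(\cC')^S\iff\oblv_S(z)$ compact in $\cC'$''. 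Transporting this back through the reductions above yields $\cC^{G,\mu,\ren}\simeq(\cC^{S,\mu_1})^{H,\mu_2,\ren}$. I expect the main obstacle to be organizational rather than mathematical: pinning down the compatibility of the equivalence of \lemref{3.3 lem} with forgetful functors at the $\infty$-categorical level, and confirming that the twisted averaging $\Av_*^{S,\mu_1}$ for pro-unipotent $S$ with a conjugation-invariant character $\mu_1$ really is a continuous idempotent monad -- both facts are implicit in \cite{[Ber]} but need to be invoked at the right level of generality; everything else is formal manipulation of generated subcategories and ind-completions.
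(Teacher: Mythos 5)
Your proposal is correct and follows essentially the same route as the paper: identify $\cC^{G,\mu}$ with $(\cC^{S,\mu_1})^{H,\mu_2}$ via Lemma \ref{3.3 lem}, reduce the character twist away by tensoring with $\Vect_\mu$ (the paper's ``without loss of generality'' step), and then use Proposition \ref{prop 3.3.1} for the pro-unipotent $S$ to see that both locally compact subcategories are the full subcategory of $\cC^{G,\mu}$ generated by objects compact in $\cC$, whence the renormalizations agree after ind-completion. The extra care you take about compatibility of the equivalence with the forgetful functors is left implicit in the paper but is the same argument.
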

\begin{proof}
With loss of generality, we only need to show the case without characters. The category $(\cC^{S})^{H,\ren}$ is the ind-completion of $(\cC^{S})^{H,\locc}$. By Proposition \ref{prop 3.3.1}, an object in $\cC^S$ is compact if and only if it is compact when regarded as an object in $\cC$. So, both $(\cC^{S})^{H,\locc}$ and $\cC^{G, \locc}$ are full subcategories of $\cC^G$ generated by the objects which are compact in $\mathcal{C}$.
\end{proof}

Now, we consider categories over quotient stacks (ref. \cite[Section 6.1]{[Ber]}). Let $X$ be a scheme and $G$ be an {algebraic}  group {(or: a group scheme)} which acts on $X$. By definition, a category $\cC$ is over $X/G$ if it admits an action of the monoidal category $D_!(X)\rtimes D_*(G)$.
\begin{ex}
Given a group $G$ and a subgroup $S\subset G$. Then for any category $\cC$ admitting an action of $S$, the category $\cC\otimes_{D_*(S)}D_*(G)$ is a category over $X/G$. Here $X:= G/S$ is the quotient scheme.
\end{ex}

The following lemma is from \cite[Theorem 6.4.2]{[Ber]}.
\begin{lem}\label{lem 3.3.5}
{Let} $\cC$ be a category over $X/G$. Assume $X$ is a pro-scheme and $G$ is a pro-group scheme which acts transitively on $X$. Then 
\[\cC^G\simeq (\cC_x)^S.\]
Here $\cC_x:= \cC\underset{D_!(X)}{\otimes}\Vect$ denotes cofiber of $\cC$ at $x\in X$, and $S=\Stab_G(x)$ is the stabilizer of $x$.
\end{lem}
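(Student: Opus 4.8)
The plan is to trivialize the prestack $X/G$ by identifying it with the classifying prestack $\pt/S$ via the orbit map, and then to reduce the comparison of invariants to descent along the atlas $\pt\to\pt/S$.

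First I would replace $X$ by $G/S$. By the transitivity hypothesis together with $S=\Stab_G(x)$, the orbit map $a_x\colon G/S\to X$, $gS\mapsto g\cdot x$, is an isomorphism of pro-schemes; one checks this after choosing compatible presentations $G=\lim_j G_j$, $X=\lim_j X_j$ along smooth surjections, where $a_x$ becomes an isomorphism at each finite level. Hence $X/G\simeq (G/S)/G$ as prestacks, the cofiber $\cC_x=\cC\otimes_{D_!(X)}\Vect$ is computed as the cofiber at the base point $eS\in G/S$, and it inherits a strong action of $D_*(S)\subset D_*(G)$, since $S$ stabilizes $eS$. Using the canonical identification $(G/S)/G\simeq \pt/S$ --- both prestacks classify $S$-torsors, with $\pt\to(G/S)/G$ induced by $\pt\mapsto eS$ --- a category over $X/G$, i.e.\ a module over $D_!(G/S)\rtimes D_*(G)$, is transported to a category with a strong $S$-action, the comparison functor being exactly ``pass to the cofiber at $eS$''.

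It remains to match the two ways of taking invariants. By definition $\cC^G$ is the totalization $\Tot$ of the cosimplicial object $n\mapsto\Hom(D_*(G)^{\otimes n},\cC)$, and likewise $(\cC_x)^S=\Tot$ of $n\mapsto\Hom(D_*(S)^{\otimes n},\cC_x)$. The heart of the argument is to identify the first cosimplicial object, after the substitution $X\simeq G/S$, with the \v{C}ech nerve of the atlas $\pt\to\pt/S$ --- whose $n$-fold self-fiber-product over $\pt/S$ is $S^{\times n}$ --- applied to the cofiber $\cC_x$; equivalently, one ``integrates out'' the $G/S=X$-directions against the $D_!(X)$-action by $G$-equivariant descent along $G\to G/S$. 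This is the content of \cite[Theorem~6.4.2]{[Ber]}, and it yields $\cC^G\simeq(\cC_x)^S$.

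The main obstacle is precisely this descent step: one must know that $\pt\to\pt/S$ is of effective descent for sheaves of categories when $S$ is merely a pro-group scheme rather than of finite type, and one must keep the pro-scheme bookkeeping consistent so that the forgetful, cofiber, and averaging functors are all compatible with the presentations $G=\lim_j G_j$, $X=\lim_j X_j$. Granting these (as carried out in \cite{[Ber]}), the remaining identifications are formal manipulations with cobar and \v{C}ech complexes.
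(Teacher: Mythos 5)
Your sketch is correct, and it is essentially the paper's approach: the paper gives no independent argument for this lemma but simply quotes it from \cite[Theorem 6.4.2]{[Ber]}, which is exactly the step your outline ultimately delegates the descent/invariants comparison to. The preliminary identifications $X\simeq G/S$ and $X/G\simeq \pt/S$ that you spell out are consistent with how that theorem is applied, so there is nothing further to reconcile.
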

\begin{prop}\label{prop 3.3.6 fin}
In the above situation, assume $\cC\simeq \cC_x\underset{D_*(S)}{\otimes} D_*(G)$. Then there is
\begin{equation}\label{ren 3.3 ren}
    \cC^{G,\locc}\subset (\cC_x)^{S, \locc}.
\end{equation}
\end{prop}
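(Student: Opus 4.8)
The plan is to deduce the inclusion \eqref{ren 3.3 ren} from \lemref{lem 3.3.5} once we control the behaviour of compact objects under passage to the fiber. First I would record that the equivalence $\cC^G\simeq(\cC_x)^S$ of \lemref{lem 3.3.5} is compatible with forgetting equivariance, i.e.\ that the square
\[
\begin{CD}
\cC^G @>{\sim}>> (\cC_x)^S\\
@V{\oblv}VV @VV{\oblv}V\\
\cC @>{\on{fib}_x}>> \cC_x
\end{CD}
\]
commutes, where $\on{fib}_x\colon\cC\to\cC_x=\cC\underset{D_!(X)}{\otimes}\Vect$ denotes restriction to the fiber over $x$. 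This is part of the construction of the equivalence in \cite[Theorem 6.4.2]{[Ber]}: both vertical arrows are restriction to the tautological atlas ($X\to X/G$, resp.\ $\pt\to\pt/S$), and these atlases match up under the identification $X/G\simeq\pt/S$ determined by $x$.

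The \emph{main point} is that $\on{fib}_x\colon\cC\to\cC_x$ preserves compact objects. I would check this first for $\cC=D_!(X)$, where $\on{fib}_x=i_x^!$ is symmetric monoidal for the $\otimes^!$-structure: writing $X=\lim_j X_j$ with $X_j$ of finite type, one has $D_!(X)=\colim_j D(X_j)$ along the $!$-pullbacks, which preserve compactness, so every compact object of $D_!(X)$ is $!$-pulled back from some $X_j$, and restriction to a point on a finite-type scheme sends it to a perfect complex. Since $D_!(X)$ is compactly generated, $i_x^!$ then has a continuous right adjoint, which is $D_!(X)$-linear (it is the right adjoint of a monoidal functor, hence obeys the projection formula). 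For general $\cC$ over $X/G$ the functor $\on{fib}_x$ is the base change of $i_x^!$ along the $D_!(X)$-action, so it acquires a continuous right adjoint $i_{x,*}\colon\cC_x\to\cC$ and therefore preserves compacts; concretely, the hypothesis $\cC\simeq\cC_x\otimes_{D_*(S)}D_*(G)$ identifies $\on{fib}_x$ with $\id_{\cC_x}\otimes_{D_*(S)}r$, where $r\colon D_*(G)\to D_*(S)$ is restriction along the closed embedding $S\hookrightarrow G$, whose right adjoint is the (continuous) pushforward along a closed embedding. I expect this to be the only real work: the geometric inputs are standard, and the difficulty lies entirely in pushing them through the $\colim$/$\lim$ bookkeeping of the ind--pro formalism.

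Granting these two facts, the inclusion is formal. If $c\in\cC^G$ has $\oblv(c)$ compact in $\cC$, then, transporting $c$ to $(\cC_x)^S$ and using the square, its image under $\oblv\colon(\cC_x)^S\to\cC_x$ is $\on{fib}_x(\oblv(c))$, which is compact in $\cC_x$; thus $c$ lies among the objects generating $(\cC_x)^{S,\locc}$. Since $\cC^{G,\locc}$ is by definition the full stable idempotent-complete subcategory of $\cC^G$ generated by all such $c$, and $(\cC_x)^{S,\locc}$ is a full stable subcategory of $(\cC_x)^S$ containing them, we get $\cC^{G,\locc}\subseteq(\cC_x)^{S,\locc}$, as claimed. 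I would also note that equality generally fails, since $\on{fib}_x$ need not reflect compactness — an object supported on the (typically infinite-dimensional) fiber over $x$ need not be compact after being carried into $\cC$ — which is exactly why the statement is an inclusion rather than an equivalence.
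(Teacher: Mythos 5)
Your formal skeleton — the commutative square $i_x^!\circ\oblv^G\simeq\oblv^S\circ(i_x^!)^G$ together with the generation argument at the end — matches the paper's, but the load-bearing step, namely that $\on{fib}_x=i_x^!\colon\cC\to\cC_x$ preserves compact objects, is false, and your argument for it already breaks at the base case $\cC=D_!(X)$. Compact objects of $D(X_j)$ for $X_j$ of finite type are \emph{coherent}, not holonomic, complexes of D-modules, and the fiber at a point of a coherent D-module is in general not perfect: already the $!$-fiber of $\mathcal{D}_{\mathbb{A}^1}$ at $x$ has cohomology $\mathcal{D}_{\mathbb{A}^1}/(t-x)\mathcal{D}_{\mathbb{A}^1}\simeq\mathbb{C}[\partial_t]$, which is infinite-dimensional. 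Hence $i_x^!$ does not preserve compacts and does not admit a continuous right adjoint, and the same is true for the induced functor $\cC\to\cC_x$ under the hypothesis of the proposition (take $S$ trivial and $X=G$ of finite type, so $\cC\simeq D(G)$, $\cC_x\simeq\Vect$, and $\on{fib}_x$ is the fiber at the identity: $\mathcal{D}_G$ is compact but its fiber is not). The auxiliary claim that the right adjoint of the restriction $r\colon D_*(G)\to D_*(S)$ is the pushforward along the closed embedding is also off: for D-modules, closed pushforward is the \emph{left} adjoint of $!$-restriction, so it cannot be used to produce a continuous right adjoint either.

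What rescues the statement is that compactness only needs to be transported for objects in the image of $\oblv^G$, and this is where the paper uses equivariance essentially. Since $G$ and $S$ are of pro-finite type, $\oblv^G$ and $\oblv^S$ admit continuous right adjoints $\Av_*^G$ and $\Av_*^S$; passing to right adjoints in the commutative square gives $\Av_*^G\circ(i_x^!)^R\simeq(i_x^!)^{G,R}\circ\Av_*^S$, where $(i_x^!)^{G,R}$ is the (continuous) inverse of the equivalence of Lemma \ref{lem 3.3.5}. For $c\in\cC^{G,\locc}$ one then tests the compactness of $\oblv^G(c)$ only against filtered colimits of the special shape $\colim\Av_*^G(i_x^!)^R(d'_i)$ with $d'_i\in\cC_x$, and the adjunctions convert the resulting isomorphism into the compactness of $i_x^!\oblv^G(c)=\oblv^S(i_x^!)^G(c)$ in $\cC_x$ — no compactness-preservation of $i_x^!$ on all of $\cC$ is needed (nor is it available). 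Your concluding generation argument is fine once this equivariant statement is in place, so the gap is concentrated in the ``main point''; as stated, that point is not repairable, and the equivariant detour above (or something equivalent to it) is genuinely required.
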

\begin{proof}
We need to prove  {that if} 
\begin{equation}\label{eq 1}
    \Hom_\cC(\oblv^G(c),\colim\ d)\simeq \colim\ \Hom_\cC(\oblv^G(c), d)
\end{equation}
for any $d\in \cC$, {then} 
\begin{equation}\label{eq 2}
    \Hom_{\cC_x}(i_x^!\oblv^G(c), \colim\ d')\simeq \colim\ \Hom_{\cC_x}(i_x^! \oblv^G (c), d')
\end{equation}
for any $d'\in \cC_x$. Here $i_x^!: \cC\longrightarrow \cC_x$ denotes the pullback of categories associated with $x\to X$.

Consider the following commutative diagram
\begin{equation}
    \xymatrix{
\cC^G\ar[r]^{(i_x^!)^G}\ar[d]^{\oblv^G}& (\cC_x)^S\ar[d]^{\oblv^S}\\
\cC\ar[r]^{i_x^!}& \cC_x,
}
\end{equation}
so that we have
\begin{equation}\label{comm 3.5}
    i_x^! \oblv^G\simeq \oblv^S (i_x^!)^G.
\end{equation}

Note that by Lemma \ref{lem 3.3.5}, $(i_x^!)^G$ is an equivalence, and therefore its inverse is its continuous right adjoint, denoted $(i_x^!)^{G,R}$.

Since $G$ and $S$ are {of pro-finite type}, $\oblv^G$ and $\oblv^S$ admit continuous right adjoint functors $\Av_*^G$ and $\Av_*^S$. So
\eqref{eq 1} {can be rewritten as follows:} 
\begin{equation}\label{eq 3}
\Hom_{\cC^G}(c, \colim\ \Av_*^G(d))\simeq \colim\ \Hom_{\cC^G}(c, \Av_*^G(d)),    
\end{equation}
and {similarly, \eqref{eq 2} becomes}
\begin{equation}\label{eq 4}
    \Hom_{(\cC_x)^S}((i_x^!)^G(c), \colim\ \Av_*^S(d'))\simeq \colim\ \Hom_{(\cC_x)^S}((i_x^!)^G(c), \Av_*^S(d')).
\end{equation}

By passing to the right adjoint of \eqref{comm 3.5}, we have
\begin{equation}\label{right 3.6}
    \Av_*^G (i_x^!)^R\simeq (i_x^!)^{G,R} \Av_*^S.
\end{equation}

Let $d= (i_x^!)^R (d')$, the left-hand side of \eqref{eq 3} is
\begin{equation}
\begin{split}
     \Hom_{\cC^G}(c, \colim\ \Av_*^G (i_x^!)^R d')&\simeq \Hom_{\cC^G}(c, \colim\ (i_x^!)^{G,R} \Av_*^S (d'))\\
     &\simeq \Hom_{\cC^G}(c, (i_x^!)^{G,R}\colim\ \Av_*^S (d')),
\end{split}
\end{equation}
and the right-hand side of \eqref{eq 3} is
\begin{equation}
\begin{split}
   \colim\  \Hom_{\cC^G}(c, \Av_*^G (i_x^!)^R d')&\simeq \colim\ \Hom_{\cC^G}(c,  (i_x^!)^{G,R} \Av_*^S (d')).
\end{split}
\end{equation}

By adjointness, it implies 
\begin{equation}
    \Hom_{(\cC_x)^S}((i_x^!)^G(c), \colim\ \Av_*^S (d'))\simeq \colim\ \Hom_{(\cC_x)^S}((i_x^!)^G (c), \Av_*^S(d')).
\end{equation}
\end{proof}

\begin{prop}\label{3.3.7}
In the above assumption, if $G$ and $S$ are of finite type,
then $\cC^{G, \locc}\supset (\cC_x)^{S, \locc}$. In particular, we have 
\[\cC^{G, \locc}\simeq (\cC_x)^{S, \locc},\]
and
\[\cC^{G, \ren}\simeq (\cC_x)^{S, \ren}.\]
\end{prop}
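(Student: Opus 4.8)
The inclusion $\cC^{G,\locc}\subset(\cC_x)^{S,\locc}$ is already \propref{prop 3.3.6 fin}, so the plan is to establish the reverse inclusion; the two $\simeq$'s in the statement then follow formally (the one for $\ren$ by passing to ind-completions). The guiding observation is that, once $G$ and $S$ are of \emph{finite} type — rather than merely pro-finite type as in \propref{prop 3.3.6 fin} — the subcategories $\cC^{G,\locc}$ and $(\cC_x)^{S,\locc}$ are not just \emph{related} to compactness but actually \emph{coincide} with the full subcategories of compact objects of $\cC^G$ and of $(\cC_x)^S$; the equivalence $(i_x^!)^G\colon\cC^G\xrightarrow{\sim}(\cC_x)^S$ of \lemref{lem 3.3.5} then matches them automatically, since any equivalence carries compact objects to compact objects.

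\textbf{Key step: $\cC^{G,\locc}=\cC^{G,c}$ (and its analogue for $S$).} Because $G$ is of finite type, the forgetful functor $\oblv^G\colon\cC^G\to\cC$ admits a continuous right adjoint $\Av_*^G$ and a continuous left adjoint $\Av_!^G$. Continuity of $\Av_*^G$ shows $\oblv^G$ preserves compact objects, and also that it reflects compactness: from $\Hom_{\cC^G}(c,-)\simeq\Hom_\cC(\oblv^G(c),\Av_*^G(-))$, a compact-represented functor is thus a composite of colimit-preserving functors, so $\oblv^G(c)$ compact $\Rightarrow$ $c$ compact in $\cC^G$. Hence the preimage $(\oblv^G)^{-1}(\cC^c)$ consists of compact objects, giving $\cC^{G,\locc}\subset\cC^{G,c}$. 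For the reverse: the objects $\Av_!^G(d)$ with $d\in\cC^c$ are compact in $\cC^G$ (left adjoint of the continuous $\oblv^G$), lie in $(\oblv^G)^{-1}(\cC^c)$ (as $\oblv^G$ preserves compacts), and generate $\cC^G$ (as $\oblv^G$ is conservative and $\cC$ is compactly generated); so the thick subcategory they generate — which sits inside $\cC^{G,\locc}$ — is all of $\cC^{G,c}$. Thus $\cC^{G,\locc}=\cC^{G,c}$, and the identical argument with $S$ in place of $G$ gives $(\cC_x)^{S,\locc}=(\cC_x)^{S,c}$.

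\textbf{Conclusion.} The equivalence $(i_x^!)^G$ of \lemref{lem 3.3.5} preserves and reflects compactness, so it restricts to an equivalence $\cC^{G,c}\simeq(\cC_x)^{S,c}$, i.e.\ $\cC^{G,\locc}\simeq(\cC_x)^{S,\locc}$; in particular $(\cC_x)^{S,\locc}\subset\cC^{G,\locc}$, which is the asserted inclusion, and applying $\Ind(-)$ yields $\cC^{G,\ren}\simeq(\cC_x)^{S,\ren}$.

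\textbf{Main obstacle.} The only genuinely delicate point is the identification $\cC^{G,\locc}=\cC^{G,c}$ and its $S$-analogue: this is precisely where the hypothesis of \emph{finite} type is needed. In \propref{prop 3.3.6 fin} only continuity of $\Av_*^G$ was used (and this is available for pro-finite-type groups), whereas here one also needs $\Av_!^G$ to exist and be continuous, which can fail for pro-finite-type groups — compare the remark following \propref{prop 3.3.1}. One should also take care that the word "generated" in the definition of $\cC^{G,\locc}$ is interpreted correctly, i.e.\ that the thick subcategory generated by $(\oblv^G)^{-1}(\cC^c)$ really exhausts $\cC^{G,c}$; this is exactly what the compact-generation argument above supplies.
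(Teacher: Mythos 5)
Your proposal has a genuine gap at its key step: the claimed identification $\cC^{G,\locc}=\cC^{G,c}$ (equivalently, that $\oblv^G$ \emph{reflects} compactness when $G$ is of finite type) is false for non-unipotent groups, and the paper itself records the counterexample in the remark following \propref{prop 3.3.1}: for $\cC=\Vect$ and $G=\Gm$ one has $\cC^{G}\simeq \mathbb{C}[u]\hmod$, and the trivial representation (the ``constant sheaf'' $\mathbb{C}$, i.e.\ $\mathbb{C}[u]/u$) lies in $\Vect^{\Gm,\locc}$ but not in $\Vect^{\Gm,c}$, since it is not perfect over $\mathbb{C}[u]$. This situation is not excluded by the standing hypotheses of \propref{3.3.7} (take $X=\pt$, $S=G$, $\cC_x=\cC$), so your intermediate claim fails exactly in the setting of the proposition, even though the proposition's conclusion is trivially true there. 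The technical slip producing it is the adjunction you quote: $\Av_*^G$ is the \emph{right} adjoint of $\oblv^G$, so the correct identity is $\Hom_{\cC}(\oblv^G c,d)\simeq \Hom_{\cC^G}(c,\Av_*^G d)$; this (together with continuity of $\Av_*^G$) gives that $\oblv^G$ \emph{preserves} compacts, not that it reflects them. Reflection would require something like $\Av_*^G\circ\oblv^G\simeq\Id$ (the pro-unipotent mechanism of \propref{prop 3.3.1}) or a totalization argument that does not commute with filtered colimits for reductive $G$. Consequently, matching compact objects under the equivalence $(i_x^!)^G$ of \lemref{lem 3.3.5} does not address the actual content of the statement, which concerns compactness \emph{after forgetting} to $\cC$ and $\cC_x$, not compactness in the equivariant categories.

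The paper's proof works directly with that content: given $c\in\cC^G$ whose restriction to $\cC_x$ is compact, it chooses an \'etale cover $U'=\coprod U_i'$ of $G$ with $U_i'\simeq S\times U_i$, $S$-invariantly and finite onto its image; equivariance (the hypothesis $\cC\simeq\cC_x\otimes_{D_*(S)}D_*(G)$) shows the restriction of $c$ to each piece is pulled back along the smooth projection $U'\to S$ from an object whose image in $\cC_x$ is compact, and since $S$ and $S\times U_i$ are of finite type this smooth $!$-pullback preserves compactness; finally compactness of $c$ itself is recovered from the finite \v{C}ech complex of the cover, using that finite limits commute with filtered colimits. If you want to salvage your outline, this geometric descent step is what you must supply in place of the (false) equality $\cC^{G,\locc}=\cC^{G,c}$; the finite-type hypothesis enters through the cover and through $p^*=p^![\text{shift}]$ preserving compactness, not through the existence of $\Av_!^G$.
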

\begin{proof}
We only need to show that if the restriction of a $G$-equivariant object $c$ of $\cC\simeq \cC_x\otimes_{D_*(S)} D_*(G)$ to $\cC_x\simeq \cC_x\otimes_{D_*(S)} D_*(S)$ is compact, then $c$ is compact.

Assume $U':=\coprod U_i'$ is an \'{e}tale cover of $G$, such that each $U_i'$ is isomorphic to $S\times U_i$, the map $j_{U'\to G}\colon U'=S\times U:= S\times \coprod U_i \longrightarrow G$ is $S$-invariant and $S\times U_i$ is finite on its image. We claim that the restriction of $c$ to $\cC_x\otimes_{D_*(S)} D_*(S\times U)$ is compact.

Indeed, since the $S$-invariant map $U'\overset{j_{U'\to G}}{\hookrightarrow} G\overset{p_G}{\longrightarrow} \pt$ factors through the $S$-invariant map $U'\overset{p_{U'\to S}}{\longrightarrow} S\overset{p_S}{\longrightarrow} \pt$, the restriction of $c$ to $\cC_x\otimes_{D_*(S)} D_*(U')$ is isomorphic to the pullback of $c'\in \cC_x\otimes_{D_*(S)} \Vect$ along
\[\cC_x\otimes_{D_*(S)} \Vect\longrightarrow \cC_x\simeq \cC_x\otimes_{D_*(S)} D_*(S)\longrightarrow \cC_x\otimes_{D_*(S)} D_*(U').\]
Here $c'$ is descended from $c$, i.e., $p_G^!(c')\simeq c$.

By our assumption, $p_S^!(c')$ is compact in $\cC_x$. Furthermore, since $p_{U'\to S}$ is smooth, {$S\x U_i$} and $S$ are of finite type, the $*$-pullback functor $p^*_{U'\to S}$ is well-defined and equals $p^!_{U'\to S}$ up to a cohomology shift. In particular, $p^!_{U'\to S}$ preserves compactness. So the restriction of $c$ to $\cC_x\otimes_{D_*(S)} D_*(U')$ is compact.

Consider the \v{C}ech complex associated with $\{U_i'\}$ removing diagonals in the relative Cartesian products. Since finite limit commutes with filtered colimit, compactness of $j_{U'\to G}^!(c)$ implies the compactness of $c$.
%
%
%
\end{proof}
As a corollary of Proposition \ref{3.3.7}, we have 
\begin{cor}\label{prop 3.3.6}
If $X$ and $G$ are of pro-finite type, and $G= G_r\ltimes G_u$, $S= S_r\ltimes S_u$. Here $S_r\subset G_r$ are reductive and $S_u\subset G_u$ are pro-unipotent. Assume $\cC\simeq \cC_x\underset{D_*(S)}{\otimes} D_*(G)$. Then there is
\begin{equation}\label{ren 3.3}
    \cC^{G,\ren}\simeq (\cC_x)^{S, \ren}.
\end{equation}
\end{cor}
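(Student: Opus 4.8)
The plan is to reduce to Proposition \ref{3.3.7} by peeling off the pro-unipotent parts first, where Proposition \ref{prop 3.3.4} (together with Proposition \ref{prop 3.3.1}) lets us replace invariants by locally compact invariants without loss, and then applying the finite-type statement to the reductive quotients. Concretely, I would first use the semidirect product structure $G = G_r \ltimes G_u$ and $S = S_r \ltimes S_u$ to filter the invariants in stages. Since $S_u \subset G_u$ is pro-unipotent, Lemma \ref{3.3 lem} identifies $(\cC^{G_u})^{G_r}$ with $\cC^G$, and similarly on the stalk side $(\cC_x^{S_u})^{S_r} \simeq (\cC_x)^S$. The point is that $\cC^{G_u}$ is again a category over the quotient of the pro-scheme $X_r := X/G_u$ (the image of $X$ under killing the pro-unipotent directions) by the reductive group $G_r$, and the stalk of $\cC^{G_u}$ at the image point $\bar x$ is $(\cC_x)^{S_u}$ with residual $S_r$-action.

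Second, I would apply Proposition \ref{3.3.7} to the pair $(G_r, S_r)$ acting on $X_r$, with the input category $\cC^{G_u}$: since $G_r$ and $S_r$ are of finite type and $\cC^{G_u} \simeq (\cC_x)^{S_u} \otimes_{D_*(S_r)} D_*(G_r)$ (this presentation descends from $\cC \simeq \cC_x \otimes_{D_*(S)} D_*(G)$ after taking $G_u$-invariants), we obtain
\[
(\cC^{G_u})^{G_r, \locc} \simeq ((\cC_x)^{S_u})^{S_r, \locc}, \qquad (\cC^{G_u})^{G_r, \ren} \simeq ((\cC_x)^{S_u})^{S_r, \ren}.
\]
Third, I would invoke Proposition \ref{prop 3.3.4}, which says precisely that for a pro-unipotent normal factor the renormalized invariants against the whole group agree with the iterated renormalized invariants: $\cC^{G,\ren} \simeq (\cC^{G_u})^{G_r, \ren}$ and $(\cC_x)^{S,\ren} \simeq ((\cC_x)^{S_u})^{S_r,\ren}$. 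Chaining these three identifications gives $\cC^{G,\ren} \simeq (\cC_x)^{S,\ren}$, as desired.

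The main obstacle I anticipate is verifying that $\cC^{G_u}$ genuinely lives over $X_r/G_r$ with the required presentation $\cC^{G_u} \simeq (\cC_x)^{S_u} \otimes_{D_*(S_r)} D_*(G_r)$ — i.e., that taking $G_u$-invariants is compatible with the base change over $X$ and produces a category over the quotient pro-scheme by the reductive group, with the correct stalk. This is where one must be careful that $X$ being of pro-finite type (rather than finite type) does not break the transitivity/stalk formalism of Lemma \ref{lem 3.3.5} and Proposition \ref{prop 3.3.6 fin}; the key is that $G_u$ acts along $X$ compatibly with the $G_r$-action so that the quotient $X \to X_r$ is a $G_u$-torsor over a pro-scheme, and one can apply Lemma \ref{3.3 lem} and the descent results of \cite[Section 6]{[Ber]} uniformly. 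Once this structural compatibility is in place, everything else is a formal concatenation of the already-established propositions.
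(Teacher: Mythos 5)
Your proposal is correct and follows essentially the same route as the paper: peel off the pro-unipotent parts via Proposition \ref{prop 3.3.4}, identify $\cC^{G_u}\simeq (\cC_x)^{S_u}\otimes_{D_*(S_r)}D_*(G_r)$ as a category over $G_r/S_r$, apply the finite-type statement (Proposition \ref{3.3.7}) to the reductive pair, and apply Proposition \ref{prop 3.3.4} again on the stalk side. The only step you flag as a potential obstacle is handled in the paper by the short computation $\cC^{G_u}\simeq \cC_x\otimes_{D_*(S)}D_*(G/G_u)\simeq \cC_x\otimes_{D_*(S_r)\otimes D_*(S_u)}D_*(G_r)\simeq (\cC_x)^{S_u}\otimes_{D_*(S_r)}D_*(G_r)$, using that $S_u\subset G_u$ acts trivially on $G/G_u$.
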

\begin{proof}
By Proposition \ref{prop 3.3.4}, we have
\begin{equation}
    (\cC)^{G, \ren}\simeq (\cC^{G_u})^{G_r,\ren},
\end{equation}
and
\begin{equation}
    (\cC_x)^{S, \ren}\simeq (\cC^{S_u})^{S_r,\ren}.
\end{equation}

Since $\cC\simeq \cC_x\underset{D_*(S)}{\otimes} D_*(G)$, there is
\begin{equation}\label{3.19}
    \begin{split}
        \cC^{G_u}\simeq \cC_x\otimes_{D_*(S)} D(G/G_u)
        \simeq \cC_x \otimes_{D_*(S_r)\otimes D_*(S_u)} D_*(G_r).
    \end{split}
\end{equation}
Since $S_u\subset G_u$ and $G_u$ is a normal subgroup of $G$, the action of $S_u$ on $G/G_u$ is trivial. In particular, $D_*(S_u)$ acts on $D_*(G_r)$ trivially. There is
\begin{equation}\label{3.20}
    \cC_x \otimes_{D_*(S_r)\otimes D_*(S_u)} D_*(G_r)\simeq (\cC_x)^{S_u} \otimes_{D_*(S_r)} D_*(G_r).
\end{equation}

Note that $(\cC_x)^{S_u} \otimes_{D_*(S_r)} D_*(G_r)$ is a category over $X_r/ G_r$. Here $X_r:= G_r/S_r$. By Proposition \ref{3.3.7}, we have
\begin{equation}\label{3.21}
    ((\cC_x)^{S_u} \otimes_{D_*(S_r)} D_*(G_r))^{G_r, \ren}\simeq (\cC_x^{S_u})^{S_r, \ren}.
\end{equation}

Apply Proposition \ref{prop 3.3.4} again, we obtain
\begin{equation*}
(\cC_x^{S_u})^{S_r, \ren}\simeq (\cC_x)^{S, \ren}.
\end{equation*}

Now, by \eqref{3.19} \eqref{3.20} and \eqref{3.21}, we have
\[\cC^{G,\ren}\simeq (\cC_x^{S_u})^{S_r, \ren}\simeq (\cC_x)^{S, \ren}.\]
\end{proof}






\subsection{Conventions}\label{3.3.1} 
Similar to the coherent side, there are four categories on the D-module side corresponding to $D_{\perf}$, ${{D}}$, $D_{\perf}$ with nilpotent condition, and ${{D}}$ with nilpotent condition.

In order to simplify the notation, let us make the following conventions.

If $M<N$, we denote by $C_{M|N}$ the DG category of $({\GL}_{M}({\bfO}) \ltimes U_{M, N}(\bfF), \chi_{M, N})$-equivariant D-modules on $\Gr_N$; if $M>N$, we denote by $C_{M|N}$ the DG category of right $({\GL}_{N}({\bfO}) \ltimes U_{N, M}(\bfF), \chi_{N, M})$-equivariant D-modules on $\GL_M(\bfO)\backslash \GL_M(\bfF)$; and we denote by $C_{M|M}$ the DG category of $\GL_M(\bfO)$-equivariant D-modules on $\Gr_{M}\times \bfF^M$.\footnote{It does not matter if we let $C_{M|M}$ be $D_!^{\GL_M(\bO)}(\Gr_M\times \bF^M)$ or $D_*^{\GL_M(\bO)}(\Gr_M\times \bF^M)$, since there is a (monoidal) equivalence between them.} The categories of their compacts are denoted by $C_{M|N}^c$, $C_{M|M,!}^c$ and $C_{M|M,*}^c$, respectively.

If $M<N$, there is a forgetful functor from $C_{M|N}$ to the category of $(U_{M,N}(\bF),\chi_{M,N})$-equivariant D-modules on $\Gr_N$. We denote by $C_{M|N}^{\mathsf{loc.c}}$ the full subcategory of $C_{M|N}$ spanned by the preimage of compact $(U_{M,N}(\bF),\chi_{M,N})$-equivariant D-modules. Furthermore, we denote by $C_{M|N}^{\mathsf{ren}}$ the ind-completion of $C_{M|N}^{\mathsf{loc.c}}$. Similarly for $M>N$ and $M=N$.

In this paper, we will prove that 
\begin{enumerate}
    \item $C^{\mathsf{ren}}_{M|N}$ is equivalent to ${{D}}^{\GL_{M}\times \GL_N}(\mathfrak{B}_{M|N}^{2,0})$, 
    \item $C^{\mathsf{loc.c}}_{M|N}$ is equivalent to $D_\perf^{\GL_{M}\times \GL_N}(\mathfrak{B}_{M|N}^{2,0})$, 
    \item $C_{M|N}$ is equivalent to ${{D}}^{\GL_{M}\times \GL_N}(\mathfrak{B}_{M|N}^{2,0})_\nilp$, 
    \item $C_{M|N}^c$ is equivalent to $D_\perf^{\GL_{M}\times \GL_N}(\mathfrak{B}_{M|N}^{2,0})_\nilp$.
\end{enumerate}

The following easy lemma establishes an equivalence between $C_{M|N}$ and $C_{N|M}$ (similarly for $C^c$, $C^\locc$ and $C^\ren$).

\begin{lem}\label{lem 3.4.1}
Pushforward along the map $g\in \GL_N(\bfF)\longrightarrow g^{-1}\in \GL_N(\bfF)$ induces an equivalence functor $C_{M|N}\simeq C_{N|M}.$\footnote{Replace $\chi_{N,M}$ by $-\chi_{N,M}$ in the definition of $C_{N|M}$.}
\end{lem}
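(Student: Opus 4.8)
The plan is to unwind both sides of the claimed equivalence and see that they are literally the same category, just described with the roles of the two ``bridge'' subgroups exchanged. Recall that for $M<N$ the category $C_{M|N}$ consists of D-modules on $\Gr_N=\GL_N(\bfO)\backslash\GL_N(\bfF)/\!$--- more precisely on $\GL_N(\bfF)/\GL_N(\bfO)$ --- that are left $(\GL_M(\bfO)\ltimes U_{M,N}(\bfF),\chi_{M,N})$-equivariant, while $C_{N|M}$ (for the same pair, now with ``$M>N$'' reading as ``$N<M$'') consists of D-modules on $\GL_N(\bfO)\backslash\GL_N(\bfF)$ that are right $(\GL_M(\bfO)\ltimes U_{M,N}(\bfF),-\chi_{M,N})$-equivariant. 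So the underlying spaces are $\GL_N(\bfF)/\GL_N(\bfO)$ and $\GL_M(\bfO)\backslash\cdots$; wait --- the point is that both are full (equivariant) subcategories of $D(\GL_N(\bfF))$ with the same two equivariance conditions imposed, one on the left and one on the right, and the inversion map $\iota\colon g\mapsto g^{-1}$ on $\GL_N(\bfF)$ swaps left and right.

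Concretely, first I would record that $\iota\colon\GL_N(\bfF)\to\GL_N(\bfF)$ is an isomorphism of ind-schemes, so $\iota_*$ (equivalently $\iota^!=\iota^*$, since $\iota$ is an isomorphism) is an equivalence $D(\GL_N(\bfF))\iso D(\GL_N(\bfF))$ (using $D_!$ or $D_*$ interchangeably, per the footnote conventions and the dimension-theory identification in~\S\ref{sec 3.1}). Next, for a subgroup $H\subset\GL_N(\bfF)$ and a character $\mu\colon H\to\GG_a$, the inversion intertwines the left $H$-action with the right $H$-action: if $L_h$ denotes left translation by $h$ and $R_h$ right translation, then $\iota\circ L_h=R_{h^{-1}}\circ\iota$, and under $h\mapsto h^{-1}$ the character $\mu$ goes to $-\mu$ (the composite $H\xra{\,\mathrm{inv}\,}H\xra{\mu}\GG_a$ is $-\mu$ because $\mu$ is additive). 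Hence $\iota_*$ carries left $(H,\mu^!\exp)$-equivariant D-modules to right $(H,(-\mu)^!\exp)$-equivariant D-modules, and vice versa. Applying this with $H=\GL_M(\bfO)\ltimes U_{M,N}(\bfF)$ --- which makes sense as a subgroup of $\GL_N(\bfF)$ since $M<N$ --- and $\mu=\chi_{M,N}$, together with the fact that $\iota$ also swaps the (unconstrained) left/right $\GL_N(\bfF)$-structures so that $\Gr_N=\GL_N(\bfF)/\GL_N(\bfO)$ is carried to $\GL_N(\bfO)\backslash\GL_N(\bfF)$, we get $\iota_*\colon C_{M|N}\iso C_{N|M}$, where on the target we use $-\chi_{N,M}$ in place of $\chi_{N,M}$ as flagged in the footnote.

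Finally I would check compatibility with the renormalizations: $\iota_*$ is an equivalence of DG-categories, hence preserves compact objects, and it intertwines the two forgetful functors to $D^{U_{M,N}(\bfF),\pm\chi_{M,N}}(\cdots)$ (again via $\iota_*$ on that smaller category), so it carries $C_{M|N}^{\locc}$ to $C_{N|M}^{\locc}$ and, passing to ind-completions, $C_{M|N}^{\ren}$ to $C_{N|M}^{\ren}$; likewise for $C^c$ and the unrenormalized $C$. There is no serious obstacle here --- the only thing that needs a word of care is that all the categories in question are honestly defined as equivariant categories for the same group acting through $\GL_N(\bfF)$, so that the abstract ``$\iota_*$ swaps left and right invariants'' statement (formally, the canonical equivalence $\cC^{H,\mu}$ computed inside $D(\GL_N(\bfF))$ for the left versus the right action) applies verbatim; this is exactly the kind of bookkeeping the conventions in~\S\ref{3.3.1} are set up to handle. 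The mildly delicate point, if any, is keeping track of the sign of the character and of the $D_!$ versus $D_*$ conventions under $\iota_*$, but since $\iota$ is an isomorphism both issues are cosmetic.
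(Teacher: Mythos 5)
Your proposal is correct and follows essentially the same route as the paper: pushforward along inversion swaps left and right equivariance and negates the character, giving the equivalence $C_{M|N}\simeq C_{N|M}$ (the paper phrases this as constructing the two mutually inverse functors, which is the same bookkeeping you carry out).
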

\begin{proof}
Pushforward along the map $g\in \GL_N(\bfF)\longrightarrow g^{-1}\in \GL_N(\bfF)$ sends left $\GL_M(\bfO)$-equivariant sheaves to right $\GL_M(\bfO)$-equivariant sheaves, and sends left $(U_{M, N}(\bfF), \chi_{M, N})$-equivariant sheaves to right $(U_{N,M}(\bfF), -\chi_{M, N})$-equivariant sheaves. It induces a functor $C_{M|N}\longrightarrow C_{N|M}$. Similarly, pushforward along the map $g\in \GL_N(\bfF)\longrightarrow g^{-1}\in \GL_N(\bfF)$ induces a functor $C_{N|M}\longrightarrow C_{M|N}$. One checks easily that these two functors are inverse to each other.
\end{proof}

\subsection{Convolution monoidal structure of $C_{M|M}$}\label{conv cons}
If $M=N$, we can define convolution monoidal structures for any sheaf theory of $C^{\mathsf{ren}}_{M|M}$, $C^{\mathsf{loc.c}}_{M|M}$, $C_{M|M}$ and $C_{M|M}^c$. For simplicity, we only {demonstrate the case} of $C_{M|M}$ in Section \ref{conv cons}-\ref{sec 3.7}. In these sections, let us recall the definitions of the monoidal category structures of $C_{M|M}$, and the actions of $C_{M|M}$ on $C_{M|N}$ (from left) and $C_{M-1|M}$ (from right).

The monoidal category structures of $C_{M|M}$ are defined in \cite[Section 3.2]{[BFGT]}. Actually, there are two different monoidal category structures on $C_{M|M}$, which are related by the Fourier transform.

The first one is the na\"{i}ve convolution product induced by the multiplication. For groups $G\subset H$, there is a convolution product functor
\begin{equation}\label{3.2}
    D(G\backslash H/G)\times D(G\backslash H/G)\longrightarrow D(G\backslash H\overset{G}{\times} H/G)\longrightarrow D(G\backslash H/G).
\end{equation} 
Let $H$ be the transpose of the mirabolic subgroup
$\Mir^t_{M+1}(\bfF)=
\left(
\begin{matrix}
\GL_M(\bfF) &* \\
  &1
\end{matrix}
\right)
$ of $\GL_{M+1}(\bfF)$, and  $G$ be $\GL_M(\bfO)=\left(
\begin{matrix}
\GL_M(\bfO) & \\
  &1
\end{matrix}
\right){{}\subset\GL_{M+1}(\bfO)\subset\GL_{M+1}(\bfF)} $. {Applying~\eqref{3.2}, we thus obtain the convolution product on} \[C_{M|M}= D^{\GL_M(\bfO)}(\Gr_M\times \bfF^M)= \linebreak 
D_*^{\GL_M(\bfO)}(\Mir^t_{M+1}(\bfF)/\GL_M(\bfO))={D_*(G\bsl H/G)}.\]

To be more precise, consider the following diagram, 
\[\xymatrix{H\times H/G\ar[r]^-{ q}\ar[d]^{p\times id}& H\overset{G}{\times} H/G\ar[d]^m\\
H/G\times H/G& H/G,
}\]
where $p$, $q$ denote the projection maps, $m$ denotes the multiplication map.

Given $\mathcal{F}_1, \mathcal{F}_2\in D_?^{\GL_M(\bfO)}(\Gr_M\times \bfF^M)$ (here ?=!,*), the pullback of $\mathcal{F}_1\boxtimes \mathcal{F}_2$ along $p\times id$ descends to a sheaf $\mathcal{F}_1\tilde{\boxtimes} \mathcal{F}_2$ on $\Mir^t_{M+1}(\bfF)\overset{\GL_M(\bfO)}{\times} \Mir^t_{M+1}(\bfF)/\GL_M(\bfO)$. We set $\mathcal{F}_1\overset{}{\textasteriskcentered}\mathcal{F}_2:= m_*(\mathcal{F}_1\tilde{\boxtimes} \mathcal{F}_2)$.

\begin{rem} We can also use the {(non-transposed)}  mirabolic subgroup $\Mir_{M+1}(\bfF)=
\left(
\begin{array}{ccc}
\GL_M(\bfF) & \\
*  &1
\end{array}
\right)$ to define the above monoidal structure. Indeed, we only need to notice that the composition of taking transpose and taking inverse induces a monoidal {equivalence} between $D(\GL_M(\bfO)\backslash \Mir^{t}_{M+1}(\bfF)/\GL_M(\bfO))$ and $D(\GL_M(\bfO)\backslash \Mir_{M+1}(\bfF)/\GL_M(\bfO))$.
\end{rem}

\subsection{The second monoidal structure}\label{restr of mon}
There is another monoidal category structure on $D^{\GL_M(\bfO)}(\Gr_M\times \bfF^M)$ defined in \cite{[BFGT]}.

We note that $\Gr_M\times \bfF^M= \GL_M(\bfF)\overset{\GL_M(\bfO)}{\times} \bfF^M$. For $g\in \GL_M(\bfF), v\in \bfF^M$, we denote by $[g,v]$ the corresponding point in $\Gr_M\times \bfF^M$. For $h\in \GL_M(\bfF)$, $h\cdot [g,v]= [h\cdot g,v]$. Under the isomorphism $\Gr_M\times \bfF^M= \Mir^t_{M+1}(\bfF)/\GL_M(\bfO)$, $[g,v]$ is the image of $(g,v)$ under the map $\GL_M(\bfF)\ltimes \bfF^M =\Mir^t_{M+1}(\bfF)\longrightarrow \Mir^t_{M+1}(\bfF)/\GL_M(\bfO)\longrightarrow \Gr_M\times \bfF^M$.

{Consider the following maps}
\begin{equation*}
    \xymatrix{\GL_M(\bfF)\times (\Gr_M\times \bfF^M)\ar[r]^-{\mathfrak{q}}\ar[d]^{\mathfrak{p}}& \GL_M(\bfF)\overset{\GL_M(\bfO)}{\times} \Gr_M\times \bfF^M\ar[d]^{\mathfrak{m}}\\
(\Gr_M\times \bfF^M)\times (\Gr_M\times \bfF^M)& \Gr_M\times \bfF^M.}
\end{equation*}

Here, \[\mathfrak{p}(g_1, [g_2,v])= ([g_1, g_2v], [g_2,v]),\]
\[\mathfrak{q}(g_1, [g_2,v])= [g_1,[g_2,v]],\]
\[\mathfrak{m}([g_1, [g_2, v]])=[g_1g_2, v].\]

Given $\mathcal{F}_1, \mathcal{F}_2\in D_?^{\GL_M(\bfO)}(\Gr_M\times \bfF^M)$, the pullback of $\mathcal{F}_1\boxtimes \mathcal{F}_2$ along $\mathfrak{p}$ descends to a sheaf $\mathcal{F}_1\chkbx  \mathcal{F}_2$ on $\GL_M(\bfF)\overset{\GL_M(\bfO)}{\times} \Gr_M\times \bfF^M$. We set $\mathcal{F}_1\overset{}{\circledast}\mathcal{F}_2:= \mathfrak{m}_!(\mathcal{F}_1\chkbx  \mathcal{F}_2)=\mathfrak{m}_*(\mathcal{F}_1\chkbx  \mathcal{F}_2)$.

\begin{rem}
Up to formal issues related to infinite-dimensionality, the $\circledast$  monoidal structure is a particular case of the monoidal structure on $D(H\backslash(G\times X)/H)$ where $X$ is a variety acted on by an algebraic group $G$ with a subgroup $H\subset G$, the right $H$-action on $G\times X$ is along the $G$ factor, while the left $H$-action is diagonal. (This is applied to $G=\GL_M(\bfF),\ H=\GL_M(\bfO)$.)
\end{rem}

\begin{rem}\label{rem 3.7.1}
On the subcategory of D-modules supported on $\Gr_M\times \{0\} \subset \Gr_M\times \bfF^M$, the functors\[\circledast, \textasteriskcentered\colon D_?^{\GL_M(\bfO)}(\Gr_M\times \bfF^M)\times D_?^{\GL_M(\bfO)}(\Gr_M\times \bfF^M)\longrightarrow D_?^{\GL_M(\bfO)}(\Gr_M\times \bfF^M)\]restrict to the usual convolution product of the spherical Hecke category.
\end{rem}

On the subcategory of constructible sheaves supported on $\{1\}\times\bfF^M \subset \Gr_M\x\bfF^M$ (i.e.,  require $g=1$), the functor \[\circledast\colon D_!^{\GL_M(\bfO)}(\Gr_M\times \bfF^M)\times D_!^{\GL_M(\bfO)}(\Gr_M\times \bfF^M)\longrightarrow D_!^{\GL_M(\bfO)}(\Gr_M\times \bfF^M)\] restricts to the  tensor product  of $D_!^{\GL_M(\bfO)}({\bfF^M})$, and the functor \[\textasteriskcentered\colon D_*^{\GL_M(\bfO)}(\Gr_M\times \bfF^M)\times D_*^{\GL_M(\bfO)}(\Gr_M\times \bfF^M)\longrightarrow D_*^{\GL_M(\bfO)}(\Gr_M\times \bfF^M)\] restricts to the convolution product of $D_*^{\GL_M(\bfO)}({\bfF^M})$.

\begin{rem} $\circledast, \textasteriskcentered$ are related by the Fourier transform. That is to say, the Fourier transform functor
\begin{equation}
    \begin{split}
        FT\colon D_!^{\GL_M(\bfO)}(\Gr_M\times \bfF^M)\longrightarrow D_*^{\GL_M(\bfO)}(\Gr_M\times (\bfF^M)^\vee)
    \end{split}
\end{equation}
 intertwines $\circledast$ and $\textasteriskcentered$. 
 
 By a choice of basis, we can identify $\bfF^M$ with $(\bfF^M)^\vee$. Accordingly, the action of $\GL_M(\bO)$ becomes the original action composed with $g\to (g^t)^{-1}$. For more details, see \cite[Section 3.15]{[BFGT]}.
\end{rem}

Regarding Theorem \ref{NN}, later on, when we use the monoidal structure $\circledast$, we identify $C_{M|M}$ as $D_!^{\GL_M(\bfO)}(\Gr_M\times \bfF^M)$, and when we use $\textasteriskcentered$, we identify $C_{M|M}$ as $D_*^{\GL_M(\bfO)}(\Gr_M\times \bfF^M)$.

\subsection{Left action of $C_{M|M}$}\label{def of mon}
In this section, we will define the left action of $C_{M|M}$ on $C_{M|N}$.  Since $FT$ is an equivalence of monoidal categories, the category of the categories admitting an action of $(D_*^{\GL_M(\bfO)}(\Gr_M\times \bfF^M), \textasteriskcentered)$ is equivalent to the category of the categories admitting an action of $(D_!^{\GL_M(\bfO)}(\Gr_M\times \bfF^M), \circledast)$. 
Hence, we only need to construct such an action for $(C_{M|M}, \textasteriskcentered)$.

Assume $N>M$, let us consider the following diagram,
\begin{equation*}
    \xymatrix{\Mir^t_{M+1}(\bfF)\times \Gr_N\ar[r]^{}\ar[d]^{p\times id}& \Mir^t_{M+1}(\bfF)\overset{\GL_M(\bfO)}{\times} \Gr_N\ar[d]^{m_N}\\
\Mir^t_{M+1}(\bfF)/\GL_M(\bfO)\times \Gr_N& \Gr_N.
}
\end{equation*}

Given $\mathcal{F}_1\in D_*^{\GL_M(\bfO)}(\Gr_M\times \bfF^M)$ and $\mathcal{F}_2\in D^{{\GL}_M({\bfO}) \ltimes U_{M, N}(\bfF), \chi_{M, N}}(\Gr_N)$, the pullback of $\mathcal{F}_1\boxtimes \mathcal{F}_2$ along $p\times id$ descends to a sheaf $\mathcal{F}_1\tilde{\boxtimes} \mathcal{F}_2$ on $\Mir^t_{M+1}(\bfF)\overset{\GL_M(\bfO)}{\times} \Gr_N$. We define $\mathcal{F}_1\overset{}{\textasteriskcentered}\mathcal{F}_2:= m_{N,*}(\mathcal{F}_1\tilde{\boxtimes} \mathcal{F}_2)$.

Since $U_{M,N}(\bfF)$ is a normal subgroup of ${\GL}_M({\bfO}) \ltimes U_{M, N}(\bfF)$ and $\chi_{M,N}$ is stable under the conjugation action of $\GL_M(\bfO)$, the resulting sheaf $\mathcal{F}_1\overset{}{\textasteriskcentered}\mathcal{F}_2$ is still $({\GL}_M({\bfO}) \ltimes U_{M, N}(\bfF), \chi_{M, N})$-equivariant. We get an action of $C_{M|M}$ on $C_{M|N}$.

\subsection{Right action on $C_{M-1|M}$}\label{sec 3.7}
In this section, we will construct a right action of $C_{M|M}$ on $C_{M-1|M}$.

Let us denote by $j_*\in D_!^{\GL_M(\bfO)}(\Gr_M\times \bfF^M)$ the $*$-extension of the constant sheaf on $1\times (\bfO^M\setminus t\bfO^M)$ along the locally closed embedding 
\[j\colon 1\times (\bfO^M\setminus t\bfO^M) \longrightarrow \Gr_M\times \bfF^M,\] 
then taking the convolution with $j_*$ gives an endofunctor of $C_{M|M}$,
\begin{equation}
\begin{split}
        j_*\circledast -\colon C_{M|M}\longrightarrow C_{M|M}\\
        \mathcal{F}\mapsto j_* \circledast \mathcal{F}.
\end{split}
\end{equation}

It is easy to see that $j_*$ is an idempotent algebra object of ($C_{M|M}, \circledast$), i.e., $j_*\circledast j_*\simeq j_*$ with the associativity. In particular, the Kleisli category $j_*\circledast C_{M|M}$ is a full subcategory of $C_{M|M}$.

By definition, $j_*\circledast C_{M|M}$ is equivalent to $D^{\Mir_M(\bfO)}(\Gr_M)$. Indeed, since $\Stab_{\GL_M(\bfO)}(e_{M})= \Mir_M(\bfO)$, we have
\begin{equation}
    D^{\Mir_M(\bfO)}(\Gr_M)\simeq D_!^{\Mir_M(\bfO)}(\Gr_M\times e_{M})\simeq D_!^{\GL_M(\bfO)}(\Gr_M\times (\bfO^M\setminus t\bfO^M)).
\end{equation}
The second  isomorphism is by \cite[Corollary 6.2.5]{[Ber]}.

Let $\xi_M$ be the automorphism of $\Gr_M$ which is induced by sending $e_M$ to $e_M$ and $e_i$ to $te_i$ for any $i=1,2,\ldots,M-1$. Pushforward along $\xi_M$ induces an endofunctor of $D^{\Mir_M(\bfO)}(\Gr_M)$.

By \cite[Section 4]{[BFGT]}, we have
\begin{equation}\label{3.5-}
    C_{M-1|M}\simeq \colim_{\xi_{M,*}}\ D^{\Mir_M(\bfO)}(\Gr_M).
\end{equation}
We denote by 
\begin{equation}
    \zeta_M\colon j_*\circledast C_{M|M}\longrightarrow j_*\circledast C_{M|M}
\end{equation}
the corresponding transition functor corresponding to $\xi_{M,*}$ under the equivalence $D^{\Mir_M(\bfO)}(\Gr_M)\simeq j_*\circledast C_{M|M}$.

Note that the transition functor $\xi_{M,*}$ is given by left multiplication with a diagonal element
$D_M=\operatorname{diag}(t,t,\ldots,t,1)$ in $\GL_M(\bfF)$, hence the right action of $C_{M|M}$ on $j_*\circledast C_{M|M}$ (resp. $D^{\Mir_M(\bfO)}(\Gr_M)$) is compatible with respect to the transition functor $\zeta_M$ (resp. $\xi_{M,*}$). By taking the colimit of the action of $C_{M|M}$ on $j_* \circledast C_{M|M}$, we obtain an action of $C_{M|M}$ on $\colim\ j_*\circledast C_{M|M}= C_{M-1|M}$.

\begin{rem}
We note that we can also describe $j_*\circledast C_{M|M}$ as a $\Mir^t_M(\bfO)$-equivariant category instead of a $\Mir_M(\bfO)$-equivariant category. Namely, by $g\longrightarrow g^{t,-1}$, there is an equivalence $D^{\Mir^t_M(\bfO)}(\Gr_M)\simeq D^{\Mir_M(\bfO)}(\Gr_M)\simeq j_*\circledast C_{M|M}$.
\end{rem}

If we use the identification $j_*\circledast C_{M|M}\simeq  D^{\Mir^t_{M}}(\Gr_M)$, we need to replace the transition functor $\xi_{M,*}$ by $\xi_{M,*}^{-1}$ in \eqref{3.5-}, i.e.,
\begin{equation}
    C_{M-1|M}\simeq \colim_{\xi^{-1}_{M,*}}\ D^{\Mir^t_M(\bfO)}(\Gr_M).
\end{equation}





\subsection{Tensor product of Gaiotto categories}Now we are going to prove the D-module side analog of  \eqref{2.10}. 

\begin{thm}\label{claim1}
For $M\leq N$, we have
\begin{equation}\label{thm 3.1}
    C^{\mathsf{ren}}_{M-1|M}\underset{C^{\mathsf{ren}}_{M|M}}{\otimes}{C^{\mathsf{ren}}_{M|N}}\simeq C^{\mathsf{ren}}_{M-1|N}.
\end{equation}

Similar claim holds for other sheaf theories, {e.g}.,
\begin{equation}\label{thm 3.1-1}
    C_{M-1|M}\underset{C_{M|M}}{\otimes}{C_{M|N}}\simeq C_{M-1|N}.
\end{equation}
\end{thm}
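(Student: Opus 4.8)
The argument parallels the coherent-side proof of Proposition \ref{prop 2.1}, with the geometric data of \S\ref{conv cons}--\S\ref{sec 3.7} in place of the representation-theoretic data. Under this dictionary the idempotent $\tilde c$ is replaced by the idempotent $j_*\in(C_{M|M},\circledast)$ of \S\ref{sec 3.7}; the ind-completed subcategory $D^{\GL_{M-1}\times\GL_N,\geq 0}(\mathfrak{B}_{M-1|N}^{2,0})$ is replaced by the Kleisli subcategory $j_*\circledast C_{M|N}$ cut out from $C_{M|N}$ by the idempotent endofunctor $j_*\circledast-$ --- here $C_{M|N}$ is regarded as a left $(C_{M|M},\circledast)$-module by transporting the $(C_{M|M},\textasteriskcentered)$-action of \S\ref{def of mon} along the Fourier transform of \S\ref{restr of mon}, which identifies the two monoidal categories together with their module categories; and the endofunctor $\eta$ is replaced by the transition functor $\zeta$ of the colimit presentation \eqref{3.5-}, induced by left translation by $\operatorname{diag}(t,\dots,t,1)$ placed in the $\GL_M(\bfF)$-block of $\GL_N(\bfF)$. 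I spell the proof out for the $C^{\mathsf{ren}}$-theory; $M=N$ being vacuous (both sides are $C^{\mathsf{ren}}_{M-1|M}$), assume $M<N$.

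\emph{Part 1: a colimit presentation of $C_{M-1|N}$.} Generalizing \eqref{3.5-}, I will show
\begin{equation*}
C^{\mathsf{ren}}_{M-1|N}\ \simeq\ \colim_{\zeta}\bigl(j_*\circledast C^{\mathsf{ren}}_{M|N}\bigr).
\end{equation*}
Two inputs are required. First, a geometric description of the Kleisli subcategory: since $\Stab_{\GL_M(\bfO)}(e_M)=\Mir_M(\bfO)$, the computation of \S\ref{sec 3.7} (which rests on \cite[Corollary 6.2.5]{[Ber]}) should extend to give $j_*\circledast C_{M|N}\simeq D^{\Mir_M(\bfO)\ltimes U_{M,N}(\bfF),\,\chi_{M,N}}(\Gr_N)$, the mirabolic analog of $D^{\Mir_M(\bfO)}(\Gr_M)$. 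Second, the identification of $C_{M-1|N}$ with the colimit of these categories along $\zeta$: this is the word-for-word generalization of \cite[Section 4]{[BFGT]}, where $N=M$ is treated; the only new feature is that one now works with $\chi$-twisted equivariant $D$-modules, and this is exactly what the Fourier transform handles --- on the Fourier-dual side $\chi_{M,N}$-equivariance becomes ordinary equivariance and $\circledast$ becomes $\textasteriskcentered$, so the arguments of \cite[Section 4]{[BFGT]} apply unchanged.

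\emph{Part 2: assembling the equivalence.} Granting Part 1 the theorem is formal. By \eqref{3.5-}, $C^{\mathsf{ren}}_{M-1|M}\simeq\colim_{\zeta_M}(j_*\circledast C^{\mathsf{ren}}_{M|M})$ with $j_*\circledast C^{\mathsf{ren}}_{M|M}\simeq D^{\Mir_M(\bfO)}(\Gr_M)$, and by the last paragraph of \S\ref{sec 3.7} the transition functors are right $C_{M|M}$-linear, so this is a colimit of right $C^{\mathsf{ren}}_{M|M}$-module categories. Since the relative tensor product preserves colimits in each variable,
\begin{equation*}
C^{\mathsf{ren}}_{M-1|M}\underset{C^{\mathsf{ren}}_{M|M}}{\otimes}C^{\mathsf{ren}}_{M|N}\ \simeq\ \colim_{\zeta_M}\Bigl(\bigl(j_*\circledast C^{\mathsf{ren}}_{M|M}\bigr)\underset{C^{\mathsf{ren}}_{M|M}}{\otimes}C^{\mathsf{ren}}_{M|N}\Bigr).
\end{equation*}
Now $j_*\circledast C_{M|M}$ is the image of the idempotent right-$C_{M|M}$-linear endofunctor $j_*\circledast-$ of $C_{M|M}$; base-changing this retraction along $-\otimes_{C_{M|M}}C_{M|N}$ yields the idempotent endofunctor $j_*\circledast-$ of $C_{M|N}$, whence $(j_*\circledast C_{M|M})\otimes_{C_{M|M}}C_{M|N}\simeq j_*\circledast C_{M|N}$. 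Substituting and invoking Part 1,
\begin{equation*}
C^{\mathsf{ren}}_{M-1|M}\underset{C^{\mathsf{ren}}_{M|M}}{\otimes}C^{\mathsf{ren}}_{M|N}\ \simeq\ \colim_{\zeta}\bigl(j_*\circledast C^{\mathsf{ren}}_{M|N}\bigr)\ \simeq\ C^{\mathsf{ren}}_{M-1|N},
\end{equation*}
which is \eqref{thm 3.1}. Identity \eqref{thm 3.1-1} and the $C^{\mathsf{loc.c}}$- and $C^c$-versions follow by the identical argument, using that $\circledast$ and $j_*\circledast-$ respect the renormalization classes of \S\ref{conv cons} (so Part 1 and the Kleisli identification pass to all four sheaf theories), Proposition \ref{3.3.7} and Corollary \ref{prop 3.3.6} to match renormalizations, and the $D$-module counterpart of Remark \ref{2.4.1} for the nilpotent-support (equivalently, compactly supported) versions.

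\emph{The main obstacle.} Part 2 is pure module-category bookkeeping; the real content sits in Part 1, and the delicate point there is that $C_{M-1|N}$ exhausts $\colim_\zeta(j_*\circledast C_{M|N})$ rather than being merely a subcategory. This demands a precise analysis of the $\GL_M(\bfO)$-orbits entering the convolution $j_*\circledast-$, of the way $\chi_{M,N}$ degenerates to $\chi_{M-1,N}$ along the transition maps, and of the compatibility of the $*$- and $!$-renormalizations at each stage --- that is, the $\chi$-equivariant incarnation of the colimit presentation of \cite[Section 4]{[BFGT]}, for which the Fourier transform of \S\ref{restr of mon} is precisely the device that makes the $\circledast$- and $\textasteriskcentered$-pictures, hence the two sides of the asserted equivalence, match.
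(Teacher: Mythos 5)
Your Part 2 is essentially the paper's own reduction (colimit presentations of $C_{M-1|M}$ and $C_{M-1|N}$, base change of the idempotent $j_*$ along $-\otimes_{C_{M|M}}C_{M|N}$), but the identification on which everything rests in Part 1 is wrong. For $M<N$ the image of $j_*\circledast-$ on $C_{M|N}$ is \emph{not} $D^{\Mir_M(\bfO)\ltimes U_{M,N}(\bfF),\chi_{M,N}}(\Gr_N)$, and the computation of \S\ref{sec 3.7} does not transport by analogy: when $N=M$ the space $\Gr_M\times\bfF^M$ has an honest $\bfF^M$-factor, $\circledast$ restricted to $D_!(\bfF^M)$ is the $!$-tensor product, so $j_*\circledast-$ is restriction of support to $\bfO^M\setminus t\bfO^M$ and the stabilizer argument yields $\Mir_M(\bfO)$. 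When $M<N$ there is no $\bfF^M$-factor in $\Gr_N$; the $D_!(\bfF^M)$-module structure on $C_{M|N}$ is the Fourier transform of the \emph{translation} action of the column group $\bfF^M\subset\GL_N(\bfF)$, so the Kleisli image must be computed by the Fourier-theoretic statement \cite[Theorem 5.2.11]{[Ber]}: $\Vect_{e_M}\otimes_{D_!(\bfF^M)}D^{U_{M,N}(\bfF),\chi_{M,N}}(\Gr_N)\simeq (D^{U_{M,N}(\bfF),\chi_{M,N}}(\Gr_N))^{\bfF^M,\chi_{e_M}}$, which by Lemma \ref{3.3 lem} and $U_{M-1,N}(\bfF)=\bfF^M\ltimes U_{M,N}(\bfF)$ equals $D^{U_{M-1,N}(\bfF),\chi_{M-1,N}}(\Gr_N)$, with residual equivariance by $\Mir^t_M(\bfO)$ (the transpose coming from the Fourier twist). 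This is exactly Proposition \ref{prop 3.1}: the unipotent group grows and the character degenerates already at the Kleisli stage, not in the colimit. Two sanity checks: for $M=1<N$ your formula would give $D^{U_{1,N}(\bfF),\chi_{1,N}}(\Gr_N)$, whereas the true image is the Whittaker-type category $(D^{U_{0,N}(\bfF),\chi_{0,N}}(\Gr_N))^{\Mir^t_1(\bfO)}$; and structurally the Kleisli category is a retract (full subcategory) of $C_{M|N}$, while $D^{\Mir_M(\bfO)\ltimes U_{M,N}(\bfF),\chi_{M,N}}(\Gr_N)$ receives $C_{M|N}$ by forgetting equivariance and cannot be such a retract.

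The error then propagates to your colimit claim: the transition functor is conjugation/translation by $\operatorname{diag}(t,\dots,t,1)$ in the $\GL_M$-block, which normalizes $U_{M,N}(\bfF)$ and fixes $\chi_{M,N}$, so every term of your colimit stays $(U_{M,N}(\bfF),\chi_{M,N})$-equivariant and there is no mechanism to create the extra $(\bfF^M,\chi_{e_M})$-equivariance that distinguishes $C_{M-1|N}=D^{\GL_{M-1}(\bfO)\ltimes U_{M-1,N}(\bfF),\chi_{M-1,N}}(\Gr_N)$. The paper's order of operations is the opposite of yours: first the Fourier identification above (Proposition \ref{prop 3.1}), and only then a colimit which merely relaxes $\Mir^t_M(\bfO)$ to $\GL_{M-1}(\bfO)$ (Proposition \ref{lemma 3.1}, via Lemma \ref{colimit lemma}); moreover the $\locc$/$\ren$ versions of that colimit are not formal and require the orbit argument (every $\GL_{M-1}(\bfO)\ltimes U_{M-1,N}(\bfF)$-orbit is $t^n\bfO^{M-1}$-invariant for $n\gg0$) together with Corollary \ref{prop 3.3.6} to match renormalizations. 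Your appeal to ``the arguments of \cite[Section 4]{[BFGT]} apply unchanged, the Fourier transform handles the twist'' defers exactly these steps, which constitute the actual content of Theorem \ref{claim1}.
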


\subsection{Mirabolic equivariant category}
To prove Theorem \ref{claim1}, we need to consider the mirabolic equivariant category. Let us denote by $D^{\Mir^t_{M+1}({\bfO}) \ltimes U_{M, N}(\bfF), \chi_{M, N}}(\Gr_N)$ the category of left $(\Mir^t_{M+1}({\bfO}) \ltimes U_{M, N}(\bfF), \chi_{M, N})$-equivariant D-modules on $\Gr_N$. This category is well-defined because $\chi_{M,N}$ is stable under the conjugation action of $\Mir^t_{M+1}(\bfO)$. Similarly, we can define the renormalized mirabolic equivariant category. Namely, we denote by $(D^{U_{M, N}(\bfF), \chi_{M, N}}(\Gr_N))^{\Mir^t_{M+1}({\bfO}),\ren}$ the ind-completion of $(D^{U_{M, N}(\bfF), \chi_{M, N}}(\Gr_N))^{\Mir^t_{M+1}({\bfO}),\locc}$. Here $(D^{U_{M, N}(\bfF), \chi_{M, N}}(\Gr_N))^{\Mir^t_{M+1}({\bfO}),\locc}$ is the  category of locally compact left $(\Mir^t_{M+1}({\bfO}) \ltimes U_{M, N}(\bfF), \chi_{M, N})$-equivariant D-modules on $\Gr_N$. 

Since $\GL_{M}(\bfO)$ is a subgroup of $\Mir^t_{M+1}(\bfO)$, there is a forgetful functor 
\begin{equation}
    \Res_{M,N}\colon (D^{U_{M, N}(\bfF), \chi_{M, N}}(\Gr_N))^{\Mir^t_{M+1}({\bfO})}\longrightarrow (D^{U_{M, N}(\bfF), \chi_{M, N}}(\Gr_N))^{\GL_{M}({\bfO})}.
\end{equation}

Denote by $\xi_{M,N}$ the automorphism of $\Gr_N$ which sends $e_i$ to $t\cdot e_i$, if $1\leq i\leq M$, and $e_{i}$ to $e_i$, if $M+1\leq i\leq N$. The action of $\xi_{M,N}$ on $\Gr_N$ is given by left multiplication with the $N\times N$ diagonal matrix $D_{M,N}=(t, t,..., t, 1,1,...,1)$. 
By a simple calculation, \[{\GL}_{M}({\bfO}) \ltimes U_{M, N}(\bfF)= D^{-1}_{M,N}({\GL}_{M}({\bfO}) \ltimes U_{M, N}(\bfF))D_{M,N}\] \[(\textnormal{resp.}\qquad \Mir^t_{M+1}(\bfO)) \ltimes U_{M, N}(\bfF)\subset D_{M,N}^{-1}(\Mir^t_{M+1}(\bfO)) \ltimes U_{M, N}(\bfF))D_{M,N}).\] Furthermore, the character $\chi_{M,N}$ of $U_{M,N}(\bfF)$ is stable under the conjugation by $D^{-1}_{M,N}$, hence the functor $\xi^{-1}_{M,N,*}$ induces an auto-equivalence (resp. endofunctor) of $D^{{\GL}_M({\bfO}) \ltimes U_{M, N}(\bfF), \chi_{M, N}}(\Gr_N)$ (resp. $D^{\Mir^t_{M+1}(\bfO)) \ltimes U_{M, N}(\bfF), \chi_{M, N}}(\Gr_N)$).

\begin{equation}
\begin{split}
    \xi^{-1}_{M,N,*}\colon (D^{U_{M, N}(\bfF), \chi_{M, N}}(\Gr_N))^{\GL_{M}({\bfO})}\longrightarrow (D^{U_{M, N}(\bfF), \chi_{M, N}}(\Gr_N))^{\GL_{M}({\bfO})},\\
    \xi^{-1}_{M,N,*}\colon (D^{U_{M, N}(\bfF), \chi_{M, N}}(\Gr_N))^{\Mir^t_{M+1}(\bfO)}\longrightarrow (D^{U_{M, N}(\bfF), \chi_{M, N}}(\Gr_N))^{\Mir^t_{M+1}(\bfO)}.
\end{split}
\end{equation}

We will prove the following Proposition after a short preparation.
\begin{prop}\label{lemma 3.1}
Taking colimit of $\Res_{M,N}$ with respect to the transition functor $\xi^{-1}_{M,N,*}$ induces an equivalence of categories. That is to say, the functor\[\colim_{\xi^{-1}_{M,N,*}} (D^{U_{M, N}(\bfF), \chi_{M, N}}(\Gr_N))^{\Mir^t_{M+1}({\bfO})}\longrightarrow \colim_{\xi^{-1}_{M,N,*}} (D^{U_{M, N}(\bfF), \chi_{M, N}}(\Gr_N))^{\GL_{M}({\bfO})}\]is an equivalence. In particular, we have
\begin{equation}\label{3.5}
    \colim_{\xi^{-1}_{M,N,*}} (D^{U_{M, N}(\bfF), \chi_{M, N}}(\Gr_N))^{\Mir^t_{M+1}({\bfO})}\simeq (D^{U_{M, N}(\bfF), \chi_{M, N}}(\Gr_N))^{\GL_{M}({\bfO})}.
\end{equation}
\end{prop}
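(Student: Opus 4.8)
The plan is to reduce the statement to the essentially group-theoretic fact that, after passing to the colimit, the extra ``mirabolic'' equivariance encoded in $\Mir^t_{M+1}(\bfO)$ beyond $\GL_M(\bfO)$ is relaxed away. Abbreviate $\mathcal A:=(D^{U_{M, N}(\bfF), \chi_{M, N}}(\Gr_N))^{\Mir^t_{M+1}(\bfO)}$ and $\mathcal B:=(D^{U_{M, N}(\bfF), \chi_{M, N}}(\Gr_N))^{\GL_{M}(\bfO)}$, and let $V\subset\GL_N$ be the $M$-dimensional vector subgroup filling the $(M{+}1)$-st column of the upper-left $(M{+}1)\times(M{+}1)$ block, so that $\Mir^t_{M+1}(\bfO)=\GL_M(\bfO)\ltimes V(\bfO)$ with $V(\bfO)$ normal and $\Mir^t_{M+1}(\bfO)/\GL_M(\bfO)=V(\bfO)$ the arc space of an affine space. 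I would first observe, with \lemref{3.3 lem}, that $\mathcal A$ is obtained from $\mathcal B$ by imposing $V(\bfO)$-equivariance (with trivial character), and that consequently $\Res_{M,N}\colon\mathcal A\to\mathcal B$ is fully faithful (forgetting equivariance along the pro-unipotent affine space $V(\bfO)$). Thus $\mathcal A$ is the full cocomplete subcategory of $\mathcal B$ of objects admitting a $V(\bfO)$-equivariant structure. For $j\in\BZ$ let $\mathcal A^{(j)}\subset\mathcal B$ be the full subcategory of those objects that are in addition equivariant for the congruence subgroup $V(t^{j}\bfO)$; then $\mathcal A^{(0)}=\mathcal A$ and $\cdots\subset\mathcal A^{(-1)}\subset\mathcal A^{(0)}\subset\mathcal A^{(1)}\subset\cdots\subset\mathcal B$.

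Next I would unwind the transition functor. Since $\xi^{-1}_{M,N,*}$ is push-forward along left translation by $D_{M,N}^{-1}$, and conjugation by $D_{M,N}^{-1}$ normalizes $\GL_M(\bfO)$ and $U_{M,N}(\bfF)$, fixes $\chi_{M,N}$, and carries $V(t^{j}\bfO)$ onto $V(t^{j-1}\bfO)$, the functor $\xi^{-1}_{M,N,*}$ is the restriction to $\mathcal A$ of the auto-equivalence of $\mathcal B$ given by translation by $D_{M,N}^{-1}$, and the latter restricts to equivalences $\mathcal A^{(j)}\xrightarrow{\sim}\mathcal A^{(j-1)}$. Writing $\xi_{M,N,*}$ for the inverse auto-equivalence of $\mathcal B$, the equivalence $(\xi_{M,N,*})^{n}\colon\mathcal A\xrightarrow{\sim}\mathcal A^{(n)}$ lets one re-coordinatize the $n$-th term of the sequence $\mathcal A\xrightarrow{\xi^{-1}_{M,N,*}}\mathcal A\xrightarrow{\xi^{-1}_{M,N,*}}\cdots$, after which every transition functor becomes the inclusion $\mathcal A^{(n)}\hookrightarrow\mathcal A^{(n+1)}$, so that
\[
\colim_{\xi^{-1}_{M,N,*}}\mathcal A\ \simeq\ \colim\bigl(\mathcal A^{(0)}\hookrightarrow\mathcal A^{(1)}\hookrightarrow\mathcal A^{(2)}\hookrightarrow\cdots\bigr)
\]
is the full cocomplete subcategory of $\mathcal B$ generated under colimits by $\bigcup_{n\ge0}\mathcal A^{(n)}$.

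It then remains to see this subcategory is all of $\mathcal B$, i.e.\ that the locally compact generators of $\mathcal B$ each lie in $\mathcal A^{(n)}$ for $n\gg0$; this is the step I expect to be the main obstacle, and presumably what the preparation preceding the proposition is designed to handle. The ingredients I would use are: $V(t^{n}\bfO)$ lies in the $n$-th congruence subgroup of $\GL_N(\bfO)$, which acts trivially on any fixed bounded piece $\Gr_N^{\le\lambda}$ once $n$ is large (depending on $\lambda$); and cleanness of $(U_{M,N}(\bfF),\chi_{M,N})$-averaging, whereby every locally compact object of $\mathcal B$ is a retract of $\Av^{U_{M,N}(\bfF),\chi_{M,N}}$ of some bounded $D$-module $\mathcal G$. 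Such a $\mathcal G$, having bounded support, is $V(t^{n}\bfO)$-equivariant for $n\gg0$; and since $V(t^{n}\bfO)$ normalizes $U_{M,N}(\bfF)$ and preserves $\chi_{M,N}$, the corresponding average (and hence the given object of $\mathcal B$) is $V(t^{n}\bfO)$-equivariant as well. This gives $\colim_{\xi^{-1}_{M,N,*}}\mathcal A\simeq\mathcal B$. Finally, $\xi^{-1}_{M,N,*}$ is an auto-equivalence on the $\GL_M(\bfO)$-side, so $\colim_{\xi^{-1}_{M,N,*}}\mathcal B\simeq\mathcal B$; tracing through the identifications shows the resulting equivalence is the one induced by the colimit of $\Res_{M,N}$, which is the assertion \eqref{3.5}.
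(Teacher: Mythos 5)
Your first half matches the paper's mechanism: writing $\Mir^t_{M+1}(\bfO)=\GL_M(\bfO)\ltimes V(\bfO)$, using that conjugation by $D_{M,N}$ rescales the congruence subgroups $V(t^j\bfO)$, and re-indexing by powers of the translation auto-equivalence so that the colimit becomes the nested union of the full subcategories $\mathcal A^{(n)}\simeq (D^{U_{M,N}(\bfF),\chi_{M,N}}(\Gr_N))^{\GL_M(\bfO)\ltimes t^n\bfO^M}$ inside $\mathcal B$, with transition functors the forgetful inclusions. This is exactly the content of the paper's Lemma~\ref{colimit lemma} (modulo the unproved but standard assertion that the colimit along fully faithful continuous embeddings with continuous right adjoints is the subcategory generated by the union).

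The genuine gap is in your last step, which you yourself flag as the expected obstacle. The paper does \emph{not} finish by checking that generators of $\mathcal B$ acquire $V(t^n\bfO)$-equivariance for $n\gg 0$; it finishes formally, for an arbitrary category $\cC$ acted on by the relevant group, by (i) commuting $\GL_M(\bfO)$-invariants with the colimit (legitimate since $\GL_M(\bfO)$ is of pro-finite type) and (ii) invoking the general equivalence $\colim_{\oblv}\,\cC^{t^i\bfO^M}\simeq\cC$ of \cite[Proposition 4.3.8]{[Ber]}. Your substitute argument rests on two substantive, unjustified inputs: that $\mathcal B=(D^{U_{M,N}(\bfF),\chi_{M,N}}(\Gr_N))^{\GL_M(\bfO)}$ is generated under colimits by its locally compact objects, and a ``cleanness'' statement exhibiting every locally compact object as a retract of $\Av^{U_{M,N}(\bfF),\chi_{M,N}}$ of a bounded D-module. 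Neither is available at this point of the paper; the first is particularly delicate precisely because the paper distinguishes $C_{M|N}$ from its renormalization $C^{\ren}_{M|N}$ (the proposition concerns the full, non-renormalized equivariant categories, where no compact-generation statement is asserted), and the second imports Whittaker-category machinery the paper never uses. A support argument of the kind you sketch (invariance of the relevant objects/orbits under $t^n\bfO^M$ for $n\gg0$) does appear in the paper, but only afterwards, in the corollary treating the locally compact/renormalized version, where it is genuinely needed; for the proposition itself the formal route via \cite[Proposition 4.3.8]{[Ber]} is what closes the argument, and your proposal as written does not supply a correct replacement for it.
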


Note that $\Mir^t_{M+1}(\bfO)$ is a pro-group scheme, we denote by $D_*(\Mir^t_{M+1}(\bfO))$ the category of D-modules defined in ~\S\ref{sec 3.1}. It is a monoidal category with the convolution monoidal structure. 

Conjugating by $D_{M+1}$ induces a monoidal functor
\begin{equation}
    D_*(\Mir^t_{M+1}(\bfO))\longrightarrow D_*(\Mir^t_{M+1}(\bfO))
\end{equation}
which factors through $D_*(\GL_M(\bO)\ltimes t \bO^M)$.

In particular, it induces a functor
\begin{equation}
    F_{M+1}: \cC^{\Mir^t_{M+1}(\bfO)}\longrightarrow \cC^{\Mir^t_{M+1}(\bfO)}
\end{equation}
for any category $\cC$ with an action of $\Mir^t_{M+1}(\bfO)$. It factors through $\cC^{\GL_M(\bO)\ltimes t \bO^M}$.

\begin{lem}\label{colimit lemma}
There is an equivalence of categories
\begin{equation}
    \colim_{F_{M+1}} \cC^{\Mir^t_{M+1}(\bfO)}\simeq \cC^{\GL_M(\bfO)}.
\end{equation}
\end{lem}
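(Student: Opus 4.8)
The plan is to reduce the colimit along $F_{M+1}$ to a colimit of $(\GL_M(\bfO)\ltimes t^n\bfO^M)$-invariant categories, and then to let the pro-unipotent "tail'' $t^n\bfO^M$ shrink to the identity.

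First I would set up the chain of groups. Write $\Mir^t_{M+1}(\bfO)=\GL_M(\bfO)\ltimes V$ with $V:=\bfO^M$ the pro-unipotent group of ``last columns''. The diagonal matrix $D_{M+1}=\operatorname{diag}(t,\dots,t,1)$ centralizes $\GL_M(\bfO)$ and conjugates $V$ isomorphically onto $tV$; hence $\operatorname{Ad}(D_{M+1})^n$ identifies $\Mir^t_{M+1}(\bfO)$ with $G_n:=\GL_M(\bfO)\ltimes t^nV$, yielding a decreasing chain $G_0=\Mir^t_{M+1}(\bfO)\supset G_1\supset G_2\supset\cdots$ with $\bigcap_n G_n=\GL_M(\bfO)$ (since $\bigcap_n t^nV=\{e\}$). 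By construction of $F_{M+1}$ (the monoidal endofunctor of $D_*(\Mir^t_{M+1}(\bfO))$ induced by conjugation by $D_{M+1}$ factors as the equivalence $D_*(\Mir^t_{M+1}(\bfO))\isoto D_*(G_1)$ followed by pushforward along $G_1\hookrightarrow\Mir^t_{M+1}(\bfO)$), the functor $F_{M+1}$ is the composite of the forgetful functor $\cC^{\Mir^t_{M+1}(\bfO)}=\cC^{G_0}\to\cC^{G_1}$ with the equivalence $\cC^{G_1}\isoto\cC^{G_0}$ transported along $\operatorname{Ad}(D_{M+1})$. Iterating and using cofinality (the equivalences get absorbed into the telescope, with the explicit identifications $\operatorname{Ad}(D_{M+1})\colon\Mir^t_{M+1}(\bfO)\isoto G_1$), one obtains
\[
\colim_{F_{M+1}}\cC^{\Mir^t_{M+1}(\bfO)}\;\simeq\;\colim\bigl(\cC^{G_0}\to\cC^{G_1}\to\cC^{G_2}\to\cdots\bigr),
\]
the transition maps now being plain forgetful functors.

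Next I would show $\colim_n\cC^{G_n}\simeq\cC^{\GL_M(\bfO)}$. Since $t^nV$ is a pro-unipotent normal subgroup of $G_n$ with quotient $\GL_M(\bfO)$, \lemref{3.3 lem} gives $\cC^{G_n}\simeq(\cC^{t^nV})^{\GL_M(\bfO)}$, compatibly with the forgetful maps, which become $(-)^{\GL_M(\bfO)}$ applied to the forgetful functors $\cC^{t^nV}\to\cC^{t^{n+1}V}$. Because $t^nV$ is pro-unipotent of pro-finite type, $\Av^{t^nV}_*$ is a continuous idempotent monad (as in the proof of \propref{prop 3.3.1}), so each $\cC^{t^nV}\to\cC$ is fully faithful onto a full subcategory, and these are nested: $\cC^{V}\subset\cC^{tV}\subset\cdots\subset\cC$. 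Their union is all of $\cC$: a strong action of the pro-finite-type group scheme $\Mir^t_{M+1}(\bfO)$ is smooth, i.e. $\cC=\colim_k\cC^{K_k}$ over the congruence subgroups $K_k$ (cf. \cite{[Ber]}), and since $K_k\supset t^kV$ one has $\cC^{K_k}\subset\cC^{t^kV}$, forcing $\colim_n\cC^{t^nV}=\cC$. Now the natural functor $\colim_n\cC^{G_n}=\colim_n(\cC^{t^nV})^{\GL_M(\bfO)}\to\cC^{\GL_M(\bfO)}$ is fully faithful (a filtered colimit of fully faithful functors along fully faithful transitions) and essentially surjective: given $c\in\cC^{\GL_M(\bfO)}$, its image in $\cC$ lies in $\cC^{t^nV}$ for some $n$, and since $\GL_M(\bfO)$ normalizes $t^nV$ and $\cC^{t^nV}\hookrightarrow\cC$ is a full $\GL_M(\bfO)$-stable subcategory, the equivariant structure on $c$ refines to an object of $(\cC^{t^nV})^{\GL_M(\bfO)}=\cC^{G_n}$. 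Composing the two equivalences then proves the lemma.

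The main obstacle is the input ``$\cC=\colim_k\cC^{K_k}$'', that is, the smoothness/continuity of a strong action of the pro-finite-type group scheme $\Mir^t_{M+1}(\bfO)$, which is what guarantees that the shrinking pro-unipotent tails $t^nV$ eventually capture every object of $\cC$; I would invoke this from \cite{[Ber]} rather than reprove it. A secondary point requiring care is the first reduction: $F_{M+1}$ is not an automorphism but only factors as forgetful-then-equivalence, so rewriting $\colim_{F_{M+1}}$ as a colimit of honest forgetful functors must be done via the standard telescope/cofinality argument with the explicit conjugation identifications spelled out above.
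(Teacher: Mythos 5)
Your first two steps track the paper's proof: you rewrite $\colim_{F_{M+1}}\cC^{\Mir^t_{M+1}(\bfO)}$ as a colimit along plain forgetful functors $\cC^{\GL_M(\bfO)\ltimes t^n\bfO^M}\to\cC^{\GL_M(\bfO)\ltimes t^{n+1}\bfO^M}$ (the paper does this via the same commutative square), and you identify $\cC^{\GL_M(\bfO)\ltimes t^n\bfO^M}\simeq(\cC^{t^n\bfO^M})^{\GL_M(\bfO)}$ via the semidirect-product lemma. The divergence, and the genuine gap, is in your last step. Your essential-surjectivity argument rests on the claim that for $c\in\cC^{\GL_M(\bfO)}$ the underlying object $\oblv(c)\in\cC$ lies in $\cC^{t^nV}$ for a \emph{single} $n$. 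This does not follow from $\cC\simeq\colim_n\cC^{t^nV}$: in a filtered colimit of cocomplete categories along fully faithful continuous embeddings, the colimit is generated under colimits by the union of the pieces, but its objects need not belong to any one piece. Concretely, take $\cC=D(\bfF^M)$ with $V=\bfO^M$ acting by translations and consider $\bigoplus_{n\ge 0}\mathcal F_n$, where $\mathcal F_n$ is the constant D-module on the $\GL_M(\bfO)$-orbit $t^n\bfO^M\setminus t^{n+1}\bfO^M$; this is $\GL_M(\bfO)$-equivariant, but it is not $t^NV$-equivariant for any fixed $N$ (equivariance is a colimit- and retract-closed property, so it would force each summand $\mathcal F_n$ with $n\ge N$ to be $t^NV$-equivariant, which fails). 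So the objectwise statement you use is false, and essential surjectivity cannot be established this way.

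The paper avoids this by never arguing objectwise: it commutes $(-)^{\GL_M(\bfO)}$ with the filtered colimit (legitimate because for the pro-finite-type group scheme $\GL_M(\bfO)$ invariants can be computed as coinvariants $\Vect\otimes_{D_*(\GL_M(\bfO))}(-)$, which manifestly commutes with colimits of module categories), and then quotes \cite[Proposition 4.3.8]{[Ber]} for $\colim_{\oblv}\cC^{t^n\bfO^M}\simeq\cC$ --- the statement you instead try to rederive from smoothness of the action, which is fine but is essentially the same input. To repair your argument you would either insert the invariants-commute-with-colimits step, or replace the objectwise claim by showing that the essential image of $\colim_n(\cC^{t^nV})^{\GL_M(\bfO)}\to\cC^{\GL_M(\bfO)}$ is closed under colimits and contains a set of generators; as written, the proof has a hole exactly where the paper's one-line commutation does the work.
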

\begin{proof}
Note that there is a commutative diagram,
\[
\xymatrix{\cC^{\Mir^t_{M+1}(\bfO)}\ar[r]^{F_{M+1}}\ar[d]^{\wr}&\cC^{\Mir^t_{M+1}(\bfO)}\ar[d]^{\wr}\\
\cC^{\GL_M(\bfO)\ltimes t^i {\bfO^{M}}}\ar[r]^{\oblv}&\cC^{\GL_M(\bfO)\ltimes t^{i+1} {\bfO^M} }
.}
\]
So, we have
\begin{equation}
    \colim_{F_{M+1}} \cC^{\Mir^t_{M+1}(\bfO)}\simeq \colim_{\oblv} \cC^{\GL_M(\bfO)\ltimes t^i {\bfO^{M}}}.
\end{equation}
Here, the transition functors of the {right}-hand side are forgetful functors. 

By Proposition \ref{prop 3.3.4}, {we have an equivalence} 
\begin{equation}
    \cC^{\GL_M(\bfO)\ltimes t^i {\bfO^{M}}}\simeq (\cC^{t^i \bfO^{M}})^{\GL_M(\bfO)}.
\end{equation}
Since $\GL_M(\bfO)$ is a pro-group scheme, taking $\GL_M(\bfO)$-invariants commutes with colimit. So we have \[(\colim\ \cC^{t^i \bfO^{M}})^{\GL_M(\bO)}\simeq {\colim\ }{((\cC^{t^i \bfO^{M}})^{\GL_M(\bfO)})}.\]
Now the claim follows from \cite[Proposition 4.3.8]{[Ber]} 
\[\colim_{\oblv}\ \cC^{t^i \bfO^{M}}\simeq \cC.\qedhere\]
\end{proof}

\begin{proof}[{Proof of Proposiition \ref{lemma 3.1}}]
We let $\mathcal{C}:=D^{U_{M,N}(\bF),\chi_{M,N}}(\Gr_N)$ in the above lemma, now it is sufficient to notice that $F^{M+1}$ coincides with $\xi_{M,N,*}^{-1}$.
\end{proof}

By definition, $\cC^{\Mir^t_{M+1}(\bO)}$ is a full subcategory of $\cC^{\GL_M(\bO)}$. The endofunctor $F_{M+1}$ of $\cC^{\Mir^t_{M+1}(\bfO)}$ comes from the restriction of the {auto}-equivalence of $\cC^{\GL_{M}(\bfO)}$ induced by conjugating with $D_{M+1}$. In particular, $F_{M+1}$ preserves {local} compactness {on}  $\cC^{\Mir^t_{M+1}(\bO)}$.  Hence $F_{M+1}$ induces a functor
\[F^\ren_{M+1}:\cC^{\Mir^t_{M+1}(\bO),\ren}\longrightarrow \cC^{\Mir^t_{M+1}(\bO),\ren}.\]

It is expected that there is an equivalence
\begin{equation}
    \colim_{F^\ren_{M+1}} \cC^{\Mir^t_{M+1}(\bfO),\ren}\simeq \cC^{\GL_M(\bfO),\ren}.
\end{equation}
In other words, there is an equivalence
\begin{equation}
    \colim_{\oblv} \cC^{\GL_{M}(\bfO)\ltimes t^i \bO^M,\ren}\simeq \cC^{\GL_M(\bfO),\ren}.
\end{equation}

Since taking ind-completion commutes with taking colimit, we only need to show 
\begin{equation}\label{3.22}
    \colim'_{\oblv} \cC^{\GL_{M}(\bfO)\ltimes t^i \bO^M,\locc}\simeq \cC^{\GL_M(\bfO),\locc}.
\end{equation}
It is not known if \eqref{3.22} is true for general $\cC$. However, it is true in the following case.
\begin{cor}
There is an equivalence of categories
\begin{equation}\label{3.5+}
    \colim'_{\xi^{-1}_{M,N,*}} (D^{U_{M, N}(\bfF), \chi_{M, N}}(\Gr_N))^{\Mir^t_{M+1}({\bfO}),\locc}\simeq (D^{U_{M, N}(\bfF), \chi_{M, N}}(\Gr_N))^{\GL_{M}({\bfO}),\locc}.
\end{equation}    
In particular,
\begin{equation}\label{3.5++}
    \colim_{\xi^{-1}_{M,N,*}} (D^{U_{M, N}(\bfF), \chi_{M, N}}(\Gr_N))^{\Mir^t_{M+1}({\bfO}),\ren}\simeq (D^{U_{M, N}(\bfF), \chi_{M, N}}(\Gr_N))^{\GL_{M}({\bfO}),\ren}.
\end{equation}    
\end{cor}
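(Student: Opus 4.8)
The plan is to reduce \eqref{3.5+} to an explicit statement about the compact objects of $\cC:=D^{U_{M,N}(\bfF),\chi_{M,N}}(\Gr_N)$, to prove that statement via the orbit structure of $U_{M,N}(\bfF)$ acting on $\Gr_N$, and then to obtain \eqref{3.5++} from \eqref{3.5+} by passing to ind-completions, which commutes with colimits.

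First I would rewrite the left-hand side of \eqref{3.5+}. By the commutative square in the proof of Lemma \ref{colimit lemma}, together with the identity $F_{M+1}=\xi^{-1}_{M,N,*}$ from the proof of Proposition \ref{lemma 3.1}, the endofunctor $F_{M+1}$ of $\cC^{\Mir^t_{M+1}(\bfO)}$ is intertwined with the forgetful functors $\cC^{\GL_M(\bfO)\ltimes t^i\bfO^M}\to\cC^{\GL_M(\bfO)\ltimes t^{i+1}\bfO^M}$, the vertical identifications being pushforwards along automorphisms of $\Gr_N$ that preserve $U_{M,N}(\bfF)$ and $\chi_{M,N}$, hence preserve the class of objects compact in $\cC$. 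Since $F_{M+1}$ preserves local compactness (as observed just before the statement), this square restricts to the $\locc$-subcategories, so the left-hand side of \eqref{3.5+} is identified with $\colim'_{\oblv}\cC^{\GL_M(\bfO)\ltimes t^i\bfO^M,\locc}$. Moreover, because the $t^i\bfO^M\subset\Mir^t_{M+1}(\bfO)$ are pro-unipotent, the inclusions $\cC^{t^i\bfO^M}\hookrightarrow\cC^{t^{i+1}\bfO^M}\hookrightarrow\cC$ are fully faithful; applying $(-)^{\GL_M(\bfO)}$, which preserves full faithfulness, and using $\cC^{\GL_M(\bfO)\ltimes t^i\bfO^M}\simeq(\cC^{t^i\bfO^M})^{\GL_M(\bfO)}$ (Lemma \ref{3.3 lem}), shows that the transition functors in this colimit are fully faithful. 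Therefore the colimit is the increasing union $\bigcup_i\cC^{\GL_M(\bfO)\ltimes t^i\bfO^M,\locc}$, taken inside $\cC^{\GL_M(\bfO)}$.

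Now I would identify the terms of this union. By definition $\cC^{\GL_M(\bfO)\ltimes t^i\bfO^M,\locc}$ is the full subcategory of $\cC^{\GL_M(\bfO)\ltimes t^i\bfO^M}$ on the objects whose image in $\cC$ is compact; since $\cC^{\GL_M(\bfO)\ltimes t^i\bfO^M}\simeq(\cC^{t^i\bfO^M})^{\GL_M(\bfO)}$ embeds fully faithfully in $\cC^{\GL_M(\bfO)}$ compatibly with the forgetful functors to $\cC$, this equals $\cC^{\GL_M(\bfO)\ltimes t^i\bfO^M}\cap\cC^{\GL_M(\bfO),\locc}$. Hence
\[
\bigcup_i\cC^{\GL_M(\bfO)\ltimes t^i\bfO^M,\locc}=\Big(\bigcup_i\cC^{\GL_M(\bfO)\ltimes t^i\bfO^M}\Big)\cap\cC^{\GL_M(\bfO),\locc},
\]
and \eqref{3.5+} is reduced to the inclusion $\cC^{\GL_M(\bfO),\locc}\subseteq\bigcup_i\cC^{\GL_M(\bfO)\ltimes t^i\bfO^M}$, i.e.\ to the assertion that every $\GL_M(\bfO)$-equivariant object of $\cC$ which is compact in $\cC$ is $(\GL_M(\bfO)\ltimes t^i\bfO^M)$-equivariant for $i\gg0$.

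Finally I would prove this last assertion using the orbit structure: any compact object of $\cC=D^{U_{M,N}(\bfF),\chi_{M,N}}(\Gr_N)$ is a finite iterated extension of retracts of $!$-extensions of the character D-module along single $U_{M,N}(\bfF)$-orbits in $\Gr_N$, so its support meets only finitely many such orbits; fix one point in each. Each such point is a single point of $\Gr_N$, hence is fixed by a deep enough congruence subgroup of $\GL_N(\bfO)$, and in particular by $t^i\bfO^M\subset\GL_{M+1}(\bfO)$ once $i$ is large. Since $t^i\bfO^M\subset\Mir^t_{M+1}(\bfO)$ normalizes $U_{M,N}(\bfF)$ and fixes $\chi_{M,N}$, for such $i$ it preserves each of these orbits, the character D-module on it, and the open locus over which the $!$-extension is clean; therefore the object carries a canonical $(t^i\bfO^M)$-equivariant structure compatible with its $\GL_M(\bfO)$-equivariance, i.e.\ lies in $\cC^{\GL_M(\bfO)\ltimes t^i\bfO^M}$. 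This yields \eqref{3.5+}, and then \eqref{3.5++} follows by ind-completion. I expect the main obstacle to be precisely this last step: describing the compact objects of the Whittaker category $D^{U_{M,N}(\bfF),\chi_{M,N}}(\Gr_N)$ sharply enough to run the congruence-subgroup argument. This is where the particular nature of $\cC$ is used, and is the reason the analogous statement \eqref{3.22} is unavailable for an arbitrary $\cC$.
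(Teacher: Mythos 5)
Your proposal is correct and takes essentially the same approach as the paper: after the same reduction to $\colim'_{\oblv}\,\cC^{\GL_M(\bfO)\ltimes t^i\bfO^M,\locc}\simeq \cC^{\GL_M(\bfO),\locc}$ for $\cC=D^{U_{M,N}(\bfF),\chi_{M,N}}(\Gr_N)$, your key step — that the finitely many relevant orbits are preserved by $t^i\bfO^M$ for $i\gg0$ because their base points $g\GL_N(\bfO)$ are fixed by deep congruence subgroups — is exactly the paper's computation $t^{2m}\bfO^M\subset g\GL_N(\bfO)g^{-1}$ (the paper phrases it as invariance of each $\GL_M(\bfO)\ltimes U_{M,N}(\bfF)$-orbit, you as fixing points of the $U_{M,N}(\bfF)$-orbits carrying the compact generators). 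You simply make the categorical bookkeeping (full faithfulness of the forgetful functors, reduction to orbit generators) more explicit than the paper does.
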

\begin{proof}
It is sufficient to show that any $\GL_M(\bO)\ltimes U_{M,N}(\bF)$-orbit $\mathbb{O}$ of $\Gr_N$ is $(\GL_M(\bO)\ltimes t^n \bO^M)\ltimes U_{M,N}(\bF)$-invariant for {sufficiently} large $n$.
Note that $(\GL_M(\bO)\ltimes t^n \bO^M)\ltimes U_{M,N}(\bF)$ is generated by $(\GL_M(\bO)\ltimes U_{M,N}(\bF))$ and $t^n \bO^M$, so 
{it} is sufficient to prove that $\mathbb{O}$ is $t^n \bO^M$-invariant for {sufficiently} large $n$. Here $t^n\bO^M$ is regarded as a subgroup of $\GL_N(\bF)$ {as follows:} 
\begin{equation*}
    t^n\bfO^M=
\begin{pNiceArray}{ccc|ccc}[last-col,code-for-last-col = \quad\,]
1 & &&  t^n\bO&&  \\
  &\Ddots& &  \Vdots&  & &M\\
 & &1 & t^n\bO &  & \\ \hline
 &&&1\\ 
 &            && &\Ddots& &N-M\\
&             &&  &&1
\CodeAfter
\UnderBrace[shorten,yshift=0.5ex]{1-1}{6-3}{M}
\UnderBrace[shorten,yshift=0.5ex]{1-4}{6-6}{N-M}
\SubMatrix .{1-6}{3-6}{\}}[right-xshift=1em]
\SubMatrix .{4-6}{6-6}{\}}[right-xshift=1em]
\end{pNiceArray} \subset \GL_N(\bfF).
\end{equation*}

\vspace{1.6em}
Fix $g\in \GL_N(\bF)$. We assume that entries of both $g$ and $g^{-1}$ {belong} to $t^{-m}\bO$. We claim that the orbit passing through $g$ is $t^{2m}\bO^M$-invariant.

Indeed, we have 
\[\GL_M(\bO)\ltimes U_{M,N}(\bF) \cdot g \cdot \GL_N(\bO)= \GL_M(\bO)\ltimes U_{M,N}(\bF)\cdot g \GL_N(\bO)g^{-1} \cdot g \cdot \GL_N(\bO),\]
\[t^{2m}\bO^M\cdot \GL_{M}(\bO)\ltimes U_{M,N}(\bF)= \GL_{M}(\bO)\ltimes U_{M,N}(\bF)\cdot t^{2m}\bO^M,\]
and 
\[t^{2m}\bO^M\subset g\GL_N(\bO)g^{-1}. \qedhere\]



\end{proof}
\subsection{Fourier transform on Gaiotto category} 
By Proposition \ref{lemma 3.1}, $C_{M-1|N}$ can be written as a colimit of $(D^{U_{M-1, N}(\bfF), \chi_{M-1, N}}(\Gr_N))^{{\Mir^t_{M}(\bfO)}}$ with the transition functor $\xi^{-1}_{M-1,N,*}$. 

As a result, we can write both sides of \eqref{thm 3.1} as colimits. Namely,
\begin{equation}
    \textnormal{LHS of \eqref{thm 3.1}}\simeq \colim_{\xi^{-1}_{M*}}\ D^{\Mir^t_M(\bfO)}(\Gr_M)\underset{C_{M|M}}{\otimes}{C_{M|N}},
\end{equation}
and 
\begin{equation}
    \textnormal{RHS of \eqref{thm 3.1}}\simeq \colim_{\xi^{-1}_{M-1,N,*}}\ (D^{U_{M-1, N}(\bfF), \chi_{M, N}}(\Gr_N))^{{\Mir^t_{M}(\bfO)}}.
\end{equation} Similarly for \eqref{thm 3.1-1}. 
To prove Theorem \ref{claim1}, it suffices to show the following proposition.
\begin{prop}\label{prop 3.1}
For $M\leq N$, we have

\textup{a)}.
\begin{equation}\label{3.23}
D^{\Mir^t_M(\bfO),\ren}(\Gr_M)\underset{C^\ren_{M|M}}{\otimes}{C^\ren_{M|N}}\simeq (D^{U_{M-1, N}(\bfF), \chi_{M, N}}(\Gr_N))^{{\Mir^t_{M}(\bfO)},\ren}.
\end{equation}

\textup{b)}.
\begin{equation}\label{eq 3.24}
D^{\Mir^t_M(\bfO)}(\Gr_M)\underset{C_{M|M}}{\otimes}{C_{M|N}}\simeq (D^{U_{M-1, N}(\bfF), \chi_{M, N}}(\Gr_N))^{{\Mir^t_{M}(\bfO)}}.
\end{equation}
\end{prop}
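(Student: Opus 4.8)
The plan is to carry over to the D-module side the argument that proves Proposition~\ref{prop 2.1} on the coherent side: realize the left-hand side of \textup{a)} and \textup{b)} as the image of an idempotent endofunctor of $C_{M|N}$, and then identify that image. Recall from \S\ref{sec 3.7} that $D^{\Mir^t_M(\bfO)}(\Gr_M)\simeq j_*\circledast C_{M|M}$ is the Kleisli category of the idempotent $j_*\circledast(-)$ of $(C_{M|M},\circledast)$ --- the D-module counterpart of the idempotent $\tilde c$ used in the proof of Proposition~\ref{prop 2.1}. Applying the Fourier transform of \S\ref{restr of mon}, which intertwines $\circledast$ with the monoidal structure $\textasteriskcentered$ through which $C_{M|M}$ acts on $C_{M|N}$ (\S\ref{def of mon}), transports this $\circledast$-module to an idempotent $\textasteriskcentered$-module; writing $e\star(-)$ for the corresponding idempotent endofunctor of $C_{M|N}$, the relative tensor product is then computed exactly as in the proof of Proposition~\ref{prop 2.1}:
\begin{equation*}
D^{\Mir^t_M(\bfO)}(\Gr_M)\underset{C_{M|M}}{\otimes}C_{M|N}\;\simeq\;e\star C_{M|N},
\end{equation*}
the essential image of $e\star(-)$. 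Since $e\star(-)$ is built out of convolutions it preserves compact objects, locally compact objects, and objects of nilpotent support --- the bookkeeping being that of \S\ref{cons sid} --- so a single computation of $e\star C_{M|N}$ gives all four sheaf-theoretic variants at once, hence both \textup{a)} and \textup{b)}.

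It remains to identify $e\star C_{M|N}$ with $\bigl(D^{U_{M-1,N}(\bfF),\chi_{M-1,N}}(\Gr_N)\bigr)^{\Mir^t_M(\bfO)}$; this is the D-module analog of Lemma~\ref{lemma 2.1}, and it is here that the Fourier transform does the real work. Three group-theoretic facts fix the shape of the answer: under the embedding $\Mir^t_{M+1}(\bfF)=\GL_M(\bfF)\ltimes\bfF^M\hookrightarrow\GL_N(\bfF)$ through which $C_{M|M}$ acts on $\Gr_N$, the factor $\bfF^M$ is realized as the $(M+1)$-st column in rows $1,\dots,M$, which is exactly $U_{M-1,N}(\bfF)/U_{M,N}(\bfF)$, so $U_{M-1,N}(\bfF)=U_{M,N}(\bfF)\ltimes\bfF^M$; the locus $\bfO^M\setminus t\bfO^M$ is the $\GL_M(\bfO)$-orbit $\GL_M(\bfO)\cdot e_M\subset\bfF^M$, with stabilizer $\Mir^t_M(\bfO)$; and the Whittaker characters satisfy $\chi_{M-1,N}|_{U_{M,N}(\bfF)}=\chi_{M,N}$ and $\chi_{M-1,N}|_{\bfF^M}=\psi$, where $\psi\colon v\mapsto\operatorname{res}(v_M)$. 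Granting the Fourier-transform statement that convolution with $e$ --- which along the $\{g=1\}\times\bfF^M$ locus is the operation attached to the $*$-extension of the constant sheaf from the shell $\bfO^M\setminus t\bfO^M$ --- amounts to imposing the extra $(\bfF^M,\psi)$-equivariance, and thereby cuts the $\GL_M(\bfO)$-equivariance of $C_{M|N}$ down to $\Mir^t_M(\bfO)$-equivariance, one reassembles the equivariances by Lemma~\ref{3.3 lem}:
\begin{equation*}
e\star C_{M|N}\;\simeq\;\bigl(D(\Gr_N)\bigr)^{\Mir^t_M(\bfO)\ltimes U_{M-1,N}(\bfF),\,\chi_{M-1,N}}\;=\;\bigl(D^{U_{M-1,N}(\bfF),\chi_{M-1,N}}(\Gr_N)\bigr)^{\Mir^t_M(\bfO)}.
\end{equation*}
Passing to locally compact objects, resp.\ to nilpotent-support objects, and ind-completing where needed, yields \textup{a)}, resp.\ \textup{b)}.

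The main obstacle is precisely this Fourier-transform identification in the ind-pro-scheme setting: making rigorous that convolving $C_{M|N}$ with $\FT(j_*)$ trades the ``support on the shell $\bfO^M\setminus t\bfO^M$'' condition for the twisted $(\bfF^M,\psi)$-equivariance, with the correct character $\psi$ and the correct reduction to $\Mir^t_M(\bfO)$ (rather than remaining spherical), while controlling dimension theories and the infinite-dimensionality of $\bfF^M$; it is the loop-group counterpart of the $\Hom$-space computation that proves Lemma~\ref{lemma 2.1} on the coherent side. The remaining ingredients --- passing from the tensor product to the image of the idempotent, the group theory above, and the reduction of the $\ren$- and nilpotent-support statements to the basic one --- are formal and follow the patterns of \S\ref{cons sid} and of the proof of Proposition~\ref{prop 2.1}.
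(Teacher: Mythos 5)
Your overall route is the paper's route (identify the relative tensor product with the image of the idempotent $j_*$, then use a Fourier transform to turn the resulting category into a Whittaker--type equivariant category and merge characters via Lemma~\ref{3.3 lem}), but as written it has a genuine gap: the step you explicitly ``grant'' --- that acting by the idempotent trades the support condition on $\bfO^M\setminus t\bfO^M$ for $(\bfF^M,\psi)$-equivariance and cuts $\GL_M(\bfO)$ down to $\Mir^t_M(\bfO)$ --- \emph{is} the content of the proposition, and you leave it unproven, calling it the main obstacle. In the paper this step requires no new analysis of the ind-pro-scheme situation; it is decomposed as follows. First, the formal identity $(j_*\circledast C_{M|M})\otimes_{C_{M|M}}\cD\simeq D_!^{\GL_M(\bfO)}(\bfO^M\setminus t\bfO^M)\otimes_{D_!^{\GL_M(\bfO)}(\bfF^M)}\cD$; second, writing $C_{M|N}=(\cD')^{\GL_M(\bfO)}$ with $\cD'=D^{U_{M,N}(\bfF),\chi_{M,N}}(\Gr_N)$, one pulls the equivariance outside and applies Lemma~\ref{lem 3.3.5} (\cite[Theorem 6.4.2]{[Ber]}): the shell is a single $\GL_M(\bfO)$-orbit through $e_M$ whose stabilizer is $\Mir_M(\bfO)$ --- not $\Mir^t_M(\bfO)$ as you assert; the transpose only appears at the end because the Fourier transform intertwines the $\Mir_M(\bfO)$- and $\Mir^t_M(\bfO)$-actions --- so one is reduced to $(\Vect\otimes_{D_!(\bfF^M)}\cD')^{\Mir_M(\bfO)}$, the fiber at the single point $e_M$; third, the Fourier equivalence of \cite[Theorem 5.2.11]{[Ber]} gives $\Vect\otimes_{D_!(\bfF^M)}\cD'\simeq(\cD')^{\bfF^M,\chi_{e_M}}$, and Lemma~\ref{3.3 lem} then merges $\chi_{e_M}$ with $\chi_{M,N}$ into $\chi_{M-1,N}$ using $U_{M-1,N}(\bfF)=\bfF^M\ltimes U_{M,N}(\bfF)$. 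So what you treat as the hard unproved core is settled by an orbit/stabilizer reduction plus a quotable theorem of Beraldo; without supplying (or citing) that, your argument does not close.

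A second, smaller gap is your claim that ``a single computation gives all four sheaf-theoretic variants at once'' because the idempotent is built from convolutions. Part a) is not formal bookkeeping: the renormalized categories are defined by locally compact objects relative to \emph{different} equivariance groups on the two sides ($\GL_M(\bfO)$ versus $\Mir_M(\bfO)$), and matching them is exactly what Propositions~\ref{prop 3.3.4}, \ref{prop 3.3.6 fin}, \ref{3.3.7} and Corollary~\ref{prop 3.3.6} are for; note Proposition~\ref{prop 3.3.6 fin} only gives an inclusion in general, and the paper has to use the decompositions $\GL_M(\bfO)=\GL_M\ltimes\GL_{M,1}(\bfO)$, $\Mir_M(\bfO)=\Mir_M\ltimes\Mir_{M,1}(\bfO)$ into a reductive group and a pro-unipotent congruence subgroup to upgrade this to the equivalence \eqref{3.56} needed for \eqref{3.23}. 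You should either reproduce that argument or invoke those statements explicitly.
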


Let us first prove \eqref{eq 3.24}. 

Recall the definition of $(C_{M|M}, \circledast)$. The pushforward along the closed embedding
\[1\times \bfF^M\longrightarrow \Gr_M\times \bfF^M\]
gives a monoidal fully faithful embedding
\begin{equation}
    D_!^{\GL_M(\bfO)}(\bfF^M)\longrightarrow C_{M|M}.
\end{equation}
{Note that the image of this embedding contains the object $j_*$, so below we use the same notation for the corresponding object in the source category.}
\begin{lem}
For any category $\mathcal{D}$ admitting a left action of $(C_{M|M}, \circledast)$, we have 
\begin{equation}\label{3.30}
    (j_*\circledast C_{M|M})\underset{C_{M|M}}{\otimes} \mathcal{D}\simeq D_!^{\GL_{M}(\bfO)}(\bfO^M\setminus t\bfO^M)\underset{D_!^{\GL_M(\bfO)}(\bfF^M)}{\otimes} \mathcal{D}.
\end{equation}
\end{lem}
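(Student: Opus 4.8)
The plan is to identify both sides of \eqref{3.30} with one and the same full subcategory of $\mathcal{D}$, namely the subcategory $j_*\circledast\mathcal{D}$ spanned by the essential image of the idempotent endofunctor $j_*\circledast-\colon\mathcal{D}\to\mathcal{D}$ --- the ``$j_*$-local'' objects of $\mathcal{D}$. The mechanism is the standard fact that tensoring a module category over a monoidal localization produces the corresponding localization of the module.

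First I would observe that the equivalence $j_*\circledast j_*\simeq j_*$, together with the unit of the $\circledast$-monoidal structure, makes $j_*$ an idempotent algebra object of $(C_{M|M},\circledast)$ (the algebra structure on an idempotent object being essentially unique), and that the Kleisli category $j_*\circledast C_{M|M}$ is exactly $\operatorname{LMod}_{j_*}(C_{M|M})$, the full subcategory of $j_*$-local objects, with its canonical right $C_{M|M}$-action. Since $j_*\circledast-$ is continuous (convolution commutes with colimits), the base-change identity $\operatorname{LMod}_A(\mathcal{A})\otimes_{\mathcal{A}}\mathcal{M}\simeq\operatorname{LMod}_A(\mathcal{M})$, applied with $\mathcal{A}=C_{M|M}$, $A=j_*$, $\mathcal{M}=\mathcal{D}$, gives
\[
(j_*\circledast C_{M|M})\underset{C_{M|M}}{\otimes}\mathcal{D}\;\simeq\;\operatorname{LMod}_{j_*}(\mathcal{D})\;\simeq\;j_*\circledast\mathcal{D},
\]
where the last equivalence holds because for an idempotent algebra every module structure is unique, so $\operatorname{LMod}_{j_*}(\mathcal{D})$ is just the subcategory of $j_*$-local objects.

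Next I would run the same argument inside the monoidal subcategory $\iota\colon D_!^{\GL_M(\bfO)}(\bfF^M)\hookrightarrow C_{M|M}$. Since $j_*$ is supported on $1\times(\bfO^M\setminus t\bfO^M)\subset 1\times\bfF^M$, it lies in the essential image of $\iota$, and as $\iota$ is a fully faithful monoidal functor it carries the idempotent algebra $j_*$ of $(D_!^{\GL_M(\bfO)}(\bfF^M),\otimes)$ to the idempotent algebra $j_*$ of $(C_{M|M},\circledast)$. Consequently $\operatorname{LMod}_{j_*}(D_!^{\GL_M(\bfO)}(\bfF^M))$ is the full subcategory of $D_!^{\GL_M(\bfO)}(\bfF^M)$ whose image under $\iota$ lands in $\operatorname{LMod}_{j_*}(C_{M|M})=j_*\circledast C_{M|M}$; in view of the identification $j_*\circledast C_{M|M}\simeq D_!^{\GL_M(\bfO)}(\Gr_M\times(\bfO^M\setminus t\bfO^M))$ from \S\ref{sec 3.7}, its ``$g=1$ part'' is precisely $D_!^{\GL_M(\bfO)}(\bfO^M\setminus t\bfO^M)$. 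Applying the base-change identity once more, with $\mathcal{D}$ regarded as a $D_!^{\GL_M(\bfO)}(\bfF^M)$-module by restriction along $\iota$, yields
\[
D_!^{\GL_M(\bfO)}(\bfO^M\setminus t\bfO^M)\underset{D_!^{\GL_M(\bfO)}(\bfF^M)}{\otimes}\mathcal{D}\;\simeq\;\operatorname{LMod}_{j_*}(\mathcal{D}).
\]
Finally, the two occurrences of $\operatorname{LMod}_{j_*}(\mathcal{D})$ agree: the $D_!^{\GL_M(\bfO)}(\bfF^M)$-action on $\mathcal{D}$ is the restriction of the $C_{M|M}$-action along $\iota$, and $\iota(j_*)\simeq j_*$, so ``$j_*$ acting through $D_!^{\GL_M(\bfO)}(\bfF^M)$'' is literally the functor $j_*\circledast-$. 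Combining the two displays proves \eqref{3.30}.

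I expect the real work to be in the bookkeeping of the first two steps, not in any genuine difficulty: one needs that $j_*$ is an honest idempotent algebra rather than an idempotent object up to an incoherent equivalence (this is where the bona fide monoidal category $(C_{M|M},\circledast)$ of \cite{[BFGT]} and the uniqueness of algebra structures on idempotents enter), that $j_*\circledast-$ is continuous so that all the relative tensor products are formed in the correct ambient $(\infty,1)$-category of cocomplete DG categories with continuous functors, and that the identification of $\operatorname{LMod}_{j_*}(D_!^{\GL_M(\bfO)}(\bfF^M))$ with $D_!^{\GL_M(\bfO)}(\bfO^M\setminus t\bfO^M)$ is really the point-version of the computation of \S\ref{sec 3.7}. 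Granting these, the base-change identity for modules over an idempotent algebra is purely formal.
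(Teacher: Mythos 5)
Your argument is correct and, at bottom, rests on the same mechanism as the paper's: that $j_*$ is idempotent for $\circledast$ and that tensoring over $C_{M|M}$ with its Kleisli category just extracts the image of $j_*\circledast(-)$. The packaging, however, differs. The paper's displayed proof is an insert-and-cancel computation: it rewrites $(j_*\circledast C_{M|M})\otimes_{C_{M|M}}\mathcal{D}$ as $(j_*\circledast C_{M|M})\otimes_{C_{M|M}}C_{M|M}\otimes_{C_{M|M}}\mathcal{D}$, substitutes the identification $j_*\circledast C_{M|M}\simeq D_!^{\GL_M(\bfO)}(\bfO^M\setminus t\bfO^M)\otimes_{D_!^{\GL_M(\bfO)}(\bfF^M)}C_{M|M}$ of right $C_{M|M}$-module categories (i.e.\ the special case $\mathcal{D}=C_{M|M}$, which it treats as known), and cancels the middle factor; the observation that both sides are the image of the $j_*$-action on $\mathcal{D}$ appears only as a remark after the proof. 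You instead prove the general statement directly by identifying both sides with $\operatorname{LMod}_{j_*}(\mathcal{D})$, the subcategory of $j_*$-local objects, via the base-change identity for module categories over an (idempotent) algebra, applied once over $C_{M|M}$ and once over the monoidal subcategory $D_!^{\GL_M(\bfO)}(\bfF^M)$. This buys a self-contained argument that in particular supplies the identification the paper quietly assumes; what it costs is the need to know that $j_*$ is an honest unital idempotent algebra rather than merely an object with $j_*\circledast j_*\simeq j_*$, and that the right-module structures on the two left-hand factors in \eqref{3.30} are the ones your identifications produce --- points you correctly flag, and which the paper's own use of the Kleisli category presupposes at the same level of rigor.
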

\begin{proof}
It follows from the fact that:
\begin{equation*}
    \begin{split}
       (j_*\circledast C_{M|M})\underset{C_{M|M}}{\otimes} \mathcal{D}
        &\simeq {(j_*\oast D_!^{\GL_{M}(\bfO)}(\bfF^M))\underset{D_!^{\GL_M(\bfO)}(\bfF^M)}{\otimes}C_{M|M}\underset{C_{M|M}}{\otimes} \mathcal{D}}\\
        &\simeq D_!^{\GL_{M}(\bfO)}(\bfO^M\setminus t\bfO^M)\underset{D_!^{\GL_M(\bfO)}(\bfF^M)}{\otimes}C_{M|M}\underset{C_{M|M}}{\otimes} \mathcal{D}\\
         &\simeq D_!^{\GL_{M}(\bfO)}(\bfO^M\setminus t\bfO^M)\underset{D_!^{\GL_M(\bfO)}(\bfF^M)}{\otimes} \mathcal{D}.\qedhere
    \end{split}
\end{equation*}
\end{proof}
\begin{rem}
Both sides of \eqref{3.30} are equivalent to the image of the action functor
of $j_*$ on $\cD$.
\end{rem}

Furthermore, if we assume that $\mathcal{D}= (\mathcal{D}')^{\GL_M(\bfO)}$ and the action of $D_!^{\GL_M(\bfO)}(\bfF^M)$ on $(\mathcal{D}')^{\GL_M(\bfO)}$ is given by the action of $D_!(\bfF^M)$ on $\mathcal{D}'$, we have
\begin{equation}
\begin{split}
    D_!^{\GL_M(\bfO)}(\bfO^M\setminus t\bfO^M)\underset{D_!^{\GL_M(\bfO)}(\bfF^M)}{\otimes} (\mathcal{D}')^{\GL_M(\bfO)}    \simeq (D_!(\bfO^M\setminus t\bfO^M)\underset{D_!(\bfF^M)}{\otimes} \mathcal{D}')^{\GL_M(\bfO)}.
\end{split}
\end{equation}


Now by Lemma \ref{lem 3.3.5}, there is an equivalence 
\begin{equation}\label{3.32}
    (D_!(\bfO^M\setminus t\bfO^M)\underset{D_!(\bfF^M)}{\otimes} \mathcal{D}')^{\GL_M(\bfO)}\simeq (\Vect_{e_M} \underset{D_!(\bfF^M)}{\otimes} \mathcal{D}')^{\Mir_M(\bfO)}.
\end{equation}
Here, $D_!(\bfF^M)$ acts on $\Vect_{e_M}$ by pulling back along ${\{e_M\}}\longrightarrow \bfF^M$.

{Applying} the isomorphism \eqref{3.32} to $\mathcal{D}= C_{M|N}$ and $\mathcal{D}'= D^{U_{M,N}(\bfF),\chi_{M,N}}(\Gr_N)${, we get}
\begin{equation}
    (j_*\circledast C_{M|M})\underset{C_{M|M}}{\otimes} C_{M|N}\simeq (\Vect_{e_M} \underset{D_!(\bfF^M)}{\otimes} D^{U_{M,N}(\bfF),\chi_{M,N}}(\Gr_N))^{\Mir_M(\bfO)}.
\end{equation}

To prove \eqref{eq 3.24}, we have to prove:
\begin{equation}\label{3.17}
    (\Vect_{e_M} \underset{D_!(\bfF^M)}{\otimes} D^{U_{M,N}(\bfF),\chi_{M,N}}(\Gr_N))^{\Mir_M(\bfO)}\simeq (D^{U_{M-1, N}(\bfF), \chi_{M, N}}(\Gr_N))^{{\Mir^t_{M}(\bfO)}}.
\end{equation}
\subsubsection{}\label{section 3.11.1}
\begin{proof}[Proof of Proposition \ref{prop 3.1} \textup{b)}.]

By the Fourier equivalence of \cite[Theorem 5.2.11]{[Ber]}, we have 
\begin{equation}\label{3.24}
    \Vect_{e_M} \underset{D_!(\bfF^M)}{\otimes} D^{U_{M,N}(\bfF),\chi_{M,N}}(\Gr_N)\simeq (D^{U_{M,N}(\bfF),\chi_{M,N}}(\Gr_N))^{\bfF^M, \chi_{e_M}}.
\end{equation}
Here, $\chi_{e_M}$ denotes the character of taking residue of the coefficients of $e_M$, and $\bfF^M$ acts on $\Gr_N$ via
\begin{equation*}
    \bfF^M=
\begin{pNiceArray}{ccc|ccc}[last-col,code-for-last-col = \quad\,]
1 & &&  *\\
  &\Ddots& &  \Vdots&  & &M\\
 & &1 & * &  & \\ \Hline
 &&&1\\ 
 &            && &\Ddots& &N-M\\
&             &&  &&1
\CodeAfter
\UnderBrace[shorten,yshift=0.5ex]{1-1}{6-3}{M}
\UnderBrace[shorten,yshift=0.5ex]{1-4}{6-6}{N-M}
\SubMatrix .{1-6}{3-6}{\}}[right-xshift=1em]
\SubMatrix .{4-6}{6-6}{\}}[right-xshift=1em]
\end{pNiceArray} \subset \GL_N(\bfF).
\end{equation*}

\vspace{1.6em}
We have $U_{M-1,N}(\bfF)= \bfF^M \ltimes U_{M,N}(\bfF)$, and the conjugation action of $\bfF^M$ on $U_{M,N}(\bfF)$ preserves $\chi_{M,N}$. Hence, by Lemma \ref{3.3 lem},

\begin{equation}\label{3.25}
    (D^{U_{M,N}(\bfF),\chi_{M,N}}(\Gr_N))^{\bfF^M, \chi_{e_M}}\simeq D^{U_{M-1,N}(\bfF),\chi_{M-1,N}}(\Gr_N).
\end{equation}

{Combining} \eqref{3.24} and \eqref{3.25}, we have
\begin{equation}\label{FT 3.25}
    \Vect_{e_M} \underset{D_!(\bfF^M)}{\otimes} D^{U_{M,N}(\bfF),\chi_{M,N}}(\Gr_N)\simeq D^{U_{M-1,N}(\bfF),\chi_{M-1,N}}(\Gr_N).
\end{equation}

Since \eqref{FT 3.25} is induced by Fourier transform, we see that the action of the group $\Mir_{M}(\bfO)$ on $\Vect_{e_M} \underset{D_!(\bfF^M)}{\otimes} D^{U_{M,N}(\bfF),\chi_{M,N}}(\Gr_N)$ becomes the action of $(\Mir_M(\bfO))^{t,-1}=\Mir^t_M(\bfO)$. 
\end{proof}

\subsubsection{}
Then, we prove \eqref{3.23}. Let $\cD'= D^{U_{M,N}(\bfF),\chi_{M,N}}(\Gr_N)$. To prove the renormalized version \eqref{3.23}, we only need to notice that \[D_*(\GL_M(\bO))\otimes_{D_*(\Mir_M(\bO))}(\Vect_{e_M} \underset{D_!(\bfF^M)}{\otimes} \mathcal{D}')\simeq D_!(\bfO^M\setminus t\bfO^M)\underset{D_!(\bfF^M)}{\otimes} \mathcal{D}',\]
and $\GL_M(\bO)= \GL_M\ltimes \GL_{M,1}(\bO)$, $\Mir_M(\bO)= \Mir_M\ltimes \Mir_{M,1}(\bO)$. Here $\GL_{M,1}(\bO)$ and $\Mir_{M,1}(\bO)$ denote the corresponding congruence subgroups. 

{Applying Corollary}  \ref{prop 3.3.6}, we obtain that there is an equivalence 
\begin{equation}\label{3.56}
    (D_!(\bfO^M\setminus t\bfO^M)\underset{D_!(\bfF^M)}{\otimes} \mathcal{D}')^{\GL_M(\bfO),\ren}\simeq (\Vect_{e_M} \underset{D_!(\bfF^M)}{\otimes} \mathcal{D}')^{\Mir_M(\bfO),\ren}.
\end{equation}

To prove \eqref{3.23}, we only need to prove:
\begin{equation}\label{3.17+}
    (\Vect_{e_M} \underset{D_!(\bfF^M)}{\otimes} D^{U_{M,N}(\bfF),\chi_{M,N}}(\Gr_N))^{\Mir_M(\bfO),\ren}\simeq (D^{U_{M-1, N}(\bfF), \chi_{M, N}}(\Gr_N))^{{\Mir^t_{M}(\bfO)},\ren},
\end{equation}
which follows exactly from the same argument of ~\S\ref{section 3.11.1}. 

\section{Compatibility of actions} \label{act com}


Theorem \ref{bfgt2} implies that there is an equivalences of categories for any $M$,
\begin{equation}\label{N-1'}
    C^{\ren}_{M-1|M}\simeq {{D}}^{\GL_{M-1}\times \GL_M}(\mathfrak{B}_{M-1|M}^{2,0}).
\end{equation}

By ~\S\ref{def of mon}, we have a left action of $C^\ren_{M-1|M-1}$ on $C^\ren_{M-1|M}$ and a right action of $C^\ren_{M|M}$ on $C^\ren_{M-1|M}$. Similarly, by ~\S\ref{convolution}, we have a left action of ${{D}}^{\GL_{M-1}\times \GL_{M-1}}(\mathfrak{B}_{M-1|M-1}^{2,0})$ and a right action of ${{D}}^{\GL_M\times \GL_M}(\mathfrak{B}_{M|M}^{2,0})$ on ${{D}}^{\GL_{M-1}\times \GL_M}(\mathfrak{B}_{M-1|M}^{2,0})$.

In this section, we will prove that the equivalence \eqref{N-1'} is not only an equivalence of categories, but also an equivalence of module categories.

\begin{prop}\label{act}
\textup{a)}  The equivalence \eqref{N-1'}
 is compatible with the left action of \[C^\ren_{M-1|M-1}\simeq {{D}}^{\GL_{M-1}\times \GL_{M-1}}(\mathfrak{B}_{M-1|M-1}^{2,0}).\]

\textup{b)}  The equivalence \eqref{N-1'} is compatible with the right action of \[C^\ren_{M|M}\simeq {{D}}^{\GL_M\times \GL_M}(\mathfrak{B}_{M|M}^{2,0}).\]
\end{prop}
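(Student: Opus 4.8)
We outline the argument; it will prove simultaneously the corresponding statements for all four sheaf theories. The idea is to reduce both (a) and (b) to the fact that the equivalence of \thmref{bgft1} is \emph{monoidal}, via the colimit presentations of the Gaiotto categories that were used to produce \eqref{N-1'} in the first place. Recall from \S\ref{sec 3.7}, \S\ref{convolution} and \lemref{lemma 2.1} (with $N=M$), and from \cite[\S 4]{[BFGT]}, that \eqref{N-1'} is obtained by passing to the colimit over the transition functors $\zeta_M\leftrightarrow\eta$ in the term-by-term equivalence
\[
j_*\circledast C_{M|M}\ \xrightarrow{\ \sim\ }\ \tilde c\overset{A}{\star}{{D}}^{\GL_M\times\GL_M}(\mathfrak B_{M|M}^{2,0})\ \simeq\ {{D}}^{\GL_{M-1}\times\GL_M,\,\geq 0}(\mathfrak B_{M-1|M}^{2,0}),
\]
where the first arrow is the restriction of the monoidal equivalence $(C_{M|M},\circledast)\simeq({{D}}^{\GL_M\times\GL_M}(\mathfrak B_{M|M}^{2,0}),\overset{A}{\star})$ of \thmref{bgft1} to the Kleisli subcategories attached to the left idempotents $j_*$ and $\tilde c$ (so in particular this equivalence carries $j_*$ to $\tilde c$ and $\zeta_M$ to $\eta$). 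Thus for each of (a) and (b) it suffices to promote this term-by-term equivalence to an equivalence of module categories and to observe that the transition functors on both sides are module functors, so that the colimit is compatible with the module structures.

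\textbf{Part b).} By \S\ref{sec 3.7} the right action of $C_{M|M}$ on $j_*\circledast C_{M|M}$ is the restriction of the right $\circledast$-convolution of $C_{M|M}$ on itself, and by \S\ref{convolution} together with \lemref{lemma 2.1} the right action of ${{D}}^{\GL_M\times\GL_M}(\mathfrak B_{M|M}^{2,0})$ on $\tilde c\overset{A}{\star}{{D}}^{\GL_M\times\GL_M}(\mathfrak B_{M|M}^{2,0})$ is the restriction of right $\overset{A}{\star}$. Since the equivalence of \thmref{bgft1} is monoidal and $j_*,\tilde c$ act from the left, its restriction to these Kleisli subcategories is automatically an equivalence of right module categories over it. Moreover the transition functors are right module functors: $\zeta_M$ is left multiplication by the diagonal matrix $D_M=\operatorname{diag}(t,\dots,t,1)\in\GL_M(\bfF)$, which commutes with right convolution (this compatibility is already noted in \S\ref{sec 3.7}), while $\eta$ is, up to a cohomological shift, tensoring with the determinant module of the $\GL_{M-1}$-factor, which commutes with the right $\overset{A}{\star}$-action (the latter only modifying the $\GL_M$-factor). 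Passing to the colimit over $\zeta_M\leftrightarrow\eta$ in the appropriate $(\infty,2)$-category of right module categories over the monoidal equivalence $C^\ren_{M|M}\simeq{{D}}^{\GL_M\times\GL_M}(\mathfrak B_{M|M}^{2,0})$ of \thmref{bgft1} then yields b).

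\textbf{Part a).} The scheme is the same, but a genuine complication arises here, and it is the main point to be dealt with. The colimit presentation above is compatible with the left $C^\ren_{M-1|M-1}$-action of \S\ref{def of mon} only \emph{up to a twist}: conjugation by $D_M$ — which is what $\zeta_M$ does — fixes the $\GL_{M-1}(\bfF)$-block of $\Mir^t_M(\bfF)$ but rescales its $\bfF^{M-1}$-part, so that $\zeta_M(\mathcal F_1\textasteriskcentered x)\simeq(\sigma\mathcal F_1)\textasteriskcentered\zeta_M(x)$, where $\sigma$ is the automorphism of $C^\ren_{M-1|M-1}=D^{\GL_{M-1}(\bfO)}(\Gr_{M-1}\times\bfF^{M-1})$ given by pushforward along multiplication by $t$ on the $\bfF^{M-1}$-coordinate. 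To absorb this twist I would also present the acting category, and its coherent counterpart, as compatible colimits: applying \S\ref{sec 3.7} (now at rank $M-1$) and \cite[\S 4]{[BFGT]}, both $C^\ren_{M-1|M-1}$ and ${{D}}^{\GL_{M-1}\times\GL_{M-1}}(\mathfrak B_{M-1|M-1}^{2,0})\simeq\Rep^{\mathsf{loc.fin}}(\underline{\GL}(M-1|M-1))$ can be written as colimits of Kleisli subcategories, matching under \thmref{bgft1} at rank $M-1$, in such a way that $\sigma$ (resp. its coherent avatar — tensoring by the appropriate determinant) is precisely the transition functor of this presentation. After this reorganization the left $C^\ren_{M-1|M-1}$-action on $C^\ren_{M-1|M}$ and the left ${{D}}^{\GL_{M-1}\times\GL_{M-1}}(\mathfrak B_{M-1|M-1}^{2,0})$-action on ${{D}}^{\GL_{M-1}\times\GL_M}(\mathfrak B_{M-1|M}^{2,0})$ are each exhibited as a single double colimit of restrictions of the monoidal equivalence of \thmref{bgft1} to Kleisli subcategories; since those restrictions are left module equivalences and all transition functors (on both factors and on both sides) are left module functors, the colimit — which recovers \eqref{N-1'} — is an equivalence of left module categories over $C^\ren_{M-1|M-1}\simeq{{D}}^{\GL_{M-1}\times\GL_{M-1}}(\mathfrak B_{M-1|M-1}^{2,0})$. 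I expect the delicate step to be exactly this matching of the two colimit presentations, i.e. verifying that the twist $\sigma$ coming from $\zeta_M$ is identified with the transition functor in the presentation of the acting category, so that no residual twist survives in the colimit.

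\textbf{Other sheaf theories.} The arguments above apply verbatim with the renormalizations of \S\ref{sec 3.1} in place and with the nilpotent support conditions inserted throughout (using \remref{2.4.1}), giving the analogous statements for $C^\locc$, for $C$ with nilpotent support, and for $C^c$.
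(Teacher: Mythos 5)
Part b) of your proposal is correct and is essentially the paper's argument: the rank-$M$ equivalence of Theorem \ref{bgft1} is monoidal and sends $j_*$ to $\tilde c$, right convolution commutes with the left idempotent, and the transition functors ($\zeta_M$ = left multiplication by $D_M$, resp.\ $\eta$ = determinant twist on the $\GL_{M-1}$-factor) are right-module functors, so one passes to the colimit. Your diagnosis in a) is also correct: conjugation by $D_M$ fixes the $\GL_{M-1}$-block but rescales the $\bfF^{M-1}$-part, so the naive term-by-term compatibility of the left action with $\zeta_M$ fails up to the twist $\sigma$, and the remedy must be a compatible colimit presentation of the acting category itself — which is indeed the idea the paper implements.

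The gap is that in a) the entire weight of the proof sits in the step you explicitly defer (``I expect the delicate step to be exactly this matching''). The paper's proof of a) consists almost entirely of establishing that matching: Lemma \ref{lemma 4.2} and Lemma \ref{4.0.5} produce the presentation $C^\ren_{M-1|M-1}\simeq \colim_{\tilde\zeta_M}\, j_*\circledast C^\ren_{M|M}\circledast j_*$ (this already requires a nontrivial argument — the orbit/congruence-subgroup analysis showing that the colimit of $\Mir^t$-equivariant locally compact categories recovers $\GL_{M-1}(\bO)$-equivariance at the $\locc$ level); then \S\ref{sec 4.1.1} identifies $j_*\circledast C_{M|M}\circledast j_*$ with $D(\Mir^t_M(\bO)\backslash \Mir^t_M(\bfF)/\Mir^t_M(\bO))$ and proves, via the congruence-subgroup presentation as in Lemma \ref{colimit lemma}, that the colimit-induced monoidal structure $\textasteriskcentered'$ and the colimit-induced action coincide with the original $\textasteriskcentered$ and the action of \S\ref{def of mon} — this is exactly where the twist is absorbed, and none of it is automatic. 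Moreover your concrete plan differs from the paper's in a way that adds an unverified input: you would present the acting category via Kleisli subcategories of $C^\ren_{M-1|M-1}$ itself with transition $\sigma$ and match the two sides under Theorem \ref{bgft1} \emph{at rank $M-1$}, asserting that $\sigma$ corresponds to a determinant twist on the coherent side; the paper instead stays entirely at rank $M$ (two-sided Kleisli subcategories of $C^\ren_{M|M}$, transition $=$ conjugation by $D_M$, matched by the rank-$M$ monoidal equivalence with $j_*\mapsto\tilde c$), precisely so that no computation of the coherent avatar of $\sigma$ under the rank-$(M-1)$ equivalence is needed. As written, then, a) is a plausible outline whose key verification — that the original module structure agrees with the colimit one, with no residual twist — is named but not carried out.
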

\begin{proof}[Proof of Proposition \ref{act} \textup{b)}]
Let us first check the compatibility of the right actions.

Since the equivalence $ C^\ren_{M|M}\simeq {{D}}^{\GL_{M}\times \GL_M}(\mathfrak{B}_{M|M}^{2,0})$ comes from the ind-completion of a monoidal equivalence functor $ C^\locc_{M|M}\simeq {{D}}_\perf^{\GL_{M}\times \GL_M}(\mathfrak{B}_{M|M}^{2,0})$, it is monoidal. The right convolution always commutes with left convolution, so the equivalence $j_* \circledast C^\ren_{M|M}\simeq \tilde{c} \overset{A}{\star} {{D}}^{\GL_M\times \GL_M}(\mathfrak{B}_{M|M}^{2,0})$ is compatible with the right action of \eqref{NN}. 

Furthermore, note that the transition functor $\xi_{M,*}$ is given by left multiplication with a diagonal element $\operatorname{diag}(t,t,...,t,1)$ in $\GL_M(\bfF)$. In particular, the transition functor commutes with right action of $C^\ren_{M|M}$. Hence, the action of $C^\ren_{M|M}$ on 
\begin{equation}\label{equ 4.2}
    C^\ren_{M-1|M}\simeq \colim_{\xi^{-1}_{M,*}} D^{\Mir^t_M(\bO),\ren}(\Gr_M)\simeq \colim_{\zeta_{M}}\ j_* \circledast C^\ren_{M|M}
\end{equation}
is given by taking colimit of the action of $C^\ren_{M|M}$ on $j_* \circledast C^\ren_{M|M}$. 

Similarly, the action of ${{D}}^{\GL_M\times \GL_M}(\mathfrak{B}_{M|M}^{2,0})$ on ${{D}}^{\GL_{M-1}\times \GL_M}(\mathfrak{B}_{M-1|M}^{2,0})$ is given by taking the colimit of the action of ${{D}}^{\GL_M\times \GL_M}(\mathfrak{B}_{M|M}^{2,0})$ on $\tilde{c} \overset{A}{\star} {{D}}^{\GL_M\times \GL_M}(\mathfrak{B}_{M|M}^{2,0})$. Now Proposition \ref{act} \textup{a)}  follows from the fact that the following diagram
\begin{equation*}
    \xymatrix{j_* \circledast C^\ren_{M|M}\ar[r]^{\zeta_M}\ar[d]^\wr  & j_* \circledast C^\ren_{M|M}\ar[d]^\wr \\ \tilde{c} \overset{A}{\star} {{D}}^{\GL_M\times \GL_M}(\mathfrak{B}_{M|M}^{2,0}) \ar[r]_{\eta}& \tilde{c} \overset{A}{\star} {{D}}^{\GL_M\times \GL_M}(\mathfrak{B}_{M|M}^{2,0})
}
\end{equation*}
commutes. 

\end{proof}

In order to check that \eqref{N-1'} is compatible with left actions of $C^\ren_{M-1|M-1}\simeq {{D}}^{\GL_{M-1}\times \GL_{M-1}}(\mathfrak{B}_{M-1|M-1}^{2,0})$, we need the following lemma.
\begin{lem}\label{lemma 4.2}
There is an equivalence of categories
\begin{equation}\label{4.2}
    C^\ren_{M-1|M}\otimes_{C^\ren_{M|M}} C^\ren_{M|M-1} \simeq C^\ren_{M-1|M-1}.
\end{equation}
\end{lem}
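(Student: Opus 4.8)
The plan is to prove \eqref{4.2} by the same mechanism that yields Theorem~\ref{claim1} (i.e.\ Proposition~\ref{prop 3.1}): use Lemma~\ref{lem 3.4.1} to absorb the ``wrong-direction'' index, present one factor as a colimit, commute the colimit past the relative tensor product, and compute a single term by the Fourier-transform argument of \S\ref{section 3.11.1}. Note first that \eqref{4.2} is exactly formula \eqref{thm 3.1} ``at $N=M-1$'', which is outside the range $M\le N$ of Theorem~\ref{claim1}; the point of the present lemma is that this boundary case can nonetheless be handled by the same tools once one passes through the duality of Lemma~\ref{lem 3.4.1}.

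Concretely, I would proceed as follows. Step~1: by Lemma~\ref{lem 3.4.1}, pushforward along $g\mapsto g^{-1}$ identifies $C^\ren_{M|M-1}$ with $C^\ren_{M-1|M}$, and (as in the discussion preceding \S\ref{conv cons}, and compatibly with the definition of the right action in \S\ref{sec 3.7}) it interchanges left and right $C^\ren_{M|M}$-module structures, sending the monoidal category $C^\ren_{M|M}$ to itself via the anti-monoidal involution $\mathrm{inv}_*$. Using the symmetry of the two-sided bar construction, $\mathcal{M}_1\otimes_{\mathcal{A}}\mathcal{M}_2\simeq\mathcal{M}_2\otimes_{\mathcal{A}^{\mathrm{rev}}}\mathcal{M}_1$, the left-hand side of \eqref{4.2} is thereby rewritten entirely in terms of $C^\ren_{M-1|M}$ with its natural right $C^\ren_{M|M}$-action. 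Step~2: by \eqref{equ 4.2} (equivalently \eqref{3.5-}) present the first factor as $C^\ren_{M-1|M}\simeq\colim_{\zeta_M}\bigl(j_*\circledast C^\ren_{M|M}\bigr)$; since $\zeta_M$ is left multiplication by $D_M=\operatorname{diag}(t,\dots,t,1)$ it commutes with the right $C^\ren_{M|M}$-action, so $\otimes_{C^\ren_{M|M}}$ commutes with this colimit, and it remains to compute the single term $\bigl(j_*\circledast C^\ren_{M|M}\bigr)\otimes_{C^\ren_{M|M}}C^\ren_{M|M-1}$ and then take the $\zeta_M$-colimit. Step~3: for this term, apply the lemma preceding \eqref{3.30} (with $\mathcal{D}=C^\ren_{M|M-1}$) to replace $j_*\circledast C^\ren_{M|M}$ by $D_!^{\GL_M(\bfO),\ren}(\bfO^M\setminus t\bfO^M)\otimes_{D_!^{\GL_M(\bfO)}(\bfF^M)}(-)$, then Lemma~\ref{lem 3.3.5} together with the Fourier transform of \cite[Theorem~5.2.11]{[Ber]}, exactly as in \S\ref{section 3.11.1}: this trades $\GL_M(\bfO)$-invariance plus restriction at $e_M$ for $\Mir_M(\bfO)$-invariance, i.e.\ it peels off one direction and lowers the rank of the relevant $U$-equivariance, yielding a rank-$(M-1)$ mirabolic category; taking the $\zeta_M$-colimit assembles it into $C^\ren_{M-1|M-1}=D^{\GL_{M-1}(\bfO),\ren}(\Gr_{M-1}\times\bfF^{M-1})$. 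The non-renormalized statement, and the statements for the other sheaf theories, follow by the identical argument using Proposition~\ref{prop 3.3.6} (resp.\ the analogues of Corollary~\ref{cor 2.5.3}) to pass between the renormalizations.

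The hard part will be the module-structure bookkeeping in Step~1: one must check that the right $C^\ren_{M|M}$-action obtained from the left action on $C^\ren_{M|M-1}$ under $\mathrm{inv}_*$ really coincides, up to the anti-involution $\mathrm{inv}_*$ of $C^\ren_{M|M}$, with the action built in \eqref{equ 4.2} from right convolution on $j_*\circledast C^\ren_{M|M}$, and that this anti-involution is matched on the two sides of \eqref{4.2}. On the coherent side $\mathrm{inv}_*$ corresponds to the transpose $A_{1,2}\leftrightarrow A_{2,1}$ of $\mathfrak{B}_{M|M}$, which exchanges left and right $\overset{A}{\star}$-modules, so the two formulations are consistent; making this identification precise (rather than just at the level of underlying categories) is where the real work lies. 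Once it is in place, the remaining manipulations with colimits, idempotents and the Fourier transform are formal and already available from \S\ref{cons sid}.
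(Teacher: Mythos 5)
Your outline is the mirror image of the paper's: the paper presents the \emph{second} factor as $C^\ren_{M|M-1}\simeq\colim_{\zeta_M'}\,C^\ren_{M|M}\circledast j_*$ (left action defined by transport through the inversion of Lemma \ref{lem 3.4.1}), commutes the colimit past $\otimes_{C^\ren_{M|M}}$, and is left with the single term $C^\ren_{M-1|M}\circledast j_*$; you present the first factor as $\colim_{\zeta_M}\,j_*\circledast C^\ren_{M|M}$ and are left with $j_*$ acting on $C^\ren_{M|M-1}$, which is the same object viewed from the other side. Up to that point the two arguments agree. The genuine gap is in your Step 3. The Fourier-transform mechanism of \S\ref{section 3.11.1} cannot be invoked "exactly as" there: that argument needs $\cD=(\cD')^{\GL_M(\bfO)}$ where $\cD'$ carries a geometric action of $\Mir^t_{M+1}(\bfF)=\GL_M(\bfF)\ltimes\bfF^M$ on an ambient $\Gr_N$ with $N>M$, so that the $\bfF^M$-action can be Fourier-transformed against $\chi_{e_M}$ and absorbed into $U_{M,N}$ to produce $U_{M-1,N}$. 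For $C_{M|M-1}$ the underlying space is $\GL_M(\bfO)\backslash\GL_M(\bfF)$ with a right equivariance condition, there is no ambient copy of $\bfF^M$, and the left $C_{M|M}$-module structure exists only by transport through inversion; so there is nothing to Fourier-transform. The paper replaces this step by a direct geometric identification, Lemma \ref{4.0.5}: $C^\ren_{M-1|M}\circledast j_*\simeq D(\Gr_{M-1}\times\bfF^{M-1})^{\Mir^t_M(\bfO),\ren}$, proved by exhibiting the subscheme $H'\subset\GL_M(\bfF)$ and matching the locally compact subcategories via IC-sheaves on orbits. Some such identification is unavoidable in your route too, and it is not supplied by the lemma preceding \eqref{3.30} alone.

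The second, and larger, omission is your claim that the final colimit assembly is "formal and already available". After the identification one must still show, at the $\locc/\ren$ level, that $\colim'$ of the $\Mir^t_M(\bfO)$-equivariant (equivalently $\GL_{M-1}\ltimes t^i\bfO^{M-1}$-equivariant) locally compact categories is $D_!^{\GL_{M-1}(\bfO),\locc}(\Gr_{M-1}\times\bfF^{M-1})$. The paper explicitly warns that the general renormalized colimit statement \eqref{3.22} is not known, and in the proof of Lemma \ref{lemma 4.2} it verifies the needed instance by hand: the locally compact category is generated by IC-sheaves on $\GL_{M-1}(\bfO)$-orbits in $\Mir'_M(\bfF)/\GL_{M-1}(\bfO)$, and each such orbit is shown to be $t^n\bfO^{M-1}$-invariant for large $n$ by an explicit congruence-subgroup estimate. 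This orbit-stability argument is the real content of the proof, and it is absent from your proposal; conversely, the module-structure bookkeeping you flag as the hard part of Step 1 is handled in the paper essentially by definition (the left action on $C_{M|M-1}$ is \emph{defined} by transport through $g\mapsto g^{-1}$), so no coherent-side comparison of anti-involutions is needed for this lemma.
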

\begin{proof}
By Lemma \ref{lem 3.4.1}, $g\mapsto g^{-1}$ induces an equivalence $C^\ren_{M|M-1}\simeq C^\ren_{M-1|M}$. Furthermore, taking inverse induces a monoidal self-anti-equivalence of $C^\ren_{M|M}$,
 \begin{equation}
     \begin{split}
      \iota:   C^\ren_{M|M}\longrightarrow C^\ren_{M|M}\\
         \mathcal{F}\mapsto \iota(\mathcal{F}),
     \end{split}
 \end{equation}
 
 such that,
 \begin{equation}
     \iota(\mathcal{F})\circledast \iota(\mathcal{G})\simeq \iota(\mathcal{G}\circledast \mathcal{F}).
 \end{equation}
 
The right action of $C^\ren_{M|M}$ on $C^\ren_{M-1|M}$ induces a left action $C^\ren_{M|M}$ on $C^\ren_{M|M-1}$.

In particular, $C^\ren_{M|M-1}\simeq \colim_{\xi'_{M,*}} D(\Gr_M)^{\Mir_M^t(\bO), \ren} \simeq \colim_{\zeta_{M}'}\ C^\ren_{M|M}\circledast j_*$. Here $D(\Gr_M)^{\Mir_M^t(\bO), \ren}$ denotes the ind-completion of the category of locally compact \textit{right} $\Mir_M^t(\bO)$-equivariant D-modules on $\GL_M(\bO)\backslash \GL_M(\bF)$, the transition functors $\xi'_{M,*}$
is given by taking pushforward along the map of right multiplication with the diagonal element $D_M = \operatorname{diag}(t, t,\cdots, t, 1)$, and $\zeta_M'$ is the corresponding transition functor under the equivalence $D(\Gr_M)^{\Mir_M^t(\bO), \ren} \simeq C^\ren_{M|M}\circledast j_*$.

Applying the above formula to the left-hand side of \eqref{4.2}, we obtain
\begin{equation}
\begin{split}
    {C^\ren_{M-1|M}\otimes_{C^\ren_{M|M}} C^\ren_{M|M-1}}
&\simeq C^\ren_{M-1|M}\otimes_{C^\ren_{M|M}} \colim_{\zeta_M'}\ C^\ren_{M|M}\circledast j_*\\
&\simeq \colim_{\zeta_M'}\ C^\ren_{M-1|M}\otimes_{C^\ren_{M|M}} C^\ren_{M|M}\circledast j_*\\
&\simeq  \colim_{\zeta_M'}\ C^\ren_{M-1|M}\circledast j_*.
\end{split}
\end{equation}
By Lemma \ref{4.0.5} below, we have
\[\colim_{\zeta_M'} C^\ren_{M-1|M}\circledast j_*\simeq \colim_{\xi_{M,*}'} D_!(\Gr_{M-1}\times \bfF^{M-1})^{\Mir_M^t(\bO),\ren}.\]
Here $D(\Gr_{M-1}\times \bfF^{M-1})^{\Mir_M^t(\bO),\ren}$ denotes the ind-completion of the category of locally compact \textit{right} $\Mir_M^t(\bO)$-equivariant D-modules on $\GL_{M-1}(\bO)\backslash \Mir^t_M(\bF)\simeq  \Gr_{M-1}\times \bfF^{M-1}.$

Now it is sufficient to prove that 
\begin{equation}
    \colim_{\xi_{M,*}'} D_!(\Gr_{M-1}\times \bfF^{M-1})^{\Mir_M^t(\bO),\ren}\simeq D_!(\Gr_{M-1}\times \bfF^{M-1})^{\GL_{M-1}(\bO),\ren}.
\end{equation}
Here $D_!(\Gr_{M-1}\times \bfF^{M-1})^{\GL_{M-1}{(\bO)},\ren}$ denotes the ind-completion of the category of locally compact \textit{right} $\GL_{M-1}{(\bO)}$-equivariant D-modules on $\GL_{M-1}(\bO)\backslash \Mir^t_M(\bF)\simeq  \Gr_{M-1}\times \bfF^{M-1}.$

We only need to show that
\begin{equation}
    \colim'_{\xi_{M,*}'} D_!(\Gr_{M-1}\times \bfF^{M-1})^{\Mir_M^t(\bO),\locc}\simeq D_!(\Gr_{M-1}\times \bfF^{M-1})^{\GL_{M-1}(\bO),\locc}.
\end{equation}

It is equivalent to
\begin{equation}
    \colim'_{\oblv} D_!(\Gr_{M-1}\times \bfF^{M-1})^{\GL_{M-1}{(\bO)}\ltimes t^i \bO^{M-1},\locc}\simeq D_!(\Gr_{M-1}\times \bfF^{M-1})^{\GL_{M-1}(\bO),\locc}.
\end{equation}

By a left-right symmetry, we should prove the equivalence for the corresponding categories of \textit{left} equivariant D-modules,
\begin{equation}
    \colim'_{\oblv} D_!^{\GL_{M-1}{(\bO)}\ltimes t^i \bO^{M-1},\locc}(\Gr_{M-1}\times \bfF^{M-1})\simeq D_!^{\GL_{M-1}(\bO),\locc}(\Gr_{M-1}\times \bfF^{M-1}).
\end{equation}

By the construction of \cite[Section 3.2]{[BFGT]}, $D_!^{\GL_{M-1}(\bO),\locc}(\Gr_{M-1}\times \bfF^{M-1})$ is generated by $\IC$-sheaves on $\GL_{M-1}{(\bO)}$-orbits in $ \Mir'_M(\bfF)/\GL_{M-1}(\bfO)$. Here $\Mir'_M(\bfF):= { \{(h,v)\in \Mir^t_M(\bfF)\mid v\neq 0\}}$. So we only need to prove that any $\GL_{M-1}(\bO)$-orbit $\mathbb{O}$ in $ \Mir'_M(\bfF)/\GL_{M-1}(\bfO)$ is $t^n\bO^{M-1}$-invariant for {sufficiently} large~$n$.

Assume  $\mathbb{O}=\{[hg,v]\in \Gr_{M-1}\times {(\bfF^{M-1}\setm0)}\mid {h}\in \GL_{M-1}(\bO)\}$ (see ~\S\ref{restr of mon} for the definition of notation). 
Let $m$ be a positive {integer} such that
\begin{equation}\label{cap}
    \GL_{M-1,m}(\bO)\subset \bigcap_{h\in \GL_{M-1}(\bfO)} hg \GL_{M-1}(\bfO) g^{-1}h^{-1}.
\end{equation}
Here $\GL_{M-1,m}(\bO):=1+ t^m \End_{M-1}(\bfO)$ denotes the {$m$}-th congrence subgroup of $\GL_{M-1}(\bO)$. It acts trivially on the affine Grassmannian part of $[hg,v]$.

{To show that  this positive integer $m$ exists, assume $g, g^{-1}\in t^{-l}\End_{M-1}(\bfO)$ for some $l> 0$.  Then  \eqref{cap} is satisfied for any $m\geq 2l$.}

 For any point $[hg,v]\in \mathbb{O}$, as $\GL_{M-1,m}(\bO)\subset \GL_{M-1}(\bfO)$, we have $[\GL_{M-1,m}(\bO) hg,v]\subset \mathbb{O}$. We only need to prove that there exists a positive integer $m_{[g,v]}$ which does not depend on $h$, such that
\begin{equation}
    \GL_{M-1,m}(\bO) hgv\supset hgv+t^{n} \bfO^{M-1},\  \textnormal{for}\ any\ n\geq m_{[g,v]},
\end{equation}
i.e., $\End_{M-1}(\bfO)hgv\supset t^{n} \bfO^{M-1}$, for any $n\geq m_{[h,v]}$. 

We can choose $h'\in \GL_{M-1}(\bfO)$, such that $h'hgv=\sum_{i=1}^{M-1} t^{m_i} e_i$.  Assume $m_i$ is the smallest number among $\{m_1, m_2, \cdots m_{M-1}\}$, it is independent of $h$ and $h'$. We have $\End_{M-1}(\bfO)hgv\supset t^{m_i} \bfO^{M-1}${, and thus we can} take $m_{[g,v]}=m_i$. 

\end{proof}
\begin{lem}\label{4.0.5}
There is an equivalence
\begin{equation}
C^\ren_{M-1|M}\circledast j_*\simeq D(\Gr_{M-1}\times \bfF^{M-1})^{\Mir_M^t(\bO),\ren}.
\end{equation}
\end{lem}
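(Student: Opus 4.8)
The plan is to deduce the lemma from the colimit presentation of $C^\ren_{M-1|M}$ recorded in \S\ref{sec 3.7}, combined with a Fourier transform of the kind used in \S\ref{section 3.11.1}. First, recall from \S\ref{sec 3.7} (and \eqref{equ 4.2}) that $C^\ren_{M-1|M}\simeq\colim_{\xi_{M,*}}\bigl(j_*\circledast C^\ren_{M|M}\bigr)$, where $j_*\circledast C^\ren_{M|M}$ is the locally-compact--renormalized full subcategory of $C^\ren_{M|M}=D^{\GL_M(\bfO),\ren}(\Gr_M\times\bfF^M)$ supported on $\Gr_M\times(\bfO^M\setminus t\bfO^M)$, and the transition functor $\xi_{M,*}$ is pushforward along left multiplication by $D_M=\operatorname{diag}(t,\dots,t,1)$; moreover the right $C^\ren_{M|M}$-action on $C^\ren_{M-1|M}$ is the colimit of the right $\circledast$-action on $j_*\circledast C^\ren_{M|M}$, and that right action is compatible with $\xi_{M,*}$ since left multiplication commutes with right convolution. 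Consequently $C^\ren_{M-1|M}\circledast j_*\simeq\colim_{\xi_{M,*}}\bigl(j_*\circledast C^\ren_{M|M}\circledast j_*\bigr)$, so it suffices to compute the bi-convolved category $j_*\circledast C^\ren_{M|M}\circledast j_*$ together with the $D_M$-colimit.

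Next, I would unwind $j_*\circledast C^\ren_{M|M}\circledast j_*$ using the diagram $(\mathfrak{p},\mathfrak{q},\mathfrak{m})$ of \S\ref{restr of mon}. On the subcategory $j_*\circledast C^\ren_{M|M}=D^{\GL_M(\bfO),\ren}(\Gr_M\times(\bfO^M\setminus t\bfO^M))$, the idempotent functor $-\circledast j_*$ is the one cutting the support down to the locus $\{(\bar g,u):u\in\bfO^M\setminus t\bfO^M,\ g^{-1}u\in\bfO^M\setminus t\bfO^M\}$. Descending the free $\GL_M(\bfO)$-action in the $u$-direction, with $\Stab_{\GL_M(\bfO)}(e_M)=\Mir_M(\bfO)$, identifies this with $D^{\Mir_M(\bfO),\ren}(W_M)$, where $W_M=\{L\in\Gr_M:\ e_M\in L\setminus tL\}$ is the open sub-indscheme of lattices in which $e_M$ is primitive; note $W_M$ is stable under left multiplication by $D_M$, so $\xi_{M,*}$ restricts to it. It then remains to identify $\colim_{\xi_{M,*}} D^{\Mir_M(\bfO),\ren}(W_M)$ with $D(\Gr_{M-1}\times\bfF^{M-1})^{\Mir^t_M(\bfO),\ren}$. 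The projection $W_M\to\Gr_{M-1}$, $L\mapsto (L+\bfF e_M)/\bfF e_M$, is (after a choice of coordinates) a fibration whose fibers are the abelian group $\bfF^{M-1}/\bfO^{M-1}$, and the loop directions of $\GL_M(\bfF)$ absorbed by the $D_M$-colimit act along these fibers by translation. Applying a fiberwise Fourier transform over $\Gr_{M-1}$ — exactly as in \S\ref{section 3.11.1} and \cite[Theorem 5.2.11]{[Ber]} — converts $W_M$ into $\GL_{M-1}(\bfO)\backslash\Mir^t_M(\bfF)\simeq\Gr_{M-1}\times\bfF^{M-1}$ and the colimited equivariance into the right $\Mir^t_M(\bfO)$-equivariance, the transpose appearing because the Fourier transform interchanges the upper and lower unipotent radicals (cf. the Remarks in \S\ref{sec 3.7} and \S\ref{restr of mon}). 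The two renormalizations are matched throughout by first working with the locally-compact subcategories and ind-completing only at the end, using Proposition \ref{3.3.7} and Corollary \ref{prop 3.3.6} to transport local compactness across the relevant quotients and invariants.

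The main obstacle is this last identification: reconciling $\colim_{\xi_{M,*}} D^{\Mir_M(\bfO),\ren}(W_M)$ with the mirabolic-equivariant category on $\Gr_{M-1}\times\bfF^{M-1}$. Two points require care. The first is that the $D_M$-colimit must be understood correctly — it is not literally ``equivariance under the union of conjugates of $\Mir_M(\bfO)$'', but, as in the proof of \eqref{3.5-} in \cite[Section 4]{[BFGT]}, the renormalized colimit interacts with the loop directions precisely so that, after the fiberwise Fourier transform, one recovers the pro-affine $\bfF^{M-1}$ fibers and the $\Mir^t_M(\bfO)$-equivariance instead of a collapsed answer. The second is keeping the renormalization defining $C^\ren_{M-1|M}$ in sync with the one defining $D(\Gr_{M-1}\times\bfF^{M-1})^{\Mir^t_M(\bfO),\ren}$ across the Fourier transform; this is handled, as in \S\ref{section 3.11.1} and Corollary \ref{prop 3.3.6}, by verifying the statement on locally-compact objects first and passing to ind-completions afterwards. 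Neither difficulty is essentially new: this lemma is the ``$\circledast$-side'' shadow of the argument already carried out for Proposition \ref{prop 3.1}.
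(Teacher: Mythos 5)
Your overall skeleton (pass to the colimit presentation of $C^\ren_{M-1|M}$, cut on the right by $j_*$, identify the bi-cut category, then take the colimit) could in principle be made to work, but the key step as you state it is wrong. After identifying $j_*\circledast C_{M|M}\circledast j_*$ with $D^{\Mir_M(\bfO)}(W_M)$, no Fourier transform is needed, and in fact a fiberwise Fourier transform along the fibers of $W_M\to\Gr_{M-1}$ would land in the wrong place: those fibers are (twisted) copies of $\bfF^{M-1}/\bfO^{M-1}$, whose dual is $\bfO^{M-1}$, so a genuine Fourier transform produces D-modules on an $\bfO^{M-1}$-bundle (or a character-equivariance condition), not $D(\Gr_{M-1}\times\bfF^{M-1})^{\Mir^t_M(\bfO)}$. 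The correct identification is purely group-theoretic: $W_M\simeq\Mir_M(\bfF)/\Mir_M(\bfO)$ (every lattice containing $e_M$ primitively is a $\Mir_M(\bfF)$-translate of the standard one), the $D_M$-colimit relaxes the left equivariance from $\Mir_M(\bfO)$ to $\GL_{M-1}(\bfO)$ exactly as in Lemma \ref{colimit lemma}, and $\GL_{M-1}(\bfO)\backslash\Mir_M(\bfF)\simeq\Gr_{M-1}\times\bfF^{M-1}$ on the nose; the transpose in the statement is a bookkeeping artifact of the identification $j_*\circledast C_{M|M}\simeq D^{\Mir^t_M(\bfO)}(\Gr_M)$ via $g\mapsto g^{t,-1}$ (Remark in \S\ref{sec 3.7}), not an effect of any Fourier transform. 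The situation of \S\ref{section 3.11.1} is genuinely different (there a fiber of a $D_!(\bfF^M)$-module category is traded for invariants against the character $\chi_{e_M}$); here no character appears on either side.

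The second, and more serious, gap is the renormalization. In your route the lemma reduces to the statement that $\colim'$ of the locally compact subcategories along the $D_M$-transition functors equals the locally compact subcategory of the colimit, i.e.\ that local compactness commutes with this colimit. This is precisely the kind of statement the paper flags as unknown in general (see the discussion around \eqref{3.22}) and proves only in specific cases by explicit orbit-stabilization arguments (the corollary giving \eqref{3.5+}, and the second half of the proof of Lemma \ref{lemma 4.2}); Proposition \ref{3.3.7} and Corollary \ref{prop 3.3.6} concern single quotients of finite or reductive-by-pro-unipotent type and do not give the commutation you need. Note that the paper's own proof of Lemma \ref{4.0.5} avoids both issues entirely: it uses $C^\ren_{M-1|M}\circledast j_*\simeq\Ind(C^\locc_{M-1|M}\circledast j_*)$, identifies the ambient category directly as $D(\GL_{M-1}(\bfO)\backslash H'/\GL_M(\bfO))\simeq D(\GL_{M-1}(\bfO)\backslash\Mir^t_M(\bfF)/\Mir^t_M(\bfO))$ with $H'=\{g:\ g^te_M\in\bfO^M\setminus t\bfO^M\}$, and then matches the two locally compact subcategories as the ones generated by IC D-modules on the corresponding orbits — no colimit and no Fourier transform. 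If you want to keep your colimit route, replace the Fourier-transform step by the direct identification above and supply an orbit-stabilization argument for the commutation of $\locc$ with the colimit.
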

\begin{proof}
By construction, we have 
\[C^\ren_{M-1|M}\circledast j_*\simeq \Ind(C^\locc_{M-1|M}\circledast j_*).\]
Hence, it is sufficient to prove that
\[C^\locc_{M-1|M}\circledast j_*\simeq D(\Gr_{M-1}\times \bfF^{M-1})^{\Mir_M^t(\bO),\locc}.\]
Note that there is an equivalence
\begin{equation}
    \begin{split}
        C_{M-1|M}\circledast j_*&\simeq D(\GL_{M-1}(\bfO)\backslash \GL_M(\bfF)/ \GL_{M}(\bfO))\circledast j_*\\
        &\simeq D(\GL_{M-1}(\bfO)\backslash S'/ \GL_{M}(\bfO))\\
        &\simeq D(\GL_{M-1}(\bfO)\backslash \Mir^t_M(\bfF)/ \Mir^t_M(\bfO)).
    \end{split}
\end{equation}
Here $S'$ denotes the subscheme of $\GL_M(\bF)$ consisting of $g\in \GL_M(\bF)$ such that $g^t\cdot e_M\in \bO^M\setminus t \bO^M$. We should prove that two full subcategories
\[C^\locc_{M-1|M}\circledast j_*\subset C_{M-1|M}\circledast j_*\] and \[D(\Gr_{M-1}\times \bfF^{M-1})^{\Mir_M^t(\bO),\locc}\subset D(\GL_{M-1}(\bfO)\backslash \Mir_M^t(\bfF)/ \Mir^t_M(\bfO))\] coincide. 

The category $D(\GL_{M-1}(\bfO)\backslash S'/ \GL_{M}(\bfO))$ can be regarded as the category of right $\GL_M(\bO)$-equivariant D-modules on $\GL_{M-1}(\bO)\backslash S'$. The category $C^\locc_{M-1|M}\circledast j_*$ is generated by the $\IC$ (intersection cohomology) D-modules on $\GL_M(\bO)$-orbits of $\GL_{M-1}(\bO)\backslash S'$. Under the equivalence $D(\GL_{M-1}(\bfO)\backslash S'/ \GL_{M}(\bfO))\simeq D(\GL_{M-1}(\bfO)\backslash \Mir^t_M(\bfF)/ \Mir^t_M(\bfO))$, $C^\locc_{M-1|M}\circledast j_*$ corresponds to the full subcategory generated by the $\IC$ D-modules on $\Mir_M^t(\bO)$-orbits of $\GL_{M-1}(\bfO)\backslash \Mir_M(\bfF)$. It is exactly the category $D(\Gr_{M-1}\times \bfF^{M-1})^{\Mir_M^t(\bO),\locc}$.
\end{proof}

\subsection{Colimit description of monoidal structures}
Note that Lemma \ref{lemma 4.2} presents $C^\ren_{M-1|M-1}$ as a colimit of monoidal categories. Namely,
\begin{equation}
\begin{split}
\lhs C^\ren_{M-1|M}\otimes_{C^\ren_{M|M}} C^\ren_{M|M-1}\\
&\simeq C^\ren_{M-1|M}\otimes_{C^\ren_{M|M}} C^\ren_{M|M}\otimes_{C^\ren_{M|M}} C^\ren_{M|M-1}\\
&\simeq    (\colim_{\zeta_M} j_*\circledast C^\ren_{M|M}) \otimes_{C^\ren_{M|M}} C^\ren_{M|M}\otimes_{C^\ren_{M|M}} (\colim_{\zeta_M'}\ C^\ren_{M|M} \circledast j_*)\\
&\simeq  \colim_{\zeta_M, \zeta_M'}\ j_*\circledast C^\ren_{M|M} \circledast j_*.  
\end{split}
\end{equation}
Denote by $\tilde{\zeta}_{M}$ the composition of $\zeta_M$ and $\zeta_M'$. Since {the index set $\{(x,x), x\in \mathbb{N}\}$ is cofinal in $\NN\x\NN$}, we have
\begin{equation*}
    \colim_{\zeta_M, \zeta_M'}\ j_*\circledast C^\ren_{M|M} \circledast j_*\simeq \colim_{\tilde{\zeta}_{M}} j_*\circledast C^\ren_{M|M} \circledast j_*.
\end{equation*}

In this section, we will describe the monoidal structure $\textasteriskcentered$ (equivalently, $\circledast$) {on} $C^\ren_{M-1|M-1}$ as a colimit of the restricted monoidal strcuture {on} $j_*\circledast C^\ren_{M|M} \circledast j_*$.

\subsubsection{Colimit description of $\textasteriskcentered$}\label{sec 4.1.1}
We note that $j_*$ is an idempotent algebra object of $C^\ren_{M|M}$, so the monoidal structure on $C^\ren_{M|M}$ induces a monoidal structure of $j_*\circledast C^\ren_{M|M}\circledast j_*$, 
\[\circledast: (j_*\circledast C^\ren_{M|M}\circledast j_*)\otimes (j_*\circledast C^\ren_{M|M}\circledast j_*)\to j_*\circledast C^\ren_{M|M}\circledast j_*.\]

It is obtained from the ind-completion of 
\[\circledast: (j_*\circledast C^\locc_{M|M}\circledast j_*)\otimes (j_*\circledast C^\locc_{M|M}\circledast j_*)\to j_*\circledast C^\locc_{M|M}\circledast j_*,\]
which is the restriction of 
\[\circledast: (j_*\circledast C_{M|M}\circledast j_*)\otimes (j_*\circledast C_{M|M}\circledast j_*)\to j_*\circledast C_{M|M}\circledast j_*.\]

Note that there is an equivalence 
\begin{equation}\label{eq 4.10}
    j_*\circledast C_{M|M}\circledast j_*\simeq D(\Mir^t_M(\bfO)\backslash \Mir^t_M(\bfF)/ \Mir^t_M(\bfO)).
\end{equation}
Indeed, by the definition of $\circledast$, we have 
\begin{equation}
\begin{split}
     (j_*\circledast C_{M|M}) \circledast j_*\simeq&  j_*\circledast D(\GL_M(\bfO)\backslash \Mir_{M+1}(\bfF)/ \GL_M(\bfO)) \circledast j_*\\ \simeq& D(\GL_M(\bfO)\backslash S/ \GL_M(\bfO)) ,
\end{split}
\end{equation}
{where} $S:= \{(g,v)\in \Mir_{M+1}(\bfF)= \GL_M(\bfF)\ltimes \bfF^M\mid v\in \bfO^M\setminus t \bfO^M, gv\in  \bfO^M\setminus t \bfO^M\}$.
Note that we have the following isomorphism
\begin{equation*}  
\begin{split}
	\GL_M(\bfO)\bsl S/ \GL_M(\bO)  &\simeq (\bO^M\setm t\bO^M)/\GL_M(\bO) \underset{\bF^M/\GL_M(\bF)}\x  (\bO^M\setm t\bO^M)/\GL_M(\bO)\\
	&\simeq (\bO^M\setm t\bO^M)/\GL_M(\bO) \underset{(\bF^M \setm 0)/\GL_M(\bF)}\x  (\bO^M\setm t\bO^M)/\GL_M(\bO)\\
	&\simeq \pt/\Mir_M(\bO)   \underset{\pt/\Mir_M(\bF)}\x  \pt/\Mir_M(\bO)\\
	&\simeq \Mir_M(\bfO)\backslash \Mir_M(\bfF)/ \Mir_M(\bfO).
\end{split}
\end{equation*}
We obtain a monoidal equivalence, \[(D(\GL_M(\bfO)\backslash S/ \GL_M(\bfO)),\oast)\simeq (D(\Mir_M(\bfO)\backslash \Mir_M(\bfF)/ \Mir_M(\bfO),{*}),\]
where the monoidal structure $\textasteriskcentered$ of the right-hand side is given by the restriction of the convolution monoidal of $C_{M-1|M-1}$ to the full subcategory $D(\Mir_M(\bfO)\backslash \Mir_M(\bfF)/ \Mir_M(\bfO)$.


{
} 


 The transition functor of $D({\Mir}_M(\bfO)\backslash {\Mir}_M(\bfF)/ {\Mir}_M(\bfO))$ corresponding to $\tilde{\zeta}_{M}$ is given by pushforward along
\begin{equation}
\begin{split}
    \tilde{\xi}^{-1}_M\colon {\Mir}_M(\bfF)\longrightarrow
    {\Mir}_M(\bfF)\\
    g\mapsto D_MgD_M^{-1}.
\end{split}
\end{equation}

We have
\begin{equation}
    \colim_{\tilde{\xi}_{M,*}}\ D({\Mir}_M(\bfO)\backslash {\Mir}_M(\bfF)/ {\Mir}_M(\bfO))\simeq D(\GL_{M-1}(\bfO)\backslash {\Mir}_M(\bfF)/ \GL_{M-1}(\bfO)).
\end{equation}
\begin{rem}
We can also give a description of the above monoidal structure in terms of the transposed mirabolic subgroup. Namely, by taking transpose inverse
\begin{equation}
\begin{split}
    {\Mir}_M(\bfF)&\longrightarrow
    {\Mir}^t_M(\bfF)\\
    g&\mapsto g^{t,-1},
\end{split}
\end{equation}
we can obtain a monoidal equivalence
\[(D(\Mir_M(\bfO)\backslash \Mir_M(\bfF)/ \Mir_M(\bfO),{*})\simeq (D(\Mir^t_M(\bfO)\backslash \Mir^t_M(\bfF)/ \Mir^t_M(\bfO),{*}).\]

The transition functor of $D({\Mir}^t_M(\bfO)\backslash {\Mir}^t_M(\bfF)/ {\Mir}^t_M(\bfO))$ corresponding to $\tilde{\zeta}_{M}$ is given by pushforward along
\begin{equation}
\begin{split}
    \tilde{\xi}_M\colon {\Mir}^t_M(\bfF)\longrightarrow
    {\Mir}^t_M(\bfF)\\
    g\mapsto D^{-1}_MgD_M.
\end{split}
\end{equation}
\end{rem}

It is easy to see that $\tilde{\xi}_{M,*}$ is an {auto}-equivalence of monoidal category. That is to say, the monoidal structure on $D({\Mir}^t_M(\bfO)\backslash D({\Mir}^t_M(\bfF)/{\Mir}^t_M(\bfO)))$ commutes with the transition functors, i.e.,
the following diagram commutes
\[\xymatrix{
\strut D(\bM(\bO)\backslash \bM(\bF)/\bM(\bO))\otimes D(\bM(\bO)\backslash \bM(\bF)/\bM(\bO))\ar[r]\ar[dd]^{\tilde{\xi}_{M,*}\otimes \tilde{\xi}_{M,*}}&  D(\bM(\bO)\backslash \bM(\bF)/\bM(\bO))\ar[dd]^{\tilde{\xi}_{M,*}}\\
&
\\
D(\bM(\bO)\backslash \bM(\bF)/\bM(\bO))\otimes D(\bM(\bO)\backslash \bM(\bF)/\bM(\bO))\ar[r]&D(\bM(\bO)\backslash \bM(\bF)/\bM(\bO)).\\
}\]
Here $\bM(\bO):= \Mir^t_M(\bfO)$ and $\bM(\bF):= \Mir^t_M(\bfF)$.
In particular, the convolution product of $D(\Mir^t_M(\bfO)\backslash \Mir^t_M(\bfF)/ \Mir^t_M(\bfO))$ gives a monoidal structure $\textasteriskcentered'$ on $C_{M-1|M-1}\simeq \colim_{\tilde{\xi}_{M,*}}\ D(\Mir^t_M(\bfO)\backslash \Mir^t_M(\bfF)/ \Mir^t_M(\bfO))$. 

Its restriction to $C^\locc_{M-1|M-1}$ induces a monoidal structure $\textasteriskcentered'$ on $C^\locc_{M-1|M-1}$. Taking its ind-completion, we extend $\textasteriskcentered'$ to $C^\ren_{M-1|M-1}$.

Similarly, taking colimit of the action of $j_*\circledast C^\ren_{M|M}\circledast j_*$ on $j_*\circledast C^\ren_{M|M}$ gives rise to an action of $(C^\ren_{M-1|M-1}, \textasteriskcentered')$ on $C^\ren_{M-1|M}$,
\begin{equation}\label{sec act}
    C^\ren_{M-1|M-1}\otimes C^\ren_{M-1|M}\to C^\ren_{M-1|M}
\end{equation}

Now we claim
\begin{prop}
The monoidal structure $\textasteriskcentered$ of $C^\ren_{M-1|M-1}$ is isomorphic to $\textasteriskcentered'$. Furthermore, the action \eqref{sec act} of $C^\ren_{M-1|M-1}$ on $C^\ren_{M-1|M}$ is isomorphic to the action in ~\S\ref{def of mon}.
\end{prop}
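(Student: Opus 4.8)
The plan is to trace both monoidal structures $\textasteriskcentered,\textasteriskcentered'$ on $C^\ren_{M-1|M-1}$, and both actions on $C^\ren_{M-1|M}$, back to a single geometric datum: the naive convolution on $\Mir^t_M(\bfF)$ (acting on itself, and on $\Gr_M$ by left multiplication through $\Mir^t_M(\bfF)\subset\GL_M(\bfF)$), examined at two nested equivariance levels $\GL_{M-1}(\bfO)\subset\Mir^t_M(\bfO)$ with pro-unipotent quotient $\Mir^t_M(\bfO)/\GL_{M-1}(\bfO)\simeq\bfO^{M-1}$. First I would record the reduction. By \S\ref{conv cons} the structure $\textasteriskcentered$ on $C_{M-1|M-1}=D(\GL_{M-1}(\bfO)\backslash\Mir^t_M(\bfF)/\GL_{M-1}(\bfO))$ is precisely this naive convolution, and by \S\ref{def of mon} the left action of $C_{M-1|M-1}$ on $C_{M-1|M}=D(\GL_{M-1}(\bfO)\backslash\GL_M(\bfF)/\GL_M(\bfO))$ is the associated convolution action. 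On the other hand, by \eqref{eq 4.10} the full subcategory $j_*\circledast C_{M|M}\circledast j_*$ is $D(\Mir^t_M(\bfO)\backslash\Mir^t_M(\bfF)/\Mir^t_M(\bfO))$ with its naive convolution (this is where the Fourier transform and the $H$-calculation of \S\ref{sec 4.1.1} are used), while $j_*\circledast C_{M|M}\simeq D^{\Mir^t_M(\bfO)}(\Gr_M)=D(\Mir^t_M(\bfO)\backslash\GL_M(\bfF)/\GL_M(\bfO))$ with the convolution action; and $\textasteriskcentered'$ and \eqref{sec act} are by construction the colimits of these along the transition functor $\tilde\xi_{M,*}$, i.e.\ pushforward by conjugation by $D_M$.

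The key step will be to show that the fully faithful inclusions (``forget from $\Mir^t_M(\bfO)$- to $\GL_{M-1}(\bfO)$-equivariance'')
\[\iota\colon D(\Mir^t_M(\bfO)\backslash\Mir^t_M(\bfF)/\Mir^t_M(\bfO))\hookrightarrow C_{M-1|M-1},\qquad \iota\colon D(\Mir^t_M(\bfO)\backslash\GL_M(\bfF)/\GL_M(\bfO))\hookrightarrow C_{M-1|M}\]
are strong monoidal, resp.\ module-compatible, for the naive convolution structures. Here I would argue that lowering the inner equivariance level replaces the middle space of the convolution correspondence by an $\bfO^{M-1}$-bundle, e.g.\ $\Mir^t_M(\bfF)\times^{\GL_{M-1}(\bfO)}\Mir^t_M(\bfF)\twoheadrightarrow\Mir^t_M(\bfF)\times^{\Mir^t_M(\bfO)}\Mir^t_M(\bfF)$ (and likewise with $\Gr_M$ in the right-hand factor for the module case), along which $!$-pullback followed by $*$-pushforward is the identity on the renormalized categories — pro-unipotent cohomological invariance, cf.\ \cite[\S3]{[Ber]} and the proof of Lemma~\ref{colimit lemma}. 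I would also check that $\iota$ intertwines the transition functor $\tilde\xi_{M,*}$ on the source with ``conjugation by $D_M$'' on the target: since $D_M=\operatorname{diag}(t,\dots,t,1)$ centralizes the $\GL_{M-1}$-block it normalizes $\GL_{M-1}(\bfO)$, so conjugation by $D_M$ is a monoidal, resp.\ module, \emph{auto}-equivalence of $(C_{M-1|M-1},\textasteriskcentered)$, resp.\ of $C_{M-1|M}$ with the \S\ref{def of mon} action.

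With these in place the comparison is formal: the inclusions $\iota$ constitute a map from the $\textasteriskcentered'$-defining tower $D(\Mir^t_M(\bfO)\backslash\Mir^t_M(\bfF)/\Mir^t_M(\bfO))\xrightarrow{\tilde\xi_{M,*}}\cdots$ to the tower $(C_{M-1|M-1},\textasteriskcentered)\xrightarrow{D_M\text{-conj}}\cdots$ of monoidal categories, and simultaneously for the module towers over $C_{M-1|M}$, compatibly with all the structure. Passing to colimits in stable cocomplete monoidal, resp.\ module, categories, the target tower contributes $(C_{M-1|M-1},\textasteriskcentered)$, resp.\ $C_{M-1|M}$ with the \S\ref{def of mon} action (colimit of auto-equivalences), the source tower contributes $(C_{M-1|M-1},\textasteriskcentered')$, resp.\ the action \eqref{sec act}, and the induced functor on colimits is the underlying equivalence of categories already established in \S\ref{sec 4.1.1} (together with \S\ref{sec 3.7} for the module structure on $C_{M-1|M}$). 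A functor that is monoidal (resp.\ module) and an equivalence of underlying stable cocomplete categories is an equivalence of monoidal (resp.\ module) categories, which is the assertion; the $C^\locc$ and nilpotent versions follow by passage to compact objects.

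I expect the main obstacle to be exactly the strong monoidality of $\iota$ in the previous paragraph: upgrading the lax structure map $\iota(\mathcal F_1)\textasteriskcentered\iota(\mathcal F_2)\to\iota(\mathcal F_1\textasteriskcentered'\mathcal F_2)$ to an isomorphism is where the infinite-dimensionality bites, and it forces one to work inside the renormalized categories with a fixed dimension theory so that the relevant $*$-pushforwards stay continuous and $\pi_*\pi^!\simeq\id$ holds for the pro-unipotent bundle $\pi$. A secondary difficulty is arranging one coherent system of transition functors realizing the colimit presentation of the monoidal category $C^\ren_{M-1|M-1}$ and of its module $C^\ren_{M-1|M}$ at once, so that the two colimit computations above are genuinely simultaneous; this should fall out of the constructions of \S\ref{sec 3.7} and \S\ref{sec 4.1.1}, but needs to be made explicit.
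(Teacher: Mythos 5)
Your proposal is correct and takes essentially the same route as the paper: both arguments rest on the colimit presentation of $C^\ren_{M-1|M-1}$, the Lemma~\ref{colimit lemma}-type identification of the transition functor $\tilde{\xi}_{M,*}$ with conjugation by $D_M$ (equivalently, with forgetful functors), and the fact that both products reduce to the naive convolution once the middle equivariance is adjusted along a pro-unipotent group. The paper merely packages this differently, re-indexing the tower by the nested bi-equivariant subcategories $D(\GL_{M-1}(\bfO)\ltimes t^i\bfO^{M-1}\backslash \Mir_M(\bfF)/\GL_{M-1}(\bfO)\ltimes t^i\bfO^{M-1})$ with forgetful transitions (and reducing first to the locally compact level), rather than via your map of towers and the strong-monoidality lemma for the inclusion $\iota$.
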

\begin{proof}
We only prove the first claim, the second claim follows from the same argument.

Note that $C^\ren_{M-1|M-1}$ is the ind-completion of $C^\locc_{M-1|M-1}$ and both monoidal structures $\textasteriskcentered$ and $\textasteriskcentered'$ are induced from $C^\locc_{M-1|M-1}$, so we only need to compare the monoidal structures of $C^\locc_{M-1|M-1}$, which are obtained from the restriction from monoidal structures of $C_{M-1|M-1}$. 

{Just}  as the proof of Lemma \ref{colimit lemma}, we can rewrite $C_{M-1|M-1}$ as the colimit of {$D((\GL_{M-1}(\bO)\ltimes t^i\bO^{M-1})\backslash \Mir^t_M(\bF)/(\GL_{M-1}(\bO)\ltimes t^i\bO^{M-1}))$}, and the transition functors are forgetful functors. Now the claim follows from the fact that the {restrictions} of both $\textasteriskcentered'$ and $\textasteriskcentered$ to {$D((\GL_{M-1}(\bO)\ltimes t^i\bO^{M-1})\backslash \Mir^t_M(\bF)/(\GL_{M-1}(\bO)\ltimes t^i\bO^{M-1}))$} are {given by the usual} convolution.
\end{proof}
\begin{rem}
In terms of the monoidal structure $\oast$, by the analysis above, the analog of the above proposition says that the monoidal structure $\oast$ of $C_{M-1|M-1}$ is the colimit of the monoidal structure $\oast$ of $j_*\oast C_{M|M}\oast j_*$ and the action of $(C_{M-1|M-1},\oast)$ on $C_{M-1|M}\simeq \colim j_*\oast C_{M|M}$ is the colimit of the action of $j_*\oast C_{M|M}\oast j_*$ on $j_*\oast C_{M|M}$.
\end{rem}

\subsection{Compatibility of left actions}
Now let us go back to the proof of Proposition \ref{act} \textup{a)}. 
\begin{proof}[Proof of Proposition \ref{act} \textup{a)}]
Note that the equivalence of Theorem \ref{bgft1} is monoidal, and $j_*$ goes to $\tilde{c}$ under the equivalence. So there is an equivalence of monoidal categories
\begin{equation}\label{4.19}
    j_*\circledast C^\ren_{M|M}\circledast j_*\simeq \tilde{c} \overset{A}{\star} {{D}}^{\GL_{M}\times \GL_{M}}(\mathfrak{B}_{M|M}^{2,0}) \overset{A}{\star} \tilde{c}.
\end{equation}

By the same reason, we have an equivalence
\begin{equation}
    j_*\circledast C^\ren_{M|M}\simeq \tilde{c} \overset{A}{\star} {{D}}^{\GL_{M}\times \GL_{M}}(\mathfrak{B}_{M|M}^{2,0}),
\end{equation}
which is compatible with the action of \eqref{4.19}.

Note that 
\begin{equation}
\begin{split}
    {{D}}^{\GL_{M-1}\times \GL_{M-1}}(\mathfrak{B}_{M-1|M-1}^{2,0})
    \simeq C^\ren_{M-1|M-1}
    &\simeq\colim\ j_*\circledast C^\ren_{M|M}\circledast j_*\\
    &\simeq \colim\ \tilde{c} \overset{A}{\star} {{D}}^{\GL_{M}\times \GL_{M}}(\mathfrak{B}_{M|M}^{2,0}) \overset{A}{\star} \tilde{c},
\end{split}
\end{equation}
and \begin{equation}
    {{D}}^{\GL_{M-1}\times \GL_{M}}(\mathfrak{B}_{M-1|M}^{2,0})\simeq \colim\ \tilde{c} \overset{A}{\star} {{D}}^{\GL_{M}\times \GL_{M}}(\mathfrak{B}_{M|M}^{2,0}).
\end{equation} So the monoidal structure of ${{D}}^{\GL_{M-1}\times \GL_{M-1}}(\mathfrak{B}_{M-1|M-1}^{2,0})$ also comes from the colimit of $\tilde{c} \overset{A}{\star} {{D}}^{\GL_{M}\times \GL_{M}}(\mathfrak{B}_{M|M}^{2,0}) \overset{A}{\star} \tilde{c}$, and the action of ${{D}}^{\GL_{M-1}\times \GL_{M-1}}(\mathfrak{B}_{M-1|M-1}^{2,0})$ on ${{D}}^{\GL_{M-1}\times \GL_{M}}(\mathfrak{B}_{M-1|M}^{2,0})$ comes from the colimit of action of $\tilde{c} \overset{A}{\star} {{D}}^{\GL_{M}\times \GL_{M}}(\mathfrak{B}_{M|M}^{2,0}) \overset{A}{\star} \tilde{c}$ on $\tilde{c} \overset{A}{\star} {{D}}^{\GL_{M}\times \GL_{M}}(\mathfrak{B}_{M|M}^{2,0})$.

Now the claim follows from the fact that the equivalence \eqref{4.19} is monoidal.
\end{proof}
\subsection{Proof of the main theorem}
Finally, we can give a proof of Theorem \ref{Gaiotto}.
\begin{proof}[Proof of Theorem \ref{Gaiotto}]
We will prove a stronger theorem: for $N\geq M$, we have a left action of $C^\ren_{M|M}$ and a right action of $C^\ren_{N|N}$ on $C^\ren_{M|N}$, such that there is an equivalence:

\begin{equation}\label{ker}
    C^\ren_{M|N}\simeq {{D}}^{\GL_{M}\times \GL_N}(\mathfrak{B}_{M|N}^{2,0}),
\end{equation}
which is compatible with the left action of 
\[C^\ren_{M|M}\simeq {{D}}^{\GL_{M}\times \GL_M}(\mathfrak{B}_{M|M}^{2,0}),\]
and the right action of 
\[C^\ren_{N|N}\simeq {{D}}^{\GL_{N}\times \GL_N}(\mathfrak{B}_{N|N}^{2,0}).\]

Fix $N\geq 1$, let us prove the theorem using the descending induction on $M$.

If $M=N$ or $M=N-1$, the theorem has already been proved in \cite{[BFGT]} and Proposition \ref{act}.

We assume that the theorem holds for $M$. We take the tensor product of \eqref{N-1'} and \eqref{ker} over \eqref{NN}. By Proposition \ref{act}, we obtain an equivalence 
\begin{equation}\label{ker+}
    C^\ren_{M-1|M}\otimes_{C^\ren_{M|M}} C^\ren_{M|N}\simeq {{D}}^{\GL_{M-1}\times \GL_M}(\mathfrak{B}_{M-1|M}^{2,0})\otimes_{{{D}}^{\GL_{M}\times \GL_M}(\mathfrak{B}_{M|M}^{2,0})} {{D}}^{\GL_{M}\times \GL_N}(\mathfrak{B}_{M|N}^{2,0})
\end{equation}
which is compatible with the left action of 
\[C^\ren_{M|M}\simeq {{D}}^{\GL_{M}\times \GL_M}(\mathfrak{B}_{M|M}^{2,0})\]
and the right action of 
\[C^\ren_{N|N}\simeq {{D}}^{\GL_{N}\times \GL_N}(\mathfrak{B}_{N|N}^{2,0}).\]

By Proposition \ref{prop 2.1} and Theorem \ref{claim1},  the left-hand side of \eqref{ker+} is equivalent to $C^\ren_{M-1|N}$, and the right-hand side is equivalent to  ${{D}}^{\GL_{M-1}\times \GL_N}(\mathfrak{B}_{M-1,N}^{2,0})$. Hence, we obtain the claim for $M-1$.

Taking subcategories of compact objects, we obtain Theorem \ref{Gaiotto}. 
\end{proof}

\subsection{Restricted equivalence}
In this section, we will prove the following theorem.
\begin{thm}\label{thm 4.4.1}
For $M\leq N$, there is an equivalence
\begin{equation}\label{4.1}
 C_{M|N}\simeq {{D}}^{\GL_M\times \GL_N}(\mathfrak{B}^{2,0}_{M|N})_\nilp.
\end{equation}
\end{thm}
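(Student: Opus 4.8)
The plan is to rerun the descending induction on $M$ (with $N$ fixed) used for Theorem~\ref{Gaiotto}, but tracking the non-renormalized categories $C_{\bullet|\bullet}$ (equivalently, their counterparts ${{D}}^{\GL_\bullet\times\GL_\bullet}(\mathfrak{B}^{2,0}_{\bullet|\bullet})_\nilp$ with nilpotent support) in place of the renormalized ones. As in the proof of Theorem~\ref{Gaiotto}, I would prove the stronger statement that for $N\geq M$ the equivalence \eqref{4.1} is compatible with the left action of $C_{M|M}\simeq{{D}}^{\GL_M\times\GL_M}(\mathfrak{B}^{2,0}_{M|M})_\nilp$ (as in \S\ref{def of mon} and \S\ref{convolution}) and the right action of $C_{N|N}\simeq{{D}}^{\GL_N\times\GL_N}(\mathfrak{B}^{2,0}_{N|N})_\nilp$, since it is this bimodule structure that propagates the induction.

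\emph{Base cases.} For $M=N$ and $M=N-1$, the equivalence \eqref{4.1} together with its bimodule compatibility is the nilpotent-support refinement of the results of \cite{[BFGT]} recalled in Theorem~\ref{bgft1} and Theorem~\ref{bfgt2}, combined with the nilpotent-support form of Proposition~\ref{act}. The latter is deduced from Proposition~\ref{act} by restricting the already-established equivalence $C^\ren_{M-1|M}\simeq{{D}}^{\GL_{M-1}\times\GL_M}(\mathfrak{B}^{2,0}_{M-1|M})$ to the full subcategories cut out by the nilpotent-support condition; this restriction is legitimate because both module actions preserve that condition — on the coherent side by Remark~\ref{2.4.1}, and on the D-module side because convolution in the loop group does not enlarge the relevant support locus in $\Gr_N$.

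\emph{Inductive step.} Assume \eqref{4.1} with its bimodule compatibility for $M$. Tensoring the $\nilp$-form of \eqref{N-1'}, namely $C_{M-1|M}\simeq{{D}}^{\GL_{M-1}\times\GL_M}(\mathfrak{B}^{2,0}_{M-1|M})_\nilp$, with \eqref{4.1} for $M$ over the monoidal equivalence $C_{M|M}\simeq{{D}}^{\GL_M\times\GL_M}(\mathfrak{B}^{2,0}_{M|M})_\nilp$, and using the $\nilp$-form of Proposition~\ref{act}, gives an equivalence of $C_{M|M}$--$C_{N|N}$--bimodule categories
\[
C_{M-1|M}\underset{C_{M|M}}{\otimes}C_{M|N}\ \simeq\ {{D}}^{\GL_{M-1}\times\GL_M}(\mathfrak{B}^{2,0}_{M-1|M})_\nilp\underset{{{D}}^{\GL_M\times\GL_M}(\mathfrak{B}^{2,0}_{M|M})_\nilp}{\otimes}{{D}}^{\GL_M\times\GL_N}(\mathfrak{B}^{2,0}_{M|N})_\nilp .
\]
By equation \eqref{thm 3.1-1} of Theorem~\ref{claim1} the left side is $C_{M-1|N}$, and by the "${{D}}$ with nilpotent support" case of Proposition~\ref{prop 2.1} the right side is ${{D}}^{\GL_{M-1}\times\GL_N}(\mathfrak{B}^{2,0}_{M-1|N})_\nilp$; this establishes \eqref{4.1} for $M-1$ with its bimodule structure, closing the induction. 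Passing to compact objects on both sides then also yields $C^c_{M|N}\simeq D_\perf^{\GL_M\times\GL_N}(\mathfrak{B}^{2,0}_{M|N})_\nilp$.

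\emph{Main obstacle.} As in the renormalized case, the weight of the argument sits in the D-module-side identification $C_{M-1|M}\otimes_{C_{M|M}}C_{M|N}\simeq C_{M-1|N}$, i.e.\ Theorem~\ref{claim1} \eqref{thm 3.1-1}, which proceeds through the Fourier-transform argument of \S\ref{cons sid}. The only genuinely new point beyond Theorem~\ref{Gaiotto} is compatibility with passing from the renormalized to the non-renormalized categories: one must check that the colimit presentations of Proposition~\ref{lemma 3.1} and the Fourier equivalence \eqref{FT 3.25} restrict to the big categories. This is precisely what the "other sheaf theories" clauses already folded into Theorem~\ref{claim1} and Proposition~\ref{prop 2.1} record, so the step requires no new analytic input — only careful bookkeeping of the support conditions through the colimits.
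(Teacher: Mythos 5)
Your inductive step is essentially the paper's: tensor the $M=N-1$ case with the inductive hypothesis over the monoidal equivalence for $M|M$, use the compatibility of actions, and identify the two sides via the ``${{D}}$ with nilpotent support'' case of Proposition~\ref{prop 2.1} and equation~\eqref{thm 3.1-1} of Theorem~\ref{claim1}. That part is fine.

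The gap is in your base case, which is where the real content of Theorem~\ref{thm 4.4.1} sits. You assert that $C_{M|M}\simeq{{D}}^{\GL_M\times\GL_M}(\mathfrak{B}^{2,0}_{M|M})_\nilp$ and $C_{M-1|M}\simeq{{D}}^{\GL_{M-1}\times\GL_M}(\mathfrak{B}^{2,0}_{M-1|M})_\nilp$ are ``nilpotent-support refinements'' of Theorems~\ref{bgft1} and~\ref{bfgt2}, obtained by restricting the renormalized equivalences, and you justify this only by the fact that the module actions preserve the nilpotent-support condition (Remark~\ref{2.4.1}) resp.\ ``do not enlarge the support locus in $\Gr_N$.'' But preservation of the condition by the actions is not the issue: the issue is to identify \emph{which} subcategory of $C^\ren_{M|M}$ (equivalently of $C^\locc_{M|M}\simeq D_\perf^{\GL_M\times\GL_M}(\mathfrak{B}^{2,0}_{M|M})$) corresponds to the nilpotent-support locus, i.e.\ to show that the compact objects $C^c_{M|M}$ of the genuine equivariant category match $D_\perf^{\GL_M\times\GL_M}(\mathfrak{B}^{2,0}_{M|M})_\nilp$. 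Theorems~\ref{bgft1} and~\ref{bfgt2} as stated say nothing about this, and it is not formal. The paper proves it in Proposition~\ref{prop 4.4.2}: one shows that $C^c_{M|M}$ is generated by the objects $\Av_!^{\GL_M(\bfO)}(\IC_\lambda\star\IC_{0,0}\star\IC_\mu)$ (using conservativity of the forgetful functor to the monodromic category and \cite[Lemma 5.4.3]{[GR]}), computes via \cite[Lemma 12.6.5]{[AG]} that $F_{M,M}(\Av_!^{\GL_M(\bfO)}(\IC_{0,0}))\simeq \mathfrak{B}^{2,0}_{M|M}\otimes_{\Sym(\mathfrak{gl}_M[-2])^{\GL_M}}\mathbb{C}\simeq\mathcal{O}_{\nilp_{M|M}}$, and then uses compatibility with convolution by $\Perv_{\GL_M(\bfO)}(\Gr_M)$ together with the fact that the objects $V^\lambda_{\GL_M}\otimes\mathcal{O}_{\nilp_{M|M}}\otimes V^\mu_{\GL_M}$ generate $D_\perf^{\GL_M\times\GL_M}(\mathfrak{B}^{2,0}_{M|M})_\nilp$. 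Without this computation (or an equivalent one) your induction has no starting point; once it is in place, your derivation of the action-compatibility in the nilpotent setting (Corollary~\ref{cor 4.4.3}) and the inductive step do go through as you describe.
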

Before we prove the above theorem, let first check the above theorem in the case $M=N-1$ or $M=N$. 
\begin{prop}\label{prop 4.4.2}
There are equivalences of categories
\begin{equation}\label{eq 4.2}
 C_{M|M}\simeq {{D}}^{\GL_M\times \GL_M}(\mathfrak{B}^{2,0}_{M|M})_\nilp,
\end{equation}
and 
\begin{equation}\label{4.3}
 C_{M-1|M}\simeq {{D}}^{\GL_{M-1}\times \GL_M}(\mathfrak{B}^{2,0}_{M-1|M})_\nilp.
\end{equation}
\end{prop}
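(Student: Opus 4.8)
The plan is to prove these two equivalences as the genuine (non-renormalized) counterparts of Theorem~\ref{bgft1} and Theorem~\ref{bfgt2} respectively, the novelty being that the genuine D-module categories are matched with the \emph{nilpotent-support} subcategories on the coherent side. I would establish \eqref{eq 4.2} first, and then deduce \eqref{4.3} from it by the colimit argument of \cite[Section 4]{[BFGT]}.

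For \eqref{eq 4.2}: both $C_{M|M}$ and ${{D}}^{\GL_M\times\GL_M}(\mathfrak B^{2,0}_{M|M})_\nilp$ are compactly generated, so it suffices to match their categories of compact objects, i.e.\ to show $C^c_{M|M}\simeq D_{\perf}^{\GL_M\times\GL_M}(\mathfrak B^{2,0}_{M|M})_\nilp$ and ind-complete. Theorem~\ref{bgft1} already gives the monoidal equivalence $C^\locc_{M|M}\simeq D_{\perf}^{\GL_M\times\GL_M}(\mathfrak B^{2,0}_{M|M})$, and $D_{\perf}^{\GL_M\times\GL_M}(\mathfrak B^{2,0}_{M|M})_\nilp$ is a full subcategory of its right-hand side. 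Since the generators of $\nilp_{M|M}$ come from $q$ of \eqref{q} and $\mathbb C$ is perfect over $\Sym(\gl_M[-2])^{\GL_M}$ (Koszul resolution, the generators sitting in degree $2$), one has $\mathfrak B^{2,0}_{M|M}\otimes_{\Sym(\gl_M[-2])^{\GL_M}}\mathbb C\simeq\mathcal O_{\nilp_{M|M}}$ as in the proof of Corollary~\ref{cor 2.5.2}, so $D_{\perf}^{\GL_M\times\GL_M}(\mathfrak B^{2,0}_{M|M})_\nilp$ is generated by $\{V^\lambda_{\GL_M}\otimes\mathcal O_{\nilp_{M|M}}\otimes V^\mu_{\GL_M}\}$. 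The content of the step is to identify, through the equivalence of Theorem~\ref{bgft1}, this subcategory with $C^c_{M|M}\subset C^\locc_{M|M}$: by Remark~\ref{rem 3.7.1} the convolution action of $\Perv_{\GL_M(\bO)}(\Gr_M)$ on the $\Gr_M$-factor corresponds through derived Satake (applied to the $\GL_{V_2}$-factor) to the map $q$, so the condition that $AB$ be nilpotent translates into local nilpotence of the central Hecke action, which is precisely the condition singling out the compact objects of the genuine equivariant category $C_{M|M}$ among the locally compact ones. (Equivalently, this is the genuine form of the mirabolic derived Satake theorem of \cite{[BFGT]}, proved there by the same reasoning; the $t$-exactness recalled in~\S\ref{reminder} matches the relevant generators on both sides.)

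For \eqref{4.3}: I would write both sides as colimits and transport the comparison across \eqref{eq 4.2}. On the D-module side, by \eqref{3.5-} and~\S\ref{sec 3.7} (which, per \cite[Section 4]{[BFGT]}, apply to the genuine categories), $C_{M-1|M}\simeq\colim_{\xi_{M,*}}D^{\Mir_M(\bO)}(\Gr_M)\simeq\colim_{\zeta_M}\bigl(j_*\circledast C_{M|M}\bigr)$. On the coherent side, Corollary~\ref{cor 2.5.3} with $N=M$ together with the nilpotent-support version of Proposition~\ref{prop 2.1} give ${{D}}^{\GL_{M-1}\times\GL_M}(\mathfrak B^{2,0}_{M-1|M})_\nilp\simeq\colim_{\eta}\,{{D}}(T_{M,M})_\nilp\simeq\colim_{\eta}\bigl(\tilde c\overset{A}{\star}{{D}}^{\GL_M\times\GL_M}(\mathfrak B^{2,0}_{M|M})_\nilp\bigr)$. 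Under \eqref{eq 4.2} the idempotent $j_*$ of $(C_{M|M},\circledast)$ goes to the idempotent $\tilde c$ of $({{D}}^{\GL_M\times\GL_M}(\mathfrak B^{2,0}_{M|M}),\overset{A}{\star})$: monoidality is Theorem~\ref{bgft1}, the matching $j_*\leftrightarrow\tilde c$ is as in~\S\ref{act com} (cf.\ \eqref{4.19}), and $\tilde c\overset{A}{\star}(-)$ preserves the nilpotent-support subcategory by the nilpotent-support version of Remark~\ref{2.4.1}; hence $j_*\circledast C_{M|M}\simeq\tilde c\overset{A}{\star}{{D}}^{\GL_M\times\GL_M}(\mathfrak B^{2,0}_{M|M})_\nilp$, and the transition functor $\zeta_M$ corresponds to $\eta$ exactly as in the proof of Proposition~\ref{act}\,b). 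Passing to the colimit then yields \eqref{4.3}.

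The technical heart — and the step I expect to be the main obstacle — is the identification in the proof of \eqref{eq 4.2} of the nilpotent-support condition with genuine compactness of equivariant D-modules, i.e.\ the explicit description of $C^c_{M|M}$ inside $C^\locc_{M|M}$ in terms of the geometry of $\Gr_M\times\bfF^M$, together with the verification that this description is stable under the transition functors $\zeta_M$ (respectively $\eta$) used in the colimit presentations. Once that is settled, the remaining steps are purely formal manipulations with idempotents and colimits, transported across the already-established equivalences of Theorem~\ref{bgft1}, Proposition~\ref{prop 2.1} and Corollary~\ref{cor 2.5.3}.
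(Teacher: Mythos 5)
There is a genuine gap, and it sits exactly where you predicted: the identification of $C^c_{M|M}$ inside $C^{\locc}_{M|M}$ with $D_\perf^{\GL_M\times\GL_M}(\mathfrak{B}^{2,0}_{M|M})_\nilp$. Your argument for this step is that nilpotent support of $AB$ corresponds to local nilpotence of the ``central Hecke action,'' and that this ``is precisely the condition singling out the compact objects of the genuine equivariant category among the locally compact ones.'' That last assertion is not an input you may quote: it is a reformulation of the very statement \eqref{eq 4.2} restricted to compact objects, so invoking it (or attributing ``the genuine form of the mirabolic derived Satake theorem'' to \cite{[BFGT]}) is circular --- what \cite{[BFGT]} provides is only the renormalized/locally compact equivalence of Theorem~\ref{bgft1}; if the genuine version were already there, Proposition~\ref{prop 4.4.2} would be a citation. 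Moreover, Remark~\ref{rem 3.7.1} and the compatibility with $\Perv_{\GL_M(\bO)}(\Gr_M)\simeq\Rep^\fin(\GL_M)^\heartsuit$ only control convolution by Hecke objects, not the mechanism relating compactness in $C_{M|M}$ to the $\Sym(\mathfrak{gl}_M[-2])^{\GL_M}$-module structure coming from $q$. The missing idea, which is how the paper actually argues, is to produce the compact generators of $C_{M|M}$ and compute their images: the forgetful functor to the $\GL_M(\bfO)$-monodromic category is conservative, so by \cite[Lemma 5.4.3]{[GR]} the objects $\Av_!^{\GL_M(\bfO)}(\IC_\lambda\star\IC_{0,0}\star\IC_\mu)$ generate $C^c_{M|M}$; then \cite[Lemma 12.6.5]{[AG]} (the statement that $!$-averaging corresponds to $-\otimes_{\Sym(\mathfrak{gl}_M[-2])^{\GL_M}}\mathbb{C}$, i.e.\ to killing the equivariant parameters) gives $F_{M,M}(\Av_!^{\GL_M(\bfO)}(\IC_{0,0}))\simeq\mathfrak{B}^{2,0}_{M|M}\otimes_{\Sym(\mathfrak{gl}_M[-2])^{\GL_M}}\mathbb{C}\simeq\mathcal{O}_{\nilp_{M|M}}$, and Hecke-equivariance identifies the generators with $V^\lambda_{\GL_M}\otimes\mathcal{O}_{\nilp_{M|M}}\otimes V^\mu_{\GL_M}$, which generate $D_\perf^{\GL_M\times\GL_M}(\mathfrak{B}^{2,0}_{M|M})_\nilp$. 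Without some version of this averaging computation your proof of \eqref{eq 4.2} does not close; note also that the heuristic ``compact $=$ locally nilpotent central action'' is delicate and relies on the mirabolic factor $\bfF^M$, so it cannot be waved through as a general principle about equivariant categories.

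Your route to \eqref{4.3} is different from the paper's (the paper just runs the analogous averaging argument for $C_{M-1|M}$, while you deduce it from \eqref{eq 4.2} via the colimit presentations $C_{M-1|M}\simeq\colim_{\zeta_M}\,j_*\circledast C_{M|M}$ and Corollary~\ref{cor 2.5.3} together with the matching $j_*\leftrightarrow\tilde c$). That reduction is reasonable and all the ingredients are in the paper, but it needs one further check you only implicitly assume: that the equivalence \eqref{eq 4.2} intertwines the endofunctor $j_*\circledast(-)$ of $C_{M|M}$ with $\tilde c\overset{A}{\star}(-)$ of ${{D}}^{\GL_M\times\GL_M}(\mathfrak{B}^{2,0}_{M|M})_\nilp$ (neither $j_*$ nor $\tilde c$ lies in the nilpotent-support subcategory, so this is a compatibility of module structures with the ambient equivalence of Theorem~\ref{bgft1}, as in \eqref{4.19}, rather than a matching of objects). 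In any case this alternative for \eqref{4.3} does not repair the central gap in \eqref{eq 4.2}.
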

\begin{proof}
We only prove the statement 
\begin{equation}\label{N, nilp}
    C^c_{M|M}\simeq D_\perf^{\GL_{M}\times \GL_M}(\mathfrak{B}_{M|M}^{2,0})_\nilp,
\end{equation}
then \eqref{eq 4.2} follows from taking ind-completion and \eqref{4.3} follows from a similar argument. 

Let $D^{\GL_M(\bO)-\mathsf{mon}}(\Gr_M\times \bfF^M)$ be the full subcategory of $D_!(\Gr_M\times \bfF^M)$ generated by locally compact $\GL_M(\bO)$-equivariant D-modules on $\Gr_M\times \bfF^M$. Since objects in $D^{\GL_M(\bO)-\mathsf{mon}}(\Gr_M\times \bfF^M)$ are ind-holonomic, the left adjoint functor \[\Av_!^{\GL_M(\bfO)}: D^{\GL_M(\bO)-\mathsf{mon}}(\Gr_M\times \bfF^M)\longrightarrow C_{M|M}\]
of the forgetful functor is well-defined. In addition, since the forgetful functor \[C_{M|M}\longrightarrow D^{\GL_M(\bO)-\mathsf{mon}}(\Gr_M\times \bfF^M)\] is conservative, the essential image of compact objects of $(D^{\GL_M(\bO)-\mathsf{mon}}(\Gr_M\times \bfF^M))^c$ under $\Av_!^{\GL_M(\bfO)}$ generates $C_{M|M}^c$ (\cite[Lemma 5.4.3]{[GR]}). 

Let $\IC_{0,0}\in C^{\mathsf{loc.c}}_{M|M}$ be the IC-extension of the perverse constant sheaf on $1\times \bfO^M$. It is known from the construction in \cite{[BFGT]} that $F_{M,M}(\IC_{0,0})=\mathfrak{B}_{M,M}^{2,0}$. Here $F_{M,M}: C^\locc_{M|M}\to {{D}}_\perf^{\GL_{M}\times \GL_M}(\mathfrak{B}_{M|M}^{2,0})$ denotes the equivalence functor in Theorem \ref{bgft1}. Note that $\IC_{0,0}$ is the pullback of $\mathbb{C}$ along $1\times \bfO^M\longrightarrow \pt$. By \cite[Lemma 12.6.5]{[AG]} and the fact that $!$-averaging functor commutes with pullback, there is 
\[F_{M,M}(\Av_!^{\GL_M(\bfO)}(\IC_{0,0}))\simeq F_{M,M}(\IC_{0,0})\underset{\Sym(\mathfrak{gl}_M[-2])^{\GL_M}}{\otimes}\mathbb{C}.\]

Since 
\[\mathfrak{B}_{M|M}^{2,0}\underset{\Sym(\mathfrak{gl}_M[-2])^{\GL_M}}{\otimes}\mathbb{C}\simeq \mathcal{O}_{\nilp_{M|M}},\]
 there is \[F_{M,M}(\Av_!^{\GL_M(\bfO)}(\IC_{0,0})) \simeq  \mathcal{O}_{\nilp_{M|M}}\]under the equivalence of Theorem \ref{bgft1}.

The equivalence in Theorem \ref{bgft1} commutes with the convolution with $\Perv_{\GL_M(\bO)}(\Gr_M)\simeq \Rep^\fin(\GL_M)$, so $\Av_!^{\GL_M(\bfO)}(\IC_\lambda\star \IC_{0,0}\star \IC_\mu)$ corresponds to $V_{\GL_M}^{\lambda}\otimes \mathcal{O}_{\nilp_{M|M}}\otimes V_{\GL_M}^{\mu}$ under the equivalence $F_{M,M}$. Now the claim follows from the fact that the objects $V_{\GL_M}^{\lambda}\otimes \mathcal{O}_{\nilp_{M|M}}\otimes V_{\GL_M}^{\mu}$ generate ${{D}}_\perf^{\GL_{M}\times \GL_M}(\mathfrak{B}_{M|M}^{2,0})_\nilp$.
\end{proof}

The equivalence \eqref{eq 4.2} is an ind-completion of a monoidal equivalence functor, so it is monoidal. In addition, the equivalence \eqref{4.3} is compatible with the left action and right action of \eqref{eq 4.2}.

\begin{cor}\label{cor 4.4.3}
\textup{a)}  The equivalence \eqref{4.3}
 is compatible with the left action of \[C_{M-1|M-1}\simeq {{D}}^{\GL_{M-1}\times \GL_{M-1}}(\mathfrak{B}_{M-1|M-1}^{2,0})_\nilp.\]

\textup{b)}  The equivalence is compatible with the right action of \[C_{M|M}\simeq {{D}}^{\GL_M\times \GL_M}(\mathfrak{B}_{M|M}^{2,0})_\nilp.\]
\end{cor}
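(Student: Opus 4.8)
The plan is to re-run, essentially verbatim, the argument that proved Proposition~\ref{act}, with the renormalized categories replaced by the nilpotent-support ones and Theorem~\ref{bgft1} replaced by Proposition~\ref{prop 4.4.2}. First I would record the nilpotent-support analogues of the two colimit presentations used there. From \eqref{3.5-} and the discussion of \S\ref{sec 3.7} (which are valid for every sheaf theory) one gets $C_{M-1|M}\simeq\colim_{\zeta_M} j_*\circledast C_{M|M}$, and the nilpotent-support version of Lemma~\ref{lemma 4.2} gives $C_{M-1|M-1}\simeq\colim_{\tilde\zeta_M} j_*\circledast C_{M|M}\circledast j_*$ as a colimit of monoidal categories, with the left action of $C_{M-1|M-1}$ on $C_{M-1|M}$ obtained by passing to the colimit of the action of $j_*\circledast C_{M|M}\circledast j_*$ on $j_*\circledast C_{M|M}$. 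These are obtained by restricting the corresponding $C^\ren$-statements (or re-running their proofs) to the subcategories with nilpotent support, which is harmless because $\circledast$ and $\textasteriskcentered$ preserve nilpotency (Remark~\ref{2.4.1}) and $j_*$ is a compact idempotent.

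For part b) I would note that the equivalence \eqref{eq 4.2} is monoidal (as recorded just before the statement) and carries $j_*$ to $\tilde c$; hence $j_*\circledast C_{M|M}\simeq\tilde c\overset{A}{\star}{{D}}^{\GL_M\times\GL_M}(\mathfrak{B}_{M|M}^{2,0})_\nilp$ is an equivalence of right $C_{M|M}$-module categories, because right convolution always commutes with left convolution. The transition functors $\zeta_M$ and $\eta$ commute with the right action (the former is left multiplication by $\operatorname{diag}(t,\dots,t,1)$), so taking the colimit of the commuting square in the proof of Proposition~\ref{act}~b) shows that \eqref{4.3} is an equivalence of right $C_{M|M}$-modules.

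For part a) I would use that \eqref{eq 4.2} also induces a monoidal equivalence $j_*\circledast C_{M|M}\circledast j_*\simeq\tilde c\overset{A}{\star}{{D}}^{\GL_M\times\GL_M}(\mathfrak{B}_{M|M}^{2,0})_\nilp\overset{A}{\star}\tilde c$, compatible with the right-module equivalence of b). Passing to colimits, and identifying $\colim_{\tilde\zeta_M}\tilde c\overset{A}{\star}{{D}}^{\GL_M\times\GL_M}(\mathfrak{B}_{M|M}^{2,0})_\nilp\overset{A}{\star}\tilde c\simeq{{D}}^{\GL_{M-1}\times\GL_{M-1}}(\mathfrak{B}_{M-1|M-1}^{2,0})_\nilp$ and $\colim_{\zeta_M}\tilde c\overset{A}{\star}{{D}}^{\GL_M\times\GL_M}(\mathfrak{B}_{M|M}^{2,0})_\nilp\simeq{{D}}^{\GL_{M-1}\times\GL_M}(\mathfrak{B}_{M-1|M}^{2,0})_\nilp$ (the nilpotent-support versions of the identifications in the proof of Proposition~\ref{act}, via Corollary~\ref{cor 2.5.3} and Proposition~\ref{prop 2.1}), one concludes that \eqref{4.3} intertwines the two left actions.

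The main obstacle I expect is bookkeeping rather than a new idea: one must verify that every colimit presentation used above — of $C_{M-1|M}$, of $C_{M-1|M-1}$ together with its monoidal structure, and of the $C_{M-1|M-1}$-action on $C_{M-1|M}$ — genuinely holds in the non-renormalized, nilpotent-support sheaf theory and matches the coherent-side presentations term by term under Proposition~\ref{prop 4.4.2}. Concretely this means checking that the constructions of \S\ref{conv cons}--\S\ref{sec 4.1.1}, of Lemma~\ref{lemma 4.2} and of Lemma~\ref{4.0.5} restrict to the subcategories with nilpotent support; this works because the idempotents $j_*$ and $\tilde c$ are compact and the relevant convolutions preserve the nilpotency condition (Remark~\ref{2.4.1}), so no argument beyond that of Proposition~\ref{act} is required.
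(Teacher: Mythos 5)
Your argument coincides with the paper's own (implicit) justification: Corollary \ref{cor 4.4.3} is stated there without a separate proof, the intended argument being exactly the re-run of Proposition \ref{act} with the renormalized categories replaced by the plain/nilpotent-support ones and Theorem \ref{bgft1} replaced by Proposition \ref{prop 4.4.2}, together with the observation that the colimit presentations of \S\ref{sec 3.7}, \S\ref{sec 4.1.1} and the convolution functors restrict to these sheaf theories (Remark \ref{2.4.1}) --- which is precisely what you carry out. The one imprecision to fix is the phrase ``carries $j_*$ to $\tilde{c}$'': since $\tilde{c}$ does not have nilpotent support it is not an object of ${{D}}^{\GL_M\times \GL_M}(\mathfrak{B}^{2,0}_{M|M})_\nilp$, so one should instead say that the idempotent endofunctors $j_*\circledast(-)$ and $\tilde{c}\overset{A}{\star}(-)$ (the latter preserving the nilpotent subcategory by Remark \ref{2.4.1}) correspond under \eqref{eq 4.2} and \eqref{4.3}, e.g.\ via the compatibility with the $C^{\locc}_{M|M}\simeq D_\perf^{\GL_M\times \GL_M}(\mathfrak{B}^{2,0}_{M|M})$-action used in the proof of Proposition \ref{prop 4.4.2}; with that rephrasing your proof goes through exactly as in Proposition \ref{act}.
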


Now let us repeat the argument of the proof of Theorem \ref{Gaiotto} to prove Theorem \ref{thm 4.4.1}.

\begin{proof}(of Theorem \ref{thm 4.4.1}.)

By Proposition \ref{prop 4.4.2}, Theorem \ref{thm 4.4.1} holds in the case $M=N$ or $M=N-1$.

Fix $N\geq 1$, assume that we have already proved Theorem \ref{thm 4.4.1} for $M$.
 We take the tensor product of \eqref{4.3} and
 \[C_{M|N}\simeq {{D}}^{\GL_{M}\times \GL_N}(\mathfrak{B}_{M|N}^{2,0})_\nilp\]
  over \eqref{eq 4.2}. By Proposition \ref{prop 4.4.2}, Corollary \ref{cor 4.4.3}, and our assumption, we obtain an equivalence 
\begin{equation}\label{ker++}
\begin{split}
        C_{M-1|M}&\otimes_{C_{M|M}} C_{M|N}\\ &\simeq \\{{D}}^{\GL_{M-1}\times \GL_M}(\mathfrak{B}_{M-1|M}^{2,0})_\nilp&\otimes_{{{D}}^{\GL_{M}\times \GL_M}(\mathfrak{B}_{M|M}^{2,0})_\nilp} {{D}}^{\GL_{M}\times \GL_N}(\mathfrak{B}_{M|N}^{2,0})_\nilp
\end{split}
\end{equation}
which is compatible with the left action of 
\[C_{M|M}\simeq {{D}}^{\GL_{M}\times \GL_M}(\mathfrak{B}_{M|M}^{2,0})_\nilp\]
and the right action of 
\[C_{N|N}\simeq {{D}}^{\GL_{N}\times \GL_N}(\mathfrak{B}_{N|N}^{2,0})_\nilp.\]

By Proposition \ref{prop 2.1} and Theorem \ref{claim1},  the left-hand side of \eqref{ker++} is equivalent to $C_{M-1|N}$, and the right-hand side is equivalent to  ${{D}}^{\GL_{M-1}\times \GL_N}(\mathfrak{B}_{M-1,N}^{2,0})_\nilp$. Hence, we obtain the claim for $M-1$.
\end{proof}

\begin{rem}
Using the equivalence interchanging left and right actions, we can also prove Theorem \ref{Gaiotto} and Theorem \ref{thm 4.4.1} for $N\leq M$.
\end{rem}

\section{A symmetric definition of $C_{M|N}$}\label{5}
In the definition of ${{D}}^{\GL_{M}\times \GL_{N}}(\mathfrak{B}_{M|N}^{2,0})$ and ${{D}}^{\GL_{M}\times \GL_{N}}(\mathfrak{B}_{M|N}^{2,0})_\nilp$, we do not need to require $N\leq M$ or $M\leq N$. But in the definition of $C_{M|N}$, we need to require $M\leq N$ or $N\leq M$, or the definition does not make sense. Although there is a canonical equivalence between $C_{N|M}\simeq C_{M|N}$, we want to introduce a more 'symmetric' definition of $C_{M|N}$ for any $M, N$.

Furthermore, in \eqref{1.3 GL}, it seems unreasonable to require that the embedding of $\GL_M(\bfO)$ into $\GL_N(\bfF)$ concentrates in the left top part. Indeed, in this section we will also study the corresponding equivariant category if we move $\GL_M(\bfO)$ to other columns and rows, and we will see that the resulting category does not depend on where we put $\GL_M(\bfO)$. It is expected that the choice of columns and rows corresponds to the choice of a Borel subgroup of the supergroup.

\subsection{Definition}

Assume $L>N$, $M$ and $r+s= L-M-1$, we denote by 
$
U_{M,L}^{r,s}=
\left(
\begin{array}{ccc}
U_r &* &*\\
  &1_{M+1}&* \\
 & &U_s
\end{array}
\right)
$, the unipotent radical of the parabolic subgroup $P_{r,s}(\bfF)$ corresponding to the partition $(1,1,...,1,M+1,1,...,1)$. We denote by $\chi_{M,L}^{r,s, (k)}$ the character
\begin{equation}
    (u_{ij})\in U_{M,L}^{r,s}\longrightarrow Res_{t=0}(\sum_{i=1}^{r-1} u_{i,i+1}+ u_{r,k}+u_{k,{L-s+1}}+ \sum_{i=L-s+1}^{L-1} u_{i,i+1})
\end{equation}
for any choice of $k\in \{r+1,...,L-s\}$. If $r=0$ or $s=0$, one of $u_{r,k}$ and $u_{k,L-s-1}$ should be omitted.

We embed $\GL_M(\bfO)$ into $P_{r,s}(\bfF)$ along the columns and rows $r+1,r+2,...,k-1,$ $k+1,..., L-s$. Since $\GL_M(\bfO)$ belongs to the normalizer group of $U_{M,L}^{r,s}$, we can form the semidirect product $\GL_M(\bfO)\ltimes U_{M,L}^{r,s}$. Furthermore, the conjugation action of $\GL_M(\bfO)$ on $U_{M,L}^{r,s}$ preserves $\chi_{M,L}^{r,s,(k)}$. In particular, $(\GL_M(\bfO)\ltimes U_{M,L}^{r,s}, \chi_{M,L}^{r,s,(k)})$-invariants are well-defined.

\begin{rem} Assume $k_1, k_2\in \{r+1,r+2,...,L-s\}$. Let $s_{k_1,k_2}\in \GL_L(\bfF)$ be
 the permutation matrix which permutes $k_1$ and $k_2$. Left multiplication with $s_{k_1, k_2}$ gives an equivalence: \[\mathcal{C}^{\GL_M(\bfO)\ltimes U_{M,L}^{r,s}, \chi_{M,L}^{r,s,(k_1)}}\simeq \mathcal{C}^{\GL_M(\bfO)\ltimes U_{M,L}^{r,s}, \chi_{M,L}^{r,s,(k_2)}}\] for any category $\mathcal{C}$ which admits an action of $\GL_{L}(\bfF)$. Hence, we may omit $k$ in the definition of $\chi_{M,L}^{r,s,(k)}$.
 \end{rem}
 
 \begin{defn}
 We define \[C_{M,N, r,s,r',s',L}:= D(\GL_M(\bfO)\ltimes U_{M,L}^{r,s}, \chi_{M,L}^{r,s}\backslash \Mir^t_{L}(\bfF)/ \GL_N(\bfO)\ltimes U_{N,L}^{r',s'}, \chi_{N,L}^{r',s'}).\]
 \end{defn}

The main goal of this section is to prove the following theorem.

\begin{thm}
The category $C_{M,N, r,s,r',s',L}$ only depends on $M,N$. That is to say, for any 
\begin{equation*}
    \begin{split}
        r_1+s_1= L_1-M-1,\ r_2+s_2=L_2- M-1,\\
r_1'+s_1'= L_1-N-1,\ r_2'+s_2'=L_2- N-1,
    \end{split}
\end{equation*}
 we have

\begin{equation}\label{main 2}
C_{M,N, r_1,s_1,r_1',s_1',L_1}\simeq C_{M,N, r_2,s_2,r_2',s_2',L_2}.
\end{equation}
In particular, $C_{M,N, r,s,r',s',L}\simeq C_{M|N}$.

\end{thm}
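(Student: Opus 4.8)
The plan is to reduce the general independence statement to two basic moves, each of which changes one of the defining data by a controlled amount, and then to iterate. The first move changes $L$: I claim that $C_{M,N,r,s,r',s',L}\simeq C_{M,N,r,s+1,r',s'+1,L+1}$. The idea is exactly the colimit/Fourier-transform argument that already appears in~\S\ref{cons sid} and~\S\ref{act com}. Passing from $L$ to $L+1$ adds one extra coordinate on which the group $U_{M,L}^{r,s}$ acts with a nondegenerate character; averaging against that character (equivalently, the Fourier-transform identity $\Vect\otimes_{D_!(\bfF)}\cD\simeq \cD^{\bfF,\chi}$ from \cite[Theorem~5.2.11]{[Ber]}, cf.\ \eqref{3.24}--\eqref{3.25}) identifies the bigger-$L$ category with a category of the form where a loop group $\GL_M(\bfO)$ on the smaller flag has been ``spread out'' into a mirabolic, and then the colimit presentation of Proposition~\ref{lemma 3.1}/Lemma~\ref{colimit lemma} (colimit of $\Mir^t$-equivariant categories along the $\tilde\xi$-transition functors is the $\GL$-equivariant category) collapses the added coordinates. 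The point is that all the ingredients --- characters stable under the reductive part, pro-unipotent averaging, Corollary~\ref{prop 3.3.6} --- are already in place and are stated for exactly this kind of semidirect-product datum.

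\textbf{Moving $\GL_M(\bfO)$ within the flag.} The second basic move is to show that the position of the block $\GL_M(\bfO)$ inside $\Mir^t_L(\bfF)$ does not matter, i.e.\ $(r,s)$ can be changed to $(r\mp 1,s\pm 1)$ keeping $r+s=L-M-1$ fixed (and similarly for the right factor $(r',s')$). Here I would use a permutation matrix in $\GL_L(\bfF)$ to conjugate. The subtlety is that a naive transposition $s_{k,k+1}$ does not send $U_{M,L}^{r,s}\ltimes\GL_M(\bfO)$ to $U_{M,L}^{r\pm1,s\mp1}\ltimes\GL_M(\bfO)$ on the nose, nor does it preserve the character --- one has to conjugate by an appropriate Weyl-group element and then, just as in the proof of Lemma~\ref{colimit lemma} or the ``$D^{-1}_{M,N}$-conjugation'' trick of~\S\ref{cons sid}, absorb the discrepancy into a change of the additive character, which is harmless because one takes equivariance against \emph{all} such characters in the relevant orbit (the Remark right before Definition of $C_{M,N,r,s,r',s',L}$ already records that the choice of $k$ is immaterial via left multiplication by $s_{k_1,k_2}$; the present claim is the analogous statement one block further out). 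Combining this with the $L$-move, any two tuples $(r_i,s_i,r_i',s_i',L_i)$ with the matching sum constraints are connected by a zig-zag of these elementary equivalences, giving \eqref{main 2}.

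\textbf{Matching with $C_{M|N}$.} Finally, to pin down the common value as $C_{M|N}$, I take the ``extreme'' representative $s=0$ (push $\GL_M(\bfO)$ all the way to the bottom-right of the non-$(M+1)$ part) and likewise for the right factor, and then run the $L$-reduction down to $L=N$ (when $M<N$) or $L=N=M$. When $L=N$ and the block sits so that $U_{M,L}^{r,0}$ becomes precisely $U_{M,N}(\bfF)$ with character $\chi_{M,N}$, the two-sided Hecke description $D(\GL_M(\bfO)\ltimes U_{M,L}^{r,s}\backslash \Mir^t_L(\bfF)/\ldots)$ degenerates: the right quotient by $\GL_N(\bfO)\ltimes U_{N,L}^{r',s'}$ with $L=N$ becomes simply $\GL_N(\bfO)$-invariants (the $U$-part is trivial), so $C_{M,N,r,0,0,0,N}=D^{\GL_M(\bfO)\ltimes U_{M,N}(\bfF),\chi_{M,N}}(\Gr_N)=C_{M|N}$; the case $M=N$ similarly lands on $D^{\GL_M(\bfO)}(\Gr_M\times\bfF^M)$ using $\Gr_M\times\bfF^M\simeq\Mir^t_{M+1}(\bfF)/\GL_M(\bfO)$ as in~\S\ref{conv cons}.

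\textbf{Main obstacle.} The hardest part will be the ``moving $\GL_M(\bfO)$ within the flag'' step: making the conjugation precise so that the semidirect-product subgroup and its character are matched up correctly after conjugating, and verifying that the residual discrepancy really is just a change of character lying in the orbit one is already invariant against (so that it can be undone by a further automorphism of the ambient $\GL_L(\bfF)$, exactly as the $k_1\leftrightarrow k_2$ Remark handles the inner case). Once that bookkeeping is done, everything else is a recombination of results already proved in~\S\ref{cons sid}--\S\ref{act com}.
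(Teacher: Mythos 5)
Your overall architecture (two elementary moves --- change $L$ by one, move the $\GL_M(\bfO)$-block by one step inside the flag --- then identify an extreme representative with $C_{M|N}$) matches the paper's. But the way you propose to carry out the second, and as you yourself note hardest, move is where the argument breaks. The subgroups $\GL_M(\bfO)\ltimes U^{r,s}_{M,L}$ and $\GL_M(\bfO)\ltimes U^{r+1,s-1}_{M,L}$ are \emph{not} conjugate in $\GL_L(\bfF)$: conjugation preserves the type of the partial flag of $\bfF$-subspaces stabilized, and the parabolics attached to the compositions $(1^r,M+1,1^s)$ and $(1^{r+1},M+1,1^{s-1})$ stabilize flags of different dimension types. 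The Remark you lean on (the $k_1\leftrightarrow k_2$ statement) works only because $s_{k_1,k_2}$ lies in the Levi $\GL_1^r\times\GL_{M+1}\times\GL_1^s$ and hence normalizes $U^{r,s}_{M,L}$, merely rotating the character; one block further out the permutation no longer normalizes the unipotent radical, and the discrepancy is a genuinely different subgroup, not a character in the same orbit. So "absorb the discrepancy into a change of the additive character" cannot be made to work, and this is precisely the step that needs a new idea.

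The paper's mechanism for this step is different in kind: both $U^{r,s}_{M,L}$ and $U^{r+1,s-1}_{M,L}$ contain $U^{r,s-1}_{M+1,L}$, and after passing to invariants for that common subgroup the two quotients become the two "Lagrangian" subgroups $V^*\oplus\bfF$ and $V\oplus\bfF$ of the Heisenberg group $\Hei=V\oplus V^*\oplus\bfF$ (the quotient of $U^{r,s+1}_{M-1,L}$). The equivalence of the two invariant categories is then the uniqueness-of-polarization statement $D(\Hei/V\oplus\bfF,\chi)\simeq D(\Hei/V^*\oplus\bfF,\chi)$ as $\Hei$-module categories, implemented by Fourier transform --- a real theorem, not bookkeeping with Weyl elements. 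Two smaller points: for the $L$-move the paper does not need the colimit machinery you invoke; it proves the base case $C_{L,L,L+1}\simeq C_{L,L,L+2}$ by computing the two-sided $(\bfF^{L+1},\chi)$-Whittaker invariants of $D(\Mir^t_{L+2}(\bfF))$ (Fourier transform pins $v=e_{L+1}$, $gv=e_{L+1}$, giving $D(\Mir_{L+1}(\bfF))$) and then applies the remaining invariance functors on both sides. And your endpoint "$L=N$" is outside the allowed range $L>M,N$; the correct terminal representatives are $L=M+1$ (for $M=N$) and $L=N+1$ with $r'=s'=0$ (for $M<N$), where one more Fourier transform along the extra $\bfF^{M+1}$-directions recovers $C_{M|N}$, as in the paper's Examples.
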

\subsection{Examples} 
\begin{enumerate}
    \item If $M=N=L-1$, $r=r'=0$, $s=s'=0$, then $\GL_M(\bfO)\ltimes U_{M,L}^{r,s}=$ $\GL_N(\bfO)\ltimes U_{M,L}^{r',s'}= \GL_M(\bfO)$ and the characters are trivial. Hence, $C_{M,N, r,s,r',s',L}\simeq D(\GL_M(\bfO)\backslash \Mir_{M+1}^t(\bfF)/\GL_M(\bfO))\simeq C_{M|M}$.\\
    \item If $M=L-2$, $N=L-1$, $r=r'=0$, $s=1$, and $s'=0$. Then, $\GL_M(\bfO)\ltimes U_{M,L}^{r,s}=\GL_M(\bfO)\ltimes \bfF^{L-1}$, and $\GL_N(\bfO)\ltimes U_{M,L}^{r',s'}= \GL_N(\bfO)$. Here, $\chi_{M,L}^{r,s}$ equals taking residue of coefficients of $e_{M+1}$, while $\chi_{N,L}^{r',s'}$ is trivial. We have
\begin{equation*}
    \begin{split}
        C_{M,N,r,s,r',s',L}&\simeq D(\GL_M(\bfO)\ltimes \bfF^{M+1}, \chi_{M,L}^{r,s}\backslash \Mir_{L}(\bfF)/ \GL_N(\bfO))\\
        &\simeq D^{\GL_M(\bfO)\ltimes \bfF^{M+1}, \chi_{M,L}^{r,s}}(\Gr_{M+1}\times \bfF^{M+1})\\
        &\simeq D^{\GL_M(\bfO)} (\Gr_{M+1}).
    \end{split}
\end{equation*}
Here, the last equivalence is given by Fourier transform.\\
\item 
If $M=0$ and $L=N+1$, in this case, $C_{M,N, r,s,r',s',L}\simeq Whit(\Gr_N)$. Here, $Whit(\Gr_N)$ denotes the Whittaker model of $D(\Gr_N)$.
\end{enumerate}

The rest of this section is devoted to the proof of \eqref{main 2}.
 
\subsection{Independence of \textit{s,r}}
 
In this section, we will prove if $L_1=L_2$, then \eqref{main 2} holds. By induction, we only need to show,
\begin{equation}
    C_{M,N, r,s,r',s',L}\simeq C_{M,N, r+1,s-1,r',s',L}.
\end{equation}

It is easy to see that the above statement follows from the following lemma,
\begin{lem}
For any category $\mathcal{C}$ admitting an action of $\GL_L(\bfF)$. we have
\begin{equation}
    \mathcal{C}^{U_{M,L}^{r,s}, \chi_{M,L}^{r,s}}\simeq \mathcal{C}^{U_{M,L}^{r+1,s-1}, \chi_{M,L}^{r+1,s-1}}
\end{equation}
\end{lem}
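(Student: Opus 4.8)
The plan is to prove this lemma by a ``rotation of Whittaker data'' argument, of the type that is standard in the theory of Whittaker models: the two unipotent subgroups $U_{M,L}^{r,s}$ and $U_{M,L}^{r+1,s-1}$, together with their characters, differ only in how the single ``unipotent row/column block'' labelled $N-s$ is handled (it sits inside the unipotent radical on one side, and is moved across the ``$\GL_M$ slab plus the $k$-th coordinate'' on the other), and the passage between the two equivariance conditions is implemented by conjugation by a suitable one-parameter (actually a root subgroup) element, combined with a cancellation of a $\GG_a$-equivariance against a $(\GG_a,\exp)$-equivariance. Concretely, I would first fix the combinatorics: write $L=L$, assume $s\ge 1$, set $k\in\{r+1,\dots,N-s\}$ as in the definition, and identify precisely which matrix entries $u_{ij}$ appear in $U_{M,L}^{r,s}$ but not in $U_{M,L}^{r+1,s-1}$, and vice versa. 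One finds that both groups contain a common subgroup $U_0$ (everything except the entries involving the ``moving'' index), and that $U_{M,L}^{r,s}=U_0\ltimes V$, $U_{M,L}^{r+1,s-1}=U_0\ltimes V'$ where $V,V'$ are each isomorphic to $\bfF$-power products of root subgroups, arranged so that the index being moved toggles between ``extra coordinate of an abelian unipotent factor'' and ``part of the $\GL_{M}$-bordering block.''

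Second, I would invoke the group-theoretic heart of the matter. After conjugating $\mathcal C$ by an element $w$ of $\GL_L(\bfF)$ (a product of elementary permutation/transvection matrices that interchanges the moving index with the $k$-th coordinate, exactly as in the Remark preceding the Definition, which already records that left multiplication by a permutation matrix $s_{k_1,k_2}$ gives such equivalences), the problem reduces to an identity of the form $\mathcal C^{U_0\ltimes V,\psi}\simeq \mathcal C^{U_0\ltimes V',\psi'}$ where now $V$ and $V'$ are ``adjacent'' in the sense that $V\cap V'$ has codimension $1$ in each, the quotients $V/(V\cap V')$ and $V'/(V\cap V')$ are each $\cong\GG_a$, and the characters $\psi,\psi'$ restrict to the same character on $V\cap V'$ but one of them is nontrivial and the other trivial on the respective $\GG_a$-quotient. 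The standard ``exchange of roots'' argument (cf.\ the treatment of Whittaker/Kazhdan data, and the Fourier-type manipulations in ~\S\ref{sec 3.1}, ~\S\ref{section 3.11.1}) then says: taking $(V\cap V')$-equivariants against the common character first, one is left to compare, on the category $\mathcal C^{V\cap V',\psi|_{V\cap V'}}$, the equivariance against $(\GG_a,\triv)$ with the equivariance against $(\GG_a,\exp)$ for two \emph{different} copies of $\GG_a$ inside a common unipotent group $H\cong\GG_a^2$ (with $\GG_a\x\GG_a\subset H$ spanned by the two relevant root subgroups), which act on $\mathcal C^{V\cap V',\dots}$ in a way governed by the commutator with the $\GL_M(\bfO)$-factor being preserved. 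The fact that $\chi_{M,L}^{r,s}$ and $\chi_{M,L}^{r+1,s-1}$ are \emph{defined precisely so that} the ``linking'' entries $u_{r,k}$, $u_{k,N-s-1}$ appear with the right coefficients is exactly what makes the two $\GG_a$'s transverse, so that either equivariance condition cuts the Whittaker-type category down to the same thing. I would cite \cite[Lemma 6.5.4, Remark 6.5.6]{[Ber]} (already used for \lemref{3.3 lem}) together with the pro-unipotent descent of \propref{prop 3.3.4} to legitimize the rearrangement of the order of taking invariants for these pro-unipotent groups in $\GL_L(\bfF)$.

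Third, I would assemble the equivalence: conjugation by $w$ gives $\mathcal C^{U_{M,L}^{r,s},\chi_{M,L}^{r,s}}\simeq\mathcal C^{wU_{M,L}^{r,s}w^{-1},{}^w\chi_{M,L}^{r,s}}$, the ``exchange of roots'' step identifies the latter with $\mathcal C^{wU_{M,L}^{r+1,s-1}w^{-1},{}^w\chi_{M,L}^{r+1,s-1}}$ after one more harmless adjustment (absorbing the residual $\GG_a$-equivariance, which is the step where the hypothesis that $\GL_M(\bfO)$ normalizes the unipotent and stabilizes the character is used to see that nothing obstructs the descent), and conjugating back by $w^{-1}$ gives $\mathcal C^{U_{M,L}^{r+1,s-1},\chi_{M,L}^{r+1,s-1}}$. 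All of these steps are canonical in $\mathcal C$ and compatible with the residual $\GL_M(\bfO)\x(\text{right }\GL_N(\bfO)\ltimes U_{N,L}^{r',s'})$-equivariance, so applying the lemma to $\mathcal C=D(\Mir^t_L(\bfF))$ with its left $\GL_L(\bfF)$-action and then taking the appropriate two-sided equivariants (as in the Definition of $C_{M,N,r,s,r',s',L}$) yields the stated $C_{M,N,r,s,r',s',L}\simeq C_{M,N,r+1,s-1,r',s',L}$, hence by induction the case $L_1=L_2$ of \eqref{main 2}.

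The main obstacle I anticipate is the bookkeeping in the ``exchange of roots'' step: one must check that when the moving index is transported across the $\GL_M$-slab, the commutators of the two relevant root subgroups with $\GL_M(\bfO)$ and with the remaining unipotent entries land in $V\cap V'$ and interact with $\chi$ in exactly the way needed for the transversality claim — i.e.\ that after taking $(V\cap V')$-equivariants the two $\GG_a$'s genuinely span a Heisenberg-type or abelian rank-two unipotent on which ``$\triv$-equivariance $=$ $\exp$-equivariance'' (a baby case of Fourier transform / the non-degeneracy in \cite[Theorem 5.2.11]{[Ber]}). This is a finite but delicate matrix-entry computation; everything else — the conjugation equivalences, the pro-unipotent descent, the passage to the two-sided equivariant category — is formal given the cited results from \cite{[Ber]} and the groundwork in ~\S3.
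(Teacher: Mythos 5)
Your reduction to a codimension-one ``exchange of roots'' cannot work as stated, and this is a genuine gap. For general $M$ the two groups differ by $M$-dimensional pieces over $\bfF$, not one-dimensional ones: $U^{r,s}_{M,L}$ contains the column entries $u_{i,r+M+2}$, $r+2\le i\le r+M+1$, which are absent from $U^{r+1,s-1}_{M,L}$, while the latter contains the row entries $u_{r+1,j}$, $r+2\le j\le r+M+1$, absent from the former; only the corner entry $u_{r+1,r+M+2}$ is shared. Conjugation by any $w\in\GL_L(\bfF)$ preserves these dimensions, so no single conjugation can produce a situation where ``$V\cap V'$ has codimension $1$ in each''; one would have to exchange the $M$ root subgroups one at a time through a chain of intermediate subgroups with extended characters, which you neither construct nor verify the exchange hypotheses for. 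Moreover your local model is mis-stated: the comparison is not between $(\GG_a,\triv)$- and $(\GG_a,\exp)$-equivariance inside ``$H\cong\GG_a^2$, Heisenberg-type or abelian'' --- for an abelian rank-two group that identification is simply false --- but between Whittaker invariants for the two polarizations of a genuine Heisenberg group, and what drives the equivalence is the nondegeneracy of the commutator pairing between the exchanged pieces, valued in the shared center on which both characters restrict to the residue character. A further omission: for the two characters to agree on the overlap one must normalize the auxiliary index, taking $k=r+1$ in $\chi^{r,s}_{M,L}$ and $k'=r+M+2$ in $\chi^{r+1,s-1}_{M,L}$ (harmless by the remark on independence of $k$); without this even the first reduction fails.

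The paper's proof does this in one step. It sandwiches $U^{r,s-1}_{M+1,L}\subset U^{r,s}_{M,L},\,U^{r+1,s-1}_{M,L}\subset U^{r,s+1}_{M-1,L}$, uses Lemma \ref{3.3 lem} to first take invariants for the common normal subgroup $U^{r,s-1}_{M+1,L}$, on which the two characters coincide, and observes that $U^{r,s+1}_{M-1,L}/U^{r,s-1}_{M+1,L}$ is the Heisenberg group $\Hei=V\oplus V^*\oplus\bfF$ with $V\cong\bfF^M$, the two groups in question becoming the polarizations $V\oplus\bfF$ and $V^*\oplus\bfF$ with characters trivial on $V$, $V^*$ and equal to the residue character on the center. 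The Fourier transform $D(V)\simeq D(V^*)$, intertwining translations with character twists, upgrades to an equivalence of $\Hei$-module categories $D(\Hei/(V\oplus\bfF),\chi)\simeq D(\Hei/(V^*\oplus\bfF),\chi)$, and applying $\Fun_{D(\Hei)}(-,\cC')$ gives the claim; this is a single rank-$M$ Stone--von Neumann step, not a baby case to be reached by conjugation and iterated. If you replace your second step by this argument (or by an honestly worked-out $M$-step root exchange), the rest of your assembly --- the use of Lemma \ref{3.3 lem} and feeding the categorical statement into the two-sided equivariant definition of $C_{M,N,r,s,r',s',L}$ --- is fine.
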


\begin{proof}
Consider the following groups, 
\begin{equation*}
    \xymatrix @R=1em{&U_{M,L}^{r,s}\ar@{}[dr]|-*[@]{\subset}&\\
U_{M+1,L}^{r,s-1}\ar@{}[ur]|-*[@]{\subset} \ar@{}[rd]|-*[@]{\subset}&& {U_{M-1,L}^{r+1,s}.}\\
&U_{M,L}^{r+1,s-1}\ar@{}[ur]|-*[@]{\subset}&
}
\end{equation*}

By definition and Lemma \ref{3.3 lem}, we have
 \[\mathcal{C}^{U_{M,L}^{r,s}, \chi_{M,L}^{r,s}}= (\mathcal{C}^{U_{M+1,L}^{r,s-1},\chi_{M,L}^{r,s}|_{U_{M+1,L}^{r,s-1}}})^{U_{M,L}^{r,s}/U_{M+1.L}^{r,s-1}, \chi_{M,L}^{r,s}},\]  
\[\mathcal{C}^{U_{M,L}^{r+1,s-1}, \chi_{M,L}^{r+1,s-1}}= (\mathcal{C}^{U_{M+1,L}^{r,s-1},\chi_{M,L}^{r+1,s-1}|_{U_{M+1,L}^{r,s-1}}})^{U_{M,L}^{r+1,s-1}/U_{M+1.L}^{r,s-1}, \chi_{M,L}^{r+1,s-1}}.\]

We note that $\chi_{M,L}^{r,s, (r+1)}|_{U_{M+1,L}^{r,s-1}}=\chi_{M,L}^{r+1,s-1, (L-s+1)}|_{U_{M+1,L}^{r,s-1}}$, we should prove that for any category $\mathcal{C}'$ admitting an action of $U_{M-1,L}^{r,s+1}/ U_{M+1,L}^{r,s-1}$, we have 
\begin{equation}\label{4.5}
    (\mathcal{C}')^{U_{M,L}^{r,s}/U_{M+1.L}^{r,s-1}, \chi_{M,L}^{r,s, {(r+1)}}}\simeq (\mathcal{C}')^{U_{M,L}^{r+1,s-1}/U_{M+1,L}^{r,s-1}, \chi_{M,L}^{r+1,s-1, {(L-s+1)}}}.
\end{equation}

We can identify $U_{M,L}^{r+1,s-1}/U_{M+1.L}^{r,s-1}$ with $V\oplus \bfF$ for a vector space $V$, identify $U_{M,L}^{r,s}/U_{M+1.L}^{r,s-1}$ as $V^*\oplus \bfF$, and identify $U_{M-1,L}^{r,s+1}/ U_{M+1, L}^{r,s-1}$ with the Heisenberg group $\Hei:= V\oplus V^*\oplus \bfF$. The group structure of $\Hei$ is given by 
\begin{equation}
\begin{split}
        \Hei\times \Hei&\longrightarrow \Hei\\
        (v,v^*,c), (v', v^{*,\prime}, c')&\mapsto (v+v', v^*+v^{*,\prime}, c+c'+\langle v, v^{*,\prime}\rangle).
\end{split}
\end{equation}
Taking restriction along $V^*\hookrightarrow \Hei$ (resp. $V\hookrightarrow \Hei$) defines an equivalence 

\begin{equation}
    D(\Hei/ {(V\oplus \bfF)}, \chi)\simeq D(V^*).
\end{equation}

\begin{equation}
  \operatorname{(resp.}\   D(\Hei/ {(V^*\oplus \bfF)}, \chi)\simeq D(V)).
\end{equation}

The action of $V$ on $D(V)$ is given by shifts and the action of $v^*\in V^*$ on ${D(}V{)}$ is given by tensoring with the local system $e^{v^*}$ on $V$. In the $D(V^*)$ realization, it is the other way around, and the Fourier transform intertwines these two actions. 

Hence, we obtain an equivalence of left $\Hei$-module categories {by re-averaging}:
\begin{equation}
   D^{V\oplus \bfF,\chi}(\Hei)\simeq D(\Hei/ {(V\oplus \bfF)}, \chi)\simeq D(\Hei/ {(V^*\oplus \bfF)}, \chi)\simeq  D^{V^*\oplus \bfF,\chi}(\Hei).
\end{equation}

Now, the equivalence \eqref{4.5} follows from the facts:
\begin{equation}
    \Fun_{D(\Hei)}( D^{V\oplus \bfF,\chi}(\Hei), \mathcal{C}')\simeq (\mathcal{C}')^{V\oplus \bfF,\chi}
\end{equation}
and 
\begin{equation}
    \Fun_{D(\Hei)}( D^{V^*\oplus \bfF,\chi}(\Hei), \mathcal{C}')\simeq (\mathcal{C}')^{V^*\oplus \bfF,\chi}.\qedhere
\end{equation}

\end{proof}

\subsection{Independence of \itshape L}
In this section, we will show that the definition of $C_{M,N,L,r,s,r',s'}$ is independent of the choice of $L$.

\begin{rem} Since we have already seen that $C_{M,N,L,r,s,r',s'}$ is independent of the choice of $r,s,r',s'$, we could just denote it by $C_{M,N,L}$. We denote by $\chi_{M,L}$ the character $\chi_{M,L}^{r,s}$ for any choice of $r,s$.
\end{rem}

We are going to prove the following lemma first,
\begin{lem}
{We have 
\begin{equation}\label{eq 5.13}
    D(\Mir^t_{L}(\bfF))\simeq D(\bfF^{L},\chi_{L-1,L+1}\backslash \Mir^t_{L+1}(\bfF)/ \bfF^{L},\chi_{L-1,L+1})
\end{equation}
}
\end{lem}
\begin{proof}
By the Fourier transform,
\begin{equation}
\begin{split}
D(\bfF^{L},\chi_{L-1,L+1}\backslash &\Mir^t_{L+1}(\bfF)/ \bfF^{L},\chi_{L-1,L+1})\\
    & {\simeq}\\
    \Vect_{e_{L}} \otimes_{D(\bfF^{L})} D(&\Mir^t_{L+1}(\bfF)) \otimes_{D(\bfF^{L})} \Vect_{e_{L}}.
\end{split}
\end{equation}
Here, both functors $D(\bfF^{L})\longrightarrow \Vect_{e_{L}}{{}\simeq\Vect}$ are given by taking restriction at $e_{L}\in \bfO^{L}\setminus t \bfO^{L}$. The left (resp. right) action of $D(\bfF^{L})$ on $D(\Mir^t_{L+1}(\bfF))$ is given by the pullback along 
\begin{equation}
    \Mir^t_{L+1}(\bfF)\simeq \bfF^{L}\rtimes \GL_{L}(\bfF)\longrightarrow \bfF^{L}
\end{equation}
\begin{equation}
    \operatorname{(resp.}\ \Mir^t_{L+1}(\bfF)\simeq \GL_{L}(\bfF)\ltimes \bfF^{L}\longrightarrow \bfF^{L}).
\end{equation}

Hence, 
\begin{equation}
     \Vect_{e_{L}} \otimes_{D(\bfF^{L})} D(\Mir^t_{L+1}(\bfF)) \otimes_{D(\bfF^{L})} \Vect_{e_{L}}\simeq 
     D(H''),
\end{equation}
where $H'':= {\{(g,v)\in \GL_{L}(\bfF)\ltimes \bfF^{L}\mid v= e_{L}, gv=e_{L} \}}\simeq \Mir_{L}(\bfF)$. We proved the lemma.
\end{proof}

Now let us take $(\GL_M(\bfO)\ltimes U_{M,L}(\bfF),\chi_{M,L})$-invariants on the left and $(\GL_N(\bfO)\ltimes U_{N,L}(\bfF),\chi_{N,L})$-invariants on the right \eqref{eq 5.13}. We have
\begin{equation}
  \resizebox{0.95\hsize}{!}{$\begin{split}
        &C_{M,N,L}\\
        &\simeq D(\GL_M(\bfO)\ltimes U_{M,L}(\bfF),\chi_{M,L}\backslash \Mir^t_{L}(\bfF)/ \GL_M(\bfO)\ltimes U_{N,L}(\bfF),\chi_{N,L})\\
        &\simeq D((\GL_M(\bfO)\ltimes U_{M,L}(\bfF))\ltimes \bfF^{L},\chi_{M,L+1} \backslash \Mir^t_{L}(\bfF)/ (\GL_M(\bfO)\ltimes U_{N,L}(\bfF))\ltimes \bfF^{L}, \chi_{N,L+1})\\
        &\simeq D(\GL_M(\bfO)\ltimes U_{M,L+1}(\bfF),\chi_{M,L+1}\backslash \Mir^t_{L+1}(\bfF)/ \GL_M(\bfO)\ltimes U_{N,L+1}(\bfF),\chi_{N,L+1})\\
        &\simeq C_{M,N,L+1}.
    \end{split}$}
\end{equation}

{
In particular, the category $C_{M,N,L}$ does not depend on $L$.
\begin{rem}
 The Gaiotto meta conjecture claimed that the Langlands dual of the category $\hat{\gl}(M|N)-\on{mod}$ is the  Whittaker bi-invariants of the category of D-modules on $\Mir^t_{L}(\bF)$ for any $L>M,N$. The lemma above asserts that this Whittaker bi-invariants category is independent of $L$. In fact, taking $(U_{M,L}(\bF),\chi_{M,L})$-invariants on the left and $(U_{N,L}(\bF),\chi_{N,L})$-invariants on the right of \eqref{eq 5.13}, we obtain that 
  \begin{equation}
      \begin{split}
          D(U_{M,L}(\bfF),\chi_{M,L}\backslash \Mir^t_{L}(\bfF)/U_{N,L}(\bfF),\chi_{N,L})\\ 
          \simeq D(U_{M,L+1}(\bfF),\chi_{M,L+1}\backslash \Mir^t_{L+1}(\bfF)/U_{N,L+1}(\bfF),\chi_{N,L+1}).
      \end{split}
  \end{equation}
\end{rem}
}
\begin{rem} From the construction of the equivalence $C_{M,N,L}\simeq C_{M,N,L+1}$, the equivalence is compatible with the left action of $C_{N|N}$ and the right action of $C_{M|M}$.
\end{rem}

As a corollary,
\begin{cor} If $M'\leq M$ and $N'\leq N$, we have
\begin{equation}
    C_{M'|M}\otimes_{C_{M|M}} C_{M,N,L}\otimes_{C_{M|M}} C_{N|N'}\simeq C_{M',N',L}.
\end{equation}
which is compatible with the left action of $C_{M'|M'}$ and the right action of $C_{N'|N'}$.
\end{cor}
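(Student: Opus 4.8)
The plan is to bootstrap from the equivalence $C_{M,N,L}\simeq C_{M|N}$ already established above in Section~\ref{5}, which by the Remark following the proof of $L$-independence intertwines the left $C_{M|M}$-action and the right $C_{N|N}$-action. Feeding this into the middle factor (and, where the left/right conventions of \S\ref{def of mon}--\S\ref{sec 3.7} require it, using Lemma~\ref{lem 3.4.1} to pass between them) identifies the left-hand side of the asserted equivalence with $C_{M'|M}\otimes_{C_{M|M}}C_{M|N}\otimes_{C_{N|N}}C_{N|N'}$ and the right-hand side with $C_{M',N',L}\simeq C_{M'|N'}$. So it suffices to prove the \emph{horizontal composition law}
\begin{equation*}
    C_{M'|M}\underset{C_{M|M}}{\otimes}C_{M|N}\underset{C_{N|N}}{\otimes}C_{N|N'}\simeq C_{M'|N'}\qquad(M'\le M,\ N'\le N),
\end{equation*}
compatibly with the residual left $C_{M'|M'}$- and right $C_{N'|N'}$-actions, and likewise for the $\ren$, $\locc$, and nilpotently supported sheaf theories.

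The building block is the one-step law $C_{A-1|A}\otimes_{C_{A|A}}C_{A|B}\simeq C_{A-1|B}$ for $A\le B$: this is Theorem~\ref{claim1}, and the proof of Theorem~\ref{Gaiotto} shows this equivalence to be compatible with the left $C_{A-1|A-1}$-action and the right $C_{B|B}$-action --- it is exactly the place where Proposition~\ref{act} (and the inductive hypothesis of that proof) enters. I would then prove, for fixed $B$, by descending induction on $A\le B$, the \emph{left contraction} $C_{A|B}\otimes_{C_{B|B}}C_{B|N'}\simeq C_{A|N'}$ as $(C_{A|A},C_{N'|N'})$-bimodule categories: the case $A=B$ is trivial, and for the inductive step one rebrackets, using associativity of relative tensor product in the $2$-category of stable cocomplete bimodule categories over the chain $C_{A|A},C_{A+1|A+1},\dots,C_{N'|N'}$,
\begin{equation*}
    C_{A-1|B}\underset{C_{B|B}}{\otimes}C_{B|N'}\simeq\Bigl(C_{A-1|A}\underset{C_{A|A}}{\otimes}C_{A|B}\Bigr)\underset{C_{B|B}}{\otimes}C_{B|N'}\simeq C_{A-1|A}\underset{C_{A|A}}{\otimes}\Bigl(C_{A|B}\underset{C_{B|B}}{\otimes}C_{B|N'}\Bigr)\simeq C_{A-1|A}\underset{C_{A|A}}{\otimes}C_{A|N'}\simeq C_{A-1|N'},
\end{equation*}
the penultimate step being the inductive hypothesis and the first and last the one-step law (the one-step law being a bimodule equivalence makes the associativity constraint meaningful).

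To finish, apply the left contraction with $B=N$ to get $C_{M'|M}\otimes_{C_{M|M}}C_{M|N}\simeq C_{M'|N}$; it then remains to contract $C_{M'|N}\otimes_{C_{N|N}}C_{N|N'}\simeq C_{M'|N'}$ in the second index. For this I would use the self-equivalence $C_{A|B}\simeq C_{B|A}$ of Lemma~\ref{lem 3.4.1}, induced by $g\mapsto g^{-1}$, together with the monoidal anti-self-equivalence of $C_{N|N}$ it induces (as in the proof of Lemma~\ref{lemma 4.2}); this turns the second contraction into the left contraction $C_{N'|N}\otimes_{C_{N|N}}C_{N|M'}\simeq C_{N'|M'}$, already proved. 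Reassembling the two contractions by associativity of $\otimes$ yields the horizontal composition law, with all module structures carried along because the one-step law and Lemma~\ref{lem 3.4.1} respect them. The argument applies verbatim to the $\ren$, plain, and nilpotently supported theories via the corresponding parts of Theorem~\ref{claim1}, Theorem~\ref{thm 4.4.1}, and Corollary~\ref{cor 4.4.3}, and to the $\locc$ versions by passing to compact objects at the end.

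The step I expect to be the main obstacle is the coherence bookkeeping of the nested inductions: one must apply the one-step law each time as a statement of bimodule categories over the \emph{correct} intermediate monoidal category, so that the associativity constraints used to rebracket live in the appropriate $(\infty,2)$-category rather than being mere abstract equivalences; once this is set up cleanly the rest is formal. A secondary point requiring care is that the reduction of the second ("right") contraction to a left one via Lemma~\ref{lem 3.4.1} genuinely intertwines the relevant module actions, which I would verify by tracking $g\mapsto g^{-1}$ through the definitions of \S\ref{def of mon} and \S\ref{sec 3.7}, exactly as in the proof of Lemma~\ref{lemma 4.2}. If one prefers to avoid establishing the one-step law in bimodule form directly, an alternative is to transport the whole horizontal composition law across the module-compatible equivalence $C_{A|B}\simeq {{D}}^{\GL_A\times\GL_B}(\mathfrak{B}^{2,0}_{A|B})$ of Theorem~\ref{Gaiotto} (resp. Theorem~\ref{thm 4.4.1}) and iterate Proposition~\ref{prop 2.1} on the coherent side, where the $\GL\times\GL$-equivariance of every category and functor is manifest.
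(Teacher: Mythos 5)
The paper states this corollary without a written proof, and your overall assembly --- replace $C_{M,N,L}$ by $C_{M|N}$ using the established $L$- and $(r,s)$-independence together with the remark on compatibility of actions, then reduce to the horizontal composition law $C_{M'|M}\otimes_{C_{M|M}}C_{M|N}\otimes_{C_{N|N}}C_{N|N'}\simeq C_{M'|N'}$ and contract by iterating the one-step law of Theorem~\ref{claim1} --- is clearly the intended one, and your first contraction (descending induction in the $M$-index, rebracketing bimodule tensor products) is fine: every application of Theorem~\ref{claim1} there has the form $C_{A-1|A}\otimes_{C_{A|A}}C_{A|C}$ with $A\le C$, which is within its stated hypotheses.

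The gap is in the second contraction. Your claim that $g\mapsto g^{-1}$ (Lemma~\ref{lem 3.4.1}) "turns the second contraction into the left contraction $C_{N'|N}\otimes_{C_{N|N}}C_{N|M'}\simeq C_{N'|M'}$, already proved" does not hold: the swap exchanges the two indices of each factor and reverses the order of the tensor product, so it returns an expression of exactly the same problematic shape. Concretely, your left contraction $C_{A|B}\otimes_{C_{B|B}}C_{B|C}\simeq C_{A|C}$ was only established for $A\le B\le C$ (each inductive step needs $A\le C$ in Theorem~\ref{claim1}), whereas in $C_{N'|N}\otimes_{C_{N|N}}C_{N|M'}$ the final index $M'$ satisfies $M'\le N$, not $M'\ge N$. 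What is actually needed is the right-index analogue of the one-step law, $C_{A|B}\otimes_{C_{B|B}}C_{B|B-1}\simeq C_{A|B-1}$ for $A\le B$, of which the paper proves only the case $A=B-1$ (Lemma~\ref{lemma 4.2}); the general case requires rerunning the Fourier-transform argument of Section~3 (equivalently, the idempotent argument of Proposition~\ref{prop 2.1} on the coherent side) with left and right interchanged, as the final Remark of Section~4 gestures at but does not carry out. Your coherent-side alternative inherits the same gap, since Proposition~\ref{prop 2.1} is likewise stated only for contraction in the first index. The statement is surely true and the fix is the evident mirror argument, but it is not a formal consequence of the results you cite.
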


\begin{cor}
The left action of $C_{M|M}$ on $C_{M|N}$ is equivalent to the left action of $C_{M,M, L}\simeq C_{M|M}$ on $C_{M,N,L}\simeq C_{M|N}$. 

Similarly for the right action.
\end{cor}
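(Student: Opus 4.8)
The plan is to run the equivalence $C_{M,N,L}\simeq C_{M|N}$ of \S\ref{5} through the elementary equivalences out of which it is assembled, and to check that each of them intertwines the two actions at issue. On the side of $C_{M,N,L}$ the left action of $C_{M|M}$ is, by construction, the one induced from left convolution on $\Mir^t_L(\bfF)$ for the bimodule structure over $(\GL_M(\bfO)\ltimes U_{M,L},\chi_{M,L})$ via the equivalence $C_{M,M,L}\simeq C_{M|M}$; on the side of $C_{M|N}$ it is the pull--push along $\Mir^t_{M+1}(\bfF)\overset{\GL_M(\bfO)}{\times}\Gr_N\to\Gr_N$ of \S\ref{def of mon}. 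So what has to be shown is that the composite equivalence is a morphism of left $C_{M|M}$-module categories, and similarly on the right.

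First I would split $C_{M,N,L}\simeq C_{M|N}$ into its three constituent types of steps: (i) the equivalences expressing independence of $r,s,r',s'$, built from the Heisenberg/Fourier argument; (ii) the equivalences $C_{M,N,L}\simeq C_{M,N,L\pm 1}$ expressing independence of $L$, built from a Fourier transform on the collapsed $\bfF$-factor; and (iii) the base identification reducing $C_{M,N,N+1}$ to $C_{M|N}$ by one last Fourier transform, as in the second of the examples in \S\ref{5}. In every one of these steps the subgroup $\GL_M(\bfO)$ — hence the copy of $\Mir^t_{M+1}(\bfF)$ that implements the $C_{M|M}$-convolution — is embedded in a fixed block of rows and columns disjoint from every $\bfF$-factor on which the intervening Fourier transform acts; consequently left convolution by $\Mir^t_{M+1}(\bfF)$ commutes with that Fourier transform, so each step is automatically left $C_{M|M}$-linear (for steps of type (ii) this is, up to left/right bookkeeping, the Remark following the independence-of-$L$ argument). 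At the start of the chain the first of the examples in \S\ref{5} identifies $C_{M,M,M+1}=D(\GL_M(\bfO)\backslash\Mir^t_{M+1}(\bfF)/\GL_M(\bfO))$ with $C_{M|M}$ carrying bimodule convolution on $\Mir^t_L(\bfF)$ to the $\textasteriskcentered$-structure; transporting this along step (iii) turns its left action on $C_{M,N,N+1}\simeq C_{M|N}$ into exactly the pull--push of \S\ref{def of mon}, which is a direct comparison of the two correspondence diagrams. Assembling the steps gives the claim for the left action; the claim for the right action then follows by the same argument with $\Mir^t_{N+1}(\bfF)$ in place of $\Mir^t_{M+1}(\bfF)$, or by conjugating with the equivalence $C_{M|N}\simeq C_{N|M}$ of Lemma \ref{lem 3.4.1}.

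The hard part will be the bookkeeping in steps (ii) and (iii): one has to pin down all the embeddings precisely enough to be sure that the block carrying $\GL_M(\bfO)$ never meets a coordinate that gets Fourier-transformed, so that the $\Mir^t_{M+1}(\bfF)$-action genuinely commutes with every intermediate equivalence. Once this is arranged the rest is formal, amounting to matching convolution diagrams; and one could alternatively organize the whole argument through the colimit presentations already used in \S\ref{5}, so that only the compatibility of the actions with the transition functors needs to be verified.
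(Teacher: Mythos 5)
Your proposal is correct and follows essentially the same route as the paper: the paper states this corollary without a separate argument, treating it as immediate from the construction (cf.\ the preceding remark that the equivalence $C_{M,N,L}\simeq C_{M,N,L+1}$ is action-compatible), because each constituent equivalence — independence of $r,s,r',s'$ via the Heisenberg/Fourier lemma, independence of $L$, and the final Fourier identification with $C_{M|N}$ — only manipulates coordinates disjoint from the block carrying $\GL_M(\bfO)$ (resp.\ $\GL_N(\bfO)$), so it commutes with the convolution action. Your sketch simply makes this bookkeeping explicit, which is exactly what the paper's "from the construction" argument amounts to.
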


\subsection*{Acknowledgments}We are grateful to Michael Finkelberg and Alexander Braverman for very helpful discussions and generous help in writing this paper. 
We also thank M.~Finkelberg for constant support and encouragement during our working on this project, and A.~Braverman for communicating to us the conjecture by D.~Gaiotto.
This paper belongs to a long series of papers about mirabolic Hecke algebras and later about their categorification, with Victor Ginzburg and Michael Finkelberg.   We are grateful to them for sharing many ideas.
Finally, we are thankful to Dennis Gaitsgory for teaching us a lot of techniques of the geometric Langlands theory.

\end{document}